\documentclass[reqno,11pt]{amsart}
%%%%%%%%%%%%%%%%%%%%%%%%%%%%%%%%%%%%%%%%%%%%%%%%%%%%%%%%%%%%%%%%%%%%%%%%%%%%%%%%%%%%%%%%%%%%%%%%%%%%%%%%%%%%%%%%%%%%%%%%%%%%%%%%%%%%%%%%%%%%%%%%%%%%%%%%%%%%%%%%%%%%%%%%%%%%%%%%%%%%%%%%%%%%%%%%%%%%%%%%%%%%%%%%%%%%%%%%%%%%%%%%%%%%%%%%%%%%%%%%%%%%%%%%%%%%
\usepackage{amscd,amsfonts,mathrsfs,amsthm,enumerate}
\usepackage{amssymb, amsmath,bigints}
\usepackage{stmaryrd}
\usepackage{epsfig}
\usepackage[sorted-cites,nobysame]{amsrefs}
\usepackage{latexsym}
\usepackage[usenames,dvipsnames]{color}
\usepackage[T1]{fontenc}
\usepackage{calligra}
\DeclareFontShape{T1}{calligra}{m}{n}{<->s*[2.3]callig15}{}
\DeclareMathAlphabet{\mathcalligra}{T1}{calligra}{m}{n}
\usepackage[colorlinks=true,linkcolor=blue,citecolor=BrickRed,urlcolor=RoyalBlue]{hyperref}
\parindent = 0 mm
\hfuzz    = 6 pt
\parskip   = 2 mm

%%%%%PACO'S DEFINITIONS %%%%%%%

\newcommand{\R}{\mathbb{R}}

%%%%%%%%%%%%%%%%%%%%%%%

%\def\si{{\mathcalligra{s}\,}}
\def\si{\zeta}

\def\comment#1 {{\color{red}(Comment: #1) }}

\def\gind {\operatorname{g}}
\def\C {\mathbb{C}}

\newtheorem{theorem}{Theorem}[section]
\newtheorem{mythm}{Theorem}

\newtheorem{lemma}[theorem]{Lemma}

\newtheorem{corollary}[theorem]{Corollary}

\newtheorem{definition}[theorem]{Definition}
\theoremstyle{definition}
\newtheorem*{assumption*}{$\lambda_{1}$-Condition}

\newtheorem{remark}[theorem]{Remark}
\newtheorem{example}[theorem]{Example}

\def\pproof#1{\@ifnextchar[\opargproof
{\opargproof[\it Proof of #1.]}}
\def\opargproof[#1]{\par\noindent {\bf #1 }}

\makeatother

\numberwithin{equation}{section}

%%%%%%%%%%%%%%%%%%%%%%%%%%%%%%%%%%%%%%%%%%%%%%%%%%%%%%%%%%%%%%%%%%%%%%%%%
\begin{document}

\title[Degenerate curve shortening flow]{Curve shortening flow on Riemann surfaces with conical singularities}
\author[N. Roidos]{\textsc{Nikolaos Roidos}}
\address{Nikolaos Roidos\newline
Department of Mathematics\newline
University of Patras\newline
26504 Rio Patras, Greece\newline
{\sl E-mail address:} {\bf roidos@math.upatras.gr}
}
\author[A. Savas-Halilaj]{\textsc{Andreas Savas-Halilaj}}
\address{%
	Andreas Savas-Halilaj\newline
	Department of Mathematics\newline
	University of Ioannina\newline
	45110 Ioannina, Greece\newline
	{\sl E-mail address:} {\bf ansavas@uoi.gr}
}
\date{}
\subjclass[2010]{Primary: 53E10, 35K59, 35K65, 35K93, 35R01. Secondary: 47A60, 47B12.}
\keywords{Singular Riemann surfaces, degenerate curve shortening flow, cone differential equations, Mellin-Sobolev spaces,
$H^{\infty}$-calculus, maximal regularity}
\thanks{The first author was supported from the Deutsche Forschungsgemeinschaft, Grant SCHR 319/9-1.
The second author was supported from the General Secretariat for Research and Innovation (GSRI) and the Hellenic Foundation for Research and Innovation (HFRI), Grant No: 133.}

\begin{abstract}
We study the curve shortening flow on Riemann surfaces with finitely many conformal conical singularities.
If the initial curve is passing through the singular points, then the evolution is governed by a degenerate quasilinear parabolic equation. In this case,
we establish short time existence, uniqueness, and regularity of the flow. We also show that the evolving curves stay fixed at the singular points
of the surface and obtain some contracting and convergence results.
\end{abstract}

\maketitle
\setcounter{tocdepth}{1}
\tableofcontents
%%%%%%%%%%

\section{Introduction}
%%%%%%%%%%%%%%%%%%%%%%%%%%%%%%%%%%%%%%%%%%%%%%%%%%%%%

There is a long history of curve shortening flow (CSF for short) on smooth Riemann surfaces.
The existence, regularity, and long-time behavior of CSF in $\R^2$ have been studied extensively by
Gage and Hamilton \cite{GaHa1}, Grayson\cite{grayson3}, \cite{grayson2}, Angenent \cite{Ang3}, and Huisken \cite{huisken}.
Later, Grayson \cite{Gr1} and Gage \cite{Ga1}, investigated the long-time behavior of CSF on smooth surfaces. In a series of papers, Angenent developed a more general theory for parabolic equations of immersed curves on smooth
surfaces; see \cite{Ang3,Ang2,Ang1,Ang4,Ang5}.

The subject of this paper, is the CSF on conical Riemann surfaces $(\Sigma,\gind)$.
A point $p\in\Sigma$ is called {\em conformal conical point of order $\beta\in\R$}, if
the metric $\gind$ has in local complex coordinates a representation of the form
$$
e^{2h}|z-z(p)|^{2\beta}|dz|^2,
$$
where here $h$ is a (real) analytic function. If $h$ is zero, then the metric $\gind$
is flat and $p$ is called {\em conical}. 
The analysis and geometry of surfaces with conical singularities was investigated among others by
Finn \cite{Finn}, Huber \cite{Huber}, Cheeger \cite{Cheeger3,Cheeger2,Cheeger1}, McOwen \cite{mcowen}, Hulin and
Troyanov \cite{Tro4,Tro1,Tro3,Tro2}, Br\"uning and Seeley \cite{BS3,BS2,BS1},
Melrose \cite{Melrose} and Schulze \cite{Schu2,Schu}.

The primary goal of this article is to prove the following:

\begin{mythm}\label{THMA}
Let $(\Sigma,\gind)$ be a Riemann surface where $\gind$ is a metric
with conformal conical $($not necessarily distinguished\,$)$ points
$\{p_1,\dots,p_n\}$, $n\ge 2$, of orders $-1<\beta_1\le\cdots\le \beta_n<0$,
respectively, and let $\gamma\colon[0,1]\to\Sigma$ be
a continuous, immersed, not necessarily closed curve satisfying the conditions:
\begin{enumerate}
\item[\rm(c$_1$)]
There is a partition $s_1=0<s_2<\cdots <s_{n}=1$ of the interval $[0,1]$
so that $\gamma(s_j)=p_j$, $1\le j\le n$.
\smallskip
\item[\rm(c$_2$)]
The image $\gamma([0,1]\backslash\{s_1,\dots,s_n\})$ does not meet any singular point of
the surface $\Sigma$.
\smallskip
\item[\rm(c$_3$)]
The curve $\gamma$ is $C^{\infty}$-smooth on each interval $[s_j,s_{j+1}]$, $1\le j\le {n-1}$.
\end{enumerate}
\vspace{-2pt}
Then, there exist $T>0$ and a unique $\varGamma\colon[0,1]\times[0,T]\to\Sigma$
such that:
\begin{enumerate}
\item[\rm(1)]
$\varGamma$ is continuous on $[0,1]\times[0,T]$,
 $C^{\infty}$-smooth on
$(s_j,s_{j+1})\times(0,T)$ and $C^{1}$-smooth on $[s_j,s_{j+1}]\times[0,T]$, $1\le j\le n-1$.
\smallskip
\item[\rm(2)]
$\varGamma$ stays fixed at the singular points, i.e. $\varGamma(s_j,t)=p_j$, $1\le j\le n$, and
$t\in[0,T]$.
\smallskip
\item[\rm(3)]
$\varGamma$ satisfies the evolution equation
\begin{equation}\label{DCSF}
\left\{
\begin{array}{ll}
\varGamma^{\perp}_t(s,t)={\bf k}_{\gind}(\varGamma(s,t)),& (s,t)\in[0,1]\times(0,T),
\\[0.2cm]
\varGamma(s,0)=\gamma(s), &s\in[0,1],
\end{array}
\right.
\end{equation}
where $\{\cdot\}^{\perp}$ denotes the orthogonal projection on the normal space and
${\bf k}_{\gind}$ the geodesic curvature vector of the evolved curve $s\mapsto \varGamma(s,\cdot\,)$.
\end{enumerate} 
\end{mythm}

\begin{remark}
Theorems \ref{THMA} hold for initial curves that are not necessarily
$C^\infty$-smooth up to the singular points. As a matter of fact,
we solve \eqref{DCSF} with initial data lying in suitable Mellin-Sobolev spaces.
For the precisely required regularity assumptions on the initial data, the uniqueness and the asymptotic behavior
of the solution, we refer to Theorem
\ref{wexreg}, Corollary \ref{pb1}, and Theorem \ref{kgregularitys}.
We emphasize that,
even if we start with a $C^{\infty}$-smooth curve, the evolving curves may
instantaneously
become only piecewise $C^1$-smooth up to the singular points; see Fig. \ref{DCSFFFF}.
\end{remark}
\begin{figure}[ht]
	\includegraphics[scale=0.8]{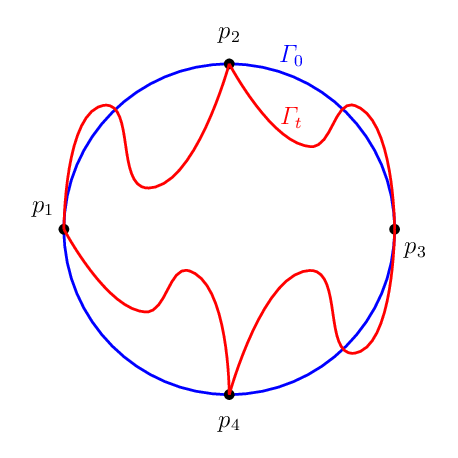}\caption{Degenerate curve shortening flow.}\label{DCSFFFF}
	\end{figure}

The next theorem describes the asymptotic behavior of the curvature near the singular points
of evolved curves under the degenerate curve shortening flow \eqref{DCSF} (DCSF for short).

\begin{mythm}\label{THMB}
Let $(\Sigma,\gind)$ and $\gamma\colon[0,1]\to\Sigma$ be
as in Theorem \ref{THMA}.
Then, for each $\varepsilon>0$ there exists a time $T_{\varepsilon}\in(0,T]$ such that for the solution $\varGamma$ of \eqref{DCSF}
on $[0,T_{\varepsilon}]$, close to $p_j$, $j\in\{1,\dots,n\}$, we have the pointwise estimate
\begin{equation*}
|{\bf k_{\gind}}|_{\gind}\leq\left\{
\begin{array}{lll}
c_j|s-s_j|^{1+\beta_j-\varepsilon} &\text{if} &\beta_n\leq -1/2,\\
c_j|s-s_j|^{-\beta_n\frac{1+\beta_j}{1+\beta_n}-\varepsilon} &\text{if}& \beta_n>-1/2,
\end{array}
\right.
\end{equation*}
where $c_{j}\geq 0$ is a constant depending only on $\varepsilon$ and $T_\varepsilon$, and $|\cdot|_{\gind}$ denotes the norm
with respect to $\gind$.
\end{mythm}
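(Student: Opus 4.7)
The plan is to localize near each singular point $p_j$ in conformal coordinates $z$ centered at $p_j$, where the metric reads $e^{2h_j(z)}|z|^{2\beta_j}|dz|^2$, and to translate the Mellin--Sobolev regularity of the solution (produced by the maximal regularity theory underlying Theorem \ref{THMA}) into pointwise asymptotic estimates on the position map $z(\cdot,t)$ of the evolved curve. Since the singular points are preserved by the flow, one has $z(s_j,t)=0$ for every $t$, so the task becomes quantifying the rate at which $z$ and its spatial derivatives vanish as $s\to s_j$.

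First, I would record the conformal transformation formula for the geodesic curvature: writing $\lambda_j = e^{h_j}|z|^{\beta_j}$, one has
\[
|{\bf k}_{\gind}|_{\gind} = \lambda_j^{-1}\bigl|k_0 - \partial_{N_0}\log\lambda_j\bigr|,
\]
where $k_0$ is the Euclidean scalar curvature and $N_0$ the Euclidean unit normal. This expresses $|{\bf k}_{\gind}|_{\gind}$ as an algebraic combination of $z$, $z_s$, $z_{ss}$ with two singular weights: the factor $|z|^{-\beta_j}$ from $\lambda_j^{-1}$, and the factor $|z|^{-1}$ from $\partial_{N_0}\log|z|$. The quadratic vanishing of $\langle z, N_0\rangle$ as $s\to s_j$ tames the second factor, so only the effective decay of $z$ and $z_s$ really governs the size of $|{\bf k}_{\gind}|_{\gind}$.

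Second, I would invoke the maximal regularity statement that produced Theorem \ref{THMA}: the solution belongs to a Mellin--Sobolev space $\mathcal{H}^{s,\gamma}$ on $\mathbb{S}^1$ with weight $\gamma$ chosen in the admissible interval dictated by the conormal symbol of the linearized operator. Continuous embedding of $\mathcal{H}^{s,\gamma}$ into weighted H\"older spaces on the cone then gives, for each $\varepsilon>0$ and after possibly shrinking to some $T_\varepsilon\in(0,T]$,
\[
|z(s,t)| + |s-s_j|\,|z_s(s,t)| + |s-s_j|^2\,|z_{ss}(s,t)| \le c_j|s-s_j|^{\mu_j-\varepsilon},
\]
where $\mu_j$ is the exponent dictated by $\gamma$ at the singular point $p_j$. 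Plugging these bounds into the curvature formula converts the spatial decay of $z$ into the claimed decay of $|{\bf k}_{\gind}|_{\gind}$.

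The main obstacle is the identification of $\mu_j$ and the origin of the dichotomy. The admissible Mellin weight is a single number that must work simultaneously at every singular point of $\mathbb{S}^1$; the conormal spectrum at $p_j$ consists of integer multiples of $1+\beta_j$. When $\beta_n\le -1/2$ the natural admissible choice yields $\mu_j = 1+\beta_j$, which gives the first branch of the estimate. When $\beta_n>-1/2$, the closest spectral point to the scaling threshold at the worst singular point $p_n$ forces a reduced weight proportional to $-\beta_n$; transporting this reduced weight to $p_j$ through the change of cone scale — geodesic distance scales as $|z|^{1+\beta_j}$ at $p_j$ and as $|z|^{1+\beta_n}$ at $p_n$ — produces the coupled exponent $-\beta_n(1+\beta_j)/(1+\beta_n)$. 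The delicate points are verifying that this global weight is indeed achieved by the nonlinear solution (so that the spectral picture of the linearization persists after one short-time argument), and absorbing the $\varepsilon$-loss by shrinking the existence time to $T_\varepsilon$.
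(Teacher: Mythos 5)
Your high-level plan — express the geodesic curvature via the conformal transformation law, then feed in weighted (Mellin--Sobolev) regularity of the solution to get pointwise decay — is the same strategy the paper follows, and your observation that the quadratic vanishing of $\langle \gamma,\xi\rangle$ controls the $|z|^{-1}$ singularity is exactly Lemma~\ref{lemm ana p}. The paper, however, works with the graph function $w$ over the initial curve (so that \eqref{s5} expresses $k_{\gind}$ in terms of $w,w_s,w_{ss}$), and the asymptotics it needs are the precise ones of Corollary~\ref{pb1}, namely $w(s,t)=c_j(t)|s-j|+O(|s-j|^{1+(\ell+3/2-2/q)(1+\beta_j)-\varepsilon})$; mere membership of the position map in a Mellin--Sobolev space would not suffice without isolating this linear leading term.

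The genuine gap is in the step you flag as ``the main obstacle'': the identification of the admissible Mellin weight and the source of the dichotomy. You attribute the exponent to ``the conormal spectrum at $p_j$, which consists of integer multiples of $1+\beta_j$''. This is not what happens. After the paper's change of variables $|s-j|^{1+\beta_j}=(1+\beta_j)|x-j|$, the linearized main operator is the cone operator $A$ of \eqref{Alinear}, whose conormal symbol is $\lambda^2-\lambda$; its zeros are $\{0,1\}$, \emph{independent} of $\beta_j$. The conormal spectrum of the linearization therefore only imposes the $\beta$-independent window $\ell\in(-1/2,1/2)$ (Lemma~\ref{maxregA}). The restrictive, $\beta$-dependent upper bound $\ell<1/2+\ell_0$ in \eqref{pqgammachoice}, which encodes the dichotomy through $\ell_0$ in \eqref{ell0}, comes instead from the \emph{reaction term}: the coefficient functions $f_1,f_{5,\cdot}$ (arising from the $Dh$-terms of the curvature) only lie in $\mathcal{H}_p^{b-1,\frac12-\frac{1+2\beta_1}{1+\beta_1}-\varepsilon}(\mathbb{B})$ by \eqref{ftermsA}, and the weight $\ell$ of the ambient space $X_0^r$ cannot exceed this. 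The branch point $\beta_1=-1/2$ is precisely where $-\frac{1+2\beta_1}{1+\beta_1}$ changes sign, i.e.\ where this reaction-term constraint becomes active. Your ``transport the reduced weight through the change of cone scale'' heuristic recovers the right formula but is not a derivation: the coupled exponent $-\beta_1(1+\beta_j)/(1+\beta_1)$ is obtained in the paper by computing $(1+\ell_0)(1+\beta_j)$ after the single weight $\ell$ has been pinned down. Finally, to convert the $w$-asymptotics into the bound on $|k_{\gind}|$ the paper still has to estimate each of the five terms $\mathcal{C}_1,\dots,\mathcal{C}_5$ of \eqref{s5} in the Banach-algebra scale $E_0\supset E_1\supset\dots$ (Theorem~\ref{kgregularitys}(b)), using Lemma~\ref{abr}(a)--(c) and Lemma~\ref{sumSn} — this term-by-term bookkeeping, in particular the handling of $\rho=|s-j|^{-2}\langle\gamma,\xi\rangle$ and the composition $Dh\big|_{\gamma+w\xi}$, is absent from your sketch and is where the actual work lies.
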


\begin{remark}
Let us make some comments regarding Theorems \ref{THMA} and \ref{THMB}.
\begin{enumerate}[\rm(1)]
\item
If $\Sigma$ is a surface as in Theorem \ref{THMA}
and
$\gamma$ is a $C^2$-smooth curve passing through singular points, then its geodesic curvature
${\bf k_{\gind}}$
vanishes at the singular points; see Lemma \ref{lemm ana p}. This is a strong indication that the 
evolved curves under the 
DCSF will stay fixed at the singular points.
\medskip
\item Let us give another motivating heuristic of why there exists a solution of 
\eqref{DCSF} which satisfies (2) of Theorem \ref{THMA}, i.e. a solution which stays fixed at the singular points. Consider $\mathbb{C}$
equipped with the singular metric
$$\gind=|z|^{2\beta}|dz|^2,\,\,\, \text{where}\,\,\,\beta\in(-1,0).$$
Then $\mathbb{C}$ minus a closed half-line starting from the origin of $\C$ is isometric with the (open)
sector
$$
\varLambda=\big\{z\in\C\backslash\{0\}:|{\rm arg}(z)|<\pi(1+\beta)\big\}\subset(\C,\gind_{\rm euc}),
$$
equipped with the euclidean metric $\gind_{\rm euc}$; see Subsection \ref{m1} and Fig. \ref{cone1}.
Note that it is not possible to map isometrically an open neighbourhood containing $0\in(\C,\gind)$
to an open subset of $(\C,\gind_{\rm euc})$.
Let us now restrict for simplicity to the case
$\beta=-1/2$ and consider an embedded curve $\Gamma_0$ in $\varLambda$ whose both ends are arriving
at the origin of $(\C,\gind_{\rm euc})$ with zero curvature. Since $0$ does not
belong to the sector $\varLambda$, the under consideration initial curve $\varGamma_0$ is actually an open curve and
the standard CSF in $(\varLambda,\gind_{\rm euc})$ with initial data $\varGamma_0$ has
infinitely many solutions. This is due to the fact that there are infinitely many ways
to extend $\varGamma_0$ to a closed curve containing $0$, and each such extension produces a solution to the
CSF.  However, among all these solutions of the CSF, there exists one solution which stays at the
origin. Indeed, extend $\varGamma_0$
to $(\C,\gind_{\rm euc})$ by reflecting it with respect to (and including) the origin of $\C$. In this way we obtain a closed symmetric $C^2$-smooth
figure-eight curve and the CSF in $(\C,\gind_{\rm euc})$ exits for short time and preserves these symmetries; see for instance \cite{Smoczyk}.
Thus, the parts of the evolved curves within $\varLambda$ stays fixed at the origin. Going back to
$(\C,\gind)$, we get a solution of the DCSF staying fixed at the singular point.
\medskip
\item
We would like to
emphazise that the procedure described in the above example cannot be carried out to prove
Theorem \ref{THMA} in the general case,
since the shape of the initial curve might be more complicated, as it might not be contained in
any sector, or $\Sigma$ can be non-compact of high genus and with several singular points.
\medskip
\item
The behavior of the CSF is quite different when we consider singular surfaces. If the initial curve
is passing through singular points, the flow is governed by a degenerate
PDE. In this case, the CSF reduces to the investigation of a degenerate quasilinear differential equation of the form
$$
w_t-s^{\alpha_1}(1-s)^{\alpha_1}P(s,w,w_s)w_{ss}=s^{\alpha_2}(1-s)^{\alpha_2}Q(s,w,w_s),
$$
where $\alpha_1,\alpha_2\in\R$, $s\in(0,1)$ and $P$, $Q$ are appropriate functions with $P$ being positive.
There has been a lot of research for such evolution equations but,
most of the results ensure only generalized solutions (e.g. weak, mild, strong, viscosity)
rather than classical ones; see for example the papers \cite{Chen,DoRu,evans2,Gi1,Gi2,GoLin, JaLu,Vo,VoHu}.
\smallskip
\item
To prove Theorems \ref{THMA} and \ref{THMB}, we write the evolving curves for a short
time as a graph over the initial curve; see \eqref{hanoofa}.
As a matter of fact, we evolve separately
each piece of the curve joining two singular points.
Following Huisken and Polden
\cite{HuPo}, we express the geodesic curvature
in terms of the height function; see \eqref{s5}. Then, the problem \eqref{DCSF} becomes equivalent to the degenerate parabolic differential equation \eqref{kwer1}-\eqref{kwer2}.
To solve such problems, we employ maximal $L^{q}$-regularity theory for quasilinear parabolic equations in combination with $H^{\infty}$-calculus results for cone differential operators;
see Sections \ref{sec4} and \ref{sec5} for more details.
\end{enumerate}
\end{remark}

Due to a deep result of Grayson \cite{Gr1} a closed embedded curve in a compact Riemann surface must either
shrink to a round point in finite time or else it converges to a simple geodesic in infinite time. In the case of planar curves,
using the Gauss-Bonnet formula, the maximal
time of the existence of the solution of CSF can be explicitly computed in terms of the enclosed area $\mathcal{A}(0)$ of the initial curve, i.e.
$$
T_{\max}={\mathcal{A}(0)}/{2\pi}.
$$
So it is natural to ask if we can estimate the maximal time of the existence of the solution and detect the
final shape of a curve moving by DCSF. In this direction, we derive the following partial result in the case
where $\Sigma$ is conformally equivalent to $\C$.

\begin{mythm}\label{THMC}
Let $(\C,\gind)$ be a flat singular Riemann surface
with conical singular points $\{p_1,\dots,p_n\}$ of orders
$-1<\beta_1\le\cdots\le \beta_n<0,$
respectively. Let $\gamma\colon[0,1]\to\C$ be
a $C^{\infty}$-smooth closed embedded curve passing through the singular {\rm (}distinct{\rm)} points $\{p_1,\dots,p_m\}$, $m\le n$, and containing the
rest into its interior.
Then,
the enclosed areas $\mathcal{A}$ of the evolved curves under the {\rm DCSF}
satisfy
$$
\mathcal{A}_t(t)=-2\pi-\sum_{j=1}^m(\pi-\alpha_j(t))\beta_j+\sum_{j=1}^m\alpha_j(t)-2\pi\sum_{j=m+1}^n\beta_j,
$$
where $\{\alpha_1,\dots,\alpha_m\}$ are the {\rm(}time dependent{\rm)} exterior 
angles of the evolved curves formed at the singular points $\{p_1,\dots,p_m\}$.
\end{mythm}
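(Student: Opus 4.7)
The plan is to combine a first-variation-of-area computation with a Gauss-Bonnet formula adapted to the conformal conical singularities, and then to convert cone-metric angles into conformal-chart angles. Since by Theorem~A the evolution is $C^{\infty}$ away from the $s_j$ and pinned at the singular points $p_j$, the standard first variation of the enclosed area applied to $\varGamma^{\perp}_t=\mathbf{k}_{\gind}$ yields
\[
\mathcal{A}_t(t)=-\int_{\mathbb{S}^1\setminus\{s_1,\dots,s_m\}}k_{\gind}\bigl(\varGamma(\cdot,t)\bigr)\,ds,
\]
with $k_{\gind}$ denoting the signed geodesic curvature relative to the inward normal of the enclosed domain $D(t)$. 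The problem thus reduces to evaluating $\int k_{\gind}\,ds$ via Gauss-Bonnet.

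To compute this integral, I would excise small geodesic balls $B_\varepsilon(p_j)$ around each singular point and apply the smooth Gauss-Bonnet formula on the flat region $D(t)\setminus\bigcup_j B_\varepsilon(p_j)$, then pass to the limit $\varepsilon\downarrow 0$. On the regularized domain $\int K\,dA=0$; a direct book-keeping of boundary-curvature integrals over the arcs $\partial B_\varepsilon(p_j)\cap D(t)$, together with the corners they introduce and the resulting Euler characteristic, shows that in the limit each interior cone point $p_j$ with $j>m$ contributes a total angle defect $-2\pi\beta_j$, and each boundary cone point $p_j$ with $j\leq m$ contributes $\pi-\phi_j(t)$, where $\phi_j(t)$ is the interior angle of $D(t)$ at $p_j$ \emph{measured in the cone metric}. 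This gives
\[
\int k_{\gind}\,ds+\sum_{j=1}^m\bigl(\pi-\phi_j(t)\bigr)-2\pi\sum_{j=m+1}^n\beta_j=2\pi.
\]

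The subtle step, and the main difficulty, is that the exterior angle $\alpha_j(t)$ appearing in the statement is the one read off in the conformal chart at $p_j$, not in the cone metric. Near $p_j$ the metric is $|z|^{2\beta_j}|dz|^2$; writing $z=re^{i\vartheta}$ and introducing the geodesic polar coordinates $\rho=r^{1+\beta_j}/(1+\beta_j)$, $\tilde\vartheta=(1+\beta_j)\vartheta$ transforms it into $d\rho^2+\rho^2\,d\tilde\vartheta^2$ of total angle $\theta_j=2\pi(1+\beta_j)$. Hence cone-metric angles and conformal-chart angles at $p_j$ differ by the dilation factor $(1+\beta_j)$, and in particular the cone-metric interior angle and the conformal interior angle $\pi-\alpha_j(t)$ are related by
\[
\phi_j(t)=(1+\beta_j)\bigl(\pi-\alpha_j(t)\bigr),\qquad\text{so}\qquad \pi-\phi_j(t)=\alpha_j(t)-\bigl(\pi-\alpha_j(t)\bigr)\beta_j.
\]
Substituting into the Gauss-Bonnet identity and then into the area formula of the first step produces exactly the claimed expression for $\mathcal{A}_t(t)$; the additional term $-\sum_{j=1}^m(\pi-\alpha_j)\beta_j$ absent from the smooth case comes entirely from this dilation factor, and setting all $\beta_j=0$ recovers the classical identity $\mathcal{A}_t=-2\pi$.
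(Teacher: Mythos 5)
Your proof is correct and reaches the claimed formula, but it takes a genuinely different route from the paper. The paper simply cites its singular Gauss--Bonnet formula (Theorem~\ref{SGB}), proved in Section~\ref{sec3} by writing $\gind=e^{2v}\gind_0$ with $v=\beta_\ell\log|z|+h_\ell$ near each singular point, applying the \emph{smooth} Gauss--Bonnet theorem to the background metric $\gind_0$, and then handling the singular correction $\int_S\Delta_{\gind_0}v\,dA_{\gind_0}+\int_{\partial S}\langle D^{\gind_0}v,\eta\rangle_{\gind_0}\,ds_{\gind_0}$ by excising small \emph{euclidean} discs, Green's theorem, and the length asymptotic of Lemma~\ref{ANG}; the exterior angles $\alpha_j$ that appear there are by construction the conformal-chart (euclidean) ones, so no angle conversion is needed. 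You instead apply the smooth Gauss--Bonnet theorem \emph{directly in the flat cone metric $\gind$} on the excised region $D(t)\setminus\bigcup_j B_\varepsilon(p_j)$, where the intrinsic cone angle $2\pi(1+\beta_j)$ is what naturally shows up, and you then translate cone-metric angles into conformal-chart angles through the dilation $t\mapsto(1+\beta_j)\phi$ coming from the isometry \eqref{isometry} between $(\C\setminus L,|z|^{2\beta_j}|dz|^2)$ and a flat sector. Both arguments correctly give $\int_{\partial D}k_{\gind}=2\pi-\sum_{j\leq m}\alpha_j+\sum_{j\leq m}(\pi-\alpha_j)\beta_j+2\pi\sum_{j>m}\beta_j$, but yours isolates more transparently where the extra term $-\sum_{j=1}^m(\pi-\alpha_j)\beta_j$ comes from: purely from the angular dilation at the boundary cone points. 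The trade-off is that your version exploits flatness of $\gind$ (so $\int K=0$ is immediate), whereas the paper's Theorem~\ref{SGB} is set up to handle arbitrary conformal conical metrics; since Theorem~\ref{THMC} is stated only for the flat case, this costs you nothing here. Two places you would want to flesh out for a complete write-up: the corner contributions where $\partial B_\varepsilon(p_j)$ meets $\partial D(t)$ tend to $\pi/2$ each as $\varepsilon\to 0$ (the cone-metric analogue of Lemma~\ref{ANG}), which accounts for the $\pi$'s in $\pi-\phi_j$; and the initial identity $\mathcal{A}_t=-\int k_{\gind}\,d\si$, which in the paper is part (d) of Lemma~\ref{lemaevol} and relies on $\varPhi\tau_{\gind}\to 0$ at the pinned endpoints so that the tangential reparametrization term does not contribute.
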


Additionally, based on a result of Grayson \cite{grayson3}, we prove the following result.

\begin{mythm}\label{THMD}
Let $\gamma:[0,1]\to(\mathbb{C},|z|^{2\beta}|dz|^2)$, $\beta\in(-1,0)$, be an embedded smooth closed
regular curve passing through the origin $O$ of $\mathbb{C}$. Assume that at least one of the following two
conditions is satisfied:
\vspace{-2pt}
\begin{enumerate}
\item[\rm (a)]
$\beta\le -1/2$ and the curve avoids a half-line starting from the origin.
\smallskip
\item [\rm (b)] $\beta>-1/2$ and the curve avoids a sector of opening angle $\pi(1+2\beta)$. 
\end{enumerate}
\vspace{-2pt}
Then, the {\rm DCSF} with $\gamma$ as initial data will contract in finite time at $0\in\C$.
\end{mythm}

Finally, we derive an analogue of Grayson's Theorem \cite{Gr1} in the case where the curve is not passing through any
singular point.

\begin{mythm}\label{THME}
Suppose that $(\Sigma,\gind)$ is a compact Riemann surface with conical singularities $\{p_1,\dots,p_n\}$, with each singularity being of order
less or equal than $-1$.
If $\gamma\colon[0,1]\to M=\Sigma\backslash\{p_1,\dots,p_n\}$ is an embedded smooth closed curve,
then the {\rm CSF} either contract to a non-singular point in finite time or else converge to a simple
closed geodesic of $M$ as time tends to infinity.
\end{mythm}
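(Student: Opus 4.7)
The plan is to reduce Theorem~\ref{THME} to Grayson's classical dichotomy on smooth surfaces \cite{Gr1,Ga1} by establishing an a priori confinement of the evolving curve $\gamma_t$ to a compact subset $K\Subset M:=\Sigma\setminus\{p_1,\dots,p_n\}$ for all times of existence. Since each order $\beta_j\leq -1$, a direct computation using the local representation $\gind=e^{2h}|z|^{2\beta_j}|dz|^2$ shows that every singular point lies at infinite $\gind$-distance; hence $(M,\gind)$ is a \emph{smooth} complete Riemannian surface with finitely many cuspidal ($\beta_j=-1$) or flaring ($\beta_j<-1$) ends, and standard short-time existence for classical CSF on a smooth surface applies to the smooth embedded initial curve $\gamma$.

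The main step is the construction of barriers near each puncture. In a local isothermal chart near $p_j$ where $\gind=e^{2h}|z|^{2\beta_j}|dz|^2$ with $h$ analytic, a polar-coordinate computation yields the geodesic curvature of the Euclidean circle $C_r=\{|z|=r\}$ with respect to $\gind$ and outward unit normal:
\[
k_{\gind}(C_r) = -e^{-h}r^{-\beta_j}\left(\frac{1+\beta_j}{r}+h_r\right).
\]
In the flaring case $\beta_j<-1$, the first term is strictly positive and dominates as $r\to 0$, so the curvature vector of $C_r$ points strictly outward and $C_r$ initially moves away from $p_j$ under CSF. In the cuspidal case $\beta_j=-1$, the change of variables $w=-\log z$ conformally identifies a neighborhood of $p_j$ with a half-infinite flat cylinder bearing a conformal factor asymptoting to a constant, and a standard variational (or perturbative ODE) argument produces genuine simple closed \emph{geodesic} loops in $M$ arbitrarily close to $p_j$; these are stationary under CSF.

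Fix such a barrier $B_j$ near each $p_j$, chosen small enough that $\gamma$ lies in the component of $M\setminus B_j$ not accumulating at $p_j$. By the avoidance principle for embedded CSF on smooth surfaces \cite{Ang3}, $\gamma_t$ remains disjoint from the CSF evolution $B_{j,t}$ for all $t$ in the interval of existence, and by continuity remains in the corresponding component of $M\setminus B_{j,t}$. In the cuspidal case $B_{j,t}\equiv B_j$ is stationary and $\gamma_t$ is immediately trapped; in the flaring case, applying the avoidance principle uniformly over the nested foliation $\{C_r\}_{r\downarrow 0}$ of initially outward-moving circles forces $\gamma_t$ to remain at positive distance from $p_j$ for all time. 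Thus $\gamma_t\subset K\Subset M$ uniformly in $t$, and Grayson's theorem \cite{Gr1} on smooth surfaces then yields the stated dichotomy: $\gamma_t$ either collapses to a non-singular point in finite time, or converges smoothly as $t\to\infty$ to a simple closed geodesic of $M$.

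The most delicate point is the flaring case $\beta_j<-1$: one must verify that the initial outward motion of each barrier circle persists uniformly in time, so that the sequence of trapping regions does not degenerate. This is addressed by propagating the avoidance principle across the entire foliation $\{C_r\}$ simultaneously — since every member of the foliation initially has outward-pointing curvature vector and the family is preserved as an ordered (non-crossing) collection of embedded curves under CSF, the outward order is monotonically propagated in $t$, ensuring the barriers remain effective for all time and yielding the required uniform confinement of $\gamma_t$.
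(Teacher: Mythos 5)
Your proposal follows the same strategy as the paper's own brief proof: confine $\gamma_t$ to a compact subset of $M$ via barriers near the punctures, preserve embeddedness, and invoke Grayson's dichotomy. Your observation that $\beta_j\le -1$ puts each $p_j$ at infinite $\gind$-distance, and your computation of the geodesic curvature of the circles $C_r$, are both correct and are precisely the geometric information encoded in Lemma~\ref{geod}, which the paper cites for this step.

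Two remarks on the barrier construction. For $\beta_j=-1$ the theorem's hypothesis is \emph{conical} (not conformal conical) singularities, so $h\equiv 0$ near each $p_j$; by Lemma~\ref{geod}(B) the concentric circles $\{|z|=r\}$ are themselves closed geodesics of $\gind$ and hence static barriers, so the variational/perturbative construction of nearby closed geodesics that you invoke in the cuspidal case is superfluous. For $\beta_j<-1$, your moving-circle foliation is the right geometric idea, but the justification should be tightened: the statement that ``the outward order is monotonically propagated in $t$'' does not by itself yield a time-uniform lower bound on the distance from $\gamma_t$ to $p_j$. Two clean ways to finish are: (i) since $h\equiv 0$, the metric is exactly rotationally symmetric on a punctured disc about $p_j$, so each $C_r$ evolves by uniqueness as a round circle of monotonically increasing radius as long as it remains in the chart, and the avoidance principle then traps $\gamma_t$ outside the \emph{fixed} circle $C_{r_0}$; or (ii) apply the parabolic maximum principle to the conical-end distance function, the analogue of the function $d$ in Lemma~\ref{t1} centered at the tip of the cone (which sits at $z=\infty$ in the chart about $p_j$). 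Finally, you should record explicitly, as the paper does, that preservation of embeddedness under CSF on a surface — needed both to run the avoidance principle and to apply Grayson's theorem — relies on the results of Gage and Angenent.
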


The paper is structured as follows: In Section \ref{CRSSM}, we set up the notation and investigate the local geometry of
singular surfaces and their geodesics. In Section \ref{sec3}, we present the Gauss-Bonnet formula for singular
surfaces with boundary. In Section \ref{sec4}, we discuss sectorial operators and basic facts about
maximal regularity for parabolic differential equations. Section \ref{sec5} is devoted to cone differential operators. Mellin-Sobolev
spaces are introduced and their properties are presented.
In Section \ref{sec6} of the paper, we derive the short time existence, uniqueness, and regularity of DCSF. In Sections \ref{sec7} \& \ref{sec8}, we derive
evolution equations for curves evolving by DCSF and obtain various geometric properties. In Section \ref{sec9}, we prove our main results.

\section{Curves on Riemann surfaces with singular metrics}\label{CRSSM}
In this section, we set up the notation and review some basic facts about singular metrics on
surfaces following the exposition in \cite{Hul,Tro1,Tro2}.
Throughout this paper $\Sigma$ will be a Riemann surface, i.e. an oriented two dimensional manifold.
It is well-known that $\Sigma$ becomes a complex manifold. We also assume that $\Sigma$ is equipped with a complete Riemannian metric
$\gind_0$.

\subsection{Geometry of singular surfaces}
Let
$\gind$ be a (weakly) conformal to $\gind_0$ metric, which is positive definite away from given points $\{p_1,\dots,p_n\}$ in $\Sigma$,
such that around each point $p_\ell$ there exists a non-singular complex chart $z_{\ell}\colon U_{\ell}\to\C$ defined in a neighborhood $U_{\ell}$ of $p_{\ell}$ with
$z_{\ell}(p_{\ell})=0$
and
$$
\gind=e^{2h_{\ell}}|z_{\ell}|^{2\beta_{\ell}}|dz_{\ell}|^2.
$$
Here $h_{\ell}\colon U_{\ell}\to\R$ is considered to be a (real) analytic function. The pair $(U_{\ell},z_{\ell})$ is called {\em conformal coordinate chart}. Often the euclidean metric will be denoted
by $\langle\cdot\,,\cdot\rangle$ and $\gind$ by $\langle\cdot\,,\cdot\rangle_{\gind}$.
The singular metric $\gind$ is called {\em conformal conical metric on} $\Sigma$ and the point $p_{\ell}$ is called {\it conformal conical singular point of order} $\beta_{\ell}$.
Notice that the definition
of singularity is independent of the choice of the conformal coordinate chart.
The Gaussian
curvature $K_{\gind}$ of the metric $\gind$ near a singular point $p_{\ell}$ of order $\beta_{\ell}$ is given by
$$
K_{\gind}=-\frac{\Delta\log \big(|z|^{2\beta_{\ell}}e^{2h_{\ell}}\big)}{2|z|^{2\beta_{\ell}}e^{2h_{\ell}}}
=-|z|^{-2\beta_{\ell}}e^{-2h_{\ell}}\Delta h_{\ell},
$$
where $\Delta$ stands for the Laplacian with respect to the euclidean metric. If $h_j\equiv 0$, the point $p_j$ will be simply called {\em conical singular point}.

\begin{example}
On every oriented surface $\Sigma$ there are plenty of such singular metrics. For instance, let $\gind_0$ be a Riemannian metric
on $\Sigma$ and $f:\Sigma\to\C$ a meromorphic function. Then,
$$\gind=|f|\gind_0$$
is a singular Riemannian metric with the singular points being the zeroes and the poles of $f$. 
\end{example}

\subsection{Curves on singular surfaces}\label{secsetup}
Let $\Sigma$ be a Riemann surface equipped with a conformal conical metric $\gind$
and singular points $\{p_1,\dots,p_n\}$.
The Levi-Civita connection associated with $\gind$ on the tangent bundle of $\Sigma\backslash\{p_1,\dots,p_n\}$
will be denoted by $\nabla$.

Let $I$ be an interval of $\R$ and let $\gamma\colon I\to(\Sigma,\gind)$ be a smooth immersed curve
passing through singular points
of the surface, that is the differential of $\gamma$ is nowhere zero. For such a given curve, away from the singular points, we write
$\tau_{\gind}$ and $\xi_{\gind}$ for its {\it unit tangent} and {\it unit normal vectors}, respectively. We shall always assume that
$\{\tau_{\gind}, \xi_{\gind}\}$ forms a positively oriented basis of $\Sigma$. The {\it geodesic curvature vector} ${\bf k}_{\gind}$
and the {\it scalar geodesic curvature} $k_{\gind}$ of $\gamma$
are given by the formulas
$$
{\bf k}_{\gind}=\nabla_{\tau_{\gind}}\tau_{\gind}\quad\text{and}\quad{\bf k}_{\gind}=k_{\gind}\,\xi_{\gind}.
$$
In the following lemma, we compute the geodesic curvature vector $\bf k_{\gind}$ of a regular curve $\gamma$
passing through a singular point.
\begin{lemma}\label{kg}
The geodesic curvature vector $\bf{k}_{\gind}$ of a smooth immersed curve
$\gamma\colon I\to U\subset\C$ passing through a singular point $p_\ell\in\Sigma$ of order $\beta_\ell$,
in a conformal coordinate chart around $p_{\ell}$, is given by the formula
$$
{\bf k}_{\gind}=|\gamma|^{-2\beta_\ell}e^{-2h_\ell}\big(k-\beta_\ell \langle\gamma,\xi\rangle|\gamma|^{-2}
-\langle Dh_\ell,\xi\rangle\big)\xi,
$$
where $\xi$ is the unit normal and $k$ is the curvature of $\gamma$ with respect to the euclidean metric of $\C$.
\end{lemma}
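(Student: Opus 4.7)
\textbf{Proof plan for Lemma \ref{kg}.}
The plan is to exploit the fact that the singular metric is conformally Euclidean away from $p_\ell$ and apply the standard formula for how the Levi-Civita connection, and hence the geodesic curvature vector, transforms under a conformal change of metric. Write
$$
\gind = e^{2\phi}\langle\cdot\,,\cdot\rangle,\qquad \phi=h_\ell+\beta_\ell\log|z|,
$$
so that the Euclidean metric is the reference and the singularity is encoded in the $\beta_\ell\log|z|$ piece of the conformal factor.

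First, I would recall that for a surface with metrics $\gind=e^{2\phi}g_0$, the two Levi-Civita connections satisfy
$$
\nabla^{\gind}_X Y=\nabla^{g_0}_X Y+X(\phi)Y+Y(\phi)X-g_0(X,Y)\,\mathrm{grad}_{g_0}\phi.
$$
Applying this with $g_0$ the Euclidean metric and $X=Y=\tau_{\gind}=e^{-\phi}\tau$, and using $\nabla^{g_0}_{\tau}\tau=k\xi$ together with the Euclidean decomposition $D\phi=\tau(\phi)\tau+\langle D\phi,\xi\rangle\,\xi$, the tangential contributions cancel and one obtains
$$
{\bf k}_{\gind}=\nabla^{\gind}_{\tau_{\gind}}\tau_{\gind}
=e^{-2\phi}\bigl(k-\langle D\phi,\xi\rangle\bigr)\xi.
$$
This is a purely pointwise identity, valid at every point where $\gamma$ avoids the singularity.

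Next, I would substitute $\phi=h_\ell+\beta_\ell\log|z|$. Differentiation gives $D\log|z|=z/|z|^2$, so along $\gamma$,
$$
D\phi=Dh_\ell+\beta_\ell\,\frac{\gamma}{|\gamma|^2},\qquad
\langle D\phi,\xi\rangle=\langle Dh_\ell,\xi\rangle+\beta_\ell\,\frac{\langle\gamma,\xi\rangle}{|\gamma|^2},
$$
and $e^{-2\phi}=e^{-2h_\ell}|\gamma|^{-2\beta_\ell}$. Plugging these into the intermediate formula immediately yields the claimed expression for ${\bf k}_{\gind}$.

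I do not expect any real obstacle: the whole computation is a routine application of the conformal-change formula, so the only care required is bookkeeping (signs, orientation of the frame $\{\tau,\xi\}$, and the identification of the position vector with $z$ in the chart). The only mildly delicate point is that the identity is understood pointwise on $I\setminus\gamma^{-1}(p_\ell)$, where $\tau,\xi$ are defined and the factor $|\gamma|^{-2\beta_\ell}$ makes sense; the behavior at the singular parameter itself is precisely the issue that the subsequent sections (and the remark after Theorem \ref{THMA}) address.
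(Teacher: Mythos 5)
Your proposal is correct and follows essentially the same route as the paper: invoke the conformal change formula for the Levi-Civita connection with the Euclidean metric as background, rescale $\tau,\xi$ to unit length with respect to $\gind$, and observe that only the normal component of $D\phi$ survives. The only cosmetic difference is that you package the conformal factor as $e^{2\phi}$ with $\phi=h_\ell+\beta_\ell\log|z|$ and substitute at the end, whereas the paper carries the factor $r^{2\beta_\ell}e^{2h_\ell}$ directly through the computation; the content is the same.
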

\begin{proof}
Denote by $D$ the usual connection of $\R^2$.
Then, from the formula relating the connections of conformally related metrics (see e.g. \cite[pp. 181-182]{carmo}), we have 
\begin{eqnarray*}
\lefteqn{\nabla_vw\,-\,D_vw}\\
&&\,\,\,\,=\frac{1}{2r^{2\beta_\ell}e^{2h_\ell}}\big(v(r^{2\beta_\ell}e^{2h_\ell})w+w(r^{2\beta_\ell}e^{2h_\ell})v-
\langle v,w\rangle D (r^{2\beta_\ell}e^{2h_\ell})\big)\\
&&\,\,\,\,=\beta_\ell r^{-1}\big(v(r)w+w(r)v-
\langle v,w\rangle Dr\big)\\
&&\quad\quad+v(h_\ell)w+w(h_\ell)v-
\langle v,w\rangle D h_\ell,
\end{eqnarray*}
for any vectors $v,w$, where $r$ is the euclidean distance from the origin.
The unit tangent $\tau_{\gind}$
and the unit normal $\xi_{\gind}$ with respect to $\gind$ are related with the corresponding
$\tau$ and $\xi$ with respect to the euclidean metric by the formulas
$$\tau_{\gind}=|\gamma|^{-\beta_\ell}e^{-h_j}\tau\quad\text{and}\quad\xi_{\gind}=|\gamma|^{-\beta_\ell}e^{-h_\ell}\,\xi.$$
Hence, the geodesic curvature $k_{\gind}$ of $\gamma$ with respect to $\gind$ is given by
\begin{eqnarray*}
{\bf k}_{\gind}&=&\langle\nabla_{\tau_{\gind}}\tau_{\gind},\xi_{\gind}\rangle_{\gind}\xi_{\gind}
=|\gamma|^{-2\beta_\ell}e^{-2h_\ell}\big\langle\nabla_{\tau}\tau,\xi\big\rangle\xi\\
&=&|\gamma|^{-2\beta_\ell}e^{-2h_\ell}\big(k-\beta_{\ell} \langle\gamma,\xi\rangle|\gamma|^{-2}
-\langle Dh_\ell,\xi\rangle\big)\xi
\end{eqnarray*}
and this completes the proof.
\end{proof}

Consider the function
$$\varrho=\frac{\langle \gamma,\xi\rangle}{|\gamma|^2}.$$
In \cite[Lemma 3.3]{gssz} is was shown that
$\varrho$ is well defined for any smooth curve passing through the origin.
\begin{lemma}\label{lemm ana p}
	Let $\gamma\in C^{2}([0,1);\mathbb{C})\cap C^{\infty}((0,1);\mathbb{C})$ be a regular curve such that $\gamma(0)=0$. Then, $\varrho$ can be
	extended everywhere. In particular,
	$$\lim_{s\to 0} \varrho(s)=-{k(0)}/{2},$$
	where $k$ is the curvature of $\gamma$ with respect to the euclidean metric. Hence, the geodesic curvature of a $C^2$-smooth
	curve passing through a singular point $p\in\Sigma$ of order $\beta<0$ is zero at $p$.
\end{lemma}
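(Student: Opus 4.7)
The plan is to reduce to arc-length parameterization and then Taylor expand. Since $\varrho(s)=\langle \gamma(s),\xi(s)\rangle/|\gamma(s)|^2$ depends only on the trace of $\gamma$ and its unit normal direction (fixed up to a global sign by the orientation of $\Sigma$), the function $\varrho$ is invariant under orientation-preserving reparameterization, so I may as well assume $\gamma$ is parameterized by arc length near $0$. In this case $\tau=\gamma'$, $|\gamma'|\equiv 1$, and, identifying $\mathbb{C}\cong\mathbb{R}^2$, we have $\xi=i\tau$ and $\gamma''(0)=k(0)\,i\gamma'(0)$.

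Setting $v:=\gamma'(0)$ with $|v|=1$, the $C^2$ hypothesis at $s=0$ yields the Peano expansions
$$
\gamma(s)=vs+\tfrac{k(0)}{2}(iv)s^2+o(s^2),\qquad \xi(s)=iv-k(0)\,v\,s+o(s).
$$
I will multiply out $\gamma(s)\,\overline{\xi(s)}$ in complex notation (so that $\langle z,w\rangle=\operatorname{Re}(z\bar w)$) using $v\bar v=1$; collecting terms up to order $s^2$ gives
$$
\gamma(s)\,\overline{\xi(s)}=-is-\tfrac{k(0)}{2}s^2+o(s^2),
$$
whose real part is $\langle\gamma(s),\xi(s)\rangle=-\tfrac{k(0)}{2}s^2+o(s^2)$. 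An analogous expansion yields $|\gamma(s)|^2=s^2+o(s^2)$, the potential cross term $2\operatorname{Re}\bigl(vs\cdot\overline{(k(0)/2)iv\,s^2}\bigr)$ vanishing by the orthogonality $\langle v,iv\rangle=0$. Taking the ratio gives $\varrho(s)\to -k(0)/2$ as claimed.

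For the second assertion I plug the first part into Lemma \ref{kg} in a conformal chart around $p_\ell$. Because $\gind=e^{2h_\ell}|z|^{2\beta_\ell}|dz|^2$ implies $|\xi|_\gind=e^{h_\ell}|\gamma|^{\beta_\ell}$, the formula in Lemma \ref{kg} rewrites as
$$
|\mathbf{k}_{\gind}|_{\gind}=e^{-h_\ell}|\gamma|^{-\beta_\ell}\bigl|k-\beta_\ell\varrho-\langle Dh_\ell,\xi\rangle\bigr|.
$$
By the first part the factor in absolute values converges as $s\to 0$ to the finite number $k(0)(1+\beta_\ell/2)-\langle Dh_\ell(0),\xi(0)\rangle$, while $-\beta_\ell\in(0,1)$ together with $|\gamma(s)|\to 0$ forces $|\gamma(s)|^{-\beta_\ell}\to 0$; hence $|\mathbf{k}_{\gind}|_{\gind}\to 0$ at $p_\ell$. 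The only delicate point in the whole argument is to be careful that mere $C^2$ regularity at $s=0$ (no $C^3$) does suffice to produce the two-term expansion of $\gamma$ with $o(s^2)$ remainder and the one-term expansion of $\xi$ with $o(s)$ remainder; this is standard from the Peano form of Taylor's theorem, and the essential mechanism driving the limit is the tangent-normal orthogonality, not any smoothness beyond $C^2$.
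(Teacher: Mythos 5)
Your proof is correct and rests on the same underlying mechanism as the paper's: a second-order Taylor--Peano expansion at $s=0$, using $\gamma(0)=0$ to produce $|\gamma|^2\sim s^2$ while tangent--normal orthogonality kills the first-order term of $\langle\gamma,\xi\rangle$, leaving $-\tfrac{k(0)}{2}s^2+o(s^2)$. The packaging differs in a way worth noting. The paper parameterizes $\gamma$ as a graph $(s,y(s))$ over its tangent line, factors $y=s^2u$, and must separately establish the auxiliary fact $su_s(s)\to 0$ before passing to the limit in its explicit formula $\varrho=-(u+su_s)\big/\big((1+s^2u^2)\sqrt{1+y_s^2}\big)$, because the numerator $-sy_s+y$ involves a derivative of $y$. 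You instead normalize to arc length (justified by the reparameterization-invariance of $\varrho$, a clean observation not spelled out in the paper) and expand $\gamma$ to order $o(s^2)$ and $\xi$ to order $o(s)$, so that $\langle\gamma,\xi\rangle$ falls out of the complex product directly, without ever differentiating a remainder; this streamlines the argument under mere $C^2$ regularity. One minor inaccuracy in your last paragraph: you write ``$-\beta_\ell\in(0,1)$'', which holds only under the paper's standing assumption $\beta_\ell\in(-1,0)$; the lemma itself asserts the conclusion for all $\beta<0$, and your argument needs only $-\beta_\ell>0$ to force $|\gamma|^{-\beta_\ell}\to 0$.
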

\begin{proof}
	Consider cartesian coordinates for $\C$ such that at the origin the $s$-axis is tangent to $\gamma$. Then, locally, we can express $\gamma$
	as a graph over the $s$-axis. Without loss of generality we may assume that after a reparameterization the curve is locally given as
the graph of a smooth function $y\colon [0,1) \to \R$ with
$y(0)=y_{s}(0)=0$.
If $y$ is identically zero, then $\varrho$ vanishes. So let us suppose that $y$ is not identically zero.
Since $y\in C^{2}([0,1);\mathbb{R})\cap C^{\infty}((0,1);\mathbb{R})$, from Taylor's expansion
we have that close to zero, $y$ has the form
$$
y(s)=s^2\big(y_{ss}(0)/2+h(s)\big)=s^2u(s),
$$
where $h$ tends to zero as $s$ approaches zero. Observe that $2u(s)\to y_{ss}(0)$ as $s$ tends to zero.
Because,
$$
y_{ss}(0)=\lim_{s\to 0}s^{-1}y_{s}(s)=\lim_{s\to 0}\big(2u(s)+su_{s}(s)\big)
$$
we have that $su_{s}(s)\to 0$ as $s$ tends to zero.
For $s>0$, we have
	$$\varrho(s)=\frac{\langle \gamma(s),\xi(s)\rangle}{|\gamma(s)|^2}=\frac{-sy_{s}(s)+y(s)}{(s^2+y^2(s))\sqrt{1+(y_{s}(s))^2}}.$$
	Consequently,
	$$\varrho(s)=\frac{-u(s)-su_{s}(s)}{(1+s^{2}u^2(s))\sqrt{1+(y_{s}(s))^2}}.$$
	Clearly, the function $\varrho$ is well defined for all values of $s\in(0,1)$. Moreover, 
	$$\lim_{s\to 0}\varrho(s)=-u(0)=-{y_{ss}(0)}/{2}=-{k(0)}/{2}.$$
	Combining this fact with Lemma \ref{kg} we complete the proof.
\end{proof}

Since we are interested in evolution of curves in the direction of their geodesic curvature vector, we would like to regard
the curvature as a differential operator and investigate its type. For this reason, let us consider a $C^2$-smooth regular curve with parametrization $\gamma\colon\mathbb{S}^1\to\Sigma$.
This parametrization can be extended to an immersion $F\colon\mathbb{S}^1\times (-\varepsilon,\varepsilon)\to\Sigma$,
for some constant $\varepsilon>0$, where
$$F(\cdot,0)=\gamma(\cdot).$$
Indeed, because of compactness of the set $\gamma(\mathbb{S}^1)$,
there exists a finite covering $\{(U_1,z_1),\dots,(U_m,z_m)\}$ of $\gamma(\mathbb{S}^1)$ by conformal coordinate charts of $(\Sigma,\gind_0)$.
For any $\ell\in\{1,\dots,m\}$ there exists $\varepsilon_{\ell}>0$ such that the graph
$$
F_{\ell}(s,t)=\gamma_{\ell}(s)+t\xi_{\ell}(s),\quad (s,t)\in\mathbb{S}^1\times (-\varepsilon_{\ell},\varepsilon_{\ell}),
$$
is an immersion, where $\gamma_{\ell}$ is the representation of $\gamma$ in the chart $U_{\ell}$
and $\xi_{\ell}$ is the unit normal of $\gamma_{\ell}$ with respect to the euclidean metric of the chart. The map $F$ is produced
by taking
$$\varepsilon=\min\{\varepsilon_1,\dots,\varepsilon_m\}$$
and by gluing together all the graphs $F_1,\dots,F_m$.
It is clear now that any immersed curve $\sigma$, which is sufficiently close to $\gamma$ with respect to the
distance arising from $\gind_0$, can be parametrized in the form
$$
\sigma(s)=F(s,w(s)),
$$
where $w\in C^{1}(\mathbb{S}^1)$. According the above mentioned considerations, we may represent any curve $\sigma$, which is sufficiently close to $\gamma$,
as the global graph of a function $w$ over $\gamma$.
\begin{remark}\label{degenerate k}
Let $\gamma\in C^{3}([0,1);\Sigma)$ be a regular curve such that $\gamma(0)$ is a singular point of order $\beta$.
For simplicity let us assume that the singular metric of $\Sigma$ is represented locally around the singular point in the form
$$
\gind=e^{2h}|z|^{2\beta}|dz|^2.
$$
and that $\gamma(0)=0$.
Consider now a variation $\sigma$ of the curve $\gamma$ given by
\begin{equation}\label{hanoofa}
\sigma=\gamma+w\xi,
\end{equation}
where $w\in C^{2}([0,1);\mathbb{R})$ and $\xi$ is the unit normal of $\gamma$ with respect to the
euclidean metric of the chart. We may assume that $\gamma$ is
parametrized by arc-length and that
$$|kw|<1/2.$$
Using the Serret-Frenet formulas we deduce
\begin{equation}\label{s1}
\sigma_s=(1-kw)\gamma_s+w_s\,\xi.
\end{equation}
Hence, the unit tangent $\tau_\sigma$ and the unit normal vector $\xi_\sigma$ along $\sigma$, with respect to the euclidean
metric, are given by the formulas
\begin{eqnarray}\label{s2}
\tau_\sigma&=&\frac{1-kw}{\sqrt{(1-kw)^2+w^2_s}}\gamma_s+\frac{w_s}{\sqrt{(1-kw)^2+w^2_s}}\,\xi,\\\label{s3}
\xi_\sigma&=&\frac{-w_s}{\sqrt{(1-kw)^2+w^2_s}}\gamma_s+\frac{1-kw}{\sqrt{(1-kw)^2+w^2_s}}\,\xi,
\end{eqnarray}
respectively. By a straightforward computation we obtain that the curvature $k_\sigma$ of $\sigma$ is given by
\begin{equation}\label{gget}
k_\sigma=\frac{1-kw}{\big((1-kw)^2+w_s^2\big)^{3/2}}\,w_{ss}+\frac{2kw^2_s+k_sw_sw+k(1-kw)^2}{\big((1-kw)^2+w_s^2\big)^{3/2}},
\end{equation}
where $k$ is the curvature of $\gamma$. On the other hand
\begin{equation}\label{s4}
\xi=\frac{w_s}{\sqrt{(1-kw)^2+w^2_s}}\,\tau_\sigma+\frac{1-kw}{\sqrt{(1-kw)^2+w^2_s}}\,\xi_\sigma.
\end{equation}
Moreover,
\begin{eqnarray*}
\varrho_\sigma&=&\frac{\langle\sigma,\xi_\sigma\rangle}{|\sigma|^2}\\
&=&\frac{\langle \gamma+w\xi,-w_s\gamma_s+(1-kw)\xi\rangle}{|\gamma+w\xi|^2\sqrt{(1-kw)^2+w^2_s}}\\
&=&\frac{-w_s\langle\gamma,\gamma_s\rangle+(1-kw)\langle\gamma,\xi\rangle+w(1-kw)}{|\gamma+w\xi|^2\sqrt{(1-kw)^2+w^2_s}}.
\end{eqnarray*}
From Lemma \ref{kg}, the geodesic curvature of $\sigma$ is given by the formula
\begin{eqnarray}\label{s5}
k_{\gind}(w)&=&|\gamma+w\xi|^{-\beta}e^{-h(\gamma+w\xi)}\frac{1-kw}{\big((1-kw)^2+w_s^2\big)^{3/2}}\,w_{ss}\\
&+&|\gamma+w\xi|^{-\beta}e^{-h(\gamma+w\xi)}\,\frac{2kw^2_s+k_sw_sw+k(1-kw)^2}{\big((1-kw)^2+w_s^2\big)^{3/2}}\nonumber\\
&-&|\gamma+w\xi|^{-\beta}e^{-h(\gamma+w\xi)}\beta\,
\frac{-w_s\langle\gamma,\gamma_s\rangle+(1-kw)\langle\gamma,\xi\rangle+w(1-kw)}{|\gamma+w\xi|^2\big((1-kw)^2+w^2_s\big)^{1/2}}
\nonumber\\
&+&|\gamma+w\xi|^{-\beta}e^{-h(\gamma+w\xi)}\frac{w_s\langle D h|_{\gamma+w\xi},\gamma_s\rangle-(1-kw)\langle Dh|_{\gamma+w\xi},\xi\rangle}{\big((1-kw)^2+w^2_s\big)^{1/2}}\nonumber.
\end{eqnarray}
Observe that if $w$ is of the form
$$w=su,$$
then the coefficient of the leading term of $k_{\gind}(w)$ vanishes if $\beta<0$ and explodes if $\beta>0$.
\end{remark}

\subsection{Conical singularities}\label{m1}
Let us turn our attention now in the case of surfaces with conical singularities.

We start by giving a geometric interpretation of the
order of a singular point. Consider polar coordinates $(r,t)$ in $\C$ and let us fix a real nonzero number $\theta$. Denote by $\sim$ the equivalence relation
given by
$(r_1,t_1)\sim(r_2,t_2)$ if and only if $r_1=r_2$ and $t_1-t_2\in\theta\mathbb{Z}.$
The quotient space
$$
V_\theta=\C/\sim
$$
is called the {\em cone of total opening angle $\theta$}. The justification for the name {\em total opening angle} comes from the observation that
in the case where $0<\theta<2\pi$, the cone $V_\theta$ is exactly the standard cone in $\R^3$ which is made by gluing together the edges of a sector of angle $\theta$ in $\R^2$; see Fig. \ref{cone1}.

\begin{figure}[ht]
	\includegraphics[scale=.7]{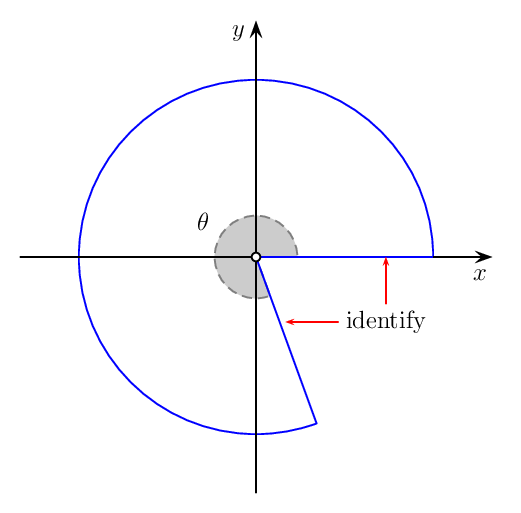}\caption{The cone $V_{\theta}$.}\label{cone1}
	\end{figure}

Denote by the closed half-line of non-negative real numbers by $L$.
It turns out that if $\beta\in(-1,0)$, the Riemann surface
$(\mathbb{C}\backslash L,|z|^{2\beta}|dz|^2)$ is isometric with the sector
$$\varLambda_{\theta}=\{(r,t)\in\R^2:r>0\quad\text{and}\quad 0<t<2\pi(1+\beta)\},$$
of angle $\theta=2\pi(1+\beta)$, equipped with the warped metric
$\gind_\theta=dr^2+r^2dt^2.$
The desired isometry is prescribed by the map $F:\C\backslash{L}\to \varLambda_{\theta}$ given by
\begin{equation}\label{isometry}
\left\{
\begin{array}{lll}
x&\rightarrow&(1+\beta)^{\frac{1}{1+\beta}}r^{\frac{1}{1+\beta}}\cos\big(\frac{t}{1+\beta}\big),\\
y&\rightarrow&(1+\beta)^{\frac{1}{1+\beta}}r^{\frac{1}{1+\beta}}\sin\big(\frac{t}{1+\beta}\big). 
\end{array}
\right.
\end{equation}
Replacing the coordinate $z$ of $\C$ by $\zeta=1/z$, one also sees that if $\beta<-1$, then a punctured
neighborhood of $0$ is a conical end with total opening angle
$$\theta=-2\pi(1+\beta).$$
Finally, let us mention that the case $\beta=-1$ is more complicated and striking geometric phenomena appears. For more details see \cite{Hul} and \cite{Tro2}.

In the following lemma, let us explain the structure of geodesic curves in neighborhoods around conical singular points.

\begin{lemma}\label{geod}
Consider the complex plane $\C$ endowed with the singular metric
$\gind_{\beta}=|z|^{2\beta}|dz|^2$, $\beta\in\mathbb{R}$. In polar coordinates $(r,\varphi)$ the metric takes the form
$
\gind_\beta=r^{2\beta}dr^2+r^{2\beta+2}d\varphi^2.
$
Moreover, straight lines emanating from the origin of $\C$ are geodesics. In addition:
\begin{enumerate}[\rm(A)]
\item
If $\beta\neq-1$, then the geodesic curves are given in polar coordinates by the formula
$$
r^{\beta+1}\cos\big((\beta+1)\varphi-m_2\big)=m^{\beta+1}_1,
$$
where $m_1$ and $m_2$ are real numbers, with $m_1$ being positive.
\medskip
\item
If $\beta=-1$, then the geodesics are either circles centered at the origin
or spirals. In polar coordinates, they are represented by the formula
$
r=m_1e^{m_2\varphi}
$
where $m_1$ and $m_2$ are real numbers, with $m_1$ being positive.
\end{enumerate}
\end{lemma}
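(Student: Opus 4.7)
The plan is to obtain the polar form of the metric by direct substitution, then exploit a uniformizing conformal change of variable that turns $g_\beta$ into the flat metric on an open subset of $\mathbb{C}$, so that geodesics can be read off from Euclidean straight lines. Writing $z=re^{i\varphi}$ gives $|z|^{2\beta}=r^{2\beta}$ and $|dz|^2=dr^2+r^2d\varphi^2$, yielding $g_\beta=r^{2\beta}dr^2+r^{2\beta+2}d\varphi^2$ immediately. The key observation is that for $\beta\neq -1$ the holomorphic map $\zeta=z^{\beta+1}/(\beta+1)$ satisfies $d\zeta=z^{\beta}dz$ and hence $|d\zeta|^2=|z|^{2\beta}|dz|^2=g_\beta$; similarly, for $\beta=-1$ the map $\zeta=\log z$ satisfies $|d\zeta|^2=|dz|^2/|z|^2=g_{-1}$. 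Each of these is a local isometry onto an open subset of the Euclidean plane.

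Before handling the general case, I would verify that radial curves $\{\varphi=\textrm{const}\}$ are geodesics. This is immediate from the rotational symmetry of the metric: $\partial_\varphi$ is a Killing field, and a radial line is pointwise fixed by the isometric reflection $\varphi\mapsto 2\varphi_0-\varphi$, so its acceleration must be tangent to itself and, after suitable reparametrization, the curve is a geodesic.

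To identify the remaining geodesics in Case (A), I would take a Euclidean straight line in the $\zeta$-plane that does not pass through the origin. Such a line is cut out by $|\zeta|\cos(\arg\zeta-m_2)=R_0$ with $R_0>0$ and $m_2\in\mathbb{R}$. Substituting $|\zeta|=r^{\beta+1}/|\beta+1|$ and $\arg\zeta=(\beta+1)\varphi$ (modulo an additive constant that can be absorbed into $m_2$) yields
$$r^{\beta+1}\cos\bigl((\beta+1)\varphi-m_2\bigr)=m_1^{\beta+1},$$
where $m_1>0$ is defined by $m_1^{\beta+1}=|\beta+1|R_0$; note that $m_1^{\beta+1}>0$ regardless of the sign of $\beta+1$, so the statement remains consistent. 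Euclidean lines through the origin in the $\zeta$-plane correspond back to radial lines in the $z$-plane, already accounted for. In Case (B), writing $\zeta=\log r+i\varphi$, Euclidean straight lines are either vertical $\{\mathrm{Im}\,\zeta=\textrm{const}\}$ (giving radial curves) or affine $\mathrm{Re}\,\zeta=m_2\,\mathrm{Im}\,\zeta+b$, which translates to $\log r=m_2\varphi+b$, i.e.\ $r=m_1e^{m_2\varphi}$ with $m_1=e^b>0$. Horizontal lines $\{\mathrm{Re}\,\zeta=\textrm{const}\}$ correspond to circles $r=\textrm{const}$ and form the special case $m_2=0$.

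The only nontrivial point is bookkeeping: one must check that $\zeta=z^{\beta+1}/(\beta+1)$, although multi-valued on $\mathbb{C}\setminus\{0\}$, still gives a well-defined local isometry on simply connected subdomains, and one must track carefully how branches of $\arg\zeta$ and the sign of $\beta+1$ get absorbed into the free parameters $m_1>0$ and $m_2\in\mathbb{R}$. Once this is cleared up, the correspondence between Euclidean lines and geodesics is automatic and the explicit formulas follow at once.
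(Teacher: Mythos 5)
Your argument is correct, and it takes a genuinely different route from the paper's. The paper parametrizes a putative geodesic as $r=r(\varphi)$, uses Lemma~\ref{kg} (with $h\equiv 0$) to reduce $k_{\gind}=0$ to the second-order ODE $rr_{\varphi\varphi}-(\beta+2)r_\varphi^2-(\beta+1)r^2=0$, lowers the order by the substitution $r_\varphi=r\omega(r)$, and integrates twice. You instead exploit conformal flatness: the holomorphic maps $\zeta=z^{\beta+1}/(\beta+1)$ (for $\beta\neq -1$) and $\zeta=\log z$ (for $\beta=-1$) are local isometries from $(\C\setminus\{0\},\gind_\beta)$ to open pieces of the Euclidean plane, so geodesics are pullbacks of Euclidean lines, and the stated formulas drop out by substituting $|\zeta|=r^{\beta+1}/|\beta+1|$, $\arg\zeta=(\beta+1)\varphi+\mathrm{const}$ (resp. $\zeta=\log r+i\varphi$) into the standard line equation. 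Your approach is more conceptual and shorter, at the cost of having to track branches of the multivalued map; the paper's ODE route is entirely elementary and works directly in the original coordinates, which is convenient since the subsequent discussion of convexity and monotonicity of $r(\varphi)$ (cases (a), (b) in the paper's proof) is also phrased in those coordinates. One small slip in your write-up: in the $\beta=-1$ case you interchange the labels ``vertical'' and ``horizontal'' for the lines $\{\mathrm{Im}\,\zeta=\text{const}\}$ and $\{\mathrm{Re}\,\zeta=\text{const}\}$, but the geometric content (radial lines from $\varphi=\text{const}$, circles from $\log r=\text{const}$, spirals from the affine lines) is correct.
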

\begin{proof}
We can represent, at least locally and away from the origin, our curve $\gamma$ in polar coordinates, i.e. we may write 
$$\gamma(\varphi)=(r(\varphi)\cos\varphi,r(\varphi)\sin\varphi).$$
By a straightforward computation, we see that the curvature $k$ of $\gamma$ with respect to the euclidean metric
and the function $\varrho$ are given by the expressions 
$$
k=\frac{-rr_{\varphi\varphi}+2r^2_{\varphi}+r^2}{\big(r^2+r^2_{\varphi})^{3/2}}\quad\text{and}\quad
\varrho=-\frac{1}{(r^2+r^2_{\varphi})^{1/2}}.
$$
Consequently, $\gamma$ is a geodesic of $\gind_\beta$ if and only if
the distance function $r$ satisfies the ODE
\begin{equation}\label{geod1}
rr_{\varphi\varphi}-(\beta+2)r^2_{\varphi}-(\beta+1)r^2=0.
\end{equation}
Note that \eqref{geod1} is invariant under dilations.
Setting $r_{\varphi}=r\omega(r)$,
then \eqref{geod1} reduces to
\begin{equation}\label{geod2}
r\omega\omega_r=(\beta+1)(\omega^2+1).
\end{equation}
{\em Case A.} Let us suppose that $\beta+1$ is non-zero. By integration we obtain that
$$
\omega^2=(r/m_1)^{2\beta+2}-1,
$$
where $m_1$ is a positive constant. Consequently,
$$
r^2_{\varphi}=r^2\big((r/m_1)^{2\beta+2}-1\big).
$$
By another integration we see that the solution is of the form
$$
r^{\beta+1}\cos\big((\beta+1)\varphi-m_2\big)=m^{\beta+1}_1,
$$
where $m_2$ is another real constant. It is clear that
$$
|(\beta+1)\varphi-m_2|<\frac{\pi}{2}.
$$
It is not difficult to see that any such geodesic is convex and is produced by rotating and dilating around the origin
the curve
$$
r^{\beta+1}\cos\big((\beta+1)\varphi\big)=1.
$$
\begin{enumerate}[\rm (a)]
\item
If $|\beta+1|\ge 1/2$, i.e. if $\beta\in(-\infty,-3/2]\cup[-1/2,+\infty)$, then the function $r$ changes monotonicity only once.
In particular, such a curve is contained in the sector prescribed by the lines
$$
\vartheta_{\pm}=\pm\frac{\pi}{2(\beta+1)},
$$
i.e. in a sector of total opening angle
$$\vartheta=\frac{\pi}{|\beta+1|}\le 2\pi.$$
Observe that if $\beta>0$, then the corresponding geodesics are convex (with respect to the euclidean metric), they do
not contain the origin within its convex hull and satisfy $\min r=m_1.$
For values of $\beta$ in the interval $[-1/2,0)$, the corresponding geodesics are again convex, they contain the origin
within its convex hull and satisfy $\min r=m_1$.
Finally, in the case where $\beta\in(-2,-3/2]$, the curves are not convex and for $\beta\le-2$ they are.
In both latter cases, $\max r=m_1$; see Fig. \ref{Cat2} and \ref{Cat1}.

\begin{figure}[ht]
	\includegraphics[scale=.7]{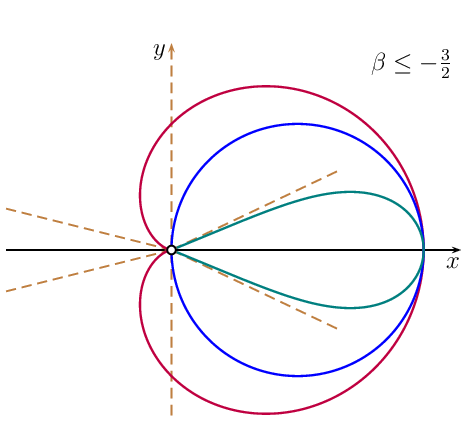}\caption{Geodesic curves for $\beta\in(-\infty,-3/2].$}\label{Cat2}
	\end{figure}
	
\begin{figure}[ht]
	\includegraphics[scale=.6]{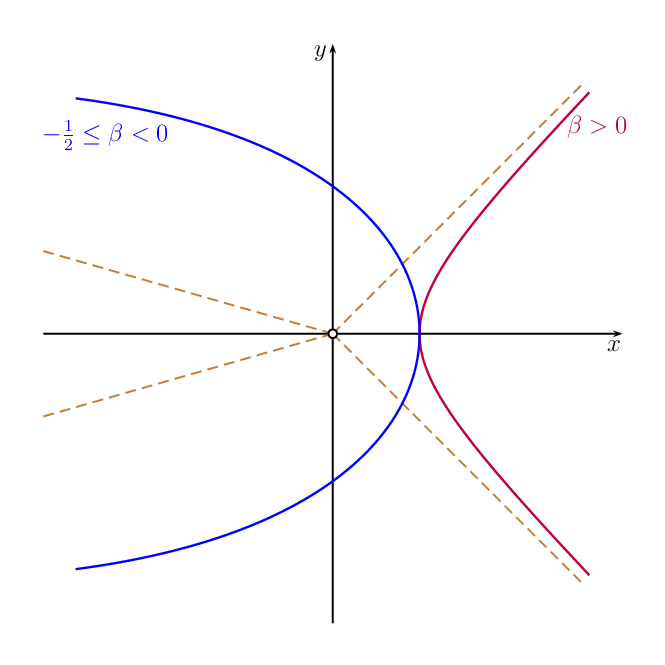}\caption{Geodesic curves for $\beta\in[-1/2,+\infty)$.}\label{Cat1}
	\end{figure}
	
\begin{figure}[ht]
	\includegraphics[scale=.6]{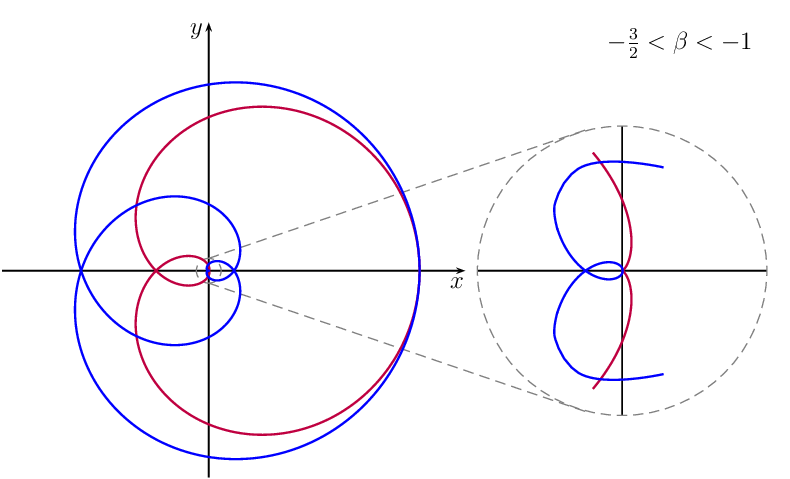}\caption{Geodesic curves for $\beta\in(-3/2,-1).$}\label{Cat3}
	\end{figure}

\item
Suppose now that $\beta\in(-3/2,-1)\cup(-1,-1/2)$. Then the geodesics
have self-intersections. If $\beta\in(-3/2,-1)$, then the curves are closing while if
$\beta\in(-1,-1/2)$ the curves are open; see Fig. \ref{Cat4}.
\begin{figure}[ht]
	\includegraphics[scale=.5]{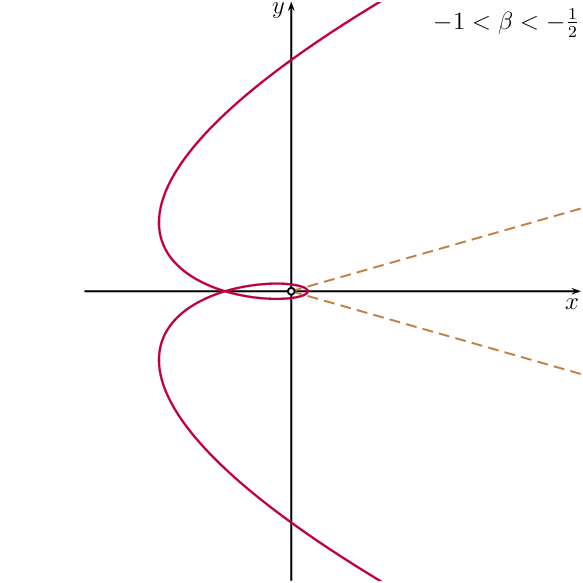}\caption{Geodesic curves for $\beta\in(-1,-1/2).$}\label{Cat4}
	\end{figure}
\end{enumerate}
{\em Case B:} If $\beta=-1$, then \eqref{geod2} becomes
$r\omega\omega_r=0$
and so $r=m_1e^{m_2\varphi}$, where $m_1$ and $m_2$ are constants, with $m_1$ positive. Hence,
the geodesics are either circles with center at the origin or spirals; see Fig. \ref{Spiral}.

\begin{figure}[ht]
	\includegraphics[scale=.5]{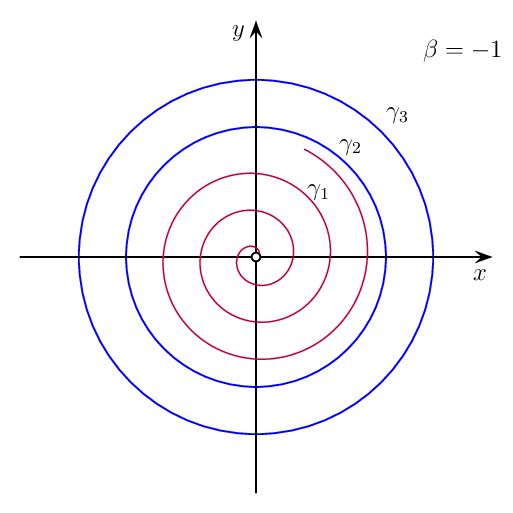}\caption{Geodesic curves for $\beta=-1$.}\label{Spiral}
	\end{figure}

This completes the proof.
\end{proof}

\begin{remark}
According to Lemma \ref{kg}, the geodesic curvature of a regular curve $\gamma$ passing through the origin of $(\C,\gind_\beta)$ is given by the formula
$$
k_{\gind}=|\gamma|^{-\beta}(k-\beta\varrho).
$$
There are plenty of embedded loops through the origin that are convex. For example, consider the {\em tear drop
curve} (see Fig. \ref{TDC}) $\gamma:[0,2\pi]\to\mathbb{C}$ given by
$$
\gamma(s)=\frac{1}{1+\cos^2(s)}\big(-\sin(s),\sin(s)\cos(s)\big).
$$
As can be easily computed,
$k=3|\gamma|=-3\varrho.$
Thus,
$$
k_{\gind}=|\gamma|^{1-\beta}(\beta+3).
$$
Hence, for any $\beta\in(-3,0)$, the above curve is convex with the euclidean as well as with the metric $\gind_{\beta}$.
\begin{figure}[ht]
	\includegraphics[scale=0.7]{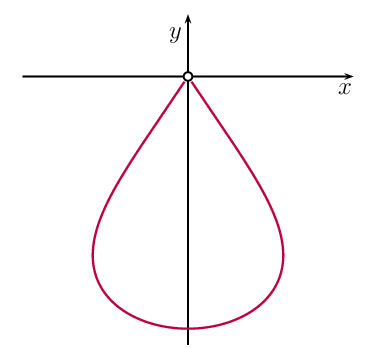}\caption{Tear drop curve.}\label{TDC}
	\end{figure}
\end{remark}

\begin{remark}
If $\gamma=u+iv:\mathbb{S}^1\to\mathbb{C}$ is a curve then $f:\mathbb{S}^1\times\mathbb{S}^{m-1}\to\mathbb{C}^m$, $m>1$, defined by $f(s,x)=\gamma(s)\cdot x$,
produces a Lagrangian submanifold which is invariant under the group of isometries in $\mathbb{O}(m)$,
where the action of $A\in\mathbb{O}(m)$ on $\mathbb{C}^m$ is given by $A(x,y):=(Ax,Ay)$, for all $x,y\in\R^m$. Such a submanifold is called {\em equivariant Lagrangian}. The evolution
under the mean curvature flow
of an equivariant Lagrangian is fully determined in terms of the evolution of the profile curve $\gamma$ under the
equation
\begin{equation}\label{equimeancurvshorty}
\gamma'=\big(k-(m-1)|\gamma|^{-2}\langle \gamma,\xi\rangle\big)\xi,
\end{equation}
where $k$ is the curvature and $\xi$ the unit vector of the curve; see \cite[Section 3]{savas}. The
right hand side in \eqref{equimeancurvshorty} is quite similar with the expression of ${\bf k}_{\gind}$ in Lemma \ref{kg}.
Moreover, the geodesics of $\gind_{m+1}=|z|^{2(m+1)}|dz|^2$ give us the stationary points of the equivariant
Lagrangian mean curvature flow or, equivalently, the minimal equivariant Lagangian submanifolds. Minimal 
submanifolds of this type are called {\em Lawson's minimal Lagragian catenoids.} For further details about 
equivariant Lagrangians and equivariant Lagrangian mean curvature flow see \cite{evans,gssz,harvey,savas,wood}.
\end{remark}

\section{A Gauss-Bonnet formula}\label{sec3}
The next theorem is a variant of the Gauss-Bonnet formula in the case of Riemann surfaces with
conformal conical singularities. In the proof we will need the following elementary lemma. For reader's
convenience we include a proof.

\begin{lemma}\label{ANG}
Let $R$ be a curvilinear triangular region in the plane, whose edges are $C^{1}$-smooth up to the vertices regular curves. At the vertex $O$ the following formula holds true
$$
\lim_{\varepsilon\to 0}\frac{\operatorname{length}(R\cap\partial B_O(\varepsilon))}{\varepsilon}=\alpha,
$$
where $B_O(\varepsilon)$ is the ball of radius $\varepsilon$ centered at the point $O$ and $\alpha$
the angle of the triangle.
\end{lemma}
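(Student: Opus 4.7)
The plan is to work in local coordinates centered at the vertex $O$ and exploit the $C^{1}$ regularity of the edges at $O$ to identify $R\cap \partial B_{O}(\varepsilon)$, up to $o(\varepsilon)$ error, as a circular arc whose angular measure converges to $\alpha$.

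First, I would denote by $\gamma_{1},\gamma_{2}\colon [0,\delta)\to \R^2$ the two regular edges meeting at $O$, parametrized by arc-length so that $\gamma_{i}(0)=O$ with unit tangent vectors $T_{i}=\gamma_{i}'(0)$, $i=1,2$. By definition of the interior angle of the curvilinear triangle, $\alpha$ is the angle between $T_{1}$ and $T_{2}$ measured from the side of $R$. The $C^{1}$ hypothesis at the endpoint gives the expansions
$$
\gamma_{i}(s)=sT_{i}+r_{i}(s),\qquad |r_{i}(s)|/s\to 0\quad\text{as }s\to 0^{+}.
$$

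Next, I would show that each edge meets $\partial B_{O}(\varepsilon)$ in a unique point $P_{i}(\varepsilon)$ near $O$, and locate it asymptotically. Since $|\gamma_{i}(s)|=s(1+o(1))$ is continuous and strictly monotone for small $s$, the intermediate value theorem yields a unique $s_{i}(\varepsilon)$ with $|\gamma_{i}(s_{i}(\varepsilon))|=\varepsilon$, and $s_{i}(\varepsilon)/\varepsilon\to 1$. Setting $P_{i}(\varepsilon)=\gamma_{i}(s_{i}(\varepsilon))$ we then get
$$
\frac{P_{i}(\varepsilon)}{\varepsilon}=\frac{s_{i}(\varepsilon)}{\varepsilon}T_{i}+\frac{r_{i}(s_{i}(\varepsilon))}{\varepsilon}\longrightarrow T_{i},
$$
so the unit directions from $O$ to $P_{i}(\varepsilon)$ converge to $T_{i}$ in $\mathbb{S}^{1}$.

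Then I would use the local triangular structure of $R$ near $O$ to identify the arc. For $\varepsilon$ small enough, the two edges are transverse to $\partial B_{O}(\varepsilon)$ (their tangents at $O$ are, up to $o(1)$, the unit radial vectors $T_{i}$), so $\partial R\cap \partial B_{O}(\varepsilon)=\{P_{1}(\varepsilon),P_{2}(\varepsilon)\}$ and $R\cap\partial B_{O}(\varepsilon)$ is the single circular arc joining $P_{1}(\varepsilon)$ and $P_{2}(\varepsilon)$ on the side of $R$. Its length equals $\varepsilon\,\theta(\varepsilon)$, where $\theta(\varepsilon)$ is the angle subtended at $O$ between $P_{1}(\varepsilon)$ and $P_{2}(\varepsilon)$ measured on the $R$-side. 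By the previous step $\theta(\varepsilon)\to \alpha$, and dividing by $\varepsilon$ and letting $\varepsilon\to 0$ yields the claim.

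The main technical point, and the one I would spell out carefully, is the justification that $R\cap\partial B_{O}(\varepsilon)$ is a single arc with exactly the endpoints $P_{1}(\varepsilon),P_{2}(\varepsilon)$; this requires that the two edges enter $B_{O}(\varepsilon)$ only once near $O$ and bound $R$ together with this arc. Both facts follow from the $C^{1}$ transversality of $\gamma_{i}$ to small circles $\partial B_{O}(\varepsilon)$ and from the definition of a curvilinear triangular region at a vertex, so no deeper input is needed.
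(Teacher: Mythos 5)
Your proof is correct, and while the underlying idea is the same (locate the two intersection points of the edges with the small circle and show their angular positions converge to the tangent directions), your execution differs from the paper's in a way worth noting. The paper first reduces to a normalized picture in which one edge is the $x$-axis and the other is a graph $x=yu(y)$ with $u$ continuous, then computes the angular coordinate $t_\varepsilon$ of the second intersection from the equation defining the graph and passes to the limit. You instead work intrinsically: parametrize both edges by arc-length, use the $C^{1}$ expansion $\gamma_i(s)=sT_i+o(s)$, and deduce $P_i(\varepsilon)/\varepsilon\to T_i$, from which the subtended angle $\theta(\varepsilon)\to\alpha$. Your version avoids the coordinate reduction (which in the paper is stated somewhat loosely), treats both edges symmetrically, and makes the role of the $C^{1}$ hypothesis transparent. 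One step you flag but do not fully write out is the uniqueness of the intersection point on each edge and the identification of $R\cap\partial B_O(\varepsilon)$ as a single arc with endpoints $P_1(\varepsilon),P_2(\varepsilon)$; this is settled by the strict monotonicity of $s\mapsto|\gamma_i(s)|$ near $0$ (from $\tfrac{d}{ds}|\gamma_i(s)|^2=2\langle\gamma_i(s),\gamma_i'(s)\rangle=2s(1+o(1))>0$) together with the fact that $\partial R$ away from $O$ is at positive distance from $O$, so your argument is complete once this is inserted.
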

\begin{proof}
It suffices to prove the lemma in the special case where $O$ is the origin of $\R^2$, the first triangle
side is the $x$-axis and the second side is a curve which, close to the origin, can be represented in the
form
$x=yu(y),$
where $u\colon[0,\delta)\to\R$ is continuous and positive for $y\in[0,\delta)$, for some $\delta>0$. In this case, the angle of the
triangle at the vertex $O$ is $\pi/2$; see Fig. \ref{CLT}.

\begin{figure}[ht]
	\includegraphics[scale=.8]{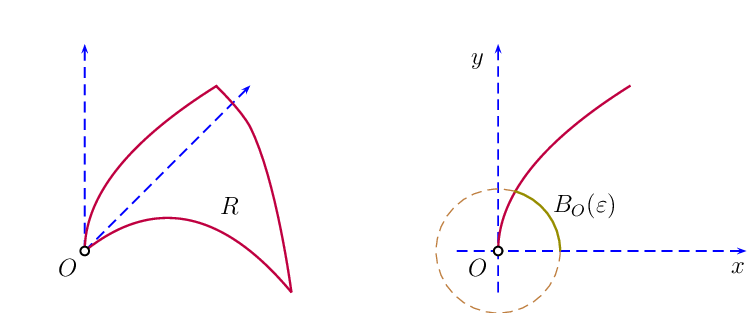}\caption{Curvilinear triangle.}\label{CLT}
	\end{figure}

Let us parametrize the circle of radius $\varepsilon>0$ centered at the origin
by the map $\gamma_\varepsilon:[0,2\pi]\to\R$ given by
$\gamma_\varepsilon(t)=\varepsilon(\cos t,\sin t)$.
Clearly, for sufficiently small values of $\varepsilon$, the circle intersects the triangle only at two points; the $x$-axis
for $t=0$ and the $y$-axis for $t=t_\varepsilon>0$, where $t_\varepsilon$ is the unique solution of
$
\cos(t_\varepsilon)=\varepsilon\sin(t_\varepsilon)u(\varepsilon\sin (t_\varepsilon)).
$
Passing to the limit as $\varepsilon$ tends to zero, we see that
$\cos(t_\varepsilon)\to 0$ as $\varepsilon$ tends to zero and so $t_{\varepsilon}\to\pi/2$.
Consequently,
$$
\lim_{\varepsilon\to 0}\frac{\operatorname{length}(R\cap\partial B_O(\varepsilon))}{\varepsilon}
=\lim_{\varepsilon\to 0}\frac{1}{\varepsilon}\int_{\gamma_\varepsilon}ds=\lim_{\varepsilon\to 0}\int_0^{t_\varepsilon}dt=
\frac{\pi}{2}.
$$
This completes the proof.
\end{proof}

We give now the proof of the Gauss-Bonnet formula, following closely ideas of Troyanov \cite[Proposition 1]{Tro1}.
\begin{theorem}\label{SGB}
Let $(\Sigma,\gind_0)$ be a Riemann surface and $\gind$ a singular metric
conformal to $\gind_0$
with conformal conical singularities. Suppose that $S$ is a compact region
of $\Sigma$ containing the singular points $\{p_1,\dots,p_n\}$ with corresponding
orders $\beta_1,\dots,\beta_n$, such that the first $m$ points belong to $\partial S$. Assume further that
the boundary $\partial S$ is a $C^{1}$-smooth simple regular curve up to the singular points of the surface.
Then, the following formula holds true:
$$
\frac{1}{2\pi}\int_S K_{\gind}+\frac{1}{2\pi}\int_{\partial S} k_{\gind}=\chi(S)
+\frac{1}{2\pi}\sum_{j=1}^m(\pi-\alpha_j)\beta_j+\sum_{j=m+1}^n\beta_j-\frac{1}{2\pi}\sum_{j=1}^m\alpha_j,
$$
where $\chi(S)$ is the topological Euler characteristic of $S$ and $\{\alpha_1,\dots,\alpha_m\}$ are the exterior 
angles of $\partial S$, with respect to $\gind_0$, at the vertices $\{p_1,\dots,p_m\}$.
\end{theorem}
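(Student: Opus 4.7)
My plan is to reduce to the classical Gauss-Bonnet theorem on a smooth region by excising small coordinate balls around each singular point and then passing to the limit. Fix $\varepsilon>0$ small enough that the Euclidean chart balls $B(p_j,\varepsilon)$ are pairwise disjoint and contained in their respective charts, and set $S_\varepsilon = S\setminus\bigcup_{j=1}^{n} B(p_j,\varepsilon)$. On $S_\varepsilon$ the metric $\gind$ is smooth, so the classical Gauss-Bonnet formula yields
\[
\int_{S_\varepsilon}K_\gind\,dA + \int_{\partial S_\varepsilon}k_\gind\,ds + \sum_{c}\vartheta_c = 2\pi\chi(S_\varepsilon),
\]
where the $\vartheta_c$ are the turning angles at the finitely many corners of $\partial S_\varepsilon$. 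Topologically, each excised interior disk lowers the Euler characteristic by one while each boundary notch preserves it, so $\chi(S_\varepsilon) = \chi(S) - (n-m)$.

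The heart of the proof is computing the asymptotics of each ingredient as $\varepsilon\to 0$. A conformal-change-of-curvature computation applied to $\gind = e^{2h_j}|z|^{2\beta_j}|dz|^2$ shows that for the counterclockwise circle $\partial B(p_j,\varepsilon)$ one has $\int_{\partial B(p_j,\varepsilon)} k_\gind\,ds_\gind = 2\pi(1+\beta_j)+O(\varepsilon)$, the factor $e^{2h_j}$ entering only through a vanishing $O(\varepsilon)$ correction. For an interior singular point ($j>m$) this circle appears in $\partial S_\varepsilon$ with reversed orientation, contributing $-2\pi(1+\beta_j)$ in the limit. For a boundary singular point ($j\le m$) only the arc $S\cap\partial B(p_j,\varepsilon)$ enters $\partial S_\varepsilon$; by Lemma~\ref{ANG} its Euclidean angular extent in the chart tends to the chart interior angle $\pi-\alpha_j$, so the same computation gives a limiting contribution of $-(1+\beta_j)(\pi-\alpha_j)$. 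Each boundary excision also creates two new corners at the intersection points $P_\varepsilon\in\partial S\cap\partial B(p_j,\varepsilon)$; since $\partial S$ is $C^1$ at $p_j$, the chord $P_\varepsilon-p_j$ aligns with a tangent direction of $\partial S$ there, so the circle tangent at $P_\varepsilon$ is asymptotically perpendicular to $\partial S$, forcing both new turning angles to tend to $\pi/2$.

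Assembling these contributions and taking $\varepsilon\to 0$, the Gauss-Bonnet identity on $S_\varepsilon$ reorganizes as
\[
\int_S K_\gind + \int_{\partial S}k_\gind = 2\pi\chi(S) + 2\pi\sum_{j=m+1}^{n}\beta_j + \sum_{j=1}^{m}\big[(\pi-\alpha_j)\beta_j-\alpha_j\big],
\]
which is the stated formula after division by $2\pi$. The main technical obstacle will be justifying the limits of the two global integrals. For $\int_{S_\varepsilon}K_\gind\to\int_{S}K_\gind$, the identity $K_\gind\,dA_\gind = -\Delta h_j\,dx\,dy$ in each chart reduces the integrand to a smooth function, making the convergence automatic. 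For $\int_{\partial S\cap S_\varepsilon}k_\gind\to\int_{\partial S}k_\gind$, one invokes Lemma~\ref{kg} to rewrite $k_\gind\,ds_\gind = |\gamma|^{-\beta_j}e^{-h_j}(k-\beta_j\varrho-\langle Dh_j,\xi\rangle)\,ds_{\mathrm{euc}}$; combined with the boundedness of $\varrho$ from Lemma~\ref{lemm ana p} and the vanishing of $|\gamma|^{-\beta_j}$ for $\beta_j\in(-1,0)$, this yields integrability up to each singular point.
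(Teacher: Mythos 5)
Your proposal is correct and reaches the stated formula, but it takes a genuinely different route from the paper's proof. The paper first writes $\gind = e^{2v}\gind_0$ for a smooth background metric $\gind_0$ with $v=\beta_\ell\log|z|+h_\ell$ near each singular point, applies the classical Gauss-Bonnet formula once to $(S,\gind_0)$ (which handles the corner angles $\alpha_j$ directly and leaves the Euler characteristic $\chi(S)$ untouched), and then isolates the singular contribution as the quantity $\int_S\Delta_{\gind_0}v\,dA_{\gind_0}+\int_{\partial S}\langle D^{\gind_0}v,\eta\rangle_{\gind_0}\,ds_{\gind_0}$, which it evaluates by excision, Green's theorem, and Lemma~\ref{ANG}. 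You instead excise the balls first and apply classical Gauss-Bonnet to the family of smooth Riemannian manifolds $(S_\varepsilon,\gind)$, passing to the limit; this forces you to track the topology change $\chi(S_\varepsilon)=\chi(S)-(n-m)$, the two new corners per boundary singular point (each turning angle tending to $\pi/2$), and the asymptotic boundary integrals $\int_{\partial B(p_j,\varepsilon)}k_\gind\,ds_\gind=2\pi(1+\beta_j)+O(\varepsilon)$. Both routes excise discs and use Lemma~\ref{ANG}, but the paper's decomposition offloads all the combinatorics to a single clean Green's-theorem computation, whereas your approach is more geometric and self-contained at the cost of heavier bookkeeping (I verified the bookkeeping does assemble to the stated identity). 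One small slip in your last paragraph: since $\mathbf{k}_\gind=k_\gind\xi_\gind$ with $\xi_\gind=|\gamma|^{-\beta_j}e^{-h_j}\xi$, one has $k_\gind=|\gamma|^{-\beta_j}e^{-h_j}(k-\beta_j\varrho-\langle Dh_j,\xi\rangle)$ and $ds_\gind=|\gamma|^{\beta_j}e^{h_j}\,ds_{\mathrm{euc}}$, so the weight factors cancel exactly and $k_\gind\,ds_\gind=(k-\beta_j\varrho-\langle Dh_j,\xi\rangle)\,ds_{\mathrm{euc}}$; the spurious factor $|\gamma|^{-\beta_j}e^{-h_j}$ you retained does not affect the integrability conclusion (it actually only helps since $-\beta_j>0$), but the correct statement is cleaner and makes the boundedness of $\varrho$ from Lemma~\ref{lemm ana p} the only ingredient needed.
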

\begin{proof}
According to our set-up there exists a smooth function $v$, defined away from the singular points of the surface, such that
$\gind=e^{2v}\gind_0.$
Moreover, in a conformal coordinate chart around a singular point $p_\ell$ of order $\beta_\ell$, the function $v$ has the form
$$
v=\beta_\ell\log|z|+h_\ell,
$$
where $h_\ell$ is smooth. The Gaussian curvatures $K_{\gind}$
and $K_{\gind_0}$ of the metrics $\gind$ and $\gind_0$, respectively, are related by the formula
$$
K_{\gind}=e^{-2v}\big(K_{\gind_0}-\Delta_{\gind_0} v),
$$
where here $\Delta_{\gind_0}$ is the Laplace operator with respect to $\gind_0$. Moreover, the area elements $dA_{\gind}$
and $dA_{\gind_0}$, the length elements $ds_{\gind}$ and $ds_{\gind_0}$ and the gradients $D^{\gind}$ and
$D^{\gind_0}$ are related by the expressions
$$
dA_{\gind}=e^{2v}dA_{\gind_0},\quad ds_{\gind}=e^{v}ds_{\gind_0}\quad\text{and}\quad D^{\gind}
=e^{-2v}D^{\gind_0}.
$$
Hence,
$$
K_{\gind}dA_{\gind}=K_{\gind_0}dA_{\gind_0}-(\Delta_{\gind_0} v) dA_{\gind_0}.
$$
Moreover, as in Lemma \ref{kg}, we obtain that the geodesic curvatures $k_{\gind}$ and $k_{\gind_0}$ are related by the formula
$$
k_{\gind}ds_{\gind}=k_{\gind_0}ds_{\gind_0}-\langle D^{\gind_0} v,\eta\rangle_{\gind_0} ds_{\gind_0},
$$
where $\eta$ is the unit inward pointing normal of $\partial S$ with respect to $\gind_0$. Hence,
\begin{eqnarray*}
&&\frac{1}{2\pi}\int_SK_{\gind}dA_{\gind}+\frac{1}{2\pi}\int_{\partial S}k_{\gind}ds_{\gind}
=\frac{1}{2\pi}\int_SK_{\gind_0}dA_{\gind_0}+\frac{1}{2\pi}\int_{\partial S}k_{\gind_0} ds_{\gind_0}\\
&&\quad\quad\quad\quad\quad\quad-\frac{1}{2\pi}\int_{S}\Delta_{\gind_0}v dA_{\gind_0}
-\frac{1}{2\pi}\int_{\partial S}\langle D^{\gind_0} v,\eta\rangle_{\gind_0} ds_{\gind_0}.
\end{eqnarray*}
From the standard Gauss-Bonnet formula, we deduce
\begin{eqnarray}\label{GB1}
&&\hspace{-35pt}\frac{1}{2\pi}\int_SK_{\gind}dA_{\gind}+\frac{1}{2\pi}\int_{\partial S}k_{\gind}ds_{\gind}=\chi(S)-\frac{1}{2\pi}\sum_{j=1}^m\alpha_j\nonumber\\
&&\quad\quad\quad-\frac{1}{2\pi}\int_{S}\Delta_{\gind_0}v dA_{\gind_0}
-\frac{1}{2\pi}\int_{\partial S}\langle D^{\gind_0} v,\eta\rangle_{\gind_0} ds_{\gind_0}.
\end{eqnarray}
For any $j\in\{1,\dots,n\}$, let $B_j(\varepsilon)$ be the euclidean disk of radius $\varepsilon>0$ around the singular point $p_j$.
Set
$$
S_\varepsilon=S\backslash\cup_{j=1}^{n}B_j(\varepsilon).
$$
The boundary $\partial S_{\varepsilon}$ is the union of the following three parts
$$
M_1(\varepsilon)=\partial S\backslash\cup_{j=1}^mB_j(\varepsilon),\,\,
M_2(\varepsilon)=S\cap\cup_{j=1}^m\partial B_j(\varepsilon),\,\,
M_3(\varepsilon)=\cup_{j=m+1}^n\partial B_j(\varepsilon);
$$
%\begin{eqnarray*}
%M_1(\varepsilon)&=&\partial S\backslash\cup_{j=1}^mB_j(\varepsilon),\\
%M_2(\varepsilon)&=&S\cap\cup_{j=1}^m\partial B_j(\varepsilon),\\
%M_3(\varepsilon)&=&\cup_{j=m+1}^n\partial B_j(\varepsilon);
%\end{eqnarray*}
see Fig. \ref{M1M2M3}.
\begin{figure}[ht]
	\includegraphics[scale=0.66]{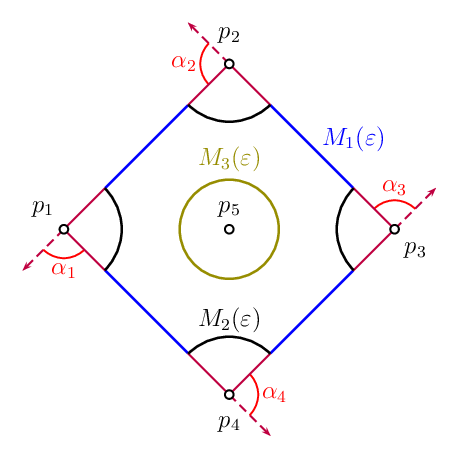}\caption{The sets $M_1(\varepsilon)$, $M_2(\varepsilon)$, $M_3(\varepsilon)$.}\label{M1M2M3}
	\end{figure}

By Green's Theorem we obtain
\begin{eqnarray*}
\int_{S_\varepsilon}\Delta_{\gind_0}v dA_{\gind_0}&=&-\int_{M_1(\varepsilon)}\langle D^{\gind_0} v,\eta\rangle_{\gind_0}ds_{\gind_0}\\
&&-\int_{M_2(\varepsilon)}\langle D^{\gind_0} v,\eta\rangle_{\gind_0}ds_{\gind_0}
-\int_{M_3(\varepsilon)}\langle D^{\gind_0} v,\eta\rangle_{\gind_0}ds_{\gind_0}.
\end{eqnarray*}
The minus sign is due to the fact that we use inward pointing normals for the boundary curves.
Note that
$$
\lim_{\varepsilon\to 0}\int_{M_1(\varepsilon)}\langle D^{\gind_0} v,\eta\rangle_{\gind_0}ds_{\gind_0}=
\int_{\partial S}\langle D^{\gind_0} v,\eta\rangle_{\gind_0}ds_{\gind_0}.
$$
Moreover, along the arcs of the discs $B_j(\varepsilon)$, $j\in\{1,\dots,m\}$, we have that
$$
\langle D^{\gind_0} v,\eta\rangle_{\gind_0}ds_{\gind_0}=\frac{\beta_j}{\varepsilon}ds+\langle {Dh_j,\xi}\rangle ds,
$$
where $ds$ the length element with respect to the euclidean metric and $\xi$ is the (euclidean) unit normal of $\partial B_j(\varepsilon)$.
Because $h_j$ is smooth, the term $|Dh_j|$
is uniformly bounded. Integrating, passing to the limit as $\varepsilon$ tends to zero and using Lemma \ref{ANG}, we deduce that
$$
\int_S\Delta_{\gind_0}v dA_{\gind_0}+\int_{\partial S}\langle D^{\gind_0} v,\eta\rangle_{\gind_0}ds_{\gind_0}
=-\sum_{j=1}^m\beta_j(\pi-\alpha_j)-2\pi\sum_{j=m+1}^n\beta_j.
$$
Plugging the last equality to \eqref{GB1}, we obtain the desired formula.
\end{proof}

\section{Functional analytic methods for parabolic problems}\label{sec4}
The curve shortening flow on singular Riemann surfaces is a quasilinear
parabolic problem whose linearized term is a degenerate operator and at the same time
the reaction term apriori could blow up at the singular points. This forces us to employ maximal $L^{q}$-regularity theory. In particular, we search for classical solutions in suitable weighted $L^{p}$-spaces. The main ingredient in this approach is a general short time existence theorem of Cl\'ement and Li \cite{CL}. 

The main requirement in the theorem of Cl\'ement and Li is the maximal $L^{q}$-regularity property for the linearization of the quasilinear operator, which is a functional analytic property for generators of holomorphic semigroups in Banach spaces. To make the paper self-contained, we review some basic facts from the linear theory. We follow closely the exposition in \cite{Am}, \cite{DHP}, \cite{KuW1} and \cite{clpr}.

\subsection{Sectorial operators and functional calculus}
Here we will briefly discuss the class of sectorial operators defined in Banach spaces and recall
some of their functional analytic properties.
In the rest of the paper, we will always assume that $X_1$ and $X_0$ are complex Banach spaces and that $X_{1}$ is densely and continuously
injected in $X_{0}$. Moreover, denote by $\mathcal{L}(X_{1},X_{0})$ the class of bounded operators from $X_{1}$ to $X_{0}$, For simplicity, we denote $\mathcal{L}(X_{0},X_{0})$ by $\mathcal{L}(X_{0})$. Additionally, we denote by $\mathcal{D}(\cdot)$ and $\rho(\cdot)$ the domain and the resolvent set of an operator, respectively.

A {\em $C_{0}$-semigroup or a strongly continuous one-parameter semigroup} on $X_{0}$ is a map $L:[0,+\infty)\rightarrow \mathcal{L}(X_{0})$ that satisfies:\\
(a) The map $t\mapsto L(t)x$ is continuous for each $x\in X_{0}$,\\
(b) $L(s+t)=L(s)L(t)$, $s,t\geq0$,\\
(c) $L(0)=I$.\\
The {\em infinitesimal generator} or simply {\em generator} $A$ of $L$ is defined as the operator on $X_{0}$ given by 
$$
Ax=\lim_{t\rightarrow0}t^{-1}(L(t)x-x),
$$
with domain 
$$
 \mathcal{D}(A)= \{\text{$x\in X_{0}$ such that ${\lim}_{t\rightarrow0}t^{-1}(L(t)x-x)$ exists in $X_{0}$}\}.
$$

A $C_{0}$-semigroup $L$ on $X_{0}$ is called {\em analytic $C_{0}$-semigroup (of angle $\theta$)}, if there exists a $\theta\in (0,\pi/2]$ such that $L$ admits an analytic extension to the sector $\{\lambda\in\mathbb{C}\backslash\{0\}\, |\, |\arg(\lambda)|<\theta\}$ with 
$$
\sup\{\|L(\lambda)\|_{\mathcal{L}(X_{0})}\, |\, \arg(\lambda)|<\phi \,\, \text{and} \,\, |\lambda|\leq1\}<\infty
$$
for each $\phi\in(0,\theta)$. In this case, the conditions (a) and (b) are extended to the above sector.

In the sequel, we will focus on linear Cauchy problems whose solutions are expressed through analytic $C_{0}$-semigroups. We start by recalling certain functional analytic properties concerning their generators.

\begin{definition}[Sectorial operators]\label{defsec}
Let $\mathcal{P}(K,\theta)$, $\theta\in[0,\pi)$, $K\geq1$, be the class of all closed densely defined linear operators $A$ in $X_{0}$ such that 
$$
S_{\theta}=\big\{\lambda\in\mathbb{C}\,|\, |\arg(\lambda)|\leq\theta\big\}\cup\{0\}\subset\rho{(-A)},$$
and 
$$(1+|\lambda|)\|(A+\lambda)^{-1}\|_{\mathcal{L}(X_{0})}\leq K \quad \text{when} \quad \lambda\in S_{\theta}.$$
The elements in
$$\mathcal{P}(\theta)=\bigcup_{K\geq1}\mathcal{P}(K,\theta)$$
are called {\em (invertible) sectorial operators of angle $\theta$}.
\end{definition}
\begin{remark}
The openness of the resolvent set implies that
$\mathcal{P}(K,\theta)\subset \mathcal{P}(2K+1,\phi)$
for some $\phi\in(\theta,\pi)$, see e.g. \cite[Chapter III, (4.6.4)-(4.6.5)]{Am}.
Thus, if $A\in \mathcal{P}(\theta)$ we can always assume $\theta>0$.
\end{remark}
In Definition \ref{defsec}, after replacing the condition on the boundedness of the family
$\lambda(A+\lambda)^{-1}$ with {\em Rademacher's boundedness}, we obtain the following stronger condition.
\begin{definition}[$R$-sectorial operators]
Denote by $\mathcal{R}(K,\theta)$, $\theta\in[0,\pi)$, $K\geq1$, the class of all operators $A\in \mathcal{P}(\theta)$ in $X_{0}$ such that for any choice of $\lambda_{1},\dots,\lambda_{n}\in S_{\theta}\backslash\{0\}$ and $\{x_{1},\dots,x_{n}\}\in X_0$, $n\in\mathbb{N}$, we have
\begin{eqnarray*}
\Big\|\sum_{k=1}^{n}\epsilon_{k}\lambda_{k}(A+\lambda_{k})^{-1}x_{k}\Big\|_{L^{2}(0,1;X_0)} \leq K \Big\|\sum_{k=1}^{n}\epsilon_{k}x_{k}\Big\|_{L^{2}(0,1;X_0)},
\end{eqnarray*}
where here $\{\epsilon_{k}\}_{k\in\mathbb{N}}$ is the sequence of the Rademacher functions. Elements in the space
$$\mathcal{R}(\theta)=\bigcup_{K\geq1}\mathcal{R}(K,\theta)$$ are called {\em $R$-sectorial operators of angle $\theta$}. 
\end{definition}

Sectorial operators admit holomorphic functional calculus, which is defined by the Dunford integral formula;
see e.g. \cite[Section 1.4]{DHP}. More precisely, let $\theta\in[0,\pi)$, $A\in\mathcal{P}(\theta)$ and let $H_{0}^{\infty}(\theta)$ be the space of all bounded holomorphic functions $f:\mathbb{C}\backslash S_{\theta}\rightarrow \mathbb{C}$ satisfying 
$$
|f(\lambda)|\leq c \left(\frac{|\lambda|}{1+|\lambda|^{2}}\right)^{\eta}
$$
for any $\lambda\in \mathbb{C}\backslash S_{\theta}$ and some $c,\eta>0$ depending on $f$. 
For any $\rho\geq0$ and $\vartheta\in(0,\pi)$, consider the counterclockwise oriented path 
\begin{eqnarray}\nonumber
\lefteqn{\Gamma_{\rho,\vartheta}=\{re^{-i\vartheta}\in\mathbb{C}\,|\,r\geq\rho\}}\\\label{gammapath}
&&\quad \cup\,\, \{\rho e^{i\phi}\in\mathbb{C}\,|\,\vartheta\leq\phi\leq2\pi-\vartheta\}\cup\{re^{i\vartheta}\in\mathbb{C}\,|\,r\geq\rho\},
\end{eqnarray}
see Fig. \ref{Path}. For simplicity we denote $\Gamma_{0,\vartheta}$ by $\Gamma_{\vartheta}$.
\begin{figure}[ht]
	\includegraphics[scale=.75]{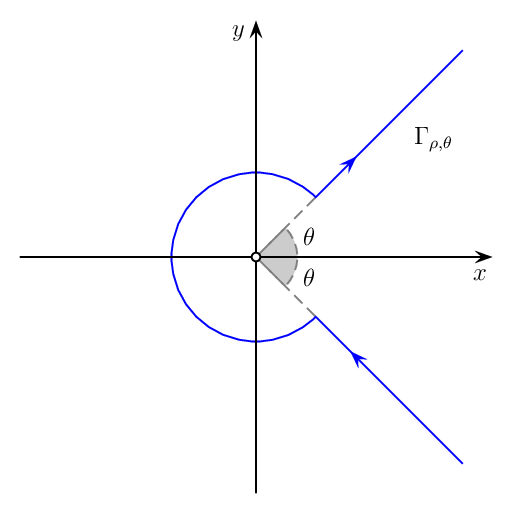}\caption{The path $\Gamma_{\rho,\theta}$.}\label{Path}
	\end{figure}
Then any function $f\in H_{0}^{\infty}(\theta)$ defines an element $f(-A)\in \mathcal{L}(X_{0})$ given by 
\begin{equation}\label{hgsta}
f(-A)=\frac{1}{2\pi i}\int_{\Gamma_{\theta+\varepsilon_{0}}}f(\lambda)(A+\lambda)^{-1} d\lambda,
\end{equation}
where $\varepsilon_{0}\in(0,\pi-\theta)$ is sufficiently small and depends only on $A$ and $\theta$.

The {\em complex powers} of a sectorial operator are a typical example of the holomorphic functional calculus. For $\mathrm{Re}(z)<0$ they are defined by
\begin{equation}\label{cp}
A^{z}=\frac{1}{2\pi i}\int_{\Gamma_{\rho,\theta+\varepsilon_{0}}}(-\lambda)^{z}(A+\lambda)^{-1}d\lambda,
\end{equation}
where $\rho>0$ is sufficiently small.

The family of operators $\{A^{z}\}_{\mathrm{Re}(z)<0}$ together with
$A^{0}=I$ is an analytic $C_{0}$-semigroup on $X_{0}$, see e.g.
\cite[ Chapter III, Theorems 4.6.2 and 4.6.5]{Am}. Moreover, each operator $A^{z}$, $\mathrm{Re}(z)<0$, is an injection and the complex powers
for the positive real part $A^{-z}$ are defined by $A^{-z}=(A^{z})^{-1}$, see e.g. \cite[Chapter III, (4.6.12)]{Am}.

By Cauchy's theorem, we can deform the path in formula \eqref{cp} and define the
{\em imaginary powers} $A^{it}$, $t\in\mathbb{R}\backslash\{0\}$, as the closure of the operator
$$
\mathcal{D}(A)\ni x \mapsto A^{it}x=\frac{\sin(i\pi t)}{i\pi t}\int_{0}^{+\infty}s^{it}(A+s)^{-2}Axds,
$$
see e.g. \cite[Chapter III, (4.6.21)]{Am}. For properties of the complex powers of sectorial operators we refer to 
 \cite[Theorem III.4.6.5]{Am}.
 
Although the imaginary powers $A^{it}$, $t\in\mathbb{R}\backslash\{0\}$, of the operator $A$ are in general unbounded operators, the 
following well-known property holds:

{\em Let $A\in\mathcal{P}(0)$ in $X_{0}$ and assume that there exist numbers $\delta,M>0$ such that $A^{it}\in \mathcal{L}(X_{0})$ and
$$\|A^{it}\|_{\mathcal{L}(X_{0})}\leq M$$
for any $t\in(-\delta,\delta)$. Then, $A^{it}\in \mathcal{L}(X_{0})$ for all $t\in\mathbb{R}$ and there exist numbers
$\phi,\widetilde{M}>0$ such that $$\|A^{it}\|_{\mathcal{L}(X_{0})}\leq \widetilde{M}e^{\phi|t|},$$
for any $t\in\mathbb{R}$.}

\begin{definition}[Bounded imaginary powers]
If an operator $A$ satisfies the above property, we say that $A$ {\em has bounded imaginary powers} and denote
the space of such operators by $\mathcal{BIP}(\phi)$.
\end{definition}

Let us conclude this section by recalling the following important property for operators in the class $\mathcal{P}(\theta)$, which is stronger than the boundedness of the imaginary powers.

\begin{definition}[Bounded $H^{\infty}$-calculus]
We say that the linear operator $A\in \mathcal{P}(\theta)$, $\theta\in[0,\pi)$, {\em has bounded $H^{\infty}$-calculus of angle $\theta$}, and we denote by $A\in \mathcal{H}^{\infty}(\theta)$, if there exists a constant $C>0$ such that
\begin{equation*}
\|f(-A)\|_{\mathcal{L}(X_{0})}\leq C\sup_{\lambda\in\mathbb{C}\backslash S_{\theta}}|f(\lambda)|
\end{equation*}
for any $f\in H_{0}^{\infty}(\theta)$.
\end{definition}

\subsection{Maximal $L^q$-regularity for parabolic equations}

We will see in this section how the notions of the previous section are deeply related to the regularity theory of parabolic equations.
Let us consider the following abstract parabolic first order linear Cauchy problem
\begin{eqnarray}\label{app1}
u_{t}(t)+Au(t)&=&f(t), \quad t\in(0,T),\\\label{app2}
u(0)&=&0,
\end{eqnarray}
where $-A:\mathcal{D}(A)=X_{1}\rightarrow X_{0}$ is the infinitesimal generator of an analytic $C_{0}$-semigroup on $X_{0}$ and
$f\in L^q(0,T;X_{0})$, where $q\in(1,\infty)$ and $T>0$. Denote by $H_{q}^{s}$, $s\in\mathbb{R}$, the usual Bessel potential space and $H_{q}^{\infty}=\cap_{s>0}H_{q}^{s}$.
\begin{definition}
The operator $A$ has {\em maximal $L^q$-regularity} if, for any map $f\in L^q(0,T;X_{0})$, there exists a unique
$$u\in H_{q}^{1}(0,T;X_{0})\cap L^{q}(0,T;X_{1})$$
solving the problem \eqref{app1}-\eqref{app2}.
\end{definition}

\begin{remark}
Recall the following embedding of the maximal $L^q$-regularity space
\begin{equation}\label{interpemb}
H_{q}^{1}(0,T;X_{0})\cap L^{q}(0,T;X_{1})\hookrightarrow C\big([0,T];(X_{1},X_{0})_{1/q,q}\big),
\end{equation}
where $T>0$, $q\in(1,\infty)$ and by $(\cdot,\cdot)_{\eta,q}$, $\eta\in(0,1)$, we denote the real interpolation functor of exponent $\eta$ and parameter $q$; see \cite[Chapter III, Theorem 4.10.2]{Am} and \cite[Example I.2.4.1]{Am}. If $T\in [T_{1},T_{2}]$, for some fixed $0<T_{1}<T_{2}<\infty$, then the norm of the embedding \ref{interpemb} is independent of $T$.
\end{remark}

In the case where $A$ has maximal $L^q$-regularity, we can replace the initial condition \eqref{app2} with $u(0)=u_{0}$, for any function $u_{0}\in (X_{1},X_{0})_{1/q,q}$. The solution $u$ depends continuously on $f$ and $u_{0}$, i.e. there exists a constant $C$, depending only on $A$ and $q$, such that
\begin{equation}\label{maxregineq}
\|u\|_{H_{q}^{1}(0,T;X_{0})}+\|u\|_{L^{q}(0,T;X_{1})}\leq C\big(\|f\|_{L^q(0,T;X_{0})}+\|u_{0}\|_{(X_{1},X_{0})_{1/q,q}}\big).
\end{equation}
The maximal $L^q$-regularity property is independent of $q$ and $T$, and the analytic $C_{0}$-semigroup generation property for $-A$ turns out to be necessary; see \cite{Doremax}. Recall that $A$ generates an analytic $C_{0}$-semigroup on $X_{0}$ if and only if there exists some $c>0$ such that $A+c\in \mathcal{P}(\pi/2)$, see \cite[Proposition 3.1.9, Proposition 3.7.4, Theorem 3.7.11]{ArBaHiNe}.

If $X_{0}$ is a Hilbert space, any operator $A$ such that $-A$ generates an analytic $C_{0}$-semigroup has maximal $L^{q}$-regularity;
see \cite{DeS}. However, in the case of Banach spaces the situation is more complicated. 

\begin{definition}
A Banach space $X_{0}$ is called {\em of class $\mathcal{HT}$}, if for some (and then all) $p\in (1,\infty)$, the Hilbert transform $H: L^{p}(\mathbb{R};X_{0})\rightarrow L^{p}(\mathbb{R};X_{0})$, given by
$$
(Hu)(t)=\frac{1}{\pi}PV\int_{\mathbb{R}}\frac{1}{t-s}u(s)ds,
$$
is a bounded map.
\end{definition}

According to a
well-known theorem, Banach spaces of class $\mathcal{HT}$ coincide with the class of all Banach spaces satisfying the
{\em unconditional martingale difference property} (for short UMD spaces), see e.g. \cite[Chapter III, Section 4.4]{Am}.

\begin{remark} Let us make some comments about the operator class $\mathcal{BIP}$ and the class of UMD spaces:
\begin{enumerate}[\rm (a)]
\item
For each $\theta\in(0,\pi)$ and $\phi\in(\pi-\theta,\pi)$, the following inclusion holds
$$\mathcal{H}^{\infty}(\theta)\subset \mathcal{BIP}(\phi).$$
\item
There is an abundance of Banach spaces having the UMD property. For example, Hilbert spaces, $L^p$ spaces with $p\in(1,\infty)$, Bessel potential spaces $H_{p}^{s}$ with $p\in(1,\infty)$ and $s\in\mathbb{R}$, and real (or even complex) interpolation spaces of UMD spaces, have this property. For more details see
\cite[Chapter 4]{Hy}. In the case where the underlying space $X_{0}$ is UMD, the following property holds true
$$\mathcal{BIP}(\phi)\subset \mathcal{R}(\pi-\phi),$$
see e.g. \cite[Theorem 4]{ClPr}.
\end{enumerate}
\end{remark}

The following classical result of Dore and Venni \cite[Theorem 3.2]{DorVe} provides a necessary condition for an operator in a UMD Banach space to satisfy the maximal $L^{q}$-regularity property. 

\begin{theorem}[Dore and Venni]\label{DoVe}
Let $X_{0}$ be a UMD Banach space and let $A\in\mathcal{BIP}(\phi)$ in $X_{0}$ with $\phi<\pi/2$. Then $A$ has maximal $L^{q}$-regularity. 
\end{theorem}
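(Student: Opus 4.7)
The plan is to convert the Cauchy problem \eqref{app1}--\eqref{app2} into an operator equation in the function space $E := L^q(0,T;X_0)$, and then apply the abstract \emph{sum theorem for commuting operators with bounded imaginary powers}, which is really the heart of the Dore--Venni argument.

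First I would introduce the two commuting operators on $E$. Let $\mathcal{A} := d/dt$ act on $E$ with domain
$\mathcal{D}(\mathcal{A}) = \{u \in H_q^{1}(0,T;X_0) : u(0)=0\}$, and let $\mathcal{B}$ be the pointwise lift of $A$, i.e.\ $(\mathcal{B}u)(t) := A\,u(t)$ with domain $\mathcal{D}(\mathcal{B}) = L^q(0,T;X_1)$. The operator $\mathcal{A}$ is invertible on $E$ with inverse the Volterra integration $(\mathcal{A}^{-1}f)(t)=\int_0^t f(s)\,ds$, so $0\in\rho(\mathcal{A})$ and $\mathcal{A}\in\mathcal{P}(\pi/2)$. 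To promote this to $\mathcal{A}\in\mathcal{BIP}(\pi/2)$, extend functions in $E$ by zero to $L^q(\mathbb{R};X_0)$, where the full-line derivative becomes a Fourier multiplier with symbol $i\xi$; the UMD hypothesis on $X_0$, via the operator-valued Mikhlin/Hilbert transform theorem, yields uniform bounds for its imaginary powers $|i\xi|^{it}(\text{sgn}\,\xi)^{\mathrm{something}}$, and restriction back to $(0,T)$ preserves these bounds. For $\mathcal{B}$, the identity $(\mathcal{B}^{it}u)(t)=A^{it}u(t)$ together with $A\in\mathcal{BIP}(\phi)$ gives $\mathcal{B}\in\mathcal{BIP}(\phi)$. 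Moreover, the resolvents $(\lambda+\mathcal{A})^{-1}$ and $(\mu+\mathcal{B})^{-1}$ commute, because the latter is pointwise in $t$ while the former is a convolution in $t$ with a scalar kernel.

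Next I would prove that $\mathcal{A}+\mathcal{B}$ with domain $\mathcal{D}(\mathcal{A})\cap\mathcal{D}(\mathcal{B})$ is \emph{invertible} on $E$. Writing $\theta_{\mathcal{A}}=\pi/2$ and $\theta_{\mathcal{B}}=\phi$ with $\theta_{\mathcal{A}}+\theta_{\mathcal{B}}<\pi$ by hypothesis, one uses the Mellin--Dunford formula
\begin{equation*}
(\mathcal{A}+\mathcal{B})^{-1} \;=\; \frac{1}{2\pi i}\int_{c-i\infty}^{c+i\infty} \frac{\pi}{\sin(\pi z)}\,\mathcal{A}^{-z}\,\mathcal{B}^{z-1}\,dz,
\end{equation*}
valid for a suitable $c\in(0,1)$. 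Convergence of this integral in $\mathcal{L}(E)$ is exactly where the bounded imaginary powers hypothesis and the angle condition $\theta_{\mathcal{A}}+\theta_{\mathcal{B}}<\pi$ enter: the factor $\pi/\sin(\pi z)$ decays like $e^{-\pi|\mathrm{Im}\,z|}$, while $\|\mathcal{A}^{-z}\mathcal{B}^{z-1}\|_{\mathcal{L}(E)}$ grows at worst like $e^{(\theta_{\mathcal{A}}+\theta_{\mathcal{B}})|\mathrm{Im}\,z|}$, so the integrand is absolutely integrable. Commutativity of the resolvents lets one verify that this operator is both a left and a right inverse of $\mathcal{A}+\mathcal{B}$ on the indicated domain, and closedness of the sum follows from the closed graph theorem applied to $\mathcal{A}(\mathcal{A}+\mathcal{B})^{-1}$ and $\mathcal{B}(\mathcal{A}+\mathcal{B})^{-1}$.

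Finally, translating back: for any $f\in E$, the unique $u\in \mathcal{D}(\mathcal{A})\cap\mathcal{D}(\mathcal{B})$ with $(\mathcal{A}+\mathcal{B})u=f$ is exactly the unique solution of \eqref{app1}--\eqref{app2} in $H_q^{1}(0,T;X_0)\cap L^q(0,T;X_1)$, giving maximal $L^q$-regularity for $A$. The hard part is unquestionably the convergence and inversion argument for the Mellin--Dunford integral; everything else is essentially bookkeeping on domains and commutation. The angle condition $\phi<\pi/2$ is used precisely once, but decisively, to keep $\theta_{\mathcal{A}}+\theta_{\mathcal{B}}<\pi$ so that the exponential growth of $\|\mathcal{A}^{-z}\mathcal{B}^{z-1}\|$ is beaten by the decay of $1/\sin(\pi z)$.
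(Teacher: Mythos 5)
The paper does not prove Theorem~\ref{DoVe}; it states it and cites Dore and Venni's original paper. Your argument is a faithful reconstruction of that original proof: lifting to $E=L^q(0,T;X_0)$, introducing the commuting pair $\mathcal{A}=d/dt$ (with zero initial trace) and the pointwise lift $\mathcal{B}$ of $A$, invoking the UMD property to secure bounded imaginary powers of angle $\pi/2$ for $\mathcal{A}$ (the key place where $\mathcal{HT}$ enters), and then feeding the angle condition $\theta_{\mathcal{A}}+\theta_{\mathcal{B}}<\pi$ into the Mellin--Dunford integral of Da Prato--Grisvard type to produce a bounded inverse of $\mathcal{A}+\mathcal{B}$. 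This is exactly the Dore--Venni operator-sum method that the cited reference carries out, so there is nothing in the paper itself for your proof to diverge from. Two small points worth making precise in a full write-up: $\mathcal{A}=d/dt$ is sectorial of angle $\theta$ for every $\theta<\pi/2$ (not at $\pi/2$ itself), and the bounded-imaginary-powers estimate for $\mathcal{A}$ typically carries a polynomial-in-$|t|$ factor in front of $e^{\pi|t|/2}$ coming from the Mikhlin argument on the line; neither harms the integral's convergence because the strict inequality $\phi<\pi/2$ leaves room.
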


Kalton and Weis \cite[Theorem 6.5]{KW1} improved the result of Dore and Venni to the following.

\begin{theorem}[Kalton and Weis]\label{KalWeiTh}
Let $X_{0}$ be a UMD Banach space and let $A\in\mathcal{R}(\theta)$ in $X_{0}$ with $\theta>\pi/2$. Then $A$ has maximal $L^{q}$-regularity. 
\end{theorem}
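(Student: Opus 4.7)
The plan is to establish maximal $L^q$-regularity by solving the Cauchy problem via the variation-of-parameters formula and then proving $L^q$-boundedness of the resulting convolution operator through the operator-valued Fourier multiplier theorem of Weis, which is tailor-made for UMD spaces and $R$-bounded families of operators.

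First, I would extend the inhomogeneity $f \in L^q(0,T;X_0)$ by zero to all of $\mathbb{R}_+$ and note that since $A \in \mathcal{R}(\theta) \subset \mathcal{P}(\theta)$ with $\theta > \pi/2$, the operator $-A$ generates a bounded analytic $C_0$-semigroup $\{e^{-tA}\}_{t\geq 0}$ on $X_0$. The candidate solution is
\begin{equation*}
u(t) = \int_0^t e^{-(t-s)A} f(s)\,ds,
\end{equation*}
and the task reduces to showing that the map $f \mapsto Au$ is bounded on $L^q(\mathbb{R}_+;X_0)$: once this is established, $u \in L^q(0,T;X_1)$, and differentiating the integral yields $u_t = f - Au \in L^q(0,T;X_0)$, giving membership in $H^1_q(0,T;X_0) \cap L^q(0,T;X_1)$. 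Uniqueness follows from standard semigroup arguments applied to the homogeneous equation.

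Second, extending $f$ by zero to the full line and taking Fourier transforms formally yields $\widehat{Au}(\xi) = m(\xi)\widehat{f}(\xi)$ with operator-valued symbol
\begin{equation*}
m(\xi) = A(A+i\xi)^{-1}, \qquad \xi m'(\xi) = -i\xi(A+i\xi)^{-1}A(A+i\xi)^{-1}.
\end{equation*}
Since $\theta > \pi/2$, the imaginary axis $i\mathbb{R}\setminus\{0\}$ lies in $S_\theta\setminus\{0\}$, so the $R$-sectoriality of $A$ yields that the families $\{i\xi(A+i\xi)^{-1} : \xi \neq 0\}$ and $\{A(A+i\xi)^{-1} = I - i\xi(A+i\xi)^{-1}\cdot(-i) : \xi \neq 0\}$ are $R$-bounded. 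Using the standard fact that products and sums of $R$-bounded families are $R$-bounded (with constants controlled by the individual $R$-bounds), both $\{m(\xi)\}$ and $\{\xi m'(\xi)\}$ are $R$-bounded on $\mathbb{R}\setminus\{0\}$.

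Third, I would invoke the operator-valued Mikhlin multiplier theorem of Weis: for a UMD space $X_0$ and a symbol $m \in C^1(\mathbb{R}\setminus\{0\};\mathcal{L}(X_0))$ such that both $\{m(\xi)\}$ and $\{\xi m'(\xi)\}$ are $R$-bounded, the associated Fourier multiplier operator $T_m$ is bounded on $L^q(\mathbb{R};X_0)$ for all $q \in (1,\infty)$. Applying this to the symbol above completes the proof of the estimate $\|Au\|_{L^q} \lesssim \|f\|_{L^q}$, and hence of maximal $L^q$-regularity.

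The principal obstacle is the Weis multiplier theorem itself, whose proof rests on square-function techniques and the boundedness of the Hilbert transform in the UMD-valued setting; this is the deep input making the transition from $R$-sectoriality to maximal regularity possible. The rest of the argument is essentially bookkeeping: verifying that the symbol genuinely comes from the semigroup convolution, checking measurability and differentiability of $m$ away from the origin, and extracting the finite-interval estimate from the half-line one by restriction.
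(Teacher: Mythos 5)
The paper does not supply its own proof of this theorem; it simply states the result and cites Kalton--Weis, and then remarks that the $R$-sectoriality characterization is due to Weis. Your proposal is a correct sketch of the \emph{Weis} proof (the operator-valued Mikhlin multiplier route), rather than the \emph{Kalton--Weis} proof that the paper's citation points to. In the latter, one notes that $B=\partial_t$ admits a bounded $H^\infty$-calculus of angle $<\pi/2$ on $L^q(\mathbb{R}_+;X_0)$ whenever $X_0$ is UMD, so the Da Prato--Grisvard/Kalton--Weis operator-sum theorem applied to $\partial_t+A$ (using $R$-sectoriality of $A$ of angle $>\pi/2$) directly yields closedness and invertibility of the sum, hence maximal regularity; indeed the paper itself uses precisely this mechanism inside the proof of Lemma~\ref{wperturb} when treating $B_{r_0}+\Lambda_{r_0}$. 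Both routes are standard and equivalent in the UMD setting, and they exchange the burden: your multiplier approach makes explicit the $L^q$-boundedness of the singular convolution $f\mapsto Au$ via $R$-boundedness of $\{m(\xi)\}$ and $\{\xi m'(\xi)\}$, while the operator-sum approach abstracts that estimate into the functional calculus of $\partial_t$ and avoids manipulating Fourier symbols directly. One small notational slip: you write $A(A+i\xi)^{-1}=I-i\xi(A+i\xi)^{-1}\cdot(-i)$, but the identity is simply $A(A+i\xi)^{-1}=I-i\xi(A+i\xi)^{-1}$; the stray factor of $(-i)$ does not belong there, though it does not affect the $R$-boundedness conclusion.
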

We point out that the maximal $L^{q}$-regularity is characterized by the $\mathcal{R}$-sectoriality in UMD spaces; see \cite[Theorem 4.2]{Weis}.

Let us see how the above property is applied to quasilinear equations. Let $q\in(1,\infty)$, $T_{0}\in(0,\infty)$, $U$ be an open subset of the
interpolation space $(X_{1},X_{0})_{{1}/{q},q}$ and let $A(\cdot): U\rightarrow \mathcal{L}(X_{1},X_{0})$, $F(\cdot\,,\cdot): U\times [0,T_{0}]\rightarrow X_{0}$
be two (possibly non-linear) maps. Consider the problem
\begin{eqnarray}\label{aqpp1}
u_{t}(t)+A(u(t))u(t)&=&F(u(t),t)+G(t),\quad t\in(0,T),\\\label{aqpp2}
u(0)&=&u_{0},
\end{eqnarray}
where $T\in(0,T_{0})$, $u_{0}\in U$ and $G\in L^{q}(0,T_{0};X_{0})$. One important fact is that maximal $L^q$-regularity for the linearization $A(u_{0})$ 
together with appropriate Lipschitz continuity conditions imply existence and uniqueness of a short time solution to \eqref{aqpp1}-\eqref{aqpp2}.

\begin{definition}
Let $Y$ and $Z$ be two Banach spaces and $U\subseteq Y$. We say that a map 
$B(\cdot):U\to Z$ is {\em locally Lipschitz} if, for any $u_1,u_2\in U$, it holds
$$
\|B(u_1)-B(u_2)\|_{Z}\le C\|u_1-u_2\|_{Y},
$$
where $C$ is a constant depending on $B$, $U$ and $q$. The space of such maps is denoted by $C^{1-}(U;Z).$
\end{definition}

Now we are ready to state a result of Cl\'ement and Li \cite[Theorem 2.1]{CL} for short time existence, uniqueness, and maximal $L^q$-regularity for solutions
of quasilinear parabolic equations.

\begin{theorem}[Cl\'ement and Li]\label{ClementLi}
Suppose that $A$ and $F$ in \eqref{aqpp1} satisfy the following conditions:
\begin{enumerate}[\rm (H1)]
\item $A(\cdot)\in C^{1-}(U;\mathcal{L}(X_{1},X_{0}))$.
\item $F(\cdot\,,\cdot)\in C^{1-}(U\times [0,T_{0}];X_{0})$.
\item $A(u_{0})$ has maximal $L^q$-regularity.
\end{enumerate}
Then, there exists a $T\in(0,T_{0})$ and a unique
$$u\in H_{q}^{1}(0,T;X_{0})\cap L^{q}(0,T;X_{1})\hookrightarrow C\big([0,T];(X_{1},X_{0})_{{1}/{q},q}\big)$$
solving \eqref{aqpp1}-\eqref{aqpp2}. 
\end{theorem}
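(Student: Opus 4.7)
The plan is a Banach fixed point argument in the maximal $L^q$-regularity space
\begin{equation*}
\mathcal{E}_T := H_q^1(0,T;X_0) \cap L^q(0,T;X_1),
\end{equation*}
after freezing the coefficient operator at the initial value $u_0$. First I would use hypothesis (H3) together with $u_0 \in (X_1,X_0)_{1/q,q}$ and $G \in L^q(0,T_0;X_0)$ to solve the reference linear problem $u_{*,t} + A(u_0) u_* = G$, $u_*(0) = u_0$, obtaining a unique $u_* \in \mathcal{E}_{T_0}$. Using the continuous embedding \eqref{interpemb} and continuity of $u_*$ at $0$, I fix $r > 0$ and $T_1 \in (0, T_0]$ so that the ball $B_{2r}(u_0) \subset (X_1, X_0)_{1/q,q}$ lies inside $U$ and $u_*(t) \in B_r(u_0)$ for all $t \in [0, T_1]$.

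For $T \in (0, T_1]$ and a small radius $\delta > 0$, consider
\begin{equation*}
\mathcal{B}_{\delta, T} := \bigl\{ v \in \mathcal{E}_T : v(0) = u_0, \; \|v - u_*\|_{\mathcal{E}_T} \leq \delta \bigr\}.
\end{equation*}
A $T$-uniform version of the embedding \eqref{interpemb}, obtained by extending the zero-initial-data difference $v - u_*$ to a fixed larger interval, shows that every $v \in \mathcal{B}_{\delta, T}$ takes values in $U$ once $\delta$ is small enough. For such $v$, hypotheses (H1)--(H2) make the right-hand side of
\begin{equation*}
\Phi(v)_t + A(u_0) \Phi(v) = \bigl(A(u_0) - A(v)\bigr) v + F(v, \cdot) + G, \quad \Phi(v)(0) = u_0,
\end{equation*}
an element of $L^q(0,T; X_0)$; the maximal $L^q$-regularity of $A(u_0)$ then defines $\Phi(v) \in \mathcal{E}_T$ uniquely.

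It remains to show that $\Phi$ preserves $\mathcal{B}_{\delta, T}$ and is a strict contraction for $\delta, T$ sufficiently small. Writing $\Phi(v) - u_*$ as the solution of the linear problem with zero initial data and right-hand side $(A(u_0) - A(v))v + F(v, \cdot)$, the maximal regularity estimate \eqref{maxregineq} gives
\begin{equation*}
\|\Phi(v) - u_*\|_{\mathcal{E}_T} \leq C \Bigl( L \sup_{t \in [0,T]} \|v(t) - u_0\|_{(X_1,X_0)_{1/q,q}} \cdot \|v\|_{L^q(0,T;X_1)} + \|F(v, \cdot)\|_{L^q(0,T;X_0)} \Bigr).
\end{equation*}
The first factor in the $A$-term tends to $0$ as $T, \delta \to 0$ because $v(0) = u_0$, while $\|v\|_{L^q(0,T;X_1)}$ stays bounded by $\|u_*\|_{L^q(0,T_1;X_1)} + \delta$; the $F$-term is controlled by a power of $T$ via (H2). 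Contractivity follows analogously: $\Phi(v_1) - \Phi(v_2)$ solves the zero-data linear problem with right-hand side
\begin{equation*}
(A(u_0) - A(v_1))(v_1 - v_2) + (A(v_2) - A(v_1)) v_2 + F(v_1, \cdot) - F(v_2, \cdot),
\end{equation*}
and each term is estimated by $\|v_1 - v_2\|_{\mathcal{E}_T}$ times a factor that is small by (H1), (H2) and the size of the traces of $v_1, v_2$. Banach's fixed point theorem then produces the unique solution of \eqref{aqpp1}--\eqref{aqpp2}. The main obstacle is establishing uniformity of the maximal regularity and trace constants as $T \to 0$, since these are only stated for $T$ bounded away from $0$ and $\infty$; the standard workaround is that \eqref{maxregineq} is applied only to problems with zero initial data, so by extending solutions to a fixed larger interval the constants may be taken independent of $T \in (0, T_1]$.
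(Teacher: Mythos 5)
The paper states this theorem without proof, citing Cl\'ement--Li \cite[Theorem 2.1]{CL}, so there is no in-paper argument to compare against. Your proof is the standard one for this result and is essentially correct: freeze the coefficients at $u_0$, solve the reference linear problem via the maximal regularity hypothesis (H3), and run Banach's fixed point theorem in a small ball $\mathcal{B}_{\delta,T}$ of the maximal regularity space centered on the reference solution, with the $T$-uniformity of the trace and maximal-regularity constants obtained by the zero-initial-data extension device --- the paper's remark following \eqref{interpemb} points in exactly that direction. Two things would need to be spelled out to make this airtight. First, in the self-map estimate the $A$-perturbation contributes a term roughly of size $\delta\cdot(\|u_*\|_{L^q(0,T;X_1)}+\delta)$, so the order of quantifiers is to fix $\delta$ small (depending only on the local Lipschitz and maximal-regularity constants so that the $\delta^2$ term is absorbed) and then shrink $T$ depending on $\delta$. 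Second, uniqueness in the full space $\mathcal{E}_T$ requires a short continuation step, since Banach's theorem a priori gives uniqueness only within $\mathcal{B}_{\delta,T}$: any other solution in $\mathcal{E}_T$ is also a fixed point of $\Phi$, and since it has zero initial difference from $u_*$ it lies in $\mathcal{B}_{\delta,T'}$ for some $T'\in(0,T]$, hence coincides with your solution there, and one then extends the agreement up to $T$ by iteration. Neither point affects the soundness of the approach.
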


For a continuous maximal regularity approach, we refer to \cite{Am2}.

\begin{remark}
Suppose that the conditions (H1), (H2), (H3) of Theorem \ref{ClementLi} are satisfied, and let $u$, $T$, $T_{0}$ be as in Theorem \ref{ClementLi}. Denote by $T_{\max}$ the supremum of all such $T$. If
$$
T_{\max}<T_{0} \quad \text{and} \quad \|u\|_{H_{q}^{1}(0,T_{\max};X_{0})\cap L^{q}(0,T_{\max};X_{1})}<\infty,
$$ 
then, by \eqref{interpemb}, the solution $u$ extends up to $T_{\max}$ as an element $u_{T_{\max}}\in (X_{1},X_{0})_{{1}/{q},q}$. In this case, there is no open set $V\subset (X_{1},X_{0})_{{1}/{q},q}$ such that $u_{T_{\max}}\in V$ and the conditions (H1), (H2), (H3) of Theorem \ref{ClementLi} are satisfied by replacing $u_{0}$ with $u_{T_{\max}}$ and $U$ with $V$.
\end{remark}

\section{Cone differential operators}\label{sec5}
In this section, we review basic facts from the theory of {\em cone differential operators} or {\em Fuchs type operators}. We focus on the theory of Schulze's cone 
calculus, toward the direction of regularity theory for PDEs; for further details we refer to \cite{GKM2,GKM,GM,Le,
SS1,SS,Schu2,Schu,Schu3,Sei}.
Since our problem concerns evolution of curves, we restrict 
ourselves and adapt the situation to the one-dimensional case. However, most of the results hold in any dimension.

\subsection{Cone operators}\label{secconebas}
Let us denote by $\mathbb{B}$ the interval $[0,1]$.

\begin{definition}
A {\em cone differential operator} $A$ of order $\mu\in\mathbb{N}\cup\{0\}$ is an $\mu$-th order differential operator with smooth $\mathbb{C}$-valued coefficients on $(0,1)$
such that near the boundary points $j\in\{0,1\}$ it has the form 
\begin{equation}\label{Aop}
A_{j}=((-1)^{j}(x-j))^{-\mu}\sum_{k=0}^{\mu}a_{k}(x)((-1)^{j+1}(x-j)\partial_{x})^{k},
\end{equation}
where each $a_{k}$ is $C^{\infty}$-smooth up to $j$.
\end{definition}
The usual {\em homogeneous principal pseudodifferential symbol} $\sigma_\psi(A)$ is given by
$$
\sigma_\psi(A)(x,\zeta)=a_\mu(x)(-i\zeta)^\mu,
$$
for any $x\in(0,1)$ and $\zeta\in\mathbb{R}$.
Beyond its usual homogenous principal symbol, we define the {\em principal rescaled symbol} $\widetilde{\sigma}_{\psi}(A)$ by 
$$
\widetilde{\sigma}_{\psi}(A)(j,\zeta)=a_{\mu}(j)(-i\zeta)^{\mu},
$$
where $j\in\{0,1\}$ and $\zeta\in\mathbb{R}$.
Moreover, the {\em conormal symbol} $\sigma_M(A)$ of $A$ is defined by the following polynomial
$$
\sigma_{M}(A)(j,\lambda)=\sum_{k=0}^{\mu}a_{k}(j)\lambda^{k},
$$
where $j\in\{0,1\}$ and $\lambda\in\mathbb{C}$. The notion of ellipticity is extended to the case of conically degenerate differential operators as follows. 

\begin{definition}
A cone differential operator $A$ is called {\em $\mathbb{B}$-elliptic} if $\sigma_{\psi}(A)$ and $\widetilde{\sigma}_{\psi}(A)$ are pointwise invertible.
\end{definition}

Let $\omega\in C^{\infty}(\mathbb{B})$ be a fixed cut-off function, which is equal to one near the boundary points of $\mathbb{B}$
and zero away from them. Decompose $\omega$ as $\omega=\omega_{0}+\omega_{1}$, where $\omega_{j}$ is supported near $j\in\{0,1\}$. Denote by $C_{c}^{\infty}$ the space of smooth compactly supported functions.

\begin{definition}[Mellin-Sobolev spaces]
For any $\gamma\in\mathbb{R}$ consider the map 
$M_{\gamma}: C_{c}^{\infty}(\mathbb{R}_{+})\rightarrow C_{c}^{\infty}(\mathbb{R})$ given by
$$(M_\gamma u)(x)=e^{\big(\gamma-\frac{1}{2}\big)x}u(e^{-x}). 
$$
Furthermore, for any $r\in\mathbb{R}$ and $p\in(1,\infty)$, let $\mathcal{H}^{r,\gamma}_p(\mathbb{B})$ be the space of all distributions $u$ on $(0,1)$ such that 
$$
\|u\|_{\mathcal{H}^{r,\gamma}_p(\mathbb{B})}=\|M_{\gamma}(\omega u)\|_{H^{r}_p(\mathbb{R})}+\|(1-\omega)u\|_{H^{r}_p(\mathbb{B})}<\infty.
$$
The space $\mathcal{H}^{r,\gamma}_p(\mathbb{B})$ is called {\em (weighted) Mellin-Sobolev space}. We denote $\mathcal{H}^{\infty,\gamma}_p(\mathbb{B})= \cap_{r>0}\mathcal{H}^{r,\gamma}_p(\mathbb{B})$.
\end{definition}

The space $\mathcal{H}^{r,\gamma}_p(\mathbb{B})$ is UMD and moreover, it is independent of the choice of the cut-off function $\omega$.
Equivalently, if $r\in \mathbb{N}\cup\{0\}$ then $\mathcal{H}^{r,\gamma}_p(\mathbb{B})$ is the space of all functions $u$ such that
$u\in H^{r}_{p,loc}(\mathbb{B}^{\circ})$ and
$$
|x-j|^{\frac{1}2-\gamma}((x-j)\partial_x)^{k}(\omega u) \in L^{p}\big(\mathbb{B}, (x-j)^{-1}dx\big),
$$
for any $j\in\{0,1\}$ and $k\le r$.
For each $n\in\mathbb{N}\cup\{0\}$, let us define the space
$$
\mathbb{S}_{\omega}^{n}=\{c_{0}\omega_{0}s^{n}+c_{1}\omega_{1}(1-s)^{n}\, |\, c_{0},c_{1}\in\mathbb{C}\},
$$ 
endowed with the norm $v\mapsto (|c_{1}|^{2}+|c_{2}|^{2})^{1/2}$. We denote $\mathbb{S}_{\omega}^{0}$ by $\mathbb{C}_{\omega}$ and $\mathbb{S}_{\omega}^{1}$ by $\mathbb{S}_{\omega}$. In the following lemma, we recall some embeddings and multiplicative properties of Mellin-Sobolev spaces; see \cite[Corollaries 2.8 and 2.9]{RS2}, \cite[Corollary 3.3]{RS3}, \cite[Lemma 5.2]{RS3} and \cite[Lemmas 6.2 and 6.3]{RS3}.

\begin{lemma}\label{abr}
\label{c0} Let $1<p,q<\infty$ and $r>1/p$. Then:
\begin{enumerate}[\rm (a)]
\item A function $u$ in $\mathcal{H}^{r,\mu}_p(\mathbb{B})$, $\mu\in\mathbb{R}$, is continuous on $(0,1)$ and close to each $j\in\{0,1\}$ satisfies 
\begin{eqnarray*}
|u(x)|\le c |x-j|^{\mu-1/2} \|u\|_{\mathcal{H}^{r,\mu}_p(\mathbb{B})},
\end{eqnarray*}
for a constant $c>0$ depending only on $\mathbb{B}$ and $p$. If $\mu\geq1/2$, then 
$$
\mathcal{H}^{r,\mu}_p(\mathbb{B})\hookrightarrow C(\mathbb{B}). 
$$
Moreover, if $u_{1}\in \mathcal{H}^{r,1/2}_p(\mathbb{B})$, $u_{2}\in \mathcal{H}^{r,\gamma}_p(\mathbb{B})$ and $\gamma\in\mathbb{R}$, then
$$
\|u_{1}u_{2}\|_{\mathcal{H}^{r,\gamma}_p(\mathbb{B})}\le C\|u_{1}\|_{\mathcal{H}^{r,1/2}_p(\mathbb{B})} \|u_{2}\|_{\mathcal{H}^{r,\gamma}_p(\mathbb{B})},
$$ 
for suitable $C>0$; in particular, up to the choice of an equivalent norm, the space $\mathcal{H}^{r,\gamma}_p(\mathbb{B})$ is a Banach algebra whenever $\gamma\ge 1/2$. 
\smallskip
\item Multiplication by an element in $\mathcal{H}^{\sigma+1/q,1/2}_q(\mathbb{B})$, $\sigma>0$, defines a bounded map on $\mathcal{H}^{\rho,\mu}_p(\mathbb{B})$, $\mu\in\mathbb{R}$, for each $\rho\in(-\sigma,\sigma)$.
\smallskip
\item If $\gamma> 1/2$ and $u\in \mathcal{H}_{p}^{r,\gamma}(\mathbb{B})\oplus{\mathbb C}_{\omega}$ is nowhere zero, then
$$1/u\in \mathcal{H}_{p}^{r,\gamma}(\mathbb{B})\oplus{\mathbb C}_{\omega},$$
that is
$\mathcal{H}_{p}^{r,\gamma}(\mathbb{B})\oplus{\mathbb C}_{\omega}$ is spectrally invariant in the space $C(\mathbb{B})$, and therefore closed under holomorphic functional calculus. In addition, if $U$ is a bounded open subset of $\mathcal{H}_{p}^{r,\gamma}(\mathbb{B})\oplus{\mathbb C}_{\omega}$ consisting of functions $v$ such that 
$\mathrm{Re}(v)\geq\alpha>0$ for some fixed $\alpha$, then the subset $\{1/v\, |\, v\in U\}$ of $\mathcal{H}_{p}^{r,\gamma}(\mathbb{B})\oplus{\mathbb C}_{\omega}$ is also bounded; its bound can be estimated by the bound of $U$ and by the constant $\alpha$. 
\smallskip
\item Let $\ell\in\mathbb{R}$ and $\eta\in(0,1)$. Then, the following embeddings hold
\begin{eqnarray*}
\lefteqn{\mathcal{H}^{\ell+2-2\eta+\varepsilon,\gamma+2-2\eta+\varepsilon}_p(\mathbb{B})\oplus\mathbb{C}_{\omega}}\\
&& \hookrightarrow(\mathcal{H}^{\ell+2,\gamma+2}_p(\mathbb{B})\oplus\mathbb{C}_{\omega},\mathcal{H}^{\ell,\gamma}_p(\mathbb{B}))_{\eta,q}\\
&&\hookrightarrow \mathcal{H}^{\ell+2-2\eta-\varepsilon,\gamma+2-2\eta-\varepsilon}_p(\mathbb{B})\oplus\mathbb{C}_{\omega},
\end{eqnarray*}
for every $\varepsilon>0$.
\end{enumerate}
\end{lemma}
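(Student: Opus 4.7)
The plan is to reduce each statement to a corresponding property on Bessel potential spaces on $\mathbb{R}$ via the Mellin transformation $M_\gamma$, localized by the cut-off $\omega=\omega_0+\omega_1$ near the two boundary points. The smooth interior contribution $(1-\omega)u$ lies in ordinary Sobolev spaces on a compact subinterval, where Sobolev embedding, pointwise products, interpolation and spectral invariance are classical; so the entire work is concentrated near the conical endpoints $j\in\{0,1\}$, and by symmetry it suffices to handle $j=0$.

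For part (a), first I would apply $M_\gamma$ to $\omega_0 u$ to land in $H^r_p(\mathbb{R})$. Since $r>1/p$, the Sobolev embedding $H^r_p(\mathbb{R})\hookrightarrow C(\mathbb{R})\cap L^\infty(\mathbb{R})$ gives a uniform pointwise bound on $e^{(\gamma-1/2)y}(\omega_0 u)(e^{-y})$. Undoing the change of variable $y=-\log x$ translates this into $|u(x)|\le c\,x^{\mu-1/2}\|u\|_{\mathcal{H}^{r,\mu}_p(\mathbb{B})}$, which yields continuity on $(0,1)$ and, when $\mu\ge 1/2$, boundedness up to the endpoints. For the algebra estimate I would write $M_\gamma(\omega_0 u_1u_2)=M_{1/2}(\omega_0 u_1)\cdot M_\gamma(\omega_0 u_2)$ modulo smooth remainders; the standard Moser-type product estimate on $H^r_p(\mathbb{R})$, valid because $r>1/p$, then furnishes the claimed bound. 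The Banach algebra property for $\gamma\ge 1/2$ follows with a suitable equivalent norm.

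For (b), the multiplication claim is obtained by interpolation: the case $\rho=\sigma$ reduces, after shifting weights by a smooth power, to (a) applied with weight index $1/2$; the case $\rho=-\sigma$ follows by duality using $(\mathcal{H}^{\rho,\mu}_p)'=\mathcal{H}^{-\rho,-\mu}_{p'}$ together with the $\rho=\sigma$ case; real interpolation then covers $\rho\in(-\sigma,\sigma)$. For the spectral invariance in (c), I would decompose $u=c_0\omega_0+c_1\omega_1+v$ with $v\in\mathcal{H}^{r,\gamma}_p(\mathbb{B})$; by (a) with $\gamma>1/2$ one has $v\in C(\mathbb{B})$ with $v(j)=0$, so $u$ is continuous and nowhere zero, and $c_j\ne 0$. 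Near $j=0$ one writes
\[
\frac{1}{u}=\frac{1}{c_0}\sum_{k\ge 0}\Bigl(-\frac{v+c_1\omega_1}{c_0}\Bigr)^k
\]
on a small neighborhood where the argument has norm $<1$ in the Banach algebra $\mathcal{H}^{r,\gamma}_p(\mathbb{B})\oplus\mathbb{C}_\omega$ (using the algebra property from (a)); patching with the analogous expansion at $j=1$ and a smooth correction on the interior yields $1/u$ in the same space. The uniform bound when $\mathrm{Re}(v)\ge\alpha$ is tracked directly through the constants in this Neumann expansion.

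For part (d), the interpolation embeddings are obtained by transporting the classical real interpolation identity $(H^{\ell+2}_p(\mathbb{R}),H^\ell_p(\mathbb{R}))_{\eta,q}=B^{\ell+2-2\eta}_{p,q}(\mathbb{R})$ and the elementary inclusions $H^{s+\varepsilon}_p\hookrightarrow B^s_{p,q}\hookrightarrow H^{s-\varepsilon}_p$ through the Mellin transform $M_\gamma$, which interchanges the weight shift with a translation in the smoothness variable. The one-dimensional summand $\mathbb{C}_\omega$ interpolates with itself since it is a common closed subspace of both endpoint spaces and admits a bounded projection. The step I expect to be most delicate is (c): one must verify that the Neumann series for $1/u$ really stays inside $\mathcal{H}^{r,\gamma}_p(\mathbb{B})\oplus\mathbb{C}_\omega$ rather than merely in some weaker weighted space; this hinges on the boundary trace of $u$ being the scalar $c_j$ and on the algebra property of (a), and it is exactly this algebraic-functional interplay, together with the bookkeeping of $r$, $\gamma$, and the $\mathbb{C}_\omega$ summand across parts (c) and (d), that makes the compilation nontrivial.
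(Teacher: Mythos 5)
The paper does not prove this lemma; it cites \cite[Corollaries 2.8 and 2.9]{RS2}, \cite[Corollary 3.3]{RS3}, \cite[Lemma 5.2]{RS3} and \cite[Lemmas 6.2 and 6.3]{RS3}, so there is no in-paper argument to compare against. Your overall strategy of conjugating with the Mellin transform and invoking known facts about Bessel potential spaces is sound and is exactly how these references proceed for part (a). The identity $M_\gamma(u_1u_2)=M_{1/2}(u_1)\,M_\gamma(u_2)$ and the Banach-algebra property of $H^r_p(\mathbb{R})$ for $r>1/p$ give (a) cleanly, and your treatment of (c) is correct in outline, with one caveat: you cannot infer that $(v+c_1\omega_1)/c_0$ has small $\mathcal{H}^{r,\gamma}_p$-norm from the fact that it is pointwise small near $j=0$. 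One needs to insert a dilated cut-off $\chi_n=\chi(n\cdot)$, prove that $\chi_n$ is a uniformly bounded multiplier on $\mathcal{H}^{r,\gamma}_p$ (via the translation-invariance of $H^r_p(\mathbb{R})$ after Mellin transform), and then use density of $C_c^\infty((0,1))$ to conclude $\|\chi_n v\|_{\mathcal{H}^{r,\gamma}_p}\to 0$. Only then does the Neumann series converge in the algebra norm on the localised piece.

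There are two more serious gaps. For (b), you try to prove the endpoint cases $\rho=\pm\sigma$ and interpolate, but the statement deliberately uses the \emph{open} interval $(-\sigma,\sigma)$ because the endpoints are false in general. The reduction to (a) at $\rho=\sigma$ does not work: (a) is a product estimate in which both factors live in the same scale $\mathcal{H}^{r,\cdot}_p$ with a common smoothness $r$ and integrability $p$, whereas here the multiplier is in $\mathcal{H}^{\sigma+1/q,1/2}_q$ and the target in $\mathcal{H}^{\rho,\mu}_p$, and there is no embedding identifying the two scales. The correct (and standard) route is to observe that $H^{\sigma+1/q}_q(\mathbb{R})\hookrightarrow C^\sigma(\mathbb{R})$ by Sobolev embedding, and that $C^\sigma$-functions are bounded pointwise multipliers on $H^\rho_p(\mathbb{R})$ precisely for $|\rho|<\sigma$ (a paraproduct estimate); no endpoint version is available, so the interpolation-from-endpoints strategy is not a repair. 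For (d), the Mellin transform does \emph{not} turn the pair $(\mathcal{H}^{\ell+2,\gamma+2}_p,\mathcal{H}^{\ell,\gamma}_p)$ into an unweighted pair of Bessel potential spaces, because the two endpoints carry \emph{different} weights: conjugating both by $M_\gamma$ sends the higher endpoint to $e^{-2y}H^{\ell+2}_p(\mathbb{R})$ (an exponentially weighted space), not to $H^{\ell+2}_p(\mathbb{R})$. The interpolation of an exponentially weighted Sobolev space against an unweighted one is not the classical Besov identity, and the phrase ``interchanges the weight shift with a translation in the smoothness variable'' is not correct; it is precisely this obstruction that forces the $\varepsilon$-loss in the statement. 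Finally, your claim that $\mathbb{C}_\omega$ ``is a common closed subspace of both endpoint spaces'' fails whenever $\gamma\ge 1/2$, since then $\omega_j\notin\mathcal{H}^{\ell,\gamma}_p(\mathbb{B})$, so the treatment of the $\mathbb{C}_\omega$-summand in (d) needs a different argument.
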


\begin{lemma}\label{sumSn}
Let $n\in\mathbb{N}\cup\{0\}$, $1<p<\infty$, $r>1/p$ and $\gamma>n+1/2$. The space
$$
Z_{n}=\mathcal{H}^{r,\gamma}_{p}(\mathbb{B})\oplus\bigoplus_{k=0}^{n}\mathbb{S}_{\omega}^{k}
$$
is a Banach algebra, spectrally invariant in the space $C(\mathbb{B})$ and closed under holomorphic functional calculus. In addition, if $U$ is a bounded open subset of $Z_{n}$ consisting of functions $v$ such that $\mathrm{Re}(v)\geq\alpha>0$ for some fixed $\alpha$, then the subset $\{1/v\, |\, v\in U\}$ of $Z_{n}$ is also bounded; its bound can be estimated by the bound of $U$ and by the constant $\alpha$. 
\end{lemma}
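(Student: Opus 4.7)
The plan is to prove all three assertions together, using Lemma \ref{abr}(c) (the case $n=0$) as a base and exploiting the canonical decomposition of each $u\in Z_n$ as $u=u_0+P_u$ with $u_0\in\mathcal{H}^{r,\gamma}_p(\mathbb{B})$ and $P_u\in\bigoplus_{k=0}^n\mathbb{S}_\omega^k$ playing the role of a degree-$n$ Taylor part at each boundary vertex. The strategy is to track how this decomposition transforms under multiplication and inversion.

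For the Banach-algebra property, I would expand $uv=u_0v_0+u_0P_v+P_uv_0+P_uP_v$. The first three terms land in $\mathcal{H}^{r,\gamma}_p(\mathbb{B})$: $u_0v_0$ by Lemma \ref{abr}(a) using $\mathcal{H}^{r,\gamma}_p(\mathbb{B})\subset\mathcal{H}^{r,1/2}_p(\mathbb{B})$, and the mixed terms because each $\omega_0 s^k$ (or $\omega_1(1-s)^k$) appearing in $P_u, P_v$ is a smooth bounded multiplier. For $P_uP_v$, the cross-vertex products $\omega_0 s^k\cdot\omega_1(1-s)^j$ are smooth and compactly supported in $(0,1)$, hence trivially in $\mathcal{H}^{\infty,\gamma}_p(\mathbb{B})$, while the same-vertex products split as $\omega_0^2 s^{k+j}=\omega_0 s^{k+j}+(\omega_0^2-\omega_0)s^{k+j}$; the second summand is again smooth and compactly supported in $(0,1)$, and the first belongs to $\mathbb{S}_\omega^{k+j}\subset Z_n$ when $k+j\le n$ and is absorbed into $\mathcal{H}^{r,\gamma}_p(\mathbb{B})$ otherwise, in view of the weight condition $\gamma>n+1/2$. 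The operator-norm estimate propagates from Lemma \ref{abr}.

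For spectral invariance, let $u\in Z_n$ be nowhere zero. Then $u(0),u(1)\ne 0$, and I would construct a polynomial $P_0(s)=b_0+b_1 s+\cdots+b_n s^n$ by solving the triangular linear system that forces the polynomial part of $uP_0$ modulo $\mathcal{H}^{r,\gamma}_p(\mathbb{B})$ to equal $1$; the $b_k$ are explicit rational expressions in the polynomial-part coefficients of $u$ at $s=0$, with denominators bounded below by a power of $|u(0)|$. Constructing the analogous $P_1$ at $s=1$ and setting $P=\omega_0 P_0+\omega_1 P_1\in\bigoplus_{k=0}^n\mathbb{S}_\omega^k$, the Banach-algebra step yields $1-uP\in\mathcal{H}^{r,\gamma}_p(\mathbb{B})$. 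On the other hand, $u$ can also be viewed as an element of $\mathcal{H}^{r,\gamma'}_p(\mathbb{B})\oplus\mathbb{C}_\omega$ for a weight $\gamma'\in(1/2,3/2)$ small enough to absorb the $\omega_0 s^k$ terms with $k\ge 1$, so Lemma \ref{abr}(c) gives $1/u\in Z_0\subset C(\mathbb{B})$. The multiplier bounds in Lemma \ref{abr}(a)--(b) then yield $(1-uP)\cdot(1/u)\in\mathcal{H}^{r,\gamma}_p(\mathbb{B})$, and the decomposition
\begin{equation*}
\frac{1}{u}=P+(1-uP)\cdot\frac{1}{u}
\end{equation*}
exhibits $1/u$ as an element of $Z_n$. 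Closure under holomorphic functional calculus is then formal, via the Dunford integral applied to the unital Banach algebra $Z_n$ (noting that $1=\omega_0+\omega_1+(1-\omega)\in\mathbb{C}_\omega+\mathcal{H}^{\infty,\gamma}_p(\mathbb{B})\subset Z_n$), which is inverse-closed in $C(\mathbb{B})$.

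The uniform bound on the inversion over the subset $U\subset Z_n$ with $\mathrm{Re}(v)\ge\alpha$ is obtained by chasing constants: the polynomial $P$ depends rationally on finitely many coefficients of $v$, each bounded by $\|v\|_{Z_n}$, with denominators controlled below by powers of $|v(0)|,|v(1)|\ge\alpha$; the remainder term is controlled through the Banach-algebra continuity of $v\mapsto 1-vP$ together with the uniform $Z_0$-bound on $1/v$ from Lemma \ref{abr}(c). The main technical obstacle is to verify rigorously that, for the polynomial part of a product or quotient modulo $\mathcal{H}^{r,\gamma}_p(\mathbb{B})$ to be well-defined and of degree at most $n$, the weight $\gamma$ must indeed lie just above $n+1/2$; this intertwining of polynomial order and weight is the heart of the calculation and is what dictates the hypothesis on $\gamma$.
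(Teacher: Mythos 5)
Your argument is correct but follows a genuinely different route from the paper's. For spectral invariance, the paper proceeds by induction on $n$: given a nowhere zero $u\in Z_{n+1}$, it factors $u = c_j\omega_j + |x-j|^{m_j}h_j$ near each vertex with $m_j$ the least nonzero exponent, writes $u^{-1}=(c_jh_j^{-1}+|x-j|^{m_j})^{-1}h_j^{-1}$ on the collar pieces, invokes the inductive hypothesis for the factors, and patches with a partition of unity; the base case $n=0$ is Lemma~\ref{abr}(c). You instead give a direct, non-inductive construction: solve a triangular system at each vertex to produce $P\in\bigoplus_{k\le n}\mathbb{S}_\omega^k$ with $1-uP\in\mathcal{H}^{r,\gamma}_p(\mathbb{B})$, observe that $1/u$ lies in $Z_0$ at a lower weight $\gamma'\in(1/2,3/2)$ by Lemma~\ref{abr}(c), and then read off $1/u\in Z_n$ from the algebraic identity $1/u=P+(1-uP)\cdot(1/u)$ together with the multiplier bounds of Lemma~\ref{abr}(a)--(b). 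Your approach is more explicit — the coefficients of $P$ are rational in the Taylor data of $u$ at the endpoints, which makes the uniform bound $\|1/v\|_{Z_n}\lesssim_{\alpha,U}1$ essentially immediate — while the paper's induction is somewhat shorter to write down but hides the quantitative dependence inside the recursion. One point you raise at the end deserves emphasis: for both the Banach-algebra step (absorbing the same-vertex products $\omega_j|x-j|^k\cdot\omega_j|x-j|^{k'}$ with $k+k'>n$) and the inversion step (absorbing the tail of $uP_j$), one needs $\omega_j|x-j|^{n+1}\in\mathcal{H}^{r,\gamma}_p(\mathbb{B})$, i.e.\ $\gamma<n+3/2$. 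This is satisfied in every application in the paper (where $n$ is chosen maximal with $n<\alpha-3/2$), but it is not visible in the bare hypothesis $\gamma>n+1/2$ of the lemma, nor in the paper's one-line justification of the Banach-algebra property; your proof has the merit of surfacing it.
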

\begin{proof}
Due to Lemma \ref{abr} (a) and the fact that multiplication by an element of $\mathbb{S}_{\omega}^{k}$, $k\in\mathbb{N}\cup\{0\}$, induces a bounded map from $\mathcal{H}^{r,\gamma}_{p}(\mathbb{B})$ to $\mathcal{H}^{r,\gamma+k}_{p}(\mathbb{B})$, it follows that the space $Z_{n}\hookrightarrow C(\mathbb{B})$ is a Banach algebra, for any $n\in\mathbb{N}\cup\{0\}$. To show the spectrally invariance property in $C(\mathbb{B})$ and the closedness under holomorphic functional calculus, we proceed by induction in $n$. For $n=0$, the statement is true due to Lemma \ref{abr} (c). Assume that the result holds for some $n\in\mathbb{N}\cup\{0\}$. Let $u\in Z_{n+1}$ be a nowhere zero function and 
$$
m_{j}=\{\min k\in\mathbb{N}\, | \, \text{the $\mathbb{S}_{\omega}^{k}$-term of $u$ is not equal to zero at $j$}\}. 
$$
On the support of $\omega_{j}$ we may write
$$u=c_{j}\omega_{j}+|x-j|^{m_{j}}h_{j},$$
where $c_{j}\in \mathbb{C}$ and 
$$
\omega_{j}h_{j}\in \mathcal{H}^{r,\gamma-m_{j}}_{p}(\mathbb{B})\oplus\bigoplus_{k=0}^{n-m_{j}}\mathbb{S}_{\omega}^{k}.
$$
By Lemma \ref{abr} (a), we obtain
$$u(j)=c_{j}\neq0.$$ 
Let $\{\phi_{k}\}_{k\in\{1,\dots,N\}}$ be a partition of unity on $\mathbb{B}$. Let $\widetilde{c}_{j}\omega_{j}\neq0$ be the $\mathbb{S}_{\omega}^{0}$-component of $\omega_{j}h_{j}$. Again by Lemma \ref{abr} (a), it follows that $h_{j}-\widetilde{c}_{j}\omega_{j}$ tends to zero as $x$ tends to $j$. Hence, there exists $\delta>0$ such that $h_{0}$ is pointwise invertible in $[0,\delta)$ and $h_{1}$ is pointwise invertible in $(1-\delta,1]$. Assume that $\phi_{1}$ is supported on $[0,\delta)$ and $\phi_{N}$ on $(1-\delta,1]$. On the support of $\phi_{k}$, $k\in\{0,N\}$, we have
$$
u^{-1}=(c_{j}h_{j}^{-1}+|x-j|^{m_{j}})^{-1}h_{j}^{-1}.
$$
By the induction hypothesis, 
$$
\phi_{k}\{h_{j}^{-1}, (c_{j}h_{j}^{-1}+|x-j|^{m_{j}})^{-1}\}\in \mathcal{H}^{r,\gamma-m_{j}}_{p}(\mathbb{B})\oplus\mathbb{S}_{\omega}^{m_{j}}\oplus\bigoplus_{k=0}^{n-m_{j}}\mathbb{S}_{\omega}^{k},
$$
which implies that $\phi_{k}u^{-1}\in Z_{n+1}$, for $k\in\{0,N\}$. In the interior, by Lemma \ref{abr} (c), we have
$$
\phi_{k}u^{-1}\in \mathcal{H}^{r,\gamma}_{p}(\mathbb{B})\hookrightarrow Z_{n+1},\quad k\in\{1\dots,N-1\}.
 $$
Therefore, from
$$
1=u\sum_{k=0}^{N}\phi_{k}u^{-1},
$$
we obtain that $u^{-1}\in Z_{n+1}$. The closedness under holomorphic functional calculus follows by the formula
$$
f(u)=\frac{1}{2\pi i}\int_{\Gamma}f(-\lambda)(u+\lambda)^{-1}d\lambda,
$$
where $\Gamma$ is a closed simple path around $\mathrm{Ran}(-u)$, within the holomorphicity area of $f$. The boundedness of the set $\{1/u\, |\, u\in U\}$ follows by the above construction.
\end{proof}

Cone differential operators act naturally on scales of weighted Mellin-Sobolev spaces, i.e. such an operator $A$ of order $\mu$ induces a bounded map
$$
A: \mathcal{H}^{r+\mu,\gamma+\mu}_p(\mathbb{B}) \rightarrow \mathcal{H}^{r,\gamma}_p(\mathbb{B}),
$$
for any $p\in(1,\infty)$ and $r,\gamma\in\mathbb{R}$.
However, we will regard $A$ as an unbounded operator in $\mathcal{H}^{r,\gamma}_p(\mathbb{B})$, $p\in(1,\infty)$, $r,\gamma\in\mathbb{R}$, with domain $C_{c}^{\infty}(\mathbb{B}^{\circ})$. If $A$ is $\mathbb{B}$-elliptic, the domain of its minimal extension $\underline{A}_{\min}$ (that is the closure of $A$) is given by 
$$
\mathcal{D}(\underline{A}_{\min})=\Big\{u\in {\bigcap}_{\varepsilon>0}\mathcal{H}^{r+\mu,\gamma+\mu-\varepsilon}_p(\mathbb{B}) \, |\, Au\in \mathcal{H}^{r,\gamma}_p(\mathbb{B})\Big\}.
$$ 
If the conormal symbol of $A$ has no zeros on the line
\begin{equation}\label{linedom}
\Big\{\lambda\in\mathbb{C}\,|\, \mathrm{Re}(\lambda)= 1/2-\gamma-\mu\Big\},
\end{equation}
then we have
$$
\mathcal{D}(\underline{A}_{\min})=\mathcal{H}^{r+\mu,\gamma+\mu}_p(\mathbb{B}).
$$ 
The domain $\mathcal{D}(\underline{A}_{\max})$ of the maximal extension $\underline{A}_{\max}$ of $A$ is defined by
\begin{equation*}
\mathcal{D}(\underline{A}_{\max})=\Big\{u\in\mathcal{H}^{r,\gamma}_p(\mathbb{B}) \, |\, Au\in \mathcal{H}^{r,\gamma}_p(\mathbb{B})\Big\}.
\end{equation*}
It turns out that $\mathcal{D}(\underline{A}_{\max})$ and $\mathcal{D}(\underline{A}_{\min})$ differ by an $r,p$-independent finite dimensional space $ \mathcal{E}_{A,\gamma}$,
i.e.
\begin{equation*}\label{dmax1}
\mathcal{D}(\underline{A}_{\max})=\mathcal{D}(\underline{A}_{\min})\oplus\mathcal{E}_{A,\gamma}.
\end{equation*}
The space $\mathcal{E}_{A,\gamma}$ is called {\em asymptotics space} and it consists of linear combinations of $C^{\infty}(\mathbb{B}^\circ)$-functions, that 
around each boundary point $j\in\{0,1\}$, are of the form
$$\omega(x)|x-j|^{-\rho}\log^{\eta}(|x-j|),$$
where $\eta\in\mathbb{N}\cup\{0\}$ and $\rho\in \mathbb{C}$ satisfies
$$
\mathrm{Re}(\rho)\in I_{\gamma}=[1/2-\gamma-\mu,1/2-\gamma).
$$

If the coefficients $a_k$ in \eqref{Aop} close to each boundary point $j\in\{0,1\}$ are constant, then we have precisely
\begin{equation}\label{Espacesharp}
\mathcal{E}_{A,\gamma}=\bigoplus_{j\in\{0,1\}}\mathcal{E}_{A,\gamma,j}= \bigoplus_{j\in\{0,1\}}\bigoplus_{\rho\in I_{\gamma}}\mathcal{E}_{A,\gamma,j,\rho}.
\end{equation}
The index $\rho$ in \eqref{Espacesharp} runs through all possible zeros of $\sigma_{M}(A)(j,\cdot)$ in $I_{\gamma}$. Furthermore, for each such $\rho$, the space $\mathcal{E}_{A,\gamma,j,\rho}$ consists of functions that are zero, at points where $\omega_{j}$ is zero, and on the support of $\omega_{j}$ are of the form
$$c\,\omega_{j}(x)|x-j|^{-\rho}\log^{\eta}(|x-j|).$$
Here $c\in\mathbb{C}$, $\eta\in\{0,\dots,m_{\rho}\}$ and $m_{\rho}\in\mathbb{N}\cup\{0\}$ is the multiplicity of $\rho$.
All possible closed extensions $\underline{A}_s$ of $A$, also called {\em realizations}, correspond to subspaces
$\underline{\mathcal{E}}_{A,\gamma}$ of $\mathcal{E}_{A,\gamma}$.

We associate to $A$ the {\em model cone operators} $A_{j,\wedge}$, $j\in\{0,1\}$, defined by 
$$
A_{j,\wedge}=x^{-\mu}\sum_{k=0}^{\mu}a_{k}(j)(-x\partial_{x})^{k},
$$
which in our setting act on smooth compactly supported functions on the half-line
$\mathbb{R}_{+}=[0,+\infty).$

\begin{definition}
For any $p\in(1,\infty)$ and $r,\gamma\in\mathbb{R}$ define $\mathcal{K}_{p}^{r,\gamma}(\mathbb{R}_{+})$ to be the space of all distributions $u$ satisfying $\omega_{0} u\in \mathcal{H}^{r,\gamma}_p(\mathbb{B})$ and $(1-\omega_{0})u\in H^{r}_{p}(\mathbb{R})$, where $(1-\omega_{0})u$ is naturally extended everywhere on $\mathbb{R}$ by zero and one.
\end{definition}

The operators $A_{j,\wedge}$ act naturally on scales of Sobolev spaces $\mathcal{K}_{p}^{r,\gamma}(\mathbb{R}_{+})$, i.e. 
$$
A_{j,\wedge}\in\mathcal{L}(\mathcal{K}_{p}^{r+\mu,\gamma+\mu}(\mathbb{R}_{+}),\mathcal{K}_{p}^{r,\gamma}(\mathbb{R}_{+})), 
$$
for any $p\in(1,\infty)$ and $r,\gamma\in\mathbb{R}$. In the same way as above, each
$$A_{j,\wedge}: C_{c}^{\infty}(\mathbb{R}_{+})\rightarrow \mathcal{K}_{p}^{r,\gamma}(\mathbb{R}_{+}),$$
considered now as an unbounded operator, admits several closed extensions. More precisely,
\begin{equation}\label{dommodcone1}
\mathcal{D}(\underline{A}_{j,\wedge,\max})=\mathcal{D}(\underline{A}_{j,\wedge,\min})\oplus\mathcal{F}_{A_{j,\wedge},\gamma}.
\end{equation}
Here,
$$
\mathcal{D}(\underline{A}_{j,\wedge,\min})=\big\{u\in {\bigcap}_{\varepsilon>0}\mathcal{K}^{r+\mu,\gamma+\mu-\varepsilon}_p(\mathbb{R}_{+}) \, |\, Au\in \mathcal{K}^{r,\gamma}_p(\mathbb{R}_{+})\big\}
$$ 
and
\begin{equation}\label{dommodcone2}
\mathcal{D}(\underline{A}_{j,\wedge,\min})=\mathcal{K}^{r+\mu,\gamma+\mu}_p(\mathbb{R}_{+})
\end{equation}
if and only if $\sigma_{M}(A)(j,\cdot)$ has no zero on the line \eqref{linedom}. In addition, $\mathcal{F}_{A_{j,\wedge},\gamma}$ coincides with
$\mathcal{E}_{A,\gamma,j}$ in \eqref{Espacesharp} by replacing $\omega_{j}$ with a fixed cut-off function
$\omega_{\wedge}\in C^{\infty}(\mathbb{R}_{+})$ near zero. In fact, there exists a natural isomorphism 
\begin{equation}\label{isomthheta}
\Theta: \mathcal{E}_{A,\gamma}\rightarrow {\bigoplus}_{j\in\{0,1\}}\mathcal{F}_{A_{j,\wedge},\gamma},
\end{equation}
see \cite[Theorem 4.7]{GKM2} for details. Clearly, each closed extension $\underline{A}_{j,\wedge,r}$ of $A_{j,\wedge}$ is obtained by a particular choice of a subspace of $\mathcal{F}_{A_{j,\wedge},\gamma}$.

Let $p\in(1,\infty)$, $r,\gamma\in\mathbb{R}$ and let $\underline{A}_{r}$ be the closed extension of a $\mathbb{B}$-elliptic operator $A$ in $\mathcal{H}^{r,\gamma}_p(\mathbb{B})$ with domain 
\begin{equation}\label{closedextofA}
\mathcal{D}(\underline{A}_{r})=\mathcal{D}(\underline{A}_{\min})\oplus\underline{\mathcal{E}}_{A,\gamma}, 
\end{equation}
where $\underline{\mathcal{E}}_{A,\gamma}$ is a subspace of $\mathcal{E}_{A,\gamma}$. For each $\theta\in[0,\pi)$, consider the following ellipticity conditions for the operator $\underline{A}_{r}$:
\begin{enumerate}[\rm (E1)]
\item Both $\sigma_{\psi}(A)+\lambda$ and $\widetilde{\sigma}_{\psi}(A)+\lambda$ are invertible for each $\lambda\in \mathbb{C}\backslash S_{\theta}$, where $S_{\theta}$ is the sector in Definition \ref{defsec}.
\medskip
\item For each $j\in\{0,1\}$ the polynomial $\sigma_{M}(A)(j,\cdot)$ has no zeroes on
$$\big\{\lambda\in\mathbb{C}\, |\,\mathrm{Re}(\lambda)=1/2-\gamma-\mu\,\, \text{ or}\,\,\, \mathrm{Re}(\lambda)=1/2-\gamma\big\}.$$
\item There exists a constant $R>0$ such that, for each $j\in\{0,1\}$, the resolvent $(\lambda-\underline{A}_{j,\wedge,0})^{-1}$ is defined for all
$\lambda\in S_{\theta}$ with $|\lambda|\geq R$, where 
$$
\mathcal{D}(\underline{A}_{j,\wedge,0})=\mathcal{K}^{\mu,\gamma+\mu}_{2}(\mathbb{R}_{+})\oplus \Theta\big(\underline{\mathcal{E}}_{A,\gamma}\big).
$$
Additionally, we assume that $\mathcal{D}(\underline{A}_{j,\wedge,0})$ is invariant under dilations, i.e. if $u\in \Theta(\underline{\mathcal{E}}_{A,\gamma})$ then for each $r>0$,
$$x\mapsto u(rx)\in \Theta\big(\underline{\mathcal{E}}_{A,\gamma}\big).$$
\end{enumerate}

The above mentioned condition (E2) is technical and guarantees that the minimal domain is equal to a Mellin-Sobolev space. The condition (E1) and a weaker version of (E3) were used in \cite[Theorems 6.9 and 6.36]{GKM} and \cite[Theorem 9.1]{GKM2} in order to show sectoriality for $\mathbb{B}$-elliptic cone operators. The same result was extended to boundary value problems; see \cite[Theorems 8.1 and 8.26]{Kra}. Furthermore, in \cite[Theorem 2]{CSS}, \cite[Theorem 3.3]{RS2} and \cite[Theorem 4.3]{SS} it was shown that (E1) and (E3) imply even bounded imaginary powers for $\underline{A}_{r}$. For our purposes, we will use the following improvement.

\begin{theorem}\label{hinftyth}
Let $\theta\in[0,\pi)$, $p\in(1,\infty)$, $r,\gamma\in\mathbb{R}$ and let $\underline{A}_{r}$ be the closed extension of $A$ in $\mathcal{H}^{r,\gamma}_p(\mathbb{B})$ with domain \eqref{closedextofA}. If the conditions {\em (E1)}, {\em(E2)}, {\em(E3)} are satisfied, then there exists $c>0$ such that
$c-\underline{A}_{r}\in \mathcal{H}^{\infty}(\theta).$
\end{theorem}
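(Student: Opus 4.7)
The plan is to upgrade the known sectoriality statement, already available in \cite{GKM}, \cite{GKM2}, \cite{Kra} under conditions (E1) and (E3), to bounded $H^\infty$-calculus via a careful analysis of the resolvent structure. Condition (E2) ensures that the minimal domain of $A$ coincides with $\mathcal{H}^{r+\mu,\gamma+\mu}_p(\mathbb{B})$, so that $\mathcal{D}(\underline{A}_{r})$ is the direct sum of this Mellin-Sobolev space with the prescribed finite-dimensional subspace $\underline{\mathcal{E}}_{A,\gamma}$. Conditions (E1) and (E3), combined with the compatibility of $\underline{\mathcal{E}}_{A,\gamma}$ with the model cone domains via the isomorphism $\Theta$ in \eqref{isomthheta}, yield existence of a large $c>0$ such that $c-\underline{A}_r\in\mathcal{P}(\theta)$, with resolvent $(\lambda+c-\underline{A}_r)^{-1}$ satisfying the standard norm bound for $\lambda\in S_\theta$.

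To establish the $H^\infty$ bound, first I would construct a parametrix of $(\lambda+c-\underline{A}_r)^{-1}$ within Schulze's cone algebra, parametrized by $\lambda\in S_\theta$. Near each boundary point $j\in\{0,1\}$ this decomposes as a sum of a parameter-dependent Mellin pseudodifferential operator, built from $\sigma_M(A)(j,\cdot)$ through a Mellin-quantization procedure, plus a Green/smoothing operator carrying the asymptotics information associated with $\Theta(\underline{\mathcal{E}}_{A,\gamma})$; in the interior the parametrix reduces to a standard parameter-dependent pseudodifferential operator. The dilation invariance required in (E3) is precisely what makes the model cone resolvent $(\lambda-\underline{A}_{j,\wedge,0})^{-1}$ behave homogeneously in $\lambda$, which is the crucial ingredient for obtaining symbol estimates that are uniform in $\lambda\in S_\theta$.

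Next I would plug this parametrix into the Dunford integral
\begin{equation*}
f(c-\underline{A}_r)=\frac{1}{2\pi i}\int_{\Gamma_{\theta+\varepsilon_0}}f(\lambda)(\lambda+c-\underline{A}_r)^{-1}d\lambda
\end{equation*}
for $f\in H_0^\infty(\theta)$. Splitting the integrand according to the parametrix decomposition, each piece must be estimated in $\mathcal{L}(\mathcal{H}_p^{r,\gamma}(\mathbb{B}))$ by $C\sup_{\lambda\in\mathbb{C}\backslash S_\theta}|f(\lambda)|$. For the Mellin part, this reduces to a vector-valued Fourier multiplier / Mellin multiplier estimate: the symbol class, together with the UMD property of $\mathcal{H}_p^{r,\gamma}(\mathbb{B})$ and an $R$-boundedness argument of Kalton-Weis type, provides the uniform bound. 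For the finite-rank Green contributions, the explicit description of $\mathcal{F}_{A_{j,\wedge},\gamma}$ lets one compute the action on the asymptotics space directly and control the resulting integral by the $H^\infty$-norm of $f$ using the decay of $(\lambda+c-\underline{A}_{j,\wedge,0})^{-1}$ on the sector. The smoothing remainders are handled by their strong decay in $\lambda$.

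The main obstacle is the second step: obtaining the parametrix with estimates that are simultaneously uniform in $\lambda$ and compatible with the abstract $R$-boundedness framework required to pass from BIP-type estimates to the full $H^\infty$-calculus. Once this is in place, the rest is a synthesis of Lemma \ref{abr}, the characterization of the closed extensions via \eqref{closedextofA}, and standard properties of the Dunford integral together with the UMD property of $\mathcal{H}_p^{r,\gamma}(\mathbb{B})$. The shift constant $c>0$ is introduced only to guarantee that $0\in\rho(-(c-\underline{A}_r))$, so that the integration contour $\Gamma_{\theta+\varepsilon_0}$ may be used in \eqref{hgsta} without further modification.
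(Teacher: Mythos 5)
The paper itself does not prove Theorem~\ref{hinftyth}: immediately after the statement, the remark records that ``the above result is proved in \cite[Theorem 5.2]{SS1}'' and that condition (E3) here is equivalent (by \cite[Lemma 2.5 and (7.1)]{GKM2}) to the ellipticity condition used in that reference. So the paper's ``proof'' is a citation plus a remark identifying the two formulations of (E3). Your proposal, by contrast, attempts to reconstruct the argument of the cited reference from scratch: parametrix in Schulze's cone algebra, Dunford integral, Mellin multiplier estimates in a UMD space for the interior/Mellin part, and separate treatment of the finite-rank Green terms and smoothing remainders. This is indeed the strategy pursued in \cite{SS1}, so as an outline of the route it is broadly consistent.

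However, your proposal has two problems that keep it from standing on its own. First, you explicitly acknowledge but do not resolve the central difficulty (uniform-in-$\lambda$ parametrix estimates compatible with the multiplier framework); what you have written is a roadmap, not a proof, and the part you flag as ``the main obstacle'' is precisely the nontrivial content of \cite[Theorem 5.2]{SS1}. Second, the phrase about ``passing from BIP-type estimates to the full $H^\infty$-calculus'' via $R$-boundedness is conceptually off: $\mathcal{BIP}$ is strictly weaker than $\mathcal{H}^\infty(\theta)$, and in a general UMD space one cannot upgrade a BIP bound to an $H^\infty$-bound by an $R$-boundedness argument. What actually has to be shown is a bound on $\|f(c-\underline{A}_r)\|_{\mathcal{L}(\mathcal{H}^{r,\gamma}_p(\mathbb{B}))}$ that is \emph{uniform over the unit ball of $H_0^\infty(\theta)$}, and this requires controlling the full symbolic and operator-valued structure of the resolvent, not merely its imaginary powers. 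Since the paper delegates all of this to \cite{SS1}, the honest comparison is that your sketch is compatible with the cited reference but does not fill in the content that the paper leaves to that reference, and it introduces a misstatement about the BIP-to-$H^\infty$ implication along the way.
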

\begin{remark}
The above result is proved in \cite[Theorem 5.2]{SS1}. We point out that, due to \cite[Lemma 2.5 and (7.1)]{GKM2}, the ellipticity condition (E3) in \cite[p. 1405]{SS1} is equivalent to (E3) above. 
\end{remark}

\section{Short time existence and regularity of the flow}\label{sec6}
In this section, we are going to investigate evolution by curvature flow of curves, lying in singular Riemann surfaces and joining two singularities.

\subsection{Short time existence}\label{assumptongama}
Let $\gamma:\mathbb{B}\to(\Sigma,\gind)$ be a regular curve such that $\gamma(0)$ and $\gamma(1)$ are conformal conical points of orders $\beta_0$ and $\beta_1$, respectively. Assume that:
\begin{enumerate}[\rm(a)]
\item The orders satisfy $-1<\beta_{0}\leq \beta_{1}<0$.
\medskip
\item The image $\gamma((0,1))$ does not meet any singular point of $(\Sigma,\gind)$.
\medskip
\item
There exist conformal coordinate charts $(U_{j},\psi_{j})$ around the points $\gamma(j)$, $j\in\{0,1\}$, and $\varepsilon>0$ such
that the curves $\psi_0\circ\gamma:(0,\varepsilon)\to(\C,|dz|^2)$
and $\psi_1\circ\gamma:(1-\varepsilon,1)\to(\C,|dz|^2)$ are parametrized by arc-length, 
$(\psi_{j}\circ\gamma)(j)=(0,0)$ and $(\psi_{j}\circ \gamma)_{s}(j)=(1,0).$ Moreover, we assume that 
\begin{equation}\label{mainassumption}
\omega_{j}(\psi_{j}\circ\gamma)\in \mathcal{H}_{p}^{b,\alpha}(\mathbb{B})\oplus\mathbb{S}_{\omega}
\end{equation}
for certain $p\in(1,\infty)$, where
$$b\in[4,\infty)\cup\{\infty\}\,\,\text{and}\,\,\alpha> \max\{3/2-\beta_{0},5/2+\beta_{1}\}>3/2,$$
and $\omega_{j}:\mathbb{B}\rightarrow \mathbb{R}$ is a cut-off function near $j\in\{0,1\}$.
\medskip
\item For any $s_0\in\mathbb{B}^{\rm o}$ and a coordinate chart $(U_{s_0},\psi_{s_0})$ around the point $\gamma(s_0)\in\Sigma$, it holds
$\psi_{s_0}\circ\gamma\in{H}^b_{p,loc}(\mathbb{B}^{\rm o})$.
\end{enumerate}
A regular curve $\gamma\in C^{\infty}(\mathbb{B};\Sigma)$ satisfy the regularity conditions (a)-(d). To simplify the notation, from now on we omit compositions with the maps $\psi_{j}$.

We will consider the evolution equation
\begin{equation}\label{Ioannina1}
(\varGamma_{t})^{\perp}=\bf k_{\gind},
\end{equation}
where $(\cdot)^{\perp}$ stands for the normal projection with respect to $\gind$ on the normal space of $\varGamma$. As in Section \ref{CRSSM}, we
may represent the evolved curves as graphs over $\gamma$ of a time-dependent function $w$ on $\mathbb{B}$. 
In the chart $(U_{j},\psi_{j})$, the flow is represented in the form
\begin{equation}\label{gflow}
\sigma(s,t)=\gamma(s)+w(s,t)\xi(s),
\end{equation}
where $\xi$ is the unit normal along $\gamma$ with respect to the euclidean metric.
Denote by $\xi^{\gind}_\sigma$ the unit normal with respect to $\gind$ of $\sigma$. From \eqref{s4}, we get that
$$
(\sigma_t)^{\perp}=(w_t\xi)^\perp=w_t\gind(\xi,\xi^{\gind}_{\sigma})\xi^{\gind}_{\sigma}=w_t\langle\xi,\xi_{\sigma}\rangle\xi_{\sigma}
=\frac{1-kw}{\sqrt{(1-kw)^2+w^2_s}} w_t\,\xi_\sigma.
$$
From the equations \eqref{s1}, \eqref{s2}, \eqref{s3}, and \eqref{s5} we deduce that the curve
$\varGamma$ is evolving by \eqref{Ioannina1} if and only if $w$ satisfies a quasilinear parabolic equation, which near the endpoints has the following degenerate form 
\begin{eqnarray}\label{kwer1}
\lefteqn{w_t=|\gamma+w\xi|^{-2\beta_j}e^{-2h}\Big(\frac{w_{ss}}{(1-kw)^2+w_s^2}+\frac{w_s\langle Dh,\gamma_s\rangle}{1-kw}
-\langle Dh,\xi\rangle}\\
&&\hspace{-20pt}+\frac{2kw^2_s+k_sw_sw+k(1-kw)^2}{(1-kw)\big((1-kw)^2+w_s^2\big)}
+\beta_j\frac{w_s\langle\gamma,\gamma_s\rangle-(1-kw)\langle\gamma,\xi\rangle-w(1-kw)}{|\gamma+w\xi|^2(1-kw)}\Big)\nonumber
\end{eqnarray}
with initial data
\begin{equation}\label{kwer2}
w_0(\cdot)=w(\cdot, 0)=0.
\end{equation}

Since at the charts we have $\gamma(j)=(0,0)$ and $\gamma_s(j)=(1,0)$, we may write $\gamma$ in the form
$$\gamma=|s-j|\varphi$$
for values of $s$ close to $j$, where $\varphi(j)=(1,0)$. 
Denote by $\gamma_{1}$, $\gamma_{2}$, $\varphi_{1}$, $\varphi_{2}$, $\xi_{1}$ and $\xi_{2}$ the components of $\gamma$, $\varphi$ and $\xi$ respectively. Due to \eqref{mainassumption}, we have
\begin{equation}\label{inreg1}
\omega\{\varphi_{1},\varphi_{2}\}\in \mathcal{H}_{p}^{b,\alpha-1}(\mathbb{B})\oplus\mathbb{C}_{\omega} \quad \text{and} \quad \omega\{\xi_{1},\xi_{2}\}\in \mathcal{H}_{p}^{b-1,\alpha-1}(\mathbb{B})\oplus\mathbb{C}_{\omega}.
\end{equation}
Recall that $k=\langle \gamma_{ss},\xi\rangle$. From the computations in the proof of Lemma \ref{lemm ana p} and Lemma \ref{abr} (a) and (b), we have
\begin{equation}\label{inreg2}
\omega k\in \mathcal{H}_{p}^{b-2,\alpha-2}(\mathbb{B}), \quad \omega g\in \mathcal{H}_{p}^{b,\alpha-2}(\mathbb{B}), \quad \text{and} \quad \omega\rho\in \mathcal{H}_{p}^{b-1,\alpha-2}(\mathbb{B}),
\end{equation}
where the functions $g$, $\rho$ are defined for values of $s$ close to $j$ and given by 
\begin{equation}\label{greqexp}
g=|s-j|^{-1}(1-|\varphi|^2) \quad \text{and} \quad \rho=|s-j|^{-2}\langle\gamma,\xi\rangle.
\end{equation}
Let 
\begin{equation}\label{nalpha}
n=\max\{m\in \mathbb{N}\cup\{0\}\, |\, m<\alpha-3/2\}.
\end{equation}
By the analyticity of $h$ and Lemma \ref{sumSn}, there exists an $R>0$ such that for each
$$\lambda\in\{z\in\mathbb{C}\, |\, |z|<R\}$$
it holds
\begin{equation}\label{inreg3}
\omega\{\vartheta(\cdot,\lambda),\zeta_{2}(\cdot,\lambda),\zeta_{2}(\cdot,\lambda)\}\in \mathcal{H}_{p}^{b-1,\alpha-1}(\mathbb{B})\oplus\bigoplus_{m=0}^{n}\mathbb{S}_{\omega}^{m},
\end{equation}
where
$$
\vartheta(\cdot,\lambda)=h(\gamma(\cdot)+\lambda\xi(\cdot))
$$
and $\zeta_{1}$, $\zeta_{2}$ are the components of
$$
\zeta(\cdot,\lambda)=Dh(\gamma(\cdot)+\lambda\xi(\cdot)).
$$

Let $\varepsilon\in(0,{1}/{2})$ and let $d:[0,1]\rightarrow [0,1]$
be a $C^{\infty}$-function such that
\begin{equation}
d(s)=\left\{
\begin{array}{ll}
s&\quad \text{on} \quad [0,\varepsilon), \\
\ge\min\{\varepsilon,1-\varepsilon\}&\quad \text{on}\quad[\varepsilon,1-\varepsilon], \\
1-s &\quad \text{on}\quad(1-\varepsilon,1].
\end{array}
\right.\label{tangent}
\end{equation}
We claim that there is a solution of the form
$$w(s,t)=d(s)u(s,t)$$
for any $(s,t)\in\mathbb{B}\times [0,T)$, where $T>0$ and
$u$ is continuous in space-time and smooth for values $s\in(0,1)$.
If this holds, then we deduce immediately that the flow will stay fixed at each singular point.

The evolution equation of $w$, close to a boundary point $j\in\{0,1\}$, can be written equivalently in terms of $u$ in the form
\begin{eqnarray*}
u_t&=&|s-j|^{-2\beta_j-1}e^{-2\vartheta(|s-j|,|s-j|u)}|\varphi+u\xi|^{-2\beta_j}\bigg(\frac{(-1)^{j}\beta_j |\varphi|^2u_s}{|\varphi+u\xi|^{2}(1-|s-j|ku)}\\
&& +\frac{|s-j|u_{ss}+(-1)^{j}2u_s}{(1-|s-j|ku)^2+(u+(-1)^{j}|s-j|u_s)^2}-\frac{\beta_{j}\rho}{|\varphi+u\xi|^{2}}\\
&&+\frac{k(1-|s-j|ku)^{2}+2k(u+(-1)^{j}|s-j|u_s)^2}{(1-|s-j|ku)((1-|s-j|ku)^2+(u+(-1)^{j}|s-j|u_s)^2)}\\
&&+\frac{(-1)^{j}|s-j|k_s(u^2+(-1)^{j}|s-j|uu_s)}{(1-|s-j|ku)((1-|s-j|ku)^2+(u+(-1)^{j}|s-j|u_s)^2)}\\
&&+\beta_j\,\frac{-gu+(-1)^{j}\langle \varphi,\varphi_s\rangle u+ku^2+|s-j|\langle\varphi,\varphi_s\rangle u_s}{|\varphi+u\xi|^2(1-|s-j|ku)}-\langle\zeta,\xi\rangle\\
&&+\frac{(u+(-1)^{j}|s-j|u_s) \big(\langle\zeta,\varphi\rangle
+(-1)^{j}\langle\zeta,|s-j|\varphi_s\rangle\big)}{1-|s-j|ku}\bigg).
\end{eqnarray*}

Around each boundary point $j\in\{0,1\}$, we perform the following change 
$$
|s-j|^{1+\beta_{j}}=(1+\beta_{j})|x-j|.
$$
Then
\begin{equation}\label{xtosderivartves}
|s-j|\partial_ s=(1+\beta_{j})|x-j|\partial_x \quad \text{and} \quad (1+\beta_{j})|s-j|^{-1}ds=|x-j|^{-1}dx.
\end{equation}
For each $m\in\mathbb{N}\cup\{0\}$, define the space
$$
\mathbb{X}_{\omega}^{m}=\big\{c_{0}\omega_{0}x^{m/(1+\beta_{0})}+c_{1}\omega_{1}(1-x)^{m/(1+\beta_{1})}\, |\, c_{0},c_{1}\in\mathbb{C}\big\},
$$ 
endowed with the norm $v\mapsto (|c_{1}|^{2}+|c_{2}|^{2})^{1/2}$. Note that $\mathbb{X}_{\omega}^{0}=\mathbb{C}_{\omega}$. Moreover, let us introduce the space
$$
\mathbb{X}_{\omega}=\bigoplus_{m=0}^{n}\mathbb{X}_{\omega}^{m},
$$
where $n$ is the fixed natural number given in \eqref{nalpha}. Due to \eqref{inreg1}-\eqref{inreg3}, for each $\lambda\in\{z\in\mathbb{C}\, |\, |z|<R\}$ we have
 \begin{equation}\label{xregdata}
 \begin{cases}
\omega k\in \mathcal{H}_{p}^{b-2,\frac{1}{2}+\min_{j\in\{0,1\}}\{\frac{\alpha-5/2}{1+\beta_{j}}\}}(\mathbb{B}), \\
 \omega g\in \mathcal{H}_{p}^{b,\frac{1}{2}+\min_{j\in\{0,1\}}\{\frac{\alpha-5/2}{1+\beta_{j}}\}}(\mathbb{B}), \\
\omega\rho\in \mathcal{H}_{p}^{b-1,\frac{1}{2}+\min_{j\in\{0,1\}}\{\frac{\alpha-5/2}{1+\beta_{j}}\}}(\mathbb{B}),\\
\omega\{\varphi_{1},\varphi_{2}\}\in \mathcal{H}_{p}^{b,\frac{1}{2}+\frac{\alpha-3/2}{1+\beta_{1}}}(\mathbb{B})\oplus\mathbb{C}_{\omega} ,\\
 \omega\{\xi_{1},\xi_{2}\}\in \mathcal{H}_{p}^{b-1,\frac{1}{2}+\frac{\alpha-3/2}{1+\beta_{1}}}(\mathbb{B})\oplus\mathbb{C}_{\omega}, \\
\omega\{\vartheta(\cdot,\lambda),\zeta_{2}(\cdot,\lambda),\zeta_{2}(\cdot,\lambda)\}\in \mathcal{H}_{p}^{b-1,\frac{1}{2}+\frac{\alpha-3/2}{1+\beta_{1}}}(\mathbb{B})\oplus\mathbb{X}_{\omega},
\end{cases}
\end{equation}
where all spaces are now considered in $x$-variable. 

Let $\kappa:[0,1]\rightarrow [0,1]$ be a $C^{\infty}$-smooth function such that $\kappa=j$ on the support of the cut-off function $\omega_{j}$. After a straightforward computation, the evolution equation for the function $u$ can be written in the form
\begin{eqnarray}\label{flqz1}
u_t+A(u)u&=&F(u),\\\label{flqz2}
u(0)&=&0.
\end{eqnarray}
The leading term is expressed globally as 
\begin{eqnarray}\nonumber
A(v)u&=&\omega\big(Q_{3}(v)+Q_{1}(v)\beta_{\kappa}(1+\beta_{\kappa})^{-1}(-1)^{\kappa+1}|x-\kappa|^{-1}
\big)u_{x}\\\label{newexpA}
&&\quad\quad\quad\quad+Q_{1}(v)Au+(1-\omega)(Q_{2}(v)+Q_{3}(v))u_{x},
\end{eqnarray}
where
\begin{equation}\label{Alinear}
A=\partial_{x}^{2}+(-1)^{\kappa}2\omega |x-\kappa|^{-1}\partial_{x},
\end{equation}
and, on the support of $\omega$, we have
\begin{eqnarray*}
\eta(v)&=&\vartheta\big(((1+\beta_{j})|x-j|)^{\frac{1}{1+\beta_{j}}},((1+\beta_{j})|x-j|)^{\frac{1}{1+\beta_{j}}}v\big),\\
Q(v)&=&\big(1-k((1+\beta_{j})|x-j|)^{\frac{1}{1+\beta_{j}}}v\big)^2+\big(v+(-1)^{j}(1+\beta_{j})|x-j|v_x\big)^2,\\
Q_{1}(v)&=&-e^{-2\eta(v)}|\varphi+v\xi|^{-2\beta_{j}}(Q(v))^{-1},\\
Q_{2}(v)&=&(-1)^{j+1}(2+\beta_{j})\frac{e^{-2\eta_{j}(v)}|\varphi+v\xi|^{-2\beta_{j}}}{(1+\beta_{j})|x-j|Q(v)},\\
Q_{3}(v)&=&(-1)^{j+1}\frac{e^{-2\eta(v)}|\varphi|^{2}|\varphi+v\xi|^{-2(1+\beta_{j})}}{1-k((1+\beta_{j})|x-j|)^{\frac{1}{1+\beta_{j}}}v} \frac{\beta_{j}}{1+\beta_{j}}\frac{1}{|x-j|}.
\end{eqnarray*}
The term $F$ on the support of $\omega$ is given by
\begin{equation}\label{Fterm}
F(u)=G(u)\big(F_1(u)+F_2(u)+F_3(u)+F_4(u)+F_5(u)\big),
\end{equation}
where
$$
G(u)=\frac{e^{-2\eta_{j}(u)}|\varphi+u\xi|^{-2\beta_{j}}}{1-k((1+\beta_{j})|x-j|)^{\frac{1}{1+\beta_{j}}}u},
$$
and
\begin{eqnarray*}
F_{1}(u)&=&f_{1}(1-k((1+\beta_{j})|x-j|)^{{1}/(1+\beta_{j})}u),\\
F_{2}(u)&=&f_{2}|\varphi+u\xi|^{-2}(1-k((1+\beta_{j})|x-j|)^{1/(1+\beta_{j})}u),\\
F_{3}(u)&=&Q^{-1}(u)(f_{3,1}+f_{3,2}u+f_{3,3}u^{2}+f_{3,4}|x-j|uu_{x}+f_{3,5}(xu_{x})^{2}),\\
F_{4}(u)&=&|\varphi+u\xi|^{-2}(f_{4,1}u+f_{4,2}|x-j|u_{x}+f_{4,3}u^{2}),\\
F_{5}(u)&=&f_{5,1}u+f_{5,2}|x-j|u_{x}.
\end{eqnarray*}
Using Lemma \ref{abr} and \eqref{xregdata}, after long but straightforward computations, we obtain that
\begin{equation}\label{ftermsA}
\omega\{f_{1},f_{5,\cdot}\} \in \bigcap_{\varepsilon>0}\mathcal{H}_{p}^{b-1,\frac{1}{2}-\frac{1+2\beta_{1}}{1+\beta_{1}}-\varepsilon}(\mathbb{B})
\end{equation}
and
\begin{equation}\label{ftermsB}
\omega\{f_{2}, f_{3,\cdot},f_{4,\cdot}\}\in \mathcal{H}_{p}^{b-3,\frac{1}{2}+\min_{j\in\{0,1\}}\{\frac{\alpha-5/2}{1+\beta_{j}},\frac{\alpha-7/2-2\beta_{j}}{1+\beta_{j}}\}}(\mathbb{B}).
\end{equation}

The operator $A$ from \eqref{Alinear} is a second order cone differential operator. The homogeneous principal pseudodifferential symbol $\sigma_{\psi}(A)$ and the principal rescaled symbol $\widetilde{\sigma}_{\psi}(A)$ of $A$ are given by
\begin{equation}\label{symbols}
\sigma_{\psi}(A)(x,\tau)=\widetilde{\sigma}_{\psi}(A)(j,\tau)=-\tau^{2},
\end{equation}
where $x$ is close to $j\in\{0,1\}$ and $\tau\in\mathbb{R}$. Therefore $A$ is $\mathbb{B}$-elliptic. The conormal symbol $\sigma_{M}(A)$ of $A$ is given by 
$$
\sigma_{M}(A)(j,\lambda)=\lambda^{2}-\lambda, \quad \lambda\in\mathbb{C},
$$
which has two zeros, $0$ and $1$. Hence, if $\underline{A}_{r,\max}$ is the maximal extension of $A$ in $\mathcal{H}_{p}^{r,\ell}(\mathbb{B})$, where $r,\ell\in\mathbb{R}$ and $\ell\notin\{-{3}/{2},-{5}/{2}\}$, then
\begin{equation}\label{maxdomA}
\mathcal{D}(\underline{A}_{r,\max})=\mathcal{H}_{p}^{r+2,\ell+2}(\mathbb{B})\bigoplus_{\eta\in\{0,1\}\cap I_{\ell}^{\circ}}\mathcal{N}_{\eta},
\end{equation}
where
$I_{\ell}^{\circ}=(-{3}/{2}-\ell,{1}/{2}-\ell),$
$\mathcal{N}_{0}=\mathbb{C}_{\omega}$ and 
$$
\mathcal{N}_{1}=\{c_{1}\omega_{1}x^{-1}+c_{2}\omega_{2}(1-x)^{-1}\}.
$$ 
According to \eqref{maxdomA}, let $\underline{A}_{r}$ be the closed extension of $A$ in $X_{0}^{r}=\mathcal{H}_{p}^{r,\ell}(\mathbb{B})$ with domain 
\begin{equation}\label{domA}
\mathcal{D}(\underline{A}_{r})=X_{1}^{r}=\mathcal{H}_{p}^{r+2,\ell+2}(\mathbb{B})\oplus \mathbb{C}_{\omega}.
\end{equation}

\begin{lemma}\label{maxregA}
Let $r\in\mathbb{R}$ and $\ell\in(-1/2,1/2)$. Then, for each $c>0$ and $\theta\in[0,\pi)$ we have $c-\underline{A}_{r}\in \mathcal{H}^{\infty}(\theta)$; in particular, $c-\underline{A}_{r}\in \mathcal{R}(\theta)$.
\end{lemma}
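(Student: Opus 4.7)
The plan is to invoke Theorem \ref{hinftyth} at any angle $\theta\in[0,\pi)$ by verifying the three ellipticity conditions (E1), (E2), (E3) for the operator $A$ and its realization $\underline{A}_{r}$, then upgrade the ``some $c$'' provided by that theorem to ``every $c>0$'' via a scalar-perturbation argument, and finally extract the $\mathcal{R}$-sectoriality from the chain $\mathcal{H}^{\infty}\subset\mathcal{BIP}\subset\mathcal{R}$ that is available in UMD spaces.

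Conditions (E1) and (E2) reduce to the symbol computations already recorded just before the lemma. By \eqref{symbols}, $\sigma_{\psi}(A)(x,\tau)=\widetilde{\sigma}_{\psi}(A)(j,\tau)=-\tau^{2}$, and any $\lambda\in\mathbb{C}\setminus S_{\theta}$ lies off $[0,\infty)\subset S_{\theta}$, so $\lambda\neq \tau^{2}$ for every $\tau\in\mathbb{R}$ and both parameter-dependent principal symbols are nowhere zero. For (E2), the conormal symbol $\sigma_{M}(A)(j,\lambda)=\lambda^{2}-\lambda$ has zeros $\{0,1\}$, while for $\ell\in(-1/2,1/2)$ the two weight lines $\mathrm{Re}(\lambda)=1/2-\ell$ and $\mathrm{Re}(\lambda)=-3/2-\ell$ lie in $(0,1)$ and $(-2,-1)$ respectively, avoiding $\{0,1\}$. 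For (E3), I would exploit that $A$ has constant coefficients in a collar of each boundary point, so the model cone operator is explicit: $A_{j,\wedge}=\partial_{x}^{2}+2x^{-1}\partial_{x}$ on $\mathbb{R}_{+}$, which is (minus) the radial Laplacian on $\mathbb{R}^{3}$. Since $I_{\ell}=[-3/2-\ell,1/2-\ell)$ intersects $\{0,1\}$ only in $\{0\}$ for $\ell\in(-1/2,1/2)$, the asymptotic space $\Theta(\underline{\mathcal{E}}_{A,\ell})=\mathbb{C}\omega_{\wedge}$ is dilation-invariant modulo $\mathcal{K}^{2,\ell+2}_{2}(\mathbb{R}_{+})$, and the standard $L^{2}$-sectoriality of the Laplacian delivers the required resolvent bound $\|(\lambda-\underline{A}_{j,\wedge,0})^{-1}\|\leq K/|\lambda|$ on $\{\lambda\in S_{\theta}\,:\,|\lambda|\geq R\}$ for any $\theta<\pi$.

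Theorem \ref{hinftyth} then supplies some $c_{0}>0$ with $c_{0}-\underline{A}_{r}\in\mathcal{H}^{\infty}(\theta)$. To extend this to an arbitrary $c>0$, I would observe that the spectrum of $\underline{A}_{r}$ is contained in a left half-plane $\{\mathrm{Re}\lambda\leq M\}$ (which can be read off from the sectoriality produced by Theorem \ref{hinftyth} and from the form of the principal symbol), so the spectrum of $c-\underline{A}_{r}$ lies in $\{\mathrm{Re}\lambda\geq c-M\}$; this set is disjoint from every sector around the negative real axis and hence $c-\underline{A}_{r}\in\mathcal{P}(\theta)$ for every $c>0$ and every $\theta<\pi$. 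Writing $c-\underline{A}_{r}=(c_{0}-\underline{A}_{r})+(c-c_{0})I$ and shifting the Dunford contour \eqref{hgsta} of $c_{0}-\underline{A}_{r}$ by the complex number $c-c_{0}$ preserves the $H^{\infty}$-bound up to a constant, yielding $c-\underline{A}_{r}\in\mathcal{H}^{\infty}(\theta)$ for every $c>0$. The $\mathcal{R}$-sectoriality $c-\underline{A}_{r}\in\mathcal{R}(\theta)$ then follows by choosing $\theta'\in(\theta,\pi)$, applying the previous steps at angle $\theta'$, and using $\mathcal{H}^{\infty}(\theta')\subset\mathcal{BIP}(\phi)$ for some $\phi\in(\pi-\theta',\pi-\theta)$ together with $\mathcal{BIP}(\phi)\subset\mathcal{R}(\pi-\phi)\subset\mathcal{R}(\theta)$, both valid since $\mathcal{H}^{r,\ell}_{p}(\mathbb{B})$ is UMD.

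The hardest step is (E3): although the model cone operator $A_{j,\wedge}$ is explicit as a radial Laplacian, translating its well-known $L^{2}$-sectoriality into the Mellin-Sobolev scale with the $\mathbb{C}\omega_{\wedge}$ augmentation, and in particular confirming the dilation invariance of $\Theta(\underline{\mathcal{E}}_{A,\ell})$ inside the prescribed domain, constitutes the main technical obstacle of the proof.
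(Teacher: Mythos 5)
Your treatment of conditions (E1) and (E2) matches the paper (the symbol computation and the observation that $1/2-\ell\in(0,1)$ and $-3/2-\ell\in(-2,-1)$ avoid the zeros $\{0,1\}$ of $\lambda^2-\lambda$), and your route from $\mathcal{H}^{\infty}$ to $\mathcal{R}$ via $\mathcal{BIP}$ in UMD spaces is standard. However, your proof of condition (E3) is not a proof but a deferral, and it is precisely the step the paper has to work hard on. Invoking ``the standard $L^{2}$-sectoriality of the Laplacian'' does not close the argument, for three reasons. First, the underlying space $\mathcal{K}^{0,\ell}_{2}(\mathbb{R}_{+})$ for $\ell\in(-1/2,1/2)$ is a weighted Mellin--Sobolev space and is not $L^{2}(\mathbb{R}^{3})$ restricted to radial functions (which would correspond to a different weight), so classical resolvent bounds for $\Delta$ on $\mathbb{R}^{3}$ do not transfer. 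Second, the realization $\underline{A}_{\wedge,0}$ uses a non-standard domain $\mathcal{K}^{2,\ell+2}_{2}(\mathbb{R}_{+})\oplus\mathbb{C}\omega_{\wedge}$ with an asymptotic augmentation, which is not the $H^{2}$-domain of a self-adjoint radial Laplacian. Third---and most decisively---$A_{\wedge}$ is \emph{not} symmetric in $\mathcal{K}^{0,\ell}_{2}(\mathbb{R}_{+})$ (its formal adjoint is $y^{-2}\big((y\partial_y)^{2}-3(y\partial_y)+2\big)$), so no self-adjointness argument is available. The paper's proof of (E3) is an explicit ODE analysis: for $\lambda\in\mathbb{C}\setminus(-\infty,0]$ it solves $(\lambda-\underline{A}_{\wedge,0})u=0$ and shows the constants must vanish (injectivity), identifies the adjoint, shows it has minimal domain $\mathcal{K}^{2,2-\ell}_{2}(\mathbb{R}_{+})$ and is also injective, and then uses the Fredholm property of $\lambda-\underline{A}_{\wedge}^{\ast}$ to get closed range, hence surjectivity of $\bar\lambda-\underline{A}_{\wedge}$ and finally bijectivity. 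None of this is captured by your appeal to a generic sectoriality result, so the proposal has a genuine gap at the step you yourself flagged as hardest.

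Two secondary remarks. Your statement that $I_{\ell}$ intersects $\{0,1\}$ only at $0$ is correct and matches the paper's conclusion $\Theta(\mathbb{C}_{\omega})=\mathbb{C}_{\omega_{\wedge}}$, but note that the relevant identification $\Theta(\underline{\mathcal{E}}_{A,\ell})=\mathbb{C}\omega_{\wedge}$ also relies on the explicit formula \cite[(3.11)--(3.12)]{SS1} rather than merely on dilation-invariance ``modulo $\mathcal{K}^{2,\ell+2}_{2}$.'' Your additional paragraph about upgrading ``some $c$'' to ``every $c>0$'' by shifting the Dunford contour is a reasonable clarification of a point the paper leaves implicit, and does not affect the assessment above.
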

\begin{proof}
We will show that the ellipticity conditions (E1), (E2) and (E3) in Section \ref{secconebas} are satisfied for the closed extension $\underline{A}_{r}$, and then the result will follow from Theorem \ref{hinftyth}. 

{\it Condition} (E1). By \eqref{symbols}, for each $\lambda\in \mathbb{C}\backslash S_{\theta}$, both $\sigma_{\psi}(A)+\lambda$ and $\widetilde{\sigma}_{\psi}(A)+\lambda$ are pointwise invertible. Hence, (E1) is satisfied. 

{\it Condition} (E2). This condition is fulfilled if $\ell\neq-1/2$ and $\ell\neq-5/2$, which is satisfied by our choice of $\ell$.

{\it Condition} (E3). For each $j\in\{0,1\}$ the model cone operators of $A$ are given by
$$
A_{j,\wedge}=A_{\wedge}=y^{-2}\big((y\partial_{y})^{2}+(y\partial_{y})\big):C_{c}^{\infty}(\mathbb{R}_{+})\rightarrow \mathcal{K}_{2}^{0,\ell}(\mathbb{R}_{+}).
$$
By \cite[(3.11)-(3.12)]{SS1}, the image of the $\omega_{j}$ component of $\mathbb{C}_{\omega}$ under the isomorphism $\Theta$ given in \eqref{isomthheta} is the set 
$$
\mathbb{C}_{\omega_{\wedge}}=\{c\omega_{1}\, |\, c\in\mathbb{C}\}. 
$$
Denote by $\underline{A}_{\wedge,0}$ the closed extension of the operator $A_{\wedge}$ in
$\mathcal{K}_{2}^{0,\ell}(\mathbb{R}_{+})$ with domain $\mathcal{K}_{2}^{2,\ell+2}(\mathbb{R}_{+})\oplus\mathbb{C}_{\omega_{\wedge}}$.
Take $\lambda\in\mathbb{C}\backslash(-\infty,0]$ and $u\in \mathcal{D}(\underline{A}_{\wedge,0})$ such that
$$(\lambda-\underline{A}_{\wedge,0})u=0.$$
This is equivalent to
$$yu_{yy}+2u_{y}-\lambda yu=0.$$
Therefore,
$$
u=y^{-1}(c_{1}e^{-y\sqrt{\lambda}}+c_{2}\lambda^{-\frac{1}{2}}e^{y\sqrt{\lambda}}),
$$
for certain $c_{1},c_{2}\in\mathbb{C}$. By Lemma \ref{abr} (a), the function $u$ is continuous up to zero, which implies that
 $$c_{1}\sqrt{\lambda}+c_{2}=0.$$
On the other hand, since the real part of $\sqrt{\lambda}$ is not equal to zero, from the fact that $u\in\mathcal{D}(\underline{A}_{\wedge,0})$, we deduce that at least one of $c_{1}$, $c_{2}$ must be zero. Hence $u=0$, which implies that $\lambda-\underline{A}_{j,\wedge,0}$ is injective. 

The inner product $\langle\cdot,\cdot\rangle_{0,0}$ of $\mathcal{K}_{2}^{0,0}(\mathbb{R}_{+})$ yields an identification of the dual space of $\mathcal{K}_{2}^{0,\ell}(\mathbb{R}_{+})$ with $\mathcal{K}_{2}^{0,-\ell}(\mathbb{R}_{+})$. Notice that $A_{\wedge}$ is not symmetric since its formal adjoint is given by
$$
A_{\wedge}^{\ast}=y^{-2}\big((y\partial_{y})^{2}-3(y\partial_{y})+2\big).
$$
The operator $A_{\wedge}^{\ast}$ is the model cone of a $\mathbb{B}$-elliptic cone differential operator with conormal symbol given by
$
\lambda^{2}+3\lambda+2,
$
which vanishes at $-2$ and $-1$. Note that
$$-1\in I_{-\ell}=[-3/2+\ell,1/2+\ell)$$
and $-2\notin I_{-\ell}$. Therefore,
by \eqref{dommodcone1}-\eqref{dommodcone2}, the maximal domain of $A_{\wedge}^{\ast}$ in $\mathcal{K}_{2}^{0,-\ell}(\mathbb{R}_{+})$ is given by
\begin{equation*}
\mathcal{D}(\underline{A}_{\wedge,\max}^{\ast})=\mathcal{K}_{2}^{2,2-\ell}(\mathbb{R}_{+})\oplus\mathcal{K}_{-1},
\end{equation*}
where $\mathcal{K}_{-1}$ is the space of functions of the form
$$y\mapsto c\omega_{1}y,$$ with $c\in\mathbb{C}$.

The adjoint $\underline{A}_{\wedge}^{\ast}$ of $\underline{A}_{\wedge}$ is defined by the action of $A_{\wedge}^{\ast}$ in $\mathcal{K}_{2}^{0,-\ell}(\mathbb{R}_{+})$ and its domain is given by
\begin{eqnarray*}
\lefteqn{\mathcal{D}(\underline{A}_{\wedge}^{\ast})= \Big\{v\in \mathcal{K}_{2}^{0,-\ell}(\mathbb{R}_{+})\, |
\, \text{there exists } f\in \mathcal{K}_{2}^{0,-\ell}(\mathbb{R}_{+})}\\
&&\quad\quad\quad\text{such that}\,\, \langle v,\underline{A}_{\wedge}u\rangle_{0,0}=\langle f, u\rangle_{0,0}\,\, \text{for all } u\in \mathcal{D}(\underline{A}_{\wedge})\Big\}.
\end{eqnarray*}
Choosing $u=c_{3}\omega_{1}\in \mathbb{C}_{\omega_{\wedge}}$ and $v=c_{4}\omega_{1}y\in \mathcal{K}_{-1}$, the condition 
$$
\langle v,\underline{A}_{\wedge}u\rangle_{0,0}=\langle \underline{A}_{\wedge}^{\ast}v, u\rangle_{0,0}
$$
becomes $\bar{c}_{3}c_{4}=0$. Therefore, $c_{4}=0$ and the domain of $\underline{A}_{\wedge}^{\ast}$ is the minimal one, i.e. 
\begin{equation}\label{maxdAstar}
\mathcal{D}(\underline{A}_{\wedge}^{\ast})=\mathcal{K}_{2}^{2,2-\ell}(\mathbb{R}_{+}).
\end{equation}

Now we shall show that $\underline{A}_{\wedge}^{\ast}$ is an injection. Suppose that $v\in \mathcal{D}(\underline{A}_{\wedge}^{\ast})$ satisfies
$$(\lambda-\underline{A}_{\wedge}^{\ast})v=0,$$
for certain $\lambda\in\mathbb{C}\backslash(-\infty,0]$. This is equivalent to
$$y^{2}v_{yy}-2yv_{y}+(2-\lambda y^{2})v=0.$$
Hence
$$
v=y(c_{5}e^{-y\sqrt{\lambda}}+c_{6}\lambda^{-\frac{1}{2}}e^{y\sqrt{\lambda}}),
$$
for some $c_{5},c_{6}\in\mathbb{C}$. By \eqref{maxdAstar} and Lemma \ref{abr} (a), we have that $v\in C(\mathbb{B})$ and 
$
|\omega_{1}v(y)|\leq c_{7}y^{3/2-\ell}, 
$
for certain $c_{7}>0$. Since $3/2-\ell>1$, it follows that $$c_{5}\sqrt{\lambda}+c_{6}=0.$$ Because the real part of $\sqrt{\lambda}$ is not equal to zero and $u\in\mathcal{D}(\underline{A}_{\wedge,0})$, at least one of $c_{5}$, $c_{6}$ must be zero. Thus $c_{5}=c_{6}=0$ and so $\lambda-\underline{A}_{\wedge}^{\ast}$ is injective.

If $\lambda\in\mathbb{C}\backslash(-\infty,0]$, by \cite[Remark 5.26 (i)]{GKM}, the operator $\lambda-\underline{A}_{\wedge}^{\ast}$ is Fredholm and therefore it has closed range. This fact, together with the injectivity, implies that $\lambda-\underline{A}_{\wedge}^{\ast}$ is bounded below and therefore $\bar{\lambda}-\underline{A}_{\wedge}$ is surjective. Hence, for each $\lambda\in\mathbb{C}\backslash(-\infty,0]$, the operator $\lambda-\underline{A}_{\wedge}$ is bijective. This completes the proof. 
\end{proof}

\begin{lemma}\label{wperturb}
Let $r\in\{0\}\cup [1,\infty)$, $\ell\in(-1/2,1/2)$ and let $\underline{A}_{r}$ be the closed extension of $A$ defined in \eqref{domA}. Moreover, let $\nu>0$ and
$$
v\in \mathcal{H}_{p}^{r+\frac{1}{p}+\nu,\frac{1}{2}+\nu}(\mathbb{B})\oplus\mathbb{C}_{\omega}
$$ 
be real-valued positive function bounded away from zero. Then, there exist $c>0$ and $\theta\in({\pi}/{2},\pi)$ such that $c-v\underline{A}_{r}\in \mathcal{R}(\theta)$.
\end{lemma}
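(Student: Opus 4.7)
The plan is to deduce the $R$-sectoriality of $c - v\underline{A}_r$ from Lemma \ref{maxregA} by treating the variable multiplier $v$ as a bounded perturbation of its boundary values. I would proceed as follows.

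\textbf{Setting up.} First I would observe that under the assumption on $v$, multiplication by $v$ defines a bounded isomorphism of both $X_0^r = \mathcal{H}^{r,\ell}_p(\mathbb{B})$ and $X_1^r = \mathcal{H}^{r+2,\ell+2}_p(\mathbb{B})\oplus \mathbb{C}_\omega$. Boundedness on $X_0^r$ follows from Lemma \ref{abr}(b) (multiplication by an element of $\mathcal{H}^{r+1/p+\nu,1/2}_p(\mathbb{B})$ is bounded on $\mathcal{H}^{\rho,\mu}_p$ for $|\rho|<r+1/p+\nu$, choosing $\rho = r$ and using that constants are bounded multipliers as well). Invertibility on $X_0^r$ comes from Lemma \ref{abr}(c) together with the positive lower bound on $v$. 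Boundedness and invertibility on $X_1^r$ use the same two lemmas in combination with Lemma \ref{sumSn} on the summand $\mathbb{C}_\omega$. Next, by Lemma \ref{abr}(a), the limits $v_0 := v(0) > 0$ and $v_1 := v(1) > 0$ exist and, near each $j\in\{0,1\}$, the remainder $v - v_j$ lies in $\mathcal{H}^{r+1/p+\nu,1/2+\nu}_p(\mathbb{B})$ (no constant summand), so it vanishes at $j$ at a rate controlled by $\nu$.

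\textbf{Localization and the constant-coefficient model.} Fix $\epsilon>0$ and a cutoff function $\chi_\epsilon$ supported in an $\epsilon$-neighborhood of $\{0,1\}$, together with a partition of unity $1 = \chi_\epsilon + (1-\chi_\epsilon)$. Write
\[
v\,\underline{A}_r \;=\; v_\kappa\,\underline{A}_r \;+\; \chi_\epsilon\,(v-v_\kappa)\,\underline{A}_r \;+\; (1-\chi_\epsilon)\,(v-v_\kappa)\,\underline{A}_r,
\]
where $v_\kappa$ denotes the piecewise-constant function $v_j$ on the support of $\omega_j$. Since $v_j > 0$, the operator $v_\kappa \underline{A}_r$ equals $\underline{A}_r$ up to a positive scalar on each component, and hence, choosing $c > 0$ large enough, $c - v_\kappa \underline{A}_r$ inherits $\mathcal{H}^\infty(\theta)\subset\mathcal{R}(\theta)$ for every $\theta\in[0,\pi)$ from Lemma \ref{maxregA}. (Commutators $[\chi_\epsilon,\underline{A}_r]$ are first-order cone operators supported away from the boundary and contribute only lower-order terms which are $\underline{A}_r$-compact, hence absorbable as relatively small perturbations.)

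\textbf{Perturbation estimate.} The heart of the argument is to show that $(v - v_\kappa)\underline{A}_r$, considered on $\mathcal{D}(\underline{A}_r) = X_1^r$ with values in $X_0^r$, is a \emph{relatively small} perturbation of $v_\kappa \underline{A}_r$ once $\epsilon$ is chosen small enough. By Lemma \ref{abr}(b), multiplication by $(v-v_\kappa)\chi_\epsilon$ is bounded on $X_0^r$, and the multiplier norm is controlled by the $\mathcal{H}^{r+1/p+\nu,1/2+\nu}_p$-norm of $(v-v_\kappa)\chi_\epsilon$. Since $v - v_\kappa$ has no constant component near the boundary, this norm tends to zero as $\epsilon\to 0$. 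The interior piece $(1-\chi_\epsilon)(v-v_\kappa)\underline{A}_r$ is supported away from the singular points, where it reduces to a variable-coefficient regular elliptic problem with smooth enough coefficients that standard $R$-sectoriality results apply and give a bounded correction.

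\textbf{Conclusion via Kunstmann--Weis.} With $\epsilon$ chosen so that the relative bound is strictly less than the reciprocal of the $\mathcal{R}$-bound of $\{\lambda(c - v_\kappa\underline{A}_r + \lambda)^{-1}\}_{\lambda \in S_\theta}$, the Kunstmann--Weis perturbation theorem for $\mathcal{R}$-sectorial operators (\cite[Theorem 4.2]{Weis} or its standard variants) yields $c - v\underline{A}_r \in \mathcal{R}(\theta)$ for some $\theta \in(\pi/2,\pi)$, possibly after enlarging $c$. The main obstacle I anticipate is the quantitative control of the multiplier norm of $(v-v_\kappa)\chi_\epsilon$ in terms of the cutoff size $\epsilon$, together with handling the commutators $[\chi_\epsilon,\underline{A}_r]$ and the transition between boundary and interior pieces so that everything fits into the perturbation framework with a single small relative bound.
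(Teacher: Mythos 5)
Your broad strategy — freeze coefficients, then perturb via Kunstmann--Weis — is the right instinct and is also the engine behind the paper's proof, but the shortcut you take to avoid the full parametrix construction does not work, for two separate reasons.

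First, the constant-coefficient model operator is not a scalar multiple of $\underline{A}_r$. Your $v_\kappa$ equals $v(0)$ near $0$, $v(1)$ near $1$, and whatever you choose in between. When $v(0)\neq v(1)$ (or as soon as $v_\kappa$ is nonconstant in the interior, which it must be to stay positive and match both endpoints), the operator $v_\kappa\underline{A}_r$ is genuinely a variable-coefficient operator, and Lemma \ref{maxregA} says nothing about it. The $\mathcal{H}^\infty$-calculus is a global statement; ``scalar multiple on each component'' is a purely local observation and does not patch together into a global functional calculus without further work. Indeed, if it did, that patching argument applied to $v$ itself would prove the lemma outright, so this step is essentially circular. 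The paper's way around this is the Neumann-series parametrix \eqref{resolventvA}: the resolvent of $c-v\underline{A}_0$ is built from the (genuinely constant-coefficient) resolvents $(\lambda+c-v(s_k)\underline{A}_0)^{-1}$ via a partition of unity covering \emph{all} of $\mathbb{B}$, and the patching errors (precisely your commutators $[\chi_\epsilon,\underline{A}_r]$) are summed away, not absorbed as a single small perturbation.

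Second, the perturbation you call ``small'' is not small. The boundary piece $\chi_\epsilon(v-v_\kappa)\underline{A}_r$ may be made small (this is plausible and is what the paper exploits via the modified coefficients $v_k$), but the interior piece $(1-\chi_\epsilon)(v-v_\kappa)\underline{A}_r$ is a genuine second-order operator whose coefficient is $O(1)$, not $o(1)$; it cannot be absorbed by Kunstmann--Weis as a relatively bounded perturbation with small bound, and calling it ``a bounded correction'' that ``standard $R$-sectoriality results'' produce does not close the argument. You would still have to combine the interior piece with the boundary pieces into a single operator with $\mathcal{R}$-sectorial resolvent, and the only mechanism offered for that is a parametrix. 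Moreover, even the smallness of the boundary piece needs care for $r>0$: the Kunstmann--Weis perturbation theorem requires smallness of the \emph{multiplier norm} on $X_0^r=\mathcal{H}_p^{r,\ell}(\mathbb{B})$, and that is not the same as smallness of the $\mathcal{H}_p^{r+1/p+\nu,1/2+\nu}$-norm of $\chi_\epsilon(v-v_\kappa)$: the cutoff $\chi_\epsilon$ has derivatives of size $\epsilon^{-1}$, so one has to check that the Mellin weight wins, and the bound one gets from Lemma \ref{abr}(b) is merely boundedness, not decay. The paper sidesteps this entirely by first doing the $r=0$ case, where multiplication norm on $\mathcal{H}_p^{0,\ell}$ is controlled by the $\sup$-norm and the smallness is elementary, and then lifts to $r\geq 1$ by induction and interpolation, with a separate commutator argument for the extra derivatives. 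Your proposal treats all $r$ at once and gives no hint of why $r\in(0,1)$ is excluded from the statement, which is a sign that the $r=0$/induction structure is genuinely needed and not an artefact.
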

\begin{proof}
By Lemma \ref{maxregA}, for each $\theta_{0}\in(\pi/2,\pi)$, there exists a $c_{0}>0$ such that the operator $c_{0}-\underline{A}_{r}:X_{1}^{r}\rightarrow X_{0}^{r}$, is $R$-sectorial of angle $\theta_{0}$. From Lemma \ref{abr} (b), we have $v\underline{A}_{r}\in\mathcal{L}(X_{1}^{r},X_{0}^{r})$. For $r=0$ we will employ the freezing-of-coefficients method to show that, for certain $c>0$, the operator
\begin{equation}\label{opfprs0}
c-v\underline{A}_{0}:X_{1}^{0}\rightarrow X_{0}^{0},
\end{equation}
is $R$-sectorial of angle $\theta_{0}$. We split the proof into several parts according to different values of $r$ and follow the same steps as in the proof of \cite[Theorem 6.1]{RS3}. 

{\em Case $r=0$}. We will show the following: for each real-valued positive function bounded away from zero $v\in C(\mathbb{B})$ and each $\theta_{0}\in(\pi/2,\pi)$, there exists a $c>0$ such that the operator \eqref{opfprs0} is $R$-sectorial of angle $\theta_{0}$; note that, by Lemma \ref{abr} (a), we have 
$$
\mathcal{H}_{p}^{\frac{1}{p}+\nu,\frac{1}{2}+\nu}(\mathbb{B})\oplus\mathbb{C}_{\omega} \hookrightarrow C(\mathbb{B}).
$$
Choose $c>c_{0}\max\{v(x)\, |\, x\in\mathbb{B}\}$ and fix $s\in\mathbb{B}$. For each $\lambda\in S_{\theta_{0}}\backslash\{0\}$, we have 
\begin{eqnarray*}
\lefteqn{\lambda\big(\lambda+c-v(s)\underline{A}_{0}\big)^{-1}}\\
&=&\frac{\lambda}{v(s)}\Big(\frac{\lambda}{v(s)}+\frac{c}{v(s)}-c_{0}+c_{0}-\underline{A}_{0}\Big)^{-1}\\
&=&\frac{\frac{\lambda}{v(s)}}{\frac{\lambda}{v(s)}+\frac{c}{v(s)}-c_{0}}\,\Big(\frac{\lambda}{v(s)}
+\frac{c}{v(s)}-c_{0}\Big)\Big(\frac{\lambda}{v(s)}+\frac{c}{v(s)}-c_{0}+c_{0}-\underline{A}_{0}\Big)^{-1}.
\end{eqnarray*}
Once we have the above expression, we proceed as in \cite[(6.4)]{RS3}, using the $R$-sectoriality of $c_{0}-\underline{A}_{0}$ and Kahane's contraction principle \cite[Proposition 2.5]{KuW1}, to deduce that
$$c-v(s)\underline{A}_{0}:X_{1}^{0}\rightarrow X_{0}^{0}$$
is $R$-sectorial of angle $\theta_{0}$ and its $R$-sectorial bound is uniformly bounded with respect to $s\in \mathbb{B}$.

Let $\varepsilon>0$ be arbitrary small and let $\{B_{k}\}_{k\in\{0,\dots,n\}}$, $n\in\mathbb{N}$, $n\geq2$, be an open cover of $\mathbb{B}$ consisting of the collar parts $B_{0}=[0,\varepsilon)$ and $B_1=(1-\varepsilon,1]$, together with a collection of open intervals $B_{k}=(s_{k}-\varepsilon,s_{k}+\varepsilon)$, $k\in\{2,\dots,n\}$,
where $\{s_{k}\}_{k\in\{2,\dots,n\}}$ is a set of points in $(0,1)$. We assume that
$$
(s_{k}-3\varepsilon/2,s_{k}+3\varepsilon/2)\cap\{0,1\}=\emptyset
$$
for all $k=\{2,\dots,n\}$. Moreover, let $\widetilde{\omega}$ be a smooth function on $\mathbb{R}$ such that $\widetilde{\omega}(x)=1$ when
$x\leq{1}/{2}$ and $\widetilde{\omega}(x)=0$ when $x\geq {3}/{4}$. Define
$$
v_{k}(x)=\widetilde{\omega}(|x-s_{k}|/2\varepsilon)v(x)+\big(1-\widetilde{\omega}(|x-s_{k}|/2\varepsilon)\big)v(s_{k}),
$$
where $x\in\mathbb{B}$, $k\in\{0,\dots,n\}$, $s_{0}=0$ and $s_1=1$. Taking $\varepsilon>0$ small enough and by possibly enlarging $n$, the norms $\|\widetilde{\omega}(|x-s_{k}|/2\varepsilon)(v-v_{k})\|_{C(\mathbb{B})}$ becomes arbitrary small. As a consequence, the norm of each $\omega(|x-s_{k}|/2\varepsilon)(v-v_{k})$, regarded as a multiplier on $X_{0}^{0}$, becomes arbitrary small. By the perturbation result of Kunstmann and Weis \cite[Theorem 1]{KuWe}, it follows that
$$
c-v_{k}\underline{A}_{0}=c-v(x_{k})\underline{A}_{0}+(v(x_{k})-v_{k})\underline{A}_{0}\in\mathcal{R}(\theta_{0}),
$$
for each $k=\{0,\dots,n\}$.

Let $\{\phi_{k}\}_{k\in\{0,\dots,n\}}$ be a partition of unity subordinate to $\{B_{k}\}_{k\in\{0,\dots,n\}}$, and let $\psi_{k}\in C^{\infty}(\mathbb{B})$,
$k=\{0,\dots,n\}$, such that $\mathrm{supp}(\psi_{k})\subset B_{k}$ and $\psi_{k}=1$ on $\mathrm{supp}(\phi_{k})$. By \cite[(I.2.5.2),(I.2.9.6)]{Am} and \cite[Lemma 5.2]{RS3}, for any $\eta\in(0,1)$ the fractional powers of $\underline{A}_{0}$ satisfy
$$
\mathcal{H}_{p}^{2\eta+\epsilon,\ell+2\eta+\epsilon}(\mathbb{B})\oplus \mathbb{C}_{\omega} \hookrightarrow \mathcal{D}((c_{0}-\underline{A}_{0})^{\eta})\hookrightarrow\mathcal{H}_{p}^{2\eta-\epsilon,\ell+2\eta-\epsilon}(\mathbb{B})\oplus \mathbb{C}_{\omega}
$$
for all $\epsilon>0$. Therefore, if $\lambda \in S_{\theta_{0}}$, by taking $c>0$ sufficiently large, similarly to \cite[pp. 1457-1458]{RS3}, we can construct an inverse for $\lambda+c-v\underline{A}_{0}$ in the space $\mathcal{L}(X_{0}^{0},X_{1}^{0})$, given by 
\begin{equation}\label{resolventvA}
\big(\lambda+c-v\underline{A}_{0}\big)^{-1}=\sum_{k=0}^{\infty}(-1)^{k}B^{k}(\lambda)R(\lambda),
\end{equation}
where 
$$
B(\lambda)=\sum_{i=0}^{n}\psi_{i}\big(\lambda+c-v_{i}\underline{A}_{0}\big)^{-1}[v\underline{A}_{0},\phi_{i}] 
$$
and
$$
R(\lambda)=\sum_{i=0}^{n}\psi_{i}\big(\lambda+c-v_{i}\underline{A}_{0}\big)^{-1}\phi_{i}.
$$
Using equation \eqref{resolventvA}, and following the same lines as in \cite[pp. 1458-1459]{RS3}, we obtain the $R$-sectoriality of $c-v\underline{A}_{0}$.
 
{\bf Claim}. {\em For any $r\geq0$, the resolvent of $v\underline{A}_{r}$ is the restriction of the resolvent of $v\underline{A}_{0}$ to $X_{0}^{r}$.}

{\em Proof of claim.} It is sufficient to show that
$$
(\lambda-v\underline{A}_{0})^{-1}(\lambda-v\underline{A}_{r})=I\quad\text{and} \quad (\lambda-v\underline{A}_{r})(\lambda-v\underline{A}_{0})^{-1}=I
$$
hold on $X_{1}^{r}$ and $X_{0}^{r}$ respectively. The first one is trivial. Concerning the second one, let $u\in X_{1}^{0}$ such that
$$(\lambda-v\underline{A}_{r})u\in X_{0}^{r}.$$
If $r\in[0,2]$, then $v\underline{A}_{r}u\in X_{0}^{r}$ and by Lemma \ref{abr} (b), (c), we obtain that $\underline{A}_{r}u\in X_{0}^{r}$. Therefore, $u$ belongs to the maximal domain of $A$ in $X_{0}^{r}$. From the structure of the maximal domain given by \eqref{maxdomA}, the choice of $\ell$, and the fact that $u\in X_{1}^{0}$, we deduce that $u\in X_{1}^{r}$. Iteration then shows the result for arbitrary $r$. This proves the claim.

{\em Case $r\in\mathbb{N}$}. Taking into account the above claim, the $R$-sectoriality of $c-v\underline{A}_{r}$ for arbitrary $r\in\mathbb{N}$ and suitable $c>0$, follows by induction as in \cite[pp. 1460-1461]{RS3}.

{\em Case $r>1$}. 
By the interpolation results \cite[Theorem 3.19]{KaSa} and \cite[Lemma 3.7]{RS3}, there exists a $c>0$ such that the operator 
\begin{equation}\label{rsecAlin}
c-v\underline{A}_{r_{0}}:X_{1}^{r_{0}}\rightarrow X_{0}^{r_{0}}
\end{equation}
is $R$-sectorial of angle $\theta_{0}$, where $r_{0}=r-1$. Let now $q\in(1,\infty)$, $T>0$, $f\in L^{q}(0,T;X_{0}^{1+r_{0}})$ and consider the problem
\begin{eqnarray}\label{atestP1}
u_{t}(t)-v\underline{A}_{r_{0}}u(t)&=&f(t), \quad t\in(0,T),\\\label{atestP2}
u(0)&=&0.
\end{eqnarray}
By Theorem \ref{KalWeiTh}, there exists a unique 
\begin{equation}\label{firstregu}
u\in E_{1}^{r_{0}}=H^{1,q}(0,T;X_{0}^{r_{0}})\cap L^{q}(0,T;X_{1}^{r_{0}})
\end{equation}
solving \eqref{atestP1}-\eqref{atestP2}. More precisely, denote by $B_{r_{0}}$ the operator $\partial_{t}$ in the space $L^{q}(0,T;X_{0}^{r_{0}})$ with domain 
$$
\mathcal{D}(B_{r_{0}})=\{u\in H_{q}^{1}(0,T;X_{0}^{r_{0}})\, |\, u(0)=0\}.
$$ 
Since $X_{0}^{r_{0}}$ is UMD, by \cite[Theorem 8.5.8]{Ha}, for each $\phi\in(0,\pi/2)$ we have that $B_{r_{0}}\in\mathcal{H}^{\infty}(\phi)$. Because $B_{r_{0}}$ and $\Lambda_{r_{0}}=c-v\underline{A}_{r_{0}}$ are resolvent commuting in the sense of \cite[(III.4.9.1)]{Am}, by \cite[Theorem 6.3]{KW1}, the inverse $(B_{r_{0}}+\Lambda_{r_{0}})^{-1}$ exists as a bounded map from $L^{q}(0,T;X_{0}^{r_{0}})$ to $H_{q}^{1}(0,T;X_{0}^{r_{0}})\cap L^{q}(0,T;X_{1}^{r_{0}})$. In particular, after changing $u$ to $e^{ct}u$ in \eqref{atestP1}-\eqref{atestP2}, we obtain
\begin{equation}\label{uexpr}
u(t)=e^{ct}(B_{r_{0}}+\Lambda_{r_{0}})^{-1}(e^{-c(\cdot)}f(\cdot))(t), \quad t\in[0,T].
\end{equation}
By classical results of Da Prato-Grisvard \cite[Theorem 3.7]{DG} and Kalton-Weis \cite[Theorem 6.3]{KW1}, the solution is expressed by the formula \cite[(3.11)]{DG} for the inverse of the closure of the sum of two closed operators, i.e. 
$$
(B_{r_{0}}+\Lambda_{r_{0}})^{-1}=\frac{1}{2\pi i}\int_{\Gamma_{\theta_{0}}} (\Lambda_{r_{0}}+\lambda)^{-1}(B_{r_{0}}-\lambda)^{-1}d\lambda,
$$
where the path $\Gamma_{\theta_{0}}$ is defined in \eqref{gammapath}. Since 
$$
(\Lambda_{r_{0}+1}+\lambda)^{-1}=(\Lambda_{r_{0}}+\lambda)^{-1}|_{X_{0}^{r_{0}+1}}, \quad \lambda\in S_{\theta_{0}},
$$
we obtain that
$$
(B_{r_{0}+1}+\Lambda_{r_{0}+1})^{-1}=(B_{r_{0}}+\Lambda_{r_{0}})^{-1}|_{E_{0}^{r_{0}+1}},
$$
where
$$
E_{0}^{r_{0}}=L^{q}(0,T;X_{0}^{r_{0}}).
$$
Consequently, $(B_{r_{0}}+\Lambda_{r_{0}})^{-1}$ maps $e^{-c(\cdot)}f(\cdot)$ to $E_{1}^{r_{0}+1}$, and so 
$$
\omega_{0}x\partial_{x}(B_{r_{0}}+\Lambda_{r_{0}})^{-1}(e^{-c(\cdot)}f(\cdot))\in E_{1}^{r_{0}}.
$$
Therefore, after applying $x\partial_{x}$ to \eqref{uexpr}, we obtain
\begin{eqnarray}\nonumber
(\omega_{0}xu_{x})(t)&=&e^{ct}(B_{r_{0}}+\Lambda_{r_{0}})^{-1}(e^{-c(\cdot)}\omega_{0}xf_{x}(\cdot))(t)\\\nonumber
&&+e^{ct}[\omega_{0}x\partial_{x},(B_{r_{0}}+\Lambda_{r_{0}})^{-1}](e^{-c(\cdot)}f(\cdot))(t)\\\label{commtaxdx}
&=&e^{ct}(B_{r_{0}}+\Lambda_{r_{0}})^{-1}(e^{-c(\cdot)}\omega_{0}xf_{x}(\cdot))(t)\\\nonumber
&&+e^{ct}(B_{r_{0}}+\Lambda_{r_{0}})^{-1}[\Lambda_{r_{0}},\omega_{0}x\partial_{x}](B_{r_{0}}+\Lambda_{r_{0}})^{-1}(e^{-c(\cdot)}f(\cdot))(t),
\end{eqnarray}
for each $t\in [0,T]$. Since $\omega_{0}xf_{x}\in E_{0}^{r_{0}}$, the first term on the right hand side of \eqref{commtaxdx} belongs to $E_{1}^{r_{0}}$. Moreover, 
$$
(B_{r_{0}}+\Lambda_{r_{0}})^{-1}(e^{-c(\cdot)}f(\cdot))\in E_{1}^{r_{0}}
$$ 
as well. By straightforward calculation, we get that $[\Lambda_{r_{0}},\omega_{0}x\partial_{x}]$ close to zero equals to
$$
(xv_{x}-2v)\partial_{x}^{2}-4x^{-1}\partial_{x}.
$$
Since 
$$
\partial_{x}^{2}, x^{-1}\partial_{x} : L^{q}(0,T;X_{1}^{r_{0}}) \rightarrow E_{0}^{r_{0}}
$$
and 
$$
\omega_{0}\{v, xv_{x}\}\in \mathcal{H}_{p}^{r_{0}+\frac{1}{p}+\nu,\frac{1}{2}+\nu}(\mathbb{B})\oplus\mathbb{C}_{\omega},
$$ 
by Lemma \ref{abr} (b), we obtain that 
$$
[\Lambda_{r_{0}},\omega_{0}x\partial_{x}](B_{r_{0}}+\Lambda_{r_{0}})^{-1}(e^{-c(\cdot)}f(\cdot))\in E_{0}^{r_{0}}.
$$
Consequently, the second term on the right hand side of \eqref{commtaxdx} belongs to $E_{1}^{r_{0}}$. Therefore, $\omega_{0}xu_{x}\in E_{1}^{r_{0}}$. Similarly, we show that
$\omega_{1}(x-1)u_{x}\in E_{1}^{r_{0}}$. This fact combined with \eqref{firstregu}, yields
$$
u\in E_{1}^{r_{0}+1} = H^{1,q}(0,T;X_{0}^{1+r_{0}})\cap L^{q}(0,T;X_{1}^{1+r_{0}}).
$$
Hence, by \eqref{atestP1}-\eqref{atestP2} we deduce that the operator $v\underline{A}_{r}:X_{1}^{r}\rightarrow X_{0}^{r}$ has maximal $L^{q}$-regularity. By the result of Weis \cite[Theorem 4.2]{Weis}, we conclude that there exist a $c>0$ and $\theta \in(\pi/2,\pi)$ such that $c-v\underline{A}_{r}\in\mathcal{R}(\theta)$.
\end{proof}

\begin{theorem}\label{short1d}
Let $\gamma$ be a curve satisfying the conditions {\rm(a)-(d)} of Section {\rm \ref{assumptongama}}, and let
\begin{equation}\label{ell0}
\ell_{0}=\min_{_{j\in\{0,1\}}}\Big\{0,-\frac{1+2\beta_{1}}{1+\beta_{1}},\frac{\alpha-5/2}{1+\beta_{j}},\frac{\alpha-7/2-2\beta_{j}}{1+\beta_{j}}\Big\}.
\end{equation}
For any, $q\in(2,\infty)$ and $r\geq1$ satisfying 
\begin{equation}\label{pqgammachoice0}
\frac{2}{q}+ \frac{1}{p}<1, \quad 
 r\in \bigg\{\begin{array}{lll}
(b-4+2/q, b-3-1/p), & \text{if} & b<\infty, \\ 
{[}1,\infty), & \text{if} & b=\infty,
\end{array}
\end{equation}
and $\ell\in\mathbb{R}$ such that
\begin{equation}\label{pqgammachoice}
-\frac{1}{2}+\frac{2}{q}<\ell<\frac{1}{2}+\ell_{0},
\end{equation}
there exists a $T>0$ and a unique 
\begin{equation}\label{exanduniqofu}
u\in H^{1,q}(0,T; \mathcal{H}_{p}^{r,\ell}(\mathbb{B}))\cap L^{q}(0,T; \mathcal{H}_{p}^{r+2,\ell+2}(\mathbb{B})\oplus\mathbb{C}_{\omega})
\end{equation}
solving \eqref{flqz1}-\eqref{flqz2}. The function $u$ also satisfies
\begin{eqnarray}\label{extrareginspace1}
\lefteqn{\hspace{-5mm} u\in C([0,T]; \mathcal{H}_{p}^{r+2-2/q-\varepsilon,\ell+2-2/q-\varepsilon}(\mathbb{B})\oplus\mathbb{C}_{\omega})}\\\label{extrareginspace0} 
&&\hspace{-5mm}\cap \, C([0,T]; (\mathcal{H}_{p}^{r+2,\ell+2}(\mathbb{B})\oplus\mathbb{C}_{\omega},\mathcal{H}_{p}^{r,\ell}(\mathbb{B}))_{1/q,q})\cap C([0,T];C(\mathbb{B})) 
\end{eqnarray}
for any $\varepsilon>0$. In particular, if $b=\infty$, then for any $\tau\in(0,T)$, $\nu,\varepsilon>0$ and $n\in\mathbb{N}$, the function $u$ satisfies
\begin{eqnarray}\label{extrareginspace2}
\lefteqn{u\in C([\tau,T]; \mathcal{H}_{p}^{\nu,\ell+2-2/q-\varepsilon}(\mathbb{B})\oplus\mathbb{C}_{\omega})}\\\label{extrareginspace3}
&&\cap \, H^{n,q}(\tau,T; \mathcal{H}_{p}^{\nu,\ell-2(n-1)}(\mathbb{B}))\cap L^{q}(\tau,T; \mathcal{H}_{p}^{\nu,\ell+2}(\mathbb{B})\oplus\mathbb{C}_{\omega}).
\end{eqnarray}
\end{theorem}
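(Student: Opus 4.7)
The plan is to recast \eqref{flqz1}-\eqref{flqz2} in the abstract quasilinear parabolic form \eqref{aqpp1}-\eqref{aqpp2} (with $u_{0}=0$ and $G=0$) and apply the Cl\'ement-Li result (Theorem \ref{ClementLi}). Set
$$X_{0}=\mathcal{H}_{p}^{r,\ell}(\mathbb{B}),\qquad X_{1}=\mathcal{H}_{p}^{r+2,\ell+2}(\mathbb{B})\oplus\mathbb{C}_{\omega},$$
both of which are UMD. By Lemma \ref{abr}(d), the interpolation space $(X_{1},X_{0})_{1/q,q}$ is sandwiched between $\mathcal{H}_{p}^{r+2-2/q\pm\varepsilon,\ell+2-2/q\pm\varepsilon}(\mathbb{B})\oplus\mathbb{C}_{\omega}$, and the assumption $\ell>-1/2+2/q$ combined with $2/q+1/p<1$ guarantees, via Lemma \ref{abr}(a), that this space embeds continuously into $C(\mathbb{B})$. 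Choose $U$ to be a small open ball around $0$ in $(X_{1},X_{0})_{1/q,q}$ on which $Q(v)$ and $|\varphi+v\xi|$ remain bounded away from zero; Lemma \ref{abr}(c) and Lemma \ref{sumSn} guarantee that such a $U$ is non-empty and that reciprocals and holomorphic functions of these quantities stay in the appropriate Banach algebra.

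The central ingredient is condition (H3). Evaluated at $u_{0}=0$, the operator $A(0)$ has leading part $Q_{1}(0)\underline{A}_{r}$ where $-Q_{1}(0)=e^{-2\eta(0)}|\varphi|^{-2\beta_{j}}$ is strictly positive and, thanks to \eqref{xregdata} and Lemma \ref{sumSn}, lies in $\mathcal{H}_{p}^{r+1/p+\nu,1/2+\nu}(\mathbb{B})\oplus\mathbb{C}_{\omega}$ for some $\nu>0$. Lemma \ref{wperturb} then yields $c>0$ and $\theta\in(\pi/2,\pi)$ with $c-(-Q_{1}(0))\underline{A}_{r}\in\mathcal{R}(\theta)$, i.e.\ $c+Q_{1}(0)\underline{A}_{r}\in\mathcal{R}(\theta)$. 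The remaining first- and zeroth-order terms of $A(0)$ in \eqref{newexpA} map $X_{1}$ into $X_{0}$ with norms small relative to $\underline{A}_{r}$ (after possibly enlarging $c$), so the Kunstmann-Weis perturbation theorem absorbs them and $c+A(0)\in\mathcal{R}(\theta)$. Theorem \ref{KalWeiTh} then delivers maximal $L^{q}$-regularity for $A(0)$.

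Conditions (H1) and (H2) are verified by a direct inspection of the formulas \eqref{newexpA} and \eqref{Fterm}--\eqref{ftermsB}. Every dependence of $A(v)$ and $F(v)$ on $v$ enters through smooth (in fact holomorphic on $U$) functions of $v$, $v_{x}$ and fixed coefficients that, by \eqref{xregdata}, sit in Mellin-Sobolev spaces of the required weight. Using the Banach algebra and multiplier properties of Lemma \ref{abr}(a), (b), together with the spectral invariance and closedness under holomorphic calculus given by Lemma \ref{abr}(c) and Lemma \ref{sumSn}, one obtains the local Lipschitz estimates $A(\cdot)\in C^{1-}(U;\mathcal{L}(X_{1},X_{0}))$ and $F(\cdot)\in C^{1-}(U;X_{0})$. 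Theorem \ref{ClementLi} then produces the unique solution \eqref{exanduniqofu}, and the embeddings \eqref{extrareginspace1}--\eqref{extrareginspace0} follow directly from \eqref{interpemb} and Lemma \ref{abr}(a), (d).

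For the final assertion when $b=\infty$, the idea is a parabolic bootstrap: once $u$ exists on $[0,T]$, the composition coefficients of $A(u)$ and $F(u)$ inherit the full spatial regularity of the smooth data via Lemma \ref{sumSn}, and on any subinterval $[\tau,T]$ with $\tau>0$ one can regard \eqref{flqz1} as a linear equation with coefficients of arbitrarily high Bessel-potential order $r$ and apply Theorem \ref{KalWeiTh} iteratively to raise $r$ to any $\nu>0$, giving \eqref{extrareginspace2}. Temporal regularity \eqref{extrareginspace3} is obtained by differentiating the equation in $t$ and applying maximal regularity to the resulting linear problems for $\partial_{t}^{k}u$, the weight on $X_{0}$ dropping by $2$ at each differentiation. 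The hardest part, in my view, is the careful bookkeeping of weight exponents in the verification of (H1) and (H2): one must check that every term produced by differentiating the complicated expressions in \eqref{newexpA} and \eqref{Fterm} lands in $X_{0}=\mathcal{H}_{p}^{r,\ell}(\mathbb{B})$, which is precisely why the ranges in \eqref{pqgammachoice0}--\eqref{pqgammachoice} are dictated by the exponent $\ell_{0}$ in \eqref{ell0}.
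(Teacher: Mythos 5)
Your proposal follows the same overall strategy as the paper (Cl\'ement--Li applied in the Mellin--Sobolev scale, sectoriality via Lemma \ref{wperturb} plus perturbation for (H3), weight bookkeeping for (H1)--(H2), then bootstrap), so the skeleton is right. However, two steps as you describe them are not sufficient.

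First, for (H3) you assert that the remaining first- and zeroth-order terms of $A(0)$ ``map $X_{1}$ into $X_{0}$ with norms small relative to $\underline{A}_{r}$ (after possibly enlarging $c$).'' Enlarging $c$ does not shrink the relative bound of the first-order term: that term carries the singular coefficient $Q_{6}(v)\,|x-\kappa|^{-1}$, and one cannot absorb it by a smallness-in-$\mathcal{L}(X_{1},X_{0})$ argument. What is actually needed is the fractional-power estimate
$$
\omega\Big(Q_{3}(v)+Q_{1}(v)\tfrac{\beta_{\kappa}}{1+\beta_{\kappa}}\tfrac{(-1)^{\kappa+1}}{|x-\kappa|}\Big)\partial_{x}\in
\mathcal{L}\big(\mathcal{D}\big((c_{0}+Q_{1}(v)\underline{A}_{r})^{\varepsilon_{1}}\big),\,X_{0}^{r}\big)
$$
for some $\varepsilon_{1}<1$, obtained by trading a small power $\varepsilon_{2}$ of $|x-\kappa|$ between the multiplier $\omega|x-\kappa|^{-\varepsilon_{2}}Q_{6}(v)$ and the fractional domain, and then invoking the interpolation estimate in \cite[Lemma 2.3.3]{Tan} before applying Kunstmann--Weis. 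Without the fractional-power intermediary the perturbation argument does not close.

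Second, for the space-smoothing when $b=\infty$ you propose to iterate maximal regularity for a ``linear equation with coefficients of arbitrarily high Bessel-potential order $r$.'' This does not work out of the box, because the coefficients of $A(u)$ and $F(u)$ only have the regularity of $u$ itself, which at the first pass is controlled solely by the fixed $r$. The paper uses a genuinely iterative scheme from \cite[Theorem 3.1]{RS4} on the Banach scales $Y_{0}^{n}=X_{0}^{r+n/(q\eta_{0})}$, $Y_{1}^{n}=X_{1}^{r+n/(q\eta_{0})}$, and the crucial enabling ingredient is the extra parameter $\mu>0$ in the interpolation exponent $1/(q-\mu)$ (see \eqref{sigmachoice}), which produces the strict embedding $(Y_{1}^{n},Y_{0}^{n})_{1/q,q}\hookrightarrow(Y_{1}^{n+1},Y_{0}^{n+1})_{1/(q-\mu),q}$ that lets regularity climb one rung per step. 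Your choice of $U$ in $(X_{1},X_{0})_{1/q,q}$ corresponds to $\mu=0$, which suffices for existence but leaves no slack for the bootstrap; you need the Lipschitz and maximal-regularity estimates to hold uniformly on a ball in $(X_{1}^{r},X_{0}^{r})_{1/(q-\mu),q}$ from the outset. The temporal smoothing by differentiating the equation is then as you say, but it relies on the spatial gain already being in place.
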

\begin{proof}
We apply Theorem \ref{ClementLi} of Cl\'ement and Li to \eqref{flqz1}-\eqref{flqz2} with 
$$
X_{0}=X_{0}^{r}=\mathcal{H}_{p}^{r,\ell}(\mathbb{B}) \quad \text{and} \quad X_{1}=X_{1}^{r}=\mathcal{H}_{p}^{r+2,\ell+2}(\mathbb{B})\oplus\mathbb{C}_{\omega},
$$ 
$A(\cdot)$ as in \eqref{newexpA}, $F$ given by \eqref{Fterm} and $u_{0}=0$. Instead of taking $U$ in Theorem \ref{ClementLi} a subset of $(X_{1}^{r},X_{0}^{r})_{1/q,q}$, we choose $U$ to be an open ball $B_{\mu}^{r}(\delta)$ in $(X_{1}^{r},X_{0}^{r})_{1/(q-\mu),q}$ centered at $u_{0}$ and of radius $\delta>0$, where
\begin{equation}\label{sigmachoice}
0\leq\mu<q-2\max\Big\{\frac{p}{p-1},\frac{2}{2\ell+1}\Big\}.
\end{equation}
The parameter $\mu$ will be used later to show the smoothness of the solution. We would like to point out that Mellin-Sobolev space regularity in \eqref{ftermsA}, and the weights in \eqref{ftermsA}-\eqref{ftermsB}, determine the parameters $r$ and $\ell$. Define
$$
\nu_{0}=\bigg\{
\begin{array}{lll} b-3 & \text{if}& b<\infty,\\
r+1-\frac{2}{q-\mu} & \text{if}& b=\infty,
\end{array}
$$
and fix 
$$
0<\varepsilon_{0}<\min\Big\{1,\frac{2\ell+1}{2}-\frac{2}{q-\mu}\Big\}.
$$
Then, by Lemma \ref{abr} (a) we have
$$
\mathbb{X}_{\omega}\hookrightarrow \mathcal{H}_{p}^{\nu_{0},\frac{1}{2}+\varepsilon_{0}}(\mathbb{B})\oplus\mathbb{C}_{\omega}\hookrightarrow C(\mathbb{B}).
$$
Additionally, by \eqref{xregdata} we get
\begin{equation}\label{unireg1}
\omega|x-\kappa|^{\frac{1}{1+\beta_{\kappa}}}k \in \mathcal{H}_{p}^{\nu_{0},\frac{1}{2}+\varepsilon_{0}}(\mathbb{B})
\end{equation}
and
\begin{equation}\label{uniregA}
\omega\{\varphi_{1},\varphi_{2}, \xi_{1},\xi_{2}, \vartheta(\cdot,\lambda)\}\in \mathcal{H}_{p}^{\nu_{0},\frac{1}{2}+\varepsilon_{0}}(\mathbb{B})\oplus\mathbb{C}_{\omega}.
\end{equation}
By Lemma \ref{abr} (a), (d), we have
\begin{equation}\label{contembofint}
(X_{1}^{r},X_{0}^{r})_{\frac{1}{q-\mu},q}\hookrightarrow \mathcal{H}_{p}^{r+2-\frac{2}{q-\mu}-\varepsilon, \ell+2-\frac{2}{q-\mu}-\varepsilon}(\mathbb{B})\oplus\mathbb{C}_{\omega},
\end{equation}
for any sufficiently small $\varepsilon>0$. In particular,
\begin{equation}\label{uniregB}
(X_{1}^{r},X_{0}^{r})_{\frac{1}{q-\mu},q}\hookrightarrow \mathcal{H}_{p}^{\nu_{0},\frac{1}{2}+\varepsilon_{0}}(\mathbb{B})\oplus\mathbb{C}_{\omega}.
\end{equation}

{\em Lipschitz property of $A(\cdot)$}. We will show this property locally on the support of $\omega$. Let $v,v_{1},v_{2}\in B_{\mu}^{r}(\delta)$. Set 
$$
Q_{4}(v)=1-k((1+\beta_{\kappa})|x-\kappa|)^{\frac{1}{1+\beta_{\kappa}}}v
$$
and
$$
Q_{5}(v)=e^{-2\eta(v)}|\varphi|^{2}|\varphi+v\xi|^{-2(1+\beta_{\kappa})}(Q_{4}(v))^{-1}.
$$
By \eqref{newexpA}, it suffices to show that there exists a $C_{1}>0$ such that
\begin{equation}\label{QLips}
\|\omega(Q_{i}(v_{1})-Q_{i}(v_{2}))\|_{\mathcal{L}(X_{0})}\leq C_{1}\|v_{1}-v_{2}\|_{(X_{1},X_{0})_{\frac{1}{q-\mu},q}},
\end{equation}
where $i\in\{1,4\}$ and each $Q_{i}(v_{l})$, $l\in\{1,2\}$, acts as a multiplication operator. In the interior of $(0,1)$, the expressions of $Q_{1}$, $Q_{2}$ and $Q_{3}$ are obtained similarly by taking the orders $\beta_{0}$ and $\beta_{1}$ equal to zero, which makes the argument simpler. 

For each $y\in\mathbb{R}$, we have
\begin{equation}\label{multwithpower}
\omega |x-\kappa|^{y}: \mathbb{C}_{\omega}\rightarrow \bigcap_{\rho>1, \nu>0,\varepsilon>0}\mathcal{H}_{\rho}^{\nu,\frac{1}{2}+y-\varepsilon}(\mathbb{B}),
\end{equation}
and the induced operator is bounded for every fixed $\rho\in(1,\infty)$, $\nu$ and $\varepsilon>0$. Therefore, for each $\varepsilon>0$ small enough, the map 
\begin{eqnarray}\nonumber
\lefteqn{\omega |x-\kappa|^{\frac{1}{1+\beta_{\kappa}}}:(X_{1}^{r},X_{0}^{r})_{\frac{1}{q-\mu},q}}\\\label{oakndbff}
&&\rightarrow \mathcal{H}_{p}^{\nu_{0},\frac{1}{2}+\frac{1}{1+\beta_{1}}-\varepsilon}(\mathbb{B}) \hookrightarrow \mathcal{H}_{p}^{\nu_{0},\frac{3}{2}-\varepsilon}(\mathbb{B}) \hookrightarrow C(\mathbb{B})
\end{eqnarray}
is also bounded. Hence, there exists a $C_{2}>0$ such that
\begin{eqnarray*}
\lefteqn{\omega\big((1+\beta_{\kappa})|x-\kappa|\big)^{\frac{1}{1+\beta_{\kappa}}}||v_{1}|-|v_{2}||}\\
&\leq& \|\omega\big((1+\beta_{\kappa})|x-\kappa|\big)^{\frac{1}{1+\beta_{\kappa}}}(v_{1}-v_{2})\|_{C(\mathbb{B})}\leq C_{2}\|v_{1}-v_{2}\|_{(X_{1}^{r},X_{0}^{r})_{\frac{1}{q-\mu},q}}.
\end{eqnarray*}
Thus, after taking $\delta>0$ sufficiently small, we can choose a closed smooth path $\Gamma_{1}$ within $\{z\in \mathbb{C}\, |\, |z|<R\}$ containing the sets
$$
\bigcup_{v\in B_{\mu}^{r}(\delta)}\mathrm{Ran}(\omega_{j} |x-j|^{1/(1+\beta_{j})}v).
$$
By the analyticity of $\vartheta$, we write
$$
\omega\eta(v)=\frac{1}{2\pi i}\int_{\Gamma_{1}}\frac{\omega\vartheta\big(((1+\beta_{\kappa})|x-\kappa|)^{\frac{1}{1+\beta_{\kappa}}},\lambda\big) }{\lambda-\big((1+\beta_{\kappa})|x-\kappa|\big)^{\frac{1}{1+\beta_{\kappa}}}v}d\lambda.
$$
From the above expression, \eqref{uniregA}, \eqref{oakndbff}, Lemma \ref{abr} (a), (c) and Lemma \ref{sumSn}, we deduce that
\begin{equation}\label{expregv1}
\omega \eta(v)\in \mathcal{H}_{p}^{\nu_{0},\frac{1}{2}+\varepsilon_{0}}(\mathbb{B})\oplus\mathbb{C}_{\omega}.
\end{equation}
Additionally, 
\begin{eqnarray*}
&&\hspace{-20pt}\omega(\eta(v_{1})-\eta(v_{2}))\\
&&\hspace{-17pt}=\frac{1}{2\pi i}\int_{\Gamma_{1}}\frac{\omega \vartheta\big(((1+\beta_{\kappa})|x-\kappa|)^{\frac{1}{1+\beta_{\kappa}}},\lambda\big) \big((1+\beta_{\kappa})|x-\kappa|\big)^{\frac{1}{1+\beta_{\kappa}}}(v_{1}-v_{2}) }{\big(\lambda-((1+\beta_{\kappa})|x-\kappa|)^{\frac{1}{1+\beta_{\kappa}}}v_{1}\big)\big(\lambda-((1+\beta_{\kappa})|x-\kappa|)^{\frac{1}{1+\beta_{\kappa}}}v_{2}\big)}d\lambda.
\end{eqnarray*}
By \eqref{uniregA}, \eqref{oakndbff}, Lemma \ref{abr} (a), (c) and Lemma \ref{sumSn}, we have
\begin{equation}\label{etatermlip}
\|\omega(\eta(v_{1})-\eta(v_{2}))\|_{\mathcal{H}_{p}^{\nu_{0},\frac{1}{2}+\varepsilon_{0}}(\mathbb{B})\oplus\mathbb{C}_{\omega}}\leq C_{3}\|v_{1}-v_{2}\|_{(X_{1}^{r},X_{0}^{r})_{\frac{1}{q-\mu},q}},
\end{equation}
for some $C_{3}>0$. Hence, there exists a $C_{4}>0$ such that
$$
|\omega(\eta(v_{1})-\eta(v_{2}))|\leq \|\omega(\eta(v_{1})-\eta(v_{2}))\|_{C(\mathbb{B})} \leq C_{4}\|v_{1}-v_{2}\|_{(X_{1}^{r},X_{0}^{r})_{\frac{1}{q-\mu},q}}.
$$
Consequently, we can choose a closed smooth path $\Gamma_{2}$ containing the sets
$$
\bigcup_{v\in B_{\mu}^{r}(\delta)}\mathrm{Ran}(\omega_{j}\eta(v)).
$$
Then,
$$
\omega e^{-2\eta(v)}=\frac{1}{2\pi i}\int_{\Gamma_{2}}\frac{\omega e^{-2\lambda}}{\lambda-\eta(v)}d\lambda,
$$
and from \eqref{expregv1} and Lemma \ref{sumSn}, we deduce
\begin{equation}\label{expregv}
\omega e^{-2\eta(v)}\in \mathcal{H}_{p}^{\nu_{0},\frac{1}{2}+\varepsilon_{0}}(\mathbb{B})\oplus\mathbb{C}_{\omega}.
\end{equation}
Moreover,
$$
\omega(e^{-2\eta(v_{1})}-e^{-2\eta(v_{2})})=\frac{1}{2\pi i}\int_{\Gamma_{2}}\frac{\omega e^{-2\lambda}(\eta(v_{1})-\eta(v_{2}))}{(\lambda-\eta(v_{1}))(\lambda-\eta(v_{2}))}d\lambda.
$$
Again by \eqref{etatermlip} and Lemma \ref{sumSn}, we obtain
\begin{equation}\label{explipsch}
\|\omega(e^{-2\eta(v_{1})}-e^{-2\eta(v_{2})})\|_{\mathcal{H}_{p}^{\nu_{0},\frac{1}{2}+\varepsilon_{0}}(\mathbb{B})\oplus\mathbb{C}_{\omega}}\leq C_{5}\|v_{1}-v_{2}\|_{(X_{1}^{r},X_{0}^{r})_{\frac{1}{q-\mu},q}},
\end{equation}
for some $C_{5}>0$.

Due to \eqref{unireg1}, \eqref{uniregB} and Lemma \ref{abr} (a), we obtain
$$
\omega Q_{4}(v) \in\mathcal{H}_{p}^{\nu_{0},\frac{1}{2}+\varepsilon_{0}}(\mathbb{B})\oplus\mathbb{C}_{\omega};
$$
in particular
\begin{equation}\label{Q5lip}
\|\omega(Q_{4}(v_{1})-Q_{4}(v_{2}))\|_{\mathcal{H}_{p}^{\nu_{0},\frac{1}{2}+\varepsilon_{0}}(\mathbb{B})\oplus\mathbb{C}_{\omega}}
 \leq C_{6}\|v_{1}-v_{2}\|_{(X_{1}^{r},X_{0}^{r})_{\frac{1}{q-\mu},q}},
\end{equation}
for certain $C_{6}>0$. Hence, by the relation 
$$
|\omega-\omega Q_{4}(v)|\leq\|\omega(Q_{4}(u_{0})-Q_{4}(v))\|_{C(\mathbb{B})}\leq C_{4}\|u_{0}-v\|_{(X_{1}^{r},X_{0}^{r})_{\frac{1}{q-\mu},q}},
$$
after choosing $\delta>0$ sufficiently small, we get that there exists a $C_{7}>0$ such that
\begin{equation}\label{q5nonzero}
|Q_{4}(v)|\geq C_{7}
\end{equation}
on the support of $\omega$. As a consequence, by Lemma \ref{abr} (a), (c), we deduce
\begin{equation}\label{qdenom2}
\omega \{Q_{4}(v), (Q_{4}(v))^{-1}, Q_{4}^{2}(v)\} \in\mathcal{H}_{p}^{\nu_{0},\frac{1}{2}+\varepsilon_{0}}(\mathbb{B})\oplus\mathbb{C}_{\omega}.
\end{equation}

By \eqref{uniregA}, \eqref{uniregB} and Lemma \ref{abr} (a), we have
\begin{equation}\label{phiterm1}
 \omega |\varphi+v\xi|^{2}\in \mathcal{H}_{p}^{\nu_{0},\frac{1}{2}+\varepsilon_{0}}(\mathbb{B})\oplus\mathbb{C}_{\omega}
\end{equation}
and
\begin{equation}\label{phivxi}
\|\omega (|\varphi+v_{1}\xi|^{2}-|\varphi+v_{2}\xi|^{2})\|_{\mathcal{H}_{p}^{\nu_{0},\frac{1}{2}+\varepsilon_{0}}(\mathbb{B})\oplus\mathbb{C}_{\omega}}\hspace{-6pt}\leq \hspace{-1pt}C_{8}\|v_{1}-v_{2}\|_{(X_{1}^{r},X_{0}^{r})_{\frac{1}{q-\mu},q}}
\end{equation}
for certain $C_{8}>0$. Hence, by the inequality
\begin{eqnarray*}
\lefteqn{\omega ||\varphi|^{2}-|\varphi+v\xi||^{2}}\\
&\leq& \|\omega (|\varphi+u_{0}\xi|^{2}-|\varphi+v\xi|^{2})\|_{C(\mathbb{B})}\leq C_{4}\|u_{0}-v\|_{(X_{1}^{r},X_{0}^{r})_{\frac{1}{q-\mu},q}},
\end{eqnarray*}
after choosing $\delta>0$ appropriately, we deduce that there exists a constant $C_{9}>0$ such that
\begin{equation}\label{phivxibound}
|\varphi+v\xi|^{2}\geq C_{9}
\end{equation}
on the support of $\omega$. From \eqref{uniregA}, \eqref{uniregB}, \eqref{phiterm1} and Lemma \ref{abr} (a), (c), we obtain 
\begin{equation}\label{phiterm}
\omega\{ \langle \varphi, \xi\rangle, |\varphi|^{2}, |\varphi+v\xi|^{-2}, |\varphi+v\xi|^{-2\beta_{\kappa}}, |\varphi|^{2} |\varphi+v\xi|^{-2(\beta_{\kappa}+1)}\}\hspace{-2pt}\in\hspace{-2pt}\mathcal{H}_{p}^{\nu_{0},\frac{1}{2}+\varepsilon_{0}}(\mathbb{B})\oplus\mathbb{C}_{\omega}.
\end{equation}

According to \eqref{contembofint} and Lemma \ref{abr} (a), we have
\begin{equation}\label{xcxv}
\omega |x-\kappa|\partial_{x}:(X_{1}^{r},X_{0}^{r})_{\frac{1}{q-\mu},q}\rightarrow \mathcal{H}_{p}^{\nu_{0},\frac{1}{2}+\varepsilon_{0}}(\mathbb{B})\hookrightarrow C(\mathbb{B}),
\end{equation}
and
\begin{equation}\label{qdenom1}
\omega\{ v+(1+\beta_{j})|x-\kappa|v_x, (v+(1+\beta_{j})|x-\kappa|v_x)^{2} \}\in\mathcal{H}_{p}^{\nu_{0},\frac{1}{2}+\varepsilon_{0}}(\mathbb{B})\oplus\mathbb{C}_{\omega }.
\end{equation}
Combining \eqref{q5nonzero}, \eqref{qdenom2}, \eqref{qdenom1} and Lemma \ref{abr} (a), (c), we have 
\begin{equation}\label{Qtermreg}
\omega \{Q(v), (Q(v))^{-1}\}\in \mathcal{H}_{p}^{\nu_{0},\frac{1}{2}+\varepsilon_{0}}(\mathbb{B})\oplus\mathbb{C}_{\omega }.
\end{equation}
Hence, by \eqref{expregv}, \eqref{phiterm}, \eqref{Qtermreg} and Lemma \ref{abr} (a), it holds
\begin{equation}\label{Q1reg}
\omega Q_{1}(v)\in \mathcal{H}_{p}^{\nu_{0},\frac{1}{2}+\varepsilon_{0}}(\mathbb{B})\oplus\mathbb{C}_{\omega}.
\end{equation}
Similarly, by \eqref{expregv}, \eqref{qdenom2}, \eqref{phiterm} and Lemma \ref{abr} (a), we have
$$
\omega Q_{5}(v)\in \mathcal{H}_{p}^{\nu_{0},\frac{1}{2}+\varepsilon_{0}}(\mathbb{B})\oplus\mathbb{C}_{\omega}.
$$
Hence, by Lemma \ref{abr} (b) each of $Q_{i}(v)$, $i\in\{1,4\}$, acts by multiplication as a bounded map on $X_{0}^{r}$, i.e. $A(v)$ is a well-defined element in $\mathcal{L}(X_{1}^{r},X_{0}^{r})$.

Due to \eqref{phivxi} and \eqref{phivxibound}, there exists a closed smooth path $\Gamma_{3}$ within the domain $\{\lambda\in\mathbb{C}\, |\, \mathrm{Re}(\lambda)>0\}$ containing the sets
$$
\bigcup_{v\in B_{\mu}^{r}(\delta)}\mathrm{Ran}(\omega _{j}|\varphi+v\xi|).
$$
Concerning the term $e^{2\eta}Q_{1}$, we estimate
\begin{eqnarray}\nonumber
\lefteqn{\omega (|\varphi+v_{1}\xi|^{-2\beta_{\kappa}}(Q(v_{1}))^{-1}-|\varphi+v_{2}\xi|^{-2\beta_{\kappa}}(Q(v_{2}))^{-1})}\\\nonumber
&=&\omega (Q(v_{2}))^{-1}(|\varphi+v_{1}\xi|^{-2\beta_{\kappa}}-|\varphi+v_{2}\xi|^{-2\beta_{\kappa}})\\\nonumber
&&+\omega ((Q(v_{1}))^{-1}-(Q(v_{2}))^{-1})|\varphi+v_{1}\xi|^{-2\beta_{\kappa}}\\\nonumber
&=&(Q(v_{2}))^{-1}\frac{\omega}{2\pi i}\int_{\Gamma_{3}}\lambda^{-2\beta_{\kappa}}((\lambda-|\varphi+v_{1}\xi|)^{-1}-(\lambda-|\varphi+v_{2}\xi|)^{-1})d\lambda\\\label{ghias}
&&+\omega |\varphi+v_{1}\xi|^{-2\beta_{\kappa}}(Q(v_{1}))^{-1}(Q(v_{2}))^{-1}(Q(v_{2})-Q(v_{1})).
\end{eqnarray}
Note that 
\begin{eqnarray}\nonumber
\lefteqn{(\lambda-|\varphi+v_{1}\xi|)^{-1}-(\lambda-|\varphi+v_{2}\xi|)^{-1}}\\\nonumber
&=&(\lambda-|\varphi+v_{1}\xi|)^{-1}(\lambda-|\varphi+v_{2}\xi|)^{-1}(|\varphi+v_{2}\xi|+|\varphi+v_{1}\xi|)^{-1}\\\label{ahfsfdfd}
&&\times (v_{1}+v_{2}+2\langle \varphi, \xi\rangle)(v_{1}-v_{2})
\end{eqnarray}
and
\begin{eqnarray}\nonumber
Q(v_{1})-Q(v_{2})&=&\big(2-k((1+\beta_{\kappa})|x-\kappa|)^{\frac{1}{1+\beta_{\kappa}}}(v_{1}+v_{2})\big)\\\nonumber
&&\times k\big((1+\beta_{\kappa})|x-\kappa|\big)^{\frac{1}{1+\beta_{\kappa}}}(v_{2}-v_{1})\\\nonumber
&&+\big(v_{1}+v_{2}+(-1)^{\kappa}(1+\beta_{\kappa})|x-\kappa|\partial_{x}(v_{1}+v_{2})\big)\\\label{Qterm}
&&\times\big(1+(-1)^{\kappa}(1+\beta_{\kappa})|x-\kappa|\partial_{x}\big)(v_{1}-v_{2}).
\end{eqnarray}
Similarly, for the term $e^{2\eta}Q_{5}$, we have
\begin{eqnarray*}
\lefteqn{\omega \big(|\varphi+v_{1}\xi|^{-2(\beta_{\kappa}+1)}(Q_{4}(v_{1}))^{-1}-|\varphi+v_{2}\xi|^{-2(\beta_{\kappa}+1)}(Q_{4}(v_{2}))^{-1}\big)}\\
&=&\omega |\varphi+v_{1}\xi|^{-2(\beta_{\kappa}+1)}(Q_{4}(v_{1}))^{-1}(Q_{4}(v_{2}))^{-1}((Q_{4}(v_{2})-Q_{4}(v_{1}))\\
&+&\frac{\omega(Q_{4}(v_{2}))^{-1}}{2\pi i}\int_{\Gamma_{3}}\lambda^{-2(\beta_{\kappa}+1)}\big((\lambda-|\varphi+v_{1}\xi|)^{-1}-(\lambda-|\varphi+v_{2}\xi|)^{-1}\big)d\lambda.
\end{eqnarray*}
By \eqref{explipsch}, \eqref{Q5lip}, \eqref{phiterm}, \eqref{Qtermreg}, \eqref{ghias}, \eqref{ahfsfdfd}, \eqref{Qterm} and Lemma \ref{abr} (a), (c), arguing as in \cite[p. 1463]{RS3}, the maps below are Lipschitz
\begin{eqnarray}\nonumber
&&\hspace{-50pt}(X_{1}^{r},X_{0}^{r})_{\frac{1}{q-\mu},q} \supset B_{\mu}^{r}(\delta)\ni v\\\label{TzP}
&&\mapsto\omega\{Q^{-1}(v), |\varphi+v\xi|^{-2\beta_{\kappa}}, Q_{i}(v)\} \in \mathcal{H}_{p}^{\nu_{0},\frac{1}{2}+\varepsilon_{0}}(\mathbb{B})\oplus\mathbb{C}_{\omega},
\end{eqnarray}
where $i\in\{1,4,5\}$.
Therefore, the map 
\begin{equation}\label{LipforA}
A(\cdot):B_{\mu}^{r}(\delta)\rightarrow \mathcal{L}(X_{1}^{r}, X_{0}^{r})
\end{equation}
is also Lipschitz.
 
{\em Lipschitz property of $F$}. To show that the map $F:B_{\mu}^{r}(\delta)\rightarrow X_{0}^{r}$ is well defined and Lipschitz, we proceed
arguing as in \cite[p. 1464]{RS3}. To illustrate, let us restrict as before on the support of the cut-off function $\omega$ and examine the terms 
\begin{equation}\label{twoterms}
\omega G(v)(Q(v))^{-1}f_{3,4}|x-\kappa|vv_{x} \quad \text{and} \quad \omega G(v)f_{5,2}|x-\kappa|v_{x}. 
\end{equation}
By \eqref{uniregB}, \eqref{expregv}, \eqref{qdenom2}, \eqref{phiterm}, \eqref{Qtermreg} and Lemma \ref{abr} (a) we obtain 
$$
\omega G(v)(Q(v))^{-1}|x-\kappa|vv_{x}\in \mathcal{H}_{p}^{\nu_{0},\frac{1}{2}+\varepsilon_{0}}(\mathbb{B})\oplus\mathbb{C}_{\omega }. 
$$
Moreover,
\begin{eqnarray}\nonumber
&&\hspace{-60pt}|x-\kappa|(G(v_{1})(Q(v_{1}))^{-1}v_{1}(v_{1})_{x}-G(v_{2})(Q(v_{2}))^{-1}v_{2}(v_{2})_{x})\\\nonumber
&=&(G(v_{1})-G(v_{2}))(Q(v_{1}))^{-1}v_{1}|x-\kappa|(v_{1})_{x}\\\nonumber
&&+G(v_{2})\big(Q(v_{1}))^{-1}-(Q(v_{2}))^{-1}\big)v_{1}|x-\kappa|(v_{1})_{x}\\\nonumber
&&+G(v_{2})(Q(v_{2}))^{-1}(v_{1}-v_{2})|x-\kappa|(v_{1})_{x}\\\label{mixedterm}
&&+G(v_{2})(Q(v_{2}))^{-1}v_{2}|x-\kappa|(v_{1}-v_{2})_{x}.
\end{eqnarray}
By \eqref{explipsch}, \eqref{q5nonzero}, \eqref{TzP}, \eqref{TzP} and Lemma \ref{abr} (a), (c), the map
\begin{equation}\label{GLip}
(X_{1}^{r},X_{0}^{r})_{\frac{1}{q-\mu},q} \supset B_{\mu}^{r}(\delta)\ni v \mapsto \omega G(v) \in \mathcal{H}_{p}^{\nu_{0},\frac{1}{2}+\varepsilon_{0}}(\mathbb{B})\oplus\mathbb{C}_{\omega }
\end{equation}
is Lipschitz. Hence, by \eqref{xcxv}, \eqref{TzP}, \eqref{mixedterm}, \eqref{GLip} we deduce that the map 
\begin{eqnarray}\nonumber
\lefteqn{(X_{1}^{r},X_{0}^{r})_{\frac{1}{q-\mu},q} \supset B_{\mu}^{r}(\delta)\ni v}\\\label{Lipofrestterm}
&& \mapsto \omega|x-\kappa|(G(v)(Q(v))^{-1}vv_{x} \in \mathcal{H}_{p}^{\nu_{0},\frac{1}{2}+\varepsilon_{0}}(\mathbb{B})\oplus\mathbb{C}_{\omega }
\end{eqnarray}
is Lipschitz. Since
$$f_{3,4}\in X_{0}^{r},$$
the result follows by \eqref{Lipofrestterm} and Lemma \ref{abr} (b). The Lipschitz property for the second term in \eqref{twoterms} follows by \eqref{xcxv}, \eqref{GLip} and the fact that
$$f_{5,2}\in X_{0}^{r}.$$
These imply Lipschitz property for
\begin{equation}\label{LipforF}
F:B_{\mu}^{r}(\delta)\rightarrow X_{0}^{r}.
\end{equation}

{\em Maximal $L^{q}$-regularity for $A(u_{0})$}. We will show that for each $v\in B_{\mu}^{r}(\delta)$ with $\mathrm{Im}(v)=0$, the operator $A(v)\in\mathcal{L}(X_{1}^{r},X_{0}^{r})$ has maximal $L^{q}$-regularity. By Lemma \ref{wperturb}, \eqref{phivxibound} and \eqref{Q1reg}, for each $\theta\in (\pi/2,\pi)$ there exists a $c_{0}>0$ such that $c_{0}+Q_{1}(v)\underline{A}_{r}\in\mathcal{R}(\theta)$. By \cite[(I.2.5.2), (I.2.9.6)]{Am} and Lemma \ref{abr} (d), we have 
\begin{equation}\label{domfracpow}
\mathcal{D}((c_{0}+Q_{1}(v)\underline{A}_{r})^{\varepsilon_{1}})\hookrightarrow \mathcal{H}^{r+2\varepsilon_{1}-\varepsilon,\ell+2\varepsilon_{1}-\varepsilon}_p(\mathbb{B})\oplus\mathbb{C}_{\omega },
\end{equation}
for any $\varepsilon_{1}\in(1/2,1)$ and $\varepsilon>0$.

The first term on the right-hand side of \eqref{newexpA} can be written in the form
$$
\omega \Big(Q_{3}(v)+Q_{1}(v)\frac{\beta_{\kappa}}{1+\beta_{\kappa}}\frac{(-1)^{\kappa+1}}{|x-\kappa|}\Big)\partial_{x}=Q_{6}(v)\frac{(-1)^{\kappa+1}}{|x-\kappa|}\partial_{x},
$$
where
$$
Q_{6}(v)=\omega e^{-2\eta(v)}|\varphi+v\xi|^{-2\beta_{\kappa}}\frac{\beta_{\kappa}}{1+\beta_{\kappa}}\Big(\frac{|\varphi|^{2}}{|\varphi+v\xi|^{2}Q_{4}(v)}-\frac{1}{Q(v)}\Big).
$$
Keeping in mind that
$$\varphi(j)=(1,0)\quad\text{and}\quad\xi(j)=(0,1),$$
using \eqref{expregv}, \eqref{qdenom2}, \eqref{phiterm}, \eqref{Qtermreg} and Lemma \ref{abr}, we get
$$
\omega Q_{6}(v)\in \mathcal{H}_{p}^{\nu_{0},\frac{1}{2}+\varepsilon_{0}}(\mathbb{B}).
$$
For each $\varepsilon_{2}\in (0,\varepsilon_{0})$, we have
$$
\omega |x-\kappa|^{-\varepsilon_{2}}Q_{6}(v) \in \mathcal{H}_{p}^{\nu_{0},\frac{1}{2}+\varepsilon_{0}-\varepsilon_{2}}(\mathbb{B})
$$
and by Lemma \ref{abr} (b) this term acts by multiplication as a bounded map on $X_{0}^{r}$. Therefore, after choosing $\varepsilon_{2}$ and $\varepsilon_{1}$ such that $\varepsilon_{1}>1-\varepsilon_{2}/2$, by \eqref{domfracpow} we conclude that 
\begin{equation}
\omega \Big(Q_{3}(v)+Q_{1}(v)\frac{\beta_{\kappa}}{1+\beta_{\kappa}}\frac{(-1)^{\kappa+1}}{|x-\kappa|}\Big)\partial_{x}\in \mathcal{L}(\mathcal{D}((c_{0}+Q_{1}(v)\underline{A}_{r})^{\varepsilon_{1}}),X_{0}^{r}).
\end{equation}

Moreover, by \eqref{expregv}, \eqref{qdenom2}, \eqref{phiterm}, \eqref{Qtermreg} and Lemma \ref{abr} (a), we have 
$$
(1-\omega )(Q_{2}(v)+Q_{3}(v))\in \bigcap_{\nu>0} \mathcal{H}_{p}^{\nu_{0},\nu}(\mathbb{B}).
$$
By Lemma \ref{abr} (b), the term
$(1-\omega )(Q_{2}(v)+Q_{3}(v))$
acts by multiplication as a bounded map on $X_{0}^{r}$. Hence, 
\begin{equation}\label{ACDC}
(1-\omega )(Q_{2}(v)+Q_{3}(v))\partial_{x}\in \mathcal{L}(\mathcal{D}((c_{0}+Q_{1}(v)\underline{A}_{r})^{\varepsilon_{1}}),X_{0}^{r}).
\end{equation}
Therefore, by writing 
\begin{eqnarray*}
\lefteqn{\Big(\omega \Big(Q_{3}(v)+Q_{1}(v)\frac{\beta_{\kappa}}{1+\beta_{\kappa}}\frac{(-1)^{\kappa+1}}{|x-\kappa|}\Big)\partial_{x}}\\
&&+(1-\omega )(Q_{2}(v)+Q_{3}(v))\partial_{x}\Big)(c_{0}+c+Q_{1}(v)\underline{A}_{r})^{-1}\\
&=&\Big(\omega \Big(Q_{3}(v)+Q_{1}(v)\frac{\beta_{\kappa}}{1+\beta_{\kappa}}\frac{(-1)^{\kappa+1}}{|x-\kappa|}\Big)\partial_{x}\\
&&+(1-\omega )(Q_{2}(v)+Q_{3}(v))\partial_{x}\Big)(c_{0}+Q_{1}(v)\underline{A}_{r})^{-\varepsilon_{1}}\\
&&\times(c_{0}+Q_{1}(v)\underline{A}_{r})^{\varepsilon_{1}}(c_{0}+c+Q_{1}(v)\underline{A}_{r})^{-1},
\end{eqnarray*}
using \cite[Lemma 2.3.3]{Tan} together with the perturbation result of Kunstmann and Weis \cite[Theorem 1]{KuWe} and choosing $c>0$ sufficiently large, we deduce that
\begin{equation}\label{maxregAu}
A(v)+c_{0}+c\in\mathcal{R}(\theta). 
\end{equation}
Note that as $v$ varies in $B_{\mu}^{r}(\delta)$, we only have to choose, if necessary, $c_{0}$ and $c$ larger. Then, maximal $L^{q}$-regularity for each $A(v)$, $v\in \{B_{\mu}^{r}(\delta)\, |\, \mathrm{Im}(v)=0\}$, is obtained by Theorem \ref{KalWeiTh}.

{\em Existence and uniqueness}. From the above conclusions, choosing $\mu=0$, it follows that there exists a $T>0$ and a unique $u$ as in \eqref{exanduniqofu} solving the problem \eqref{flqz1}-\eqref{flqz2}. By \eqref{interpemb}, \eqref{contembofint} and Lemma \ref{abr} (a), we also obtain the regularity \eqref{extrareginspace1}-\eqref{extrareginspace0} of $u$. Moreover, by taking the complex conjugate in \eqref{flqz1}-\eqref{flqz2}, and using the above uniqueness result, we conclude that $u(t)\in \mathbb{R}$, $t\in[0,T]$.

\begin{remark}\label{unifofdata} 
In the proofs of \eqref{LipforA}, \eqref{LipforF} and \eqref{maxregAu}, we choose $\delta>0$ in $B_{\mu}^{r}(\delta)$ sufficiently small, so that the equations \eqref{q5nonzero}, \eqref{phivxibound} are satisfied, and moreover to ensure the existence of the paths $\Gamma_{1}$, $\Gamma_{2}$. As a matter of fact,
\eqref{LipforA}, \eqref{LipforF} and \eqref{maxregAu} still hold true, if we replace
$$\{v\in B_{\mu}^{r}(\delta)\, | \, \mathrm{Im}(v)=0\}$$ with
$$\{v\in B_{\mu_{0}}^{r_{0}}(\delta_{0})\, | \, \mathrm{Im}(v)=0\}\cap B_{\mu}^{r}(\delta),$$
with fixed $\mu_{0}$, $r_{0}$, $\delta_{0}$, where $\delta_{0}$ is sufficiently small and arbitrary $\mu$, $r$, $\delta$.
\end{remark}

{\em Smoothing in space}. Suppose now that $b$ in \eqref{pqgammachoice0} is infinite. We will show that the solution becomes instantaneously smooth in space. We apply \cite[Theorem 3.1]{RS4} to \eqref{flqz1}-\eqref{flqz2} with $A(\cdot)$ and $F$ as before, the Banach scales
$$
Y_{0}^{n}=X_{0}^{r+\frac{n}{q\eta_{0}}}\quad\text{and}\quad Y_{1}^{n}=X_{1}^{r+\frac{n}{q\eta_{0}}},
$$
where $n\in \mathbb{N}\cup\{0\}$ and
$Z=\{v\in B_{0}^{r}(\delta_{0})\, | \, \mathrm{Im}(v)=0\}$
for certain $\delta_{0}>0$ sufficiently small. Moreover, we choose 
\begin{equation}\label{eta0choice}
\eta_{0}>\frac{1}{2}\max\Big\{1,\frac{q-\mu}{\mu}\Big\},
\end{equation}
where $\mu>0$ satisfies \eqref{sigmachoice}. Clearly, for $\varepsilon>0$ small enough, we have
$$
\mathcal{H}_{p}^{r+\frac{n}{q\eta_{0}}+2,\ell+2}(\mathbb{B})\hookrightarrow \mathcal{H}_{p}^{r+\frac{n+1}{q\eta_{0}}+2-\frac{2}{q}+\varepsilon,\ell+2-\frac{2}{q}+\varepsilon}(\mathbb{B}).
$$
Hence, by Lemma \ref{abr} (d), we deduce
$$Y_{1}^{n}\hookrightarrow (Y_{1}^{n+1},Y_{0}^{n+1})_{{1}/{q},q}$$
for each $n\in \mathbb{N}\cup\{0\}$. We examine now the validity of the conditions (i), (ii) and (iii) of \cite[Theorem 3.1]{RS4} separately.

{\em Condition} (i). Due to \eqref{extrareginspace0}, by possibly choosing a smaller $T>0$, we can achieve that $u(t)\in Z$ for all $t\in[0,T]$. Hence, \eqref{extrareginspace0} and \eqref{LipforA} imply continuity of
$$
[0,T]\ni t\mapsto A(u(t)) \in \mathcal{L}(Y_{1}^{0},Y_{0}^{0}).
$$
In addition, by \eqref{maxregAu} and Theorem \ref{KalWeiTh}, we deduce that for each $t\in[0,T]$ the operator
$$
A(u(t))\in \mathcal{L}(Y_{1}^{0},Y_{0}^{0})
$$
has maximal $L^{q}$-regularity. 

{\em Condition} (ii). Let $w\in Z\cap (Y_{1}^{n},Y_{0}^{n})_{{1}/{q},q}$ for some $n\in\mathbb{N}\cup\{0\}$. By Lemma \ref{abr} (d) and \eqref{eta0choice}, we have
\begin{eqnarray}\nonumber
(Y_{1}^{n},Y_{0}^{n})_{\frac{1}{q},q}&\hookrightarrow& \mathcal{H}_{p}^{r+\frac{n}{q\eta_{0}}+2-\frac{2}{q}-\varepsilon,\ell+2-\frac{2}{q}-\varepsilon}(\mathbb{B})\oplus\mathbb{C}_{\omega }\\\nonumber
&\hookrightarrow& \mathcal{H}_{p}^{r+\frac{n+1}{q\eta_{0}}+2-\frac{2}{q-\mu}+\varepsilon,\ell+2-\frac{2}{q-\mu}+\varepsilon}(\mathbb{B})\oplus\mathbb{C}_{\omega } \\\label{intersmoothemb}
&\hookrightarrow& (Y_{1}^{n+1},Y_{0}^{n+1})_{\frac{1}{q-\mu},q},
\end{eqnarray}
for $\varepsilon>0$ small enough. For fixed $n\in\mathbb{N}\cup\{0\}$, we can choose $\delta>0$ such that 
$$
w\in B_{\mu}^{r+(n+1)/(q\eta_{0})}(\delta).
$$ 
Consequently, by Remark \ref{unifofdata} and \eqref{intersmoothemb}, the map
$$
A(w):Y_{1}^{n+1}\rightarrow Y_{0}^{n+1}
$$
is a well defined and has maximal $L^{q}$-regularity. Similarly, if
$$
h\in C([0,T];Z\cap (Y_{1}^{n},Y_{0}^{n})_{{1}/{q},q}),
$$
then by the equation \eqref{intersmoothemb}, for any $n\in\mathbb{N}\cup\{0\}$ we can choose the radius $\delta$ of $B_{\mu}^{r+(n+1)/(q\eta_{0})}(\delta)$ large enough such that 
$$
h(t)\in B_{\mu}^{r+(n+1)/(q\eta_{0})}(\delta),
$$ 
for each $t\in[0,T]$. Therefore, by Remark \ref{unifofdata} and \eqref{intersmoothemb}, it follows that
$$
[0,T]\ni t\mapsto A(h(t)) \in \mathcal{L}(Y_{1}^{n+1},Y_{0}^{n+1})
$$
is continuous for any $n\in\mathbb{N}\cup\{0\}$.

{\em Condition} (iii). Immediately follows by Remark \ref{unifofdata} and \eqref{intersmoothemb}. 

By \cite[Theorem 3.1]{RS4}, besides \eqref{extrareginspace0}-\eqref{extrareginspace1}, the solution $u$ satisfies additionally the regularity \eqref{extrareginspace3} with $n=1$, for each $\tau\in(0,T)$. Then, \eqref{extrareginspace2} follows by \eqref{interpemb} and \eqref{contembofint}.

{\em Smoothing in time}. The space smoothness of the solution, provided by \eqref{extrareginspace2} and \eqref{extrareginspace3} with $n=1$, immediately implies smoothness in time as well. Let us highlight how to prove \eqref{extrareginspace3} for each $n\in\mathbb{N}$. It suffices to prove our claim on the support of $\omega$. By differentiating \eqref{flqz1} with respect to $t$, we obtain
\begin{equation}\label{diffovert}
u_{tt}=-A(u)u_{t}-(\partial_{t}A(u))u+\partial_{t}F(u), \quad t\in(\tau,T).
\end{equation}
Let $\nu>2+1/p$. According to Remark \ref{unifofdata}, we may choose $\delta>0$ sufficiently large such that 
$$
u(t)\in \{v\in B_{0}^{r}(\delta_{0})\, |\, \mathrm{Im}(v)=0\}\cap B_{0}^{\nu}(\delta),
$$ 
for each $t\in[\tau,T]$. Due to \eqref{extrareginspace3} with $n=1$, we have
$$
\omega\{ |x-\kappa|^{-1}u_{xt}, u_{xxt}\}\in L^{q}(\tau,T; \mathcal{H}_{p}^{\nu-2,\ell-2}(\mathbb{B})).
$$
Hence, due to \eqref{interpemb}, \eqref{extrareginspace3} with $n=1$, \eqref{TzP}, \eqref{TzP} and Lemma \ref{abr} (b) we have 
$$
\omega \{|x-\kappa|^{-1}Q_{1}(u)u_{xt}, Q_{1}(u)u_{xxt}, Q_{3}(u)u_{xt}\}\in L^{q}(\tau,T; \mathcal{H}_{p}^{\nu-2,\ell-2}(\mathbb{B})).
$$
Therefore
\begin{equation}\label{treg1}
A(u)u_{t}\in L^{q}(\tau,T; \mathcal{H}_{p}^{\nu-2,\ell-2}(\mathbb{B})).
\end{equation}
Concerning the second term on the right-hand side of \eqref{diffovert}, we have 
$$
\omega \partial_{t}A(u)=\omega \Big(\big(Q_{7}(u)u_{t}+Q_{8}(u)u_{xt}\big)\partial_{x}^{2}+\big(Q_{9}(u)u_{t}+Q_{10}(u)u_{xt}\big)\frac{\partial_{x}}{|x-\kappa|}\Big).
$$
Similarly to \eqref{TzP} and \eqref{TzP}, the maps
\begin{equation}\label{kkajsoc1}
(X_{1}^{\nu},X_{0}^{\nu})_{\frac{1}{q},q} \supset B_{0}^{r}(\delta_{0})\cap B_{0}^{\nu}(\delta)\ni v\mapsto \omega Q_{i}(v)\in \mathcal{H}_{p}^{\nu+1-\frac{2}{q}-\varepsilon,\frac{1}{2}+\varepsilon_{0}}(\mathbb{B})\oplus\mathbb{C}_{\omega},
\end{equation}
where $i\in\{7,8,9,10\}$ and $\varepsilon>0$, are Lipschitz. Moreover
\begin{equation}\label{parto2wdfu}
u_{t}\in L^{q}(\tau,T; \mathcal{H}_{p}^{\nu,\ell}(\mathbb{B})) \quad \text{and} \quad u_{xt}\in L^{q}(\tau,T; \mathcal{H}_{p}^{\nu-1,\ell-1}(\mathbb{B})).
\end{equation}
In addition, by \eqref{extrareginspace2}
\begin{equation}\label{partofssbhu1}
u_{x}\in \bigcap_{\varepsilon>0}C([\tau,T]; \mathcal{H}_{p}^{\frac{1}{\varepsilon},\ell+1-\frac{2}{q}-\varepsilon}(\mathbb{B})) \,\, \text{and} \,\, u_{xx}\in \bigcap_{\varepsilon>0}C([\tau,T]; \mathcal{H}_{p}^{\frac{1}{\varepsilon},\ell-\frac{2}{q}-\varepsilon}(\mathbb{B})).
\end{equation}
Let $d$ be as in \eqref{tangent}. By writing $u_{t}u_{xx}=(u_{t}/d)(du_{xx})$ and $u_{xt}u_{xx}=(u_{xt}/d)(du_{xx})$,
from \eqref{parto2wdfu}, \eqref{partofssbhu1}, and Lemma \ref{abr} (b) we get
$$
u_{t}u_{xx}, u_{xt}u_{xx}, \omega |x-\kappa|^{-1}u_{t}u_{x}\in L^{q}(\tau,T; \mathcal{H}_{p}^{\nu-1,\ell-2}(\mathbb{B})).
$$
Hence, by \eqref{interpemb}, \eqref{extrareginspace3} with $n=1$, \eqref{kkajsoc1} and Lemma \ref{abr} (b), we obtain 
\begin{equation}\label{treg2}
(\partial_{t}A(u))u\in L^{q}(\tau,T; \mathcal{H}_{p}^{\nu-1,\ell-2}(\mathbb{B})).
\end{equation}

For the third term on the right-hand side of \eqref{diffovert}, we have
$$
\partial_{t}F(u)=Q_{11}(u)u_{t}+Q_{12}(u)u_{xt}.
$$
Similarly to \eqref{LipforF}, the maps
\begin{equation}\label{akpslodc}
(X_{1}^{\nu},X_{0}^{\nu})_{\frac{1}{q},q} \supset B_{0}^{r}(\delta_{0})\cap B_{0}^{\nu}(\delta)\ni v\mapsto \omega Q_{i}(v)\in\mathcal{H}_{p}^{\nu,\ell}(\mathbb{B}),
\end{equation}
where $i\in\{11,12\}$, are Lipschitz. By writing 
$$
Q_{11}(u)u_{t}=(dQ_{11}(u))(u_{t}/d) \quad \text{and} \quad Q_{12}(u)u_{xt}=(dQ_{12}(u))(u_{xt}/d),
$$
and taking into account \eqref{interpemb}, \eqref{extrareginspace3} with $n=1$, \eqref{parto2wdfu}, \eqref{akpslodc} and Lemma \ref{abr} (b), we deduce that
\begin{equation}\label{treg3}
\partial_{t}F(u)\in L^{q}(\tau,T; \mathcal{H}_{p}^{\nu-1,\ell-2}(\mathbb{B})).
\end{equation}
By \eqref{diffovert}, \eqref{treg1}, \eqref{treg2} and \eqref{treg3}, we conclude that 
$$
u\in H^{2,q}(\tau,T; \mathcal{H}_{p}^{\nu-2,\ell-2}(\mathbb{B})),
$$
which is \eqref{extrareginspace3} for $n=2$. The proof for arbitrary $n\in\mathbb{N}$, follows by iterating the above procedure and using the following fact: for each $n\in\mathbb{N}$, due to smoothness in space, we can choose $\nu$ sufficiently large and restrict the ground space to the space of smaller weight $\mathcal{H}_{p}^{\nu,\ell-2(n-1)}(\mathbb{B})$, to make sure that Lemma \ref{abr} (b) can be applied to treat the non-linearities; see e.g. \cite[Section 5.3]{RS4} for more details.
\end{proof}

We return now to the initial variable $s$ and recover the regularity of the distance function $w$. From \eqref{xtosderivartves}, Theorem \ref{short1d} and the definition of Mellin-Sobolev spaces we immediately obtain the following.

\begin{theorem}[$s$-variable regularity for $w$]\label{wexreg}
Let $\gamma$ be a curve satisfying the conditions {\rm(a)-(d)} of Section {\rm\ref{assumptongama}} and let $p$, $q$, $r$, $\ell_{0}$, $\ell$ be as in \eqref{ell0}, \eqref{pqgammachoice0} and \eqref{pqgammachoice}. Then, there exists a $T>0$ and a unique
$$
w\in H^{1,q}(0,T; \mathcal{H}_{p}^{r,\frac{3}{2}+(\ell-\frac{1}{2})(1+\beta_{1})}(\mathbb{B})) \cap L^{q}(0,T; \mathcal{H}_{p}^{r+2,\frac{3}{2}+(\ell+\frac{3}{2})(1+\beta_{0})}(\mathbb{B})\oplus\mathbb{S}_{\omega})
$$
solving \eqref{kwer1}-\eqref{kwer2}. The function $w$ also satisfies 
\begin{eqnarray}\label{wreg3}
\lefteqn{w\in C([0,T]; \mathcal{H}_{p}^{r+2-\frac{2}{q}-\varepsilon,\frac{3}{2}+(\ell+\frac{3}{2}-\frac{2}{q})(1+\beta_{0})-\varepsilon}(\mathbb{B})\oplus\mathbb{S}_{\omega})}\\\nonumber
&& \cap \, C([0,T]; (\mathcal{H}_{p}^{r+2,\frac{3}{2}+(\ell+\frac{3}{2})(1+\beta_{0})}(\mathbb{B})\oplus\mathbb{S}_{\omega}, \mathcal{H}_{p}^{r,\frac{3}{2}+(\ell-\frac{1}{2})(1+\beta_{1})}(\mathbb{B}))_{1/q,q})
\end{eqnarray}
for all $\varepsilon>0$. In particular, if $b=\infty$, then for any $\tau\in(0,T)$, $\nu,\varepsilon>0$ and $n\in\mathbb{N}$, the solution $w$ satisfies
\begin{eqnarray}\label{wreg4}
\lefteqn{ w\in C([\tau,T]; \mathcal{H}_{p}^{\nu,\frac{3}{2}+(\ell+\frac{3}{2}-\frac{2}{q})(1+\beta_{0})-\varepsilon}(\mathbb{B})\oplus\mathbb{S}_{\omega})}\\\label{wreg5}
&&\cap \, H^{n,q}(\tau,T; \mathcal{H}_{p}^{\nu,\frac{3}{2}+(\ell+\frac{3}{2}-2n)(1+\beta_{1})}(\mathbb{B}))\\\nonumber
&&\cap \, L^{q}(\tau,T; \mathcal{H}_{p}^{\nu,\frac{3}{2}+(\ell+\frac{3}{2})(1+\beta_{0})}(\mathbb{B})\oplus\mathbb{S}_{\omega}).
\end{eqnarray}
\end{theorem}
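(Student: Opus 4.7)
The plan is to pull back the regularity of $u$ established in Theorem \ref{short1d} from the $x$-variable to the original $s$-variable using the diffeomorphism $|s-j|^{1+\beta_j}=(1+\beta_j)|x-j|$ valid near each endpoint $j\in\{0,1\}$, and then reintroduce the factor $d(s)$ so as to recover $w=d(s)u$.

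First I would establish the transformation law for Mellin--Sobolev spaces under $s\leftrightarrow x$. From \eqref{xtosderivartves} we have $|x-j|^{-1}dx=(1+\beta_j)|s-j|^{-1}ds$ and $|s-j|\partial_s=(1+\beta_j)|x-j|\partial_x$. A direct computation with the characterization of $\mathcal{H}_p^{r,\gamma}(\mathbb{B})$ through the measure $|x-j|^{-1}dx$ and the totally characteristic derivatives $((-1)^{j+1}(x-j)\partial_x)^k$ then shows that a function $u\in \mathcal{H}_p^{r,\gamma}(\mathbb{B})$ in the $x$-variable corresponds, near each $j$, to a function in $\mathcal{H}_p^{r,\,1/2+(\gamma-1/2)(1+\beta_j)}(\mathbb{B})$ in the $s$-variable; the Sobolev index $r$ is preserved because the Mellin derivatives transform by a scalar factor, and the asymptotic space $\mathbb{C}_\omega$ is mapped to itself.

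Next, multiplication by $d(s)$, which near each endpoint equals $|s-j|$ by \eqref{tangent}, shifts the Mellin weight by $+1$ and sends $\mathbb{C}_\omega$ into $\mathbb{S}_\omega$; this is immediate from the definitions together with Lemma \ref{abr}(b). Combining these two operations, the $x$-ground space $\mathcal{H}_p^{r,\ell}(\mathbb{B})$ produces an $s$-weight $3/2+(\ell-1/2)(1+\beta_j)$ near each $j$; since \eqref{pqgammachoice} together with $\ell_0\le 0$ forces $\ell<1/2$, and since $\beta_0\le\beta_1$, the minimum over $j$ is attained at $j=1$, giving the weight $3/2+(\ell-1/2)(1+\beta_1)$. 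For the $x$-domain $\mathcal{H}_p^{r+2,\ell+2}(\mathbb{B})\oplus\mathbb{C}_\omega$ one similarly obtains weights $3/2+(\ell+3/2)(1+\beta_j)$; since $\ell+3/2>0$ the minimum is now attained at $j=0$, producing $3/2+(\ell+3/2)(1+\beta_0)$, while $\mathbb{C}_\omega$ becomes $\mathbb{S}_\omega$ after multiplication by $d$.

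Applying these transformations pointwise in $t$ to \eqref{exanduniqofu} and \eqref{extrareginspace1}--\eqref{extrareginspace3} yields the claimed regularity of $w$; uniqueness is inherited from that of $u$ since the procedure is an isomorphism between the relevant function spaces, and the real interpolation space in \eqref{wreg3} transfers correctly because real interpolation is functorial and both the change of variable and multiplication by $d$ are topological isomorphisms between the corresponding $x$- and $s$-spaces. The main delicate point is tracking the asymmetry induced by $\beta_0\le\beta_1<0$: a single Mellin weight must control both endpoints simultaneously, so in each regularity statement one must identify at which of $j=0$ and $j=1$ the constraint is binding, and verify that this is consistent with the sign of $\ell-1/2$ (for the ground space) respectively $\ell+3/2$ (for the domain), a sign that is precisely dictated by the admissible range of $\ell$ in \eqref{pqgammachoice}.
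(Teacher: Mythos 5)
Your proposal is correct and follows the paper's approach exactly; the paper's own proof is only a one-sentence remark ("From \eqref{xtosderivartves}, Theorem \ref{short1d} and the definition of Mellin--Sobolev spaces we immediately obtain the following"), and your write-up supplies precisely the details that remark leaves implicit. The weight transformation formula $\gamma \mapsto 1/2 + (\gamma-1/2)(1+\beta_j)$ is correct (it comes from matching the factors $|x-j|^{1/2-\gamma}$ against $|s-j|^{1/2-\tilde\gamma}$ under $|x-j| = (1+\beta_j)^{-1}|s-j|^{1+\beta_j}$), the preservation of the Sobolev index $r$ follows from the change of variables being affine in logarithmic coordinates, multiplication by $d$ shifts the weight by $+1$ and sends $\mathbb{C}_\omega$ into $\mathbb{S}_\omega$, and your accounting of which endpoint determines the binding constraint — $j=1$ when the coefficient $\ell-1/2$ or $\ell+3/2-2n$ is negative, $j=0$ when $\ell+3/2$ is positive — matches the weights in the statement and is correctly justified from the admissible range \eqref{pqgammachoice}.
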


Considering $\omega_{j}w$ instead of $w$, we obtain a version of Theorem \ref{wexreg} with sharper weights in the corresponding Mellin-Sobolev spaces. In particular, we get the following asymptotic behavior for the solution and its derivatives.

\begin{corollary}[$s$-regularity and asymptotic behavior of $w$]\label{pb1}
Let $w$ be the unique solution of \eqref{kwer1}-\eqref{kwer2} given by Theorem {\rm\ref{wexreg}}. Then for each endpoint $j\in\{0,1\}$, we have
\begin{eqnarray*}
\lefteqn{\omega_{j}w\in C([0,T]; \mathcal{H}_{p}^{r+2-\frac{2}{q}-\varepsilon,\frac{3}{2}+(\ell+\frac{3}{2}-\frac{2}{q})(1+\beta_{j})-\varepsilon}(\mathbb{B})\oplus\mathbb{S}_{\omega})}\\\nonumber
&& \cap \, C([0,T]; (\mathcal{H}_{p}^{r+2,\frac{3}{2}+(\ell+\frac{3}{2})(1+\beta_{j})}(\mathbb{B})\oplus\mathbb{S}_{\omega}, \mathcal{H}_{p}^{r,\frac{3}{2}+(\ell-\frac{1}{2})(1+\beta_{j})}(\mathbb{B}))_{1/q,q})\\
&&\cap\, H^{1,q}(0,T; \mathcal{H}_{p}^{r,\frac{3}{2}+(\ell-\frac{1}{2})(1+\beta_{j})}(\mathbb{B})) \cap L^{q}(0,T; \mathcal{H}_{p}^{r+2,\frac{3}{2}+(\ell+\frac{3}{2})(1+\beta_{j})}(\mathbb{B})\oplus\mathbb{S}_{\omega})
\end{eqnarray*}
for all $\varepsilon>0$. In particular, if $b=\infty$, then for any $\tau\in(0,T)$, $\nu,\varepsilon>0$ and $n\in\mathbb{N}$, it holds
\begin{eqnarray*}
\lefteqn{\omega_{j}w\in C([\tau,T]; \mathcal{H}_{p}^{\nu,\frac{3}{2}+(\ell+\frac{3}{2}-\frac{2}{q})(1+\beta_{j})-\varepsilon}(\mathbb{B})\oplus\mathbb{S}_{\omega})}\\
&&\cap \, H^{n,q}(\tau,T; \mathcal{H}_{p}^{\nu,\frac{3}{2}+(\ell+\frac{3}{2}-2n)(1+\beta_{j})}(\mathbb{B}))\\
&&\cap \, L^{q}(\tau,T; \mathcal{H}_{p}^{\nu,\frac{3}{2}+(\ell+\frac{3}{2})(1+\beta_{j})}(\mathbb{B})\oplus\mathbb{S}_{\omega}).
\end{eqnarray*}
Moreover, close to the boundary point $j\in\{0,1\}$, we have:
\begin{enumerate}[\rm(a)]
\item For any $t\in [0,T]$ and any $\varepsilon>0$, it holds
\begin{eqnarray*}
|w(s,t)-c_{j}(t)|s-j||&\leq& C_{1,j}|s-j|^{1+(\ell+\frac{3}{2}-\frac{2}{q})(1+\beta_{j})-\varepsilon},\\
|w_s(s,t)-(-1)^{j}c_{j}(t)|&\leq& C_{2,j}|s-j|^{(\ell+\frac{3}{2}-\frac{2}{q})(1+\beta_{j})-\varepsilon},\\
|w_{ss}(s,t)|&\leq& C_{3,j}|s-j|^{-1+(\ell+\frac{3}{2}-\frac{2}{q})(1+\beta_{j})-\varepsilon},
\end{eqnarray*}
where $c_{j}=u(j,\cdot)\in C([0,T];\mathbb{R})$, with $u$ the solution in Theorem {\rm\ref{short1d}}, and $C_{1,j}$, $C_{2,j}$, $C_{3,j}$ are positive constants depending only on $q$, $r$, $\ell$, $T$, $\varepsilon$.
\medskip
\item For any fixed $\varepsilon>0$, after choosing $q$ sufficiently large and $\ell$ sufficiently close to $1/2+\ell_{0}$, there exists a time $0<T_{\varepsilon}\le T$ is as in Theorem {\rm\ref{wexreg}}, such that for any $t\in [0,T_{\varepsilon}]$ it holds
\begin{eqnarray*}
|w(s,t)-c_{j}(t)|s-j||\leq C_{4,j}|s-j|^{3+2\beta_{j}+\ell_{0}(1+\beta_{j})-\varepsilon},\\
|w_s(s,t)-(-1)^{j}c_{j}(t)|\leq C_{5,j}|s-j|^{2+2\beta_{j}+\ell_{0}(1+\beta_{j})-\varepsilon},\\
|w_{ss}(s,t)|\leq C_{6,j}|s-j|^{1+2\beta_{j}+\ell_{0}(1+\beta_{j})-\varepsilon},
\end{eqnarray*}
where $C_{4,j}$, $C_{5,j}$, $C_{6,j}$ are positive constants depending only on $r$, $\varepsilon$, $T_{\varepsilon}$.
\end{enumerate}
\end{corollary}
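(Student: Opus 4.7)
The plan is to strengthen Theorem \ref{wexreg} by localizing near each endpoint separately. The first assertion of the corollary sharpens the weight indices of Theorem \ref{wexreg}, which are ``mixed'' (using $\beta_1$ on the ground side and $\beta_0$ on the domain side) so as to give a single statement on all of $\mathbb{B}$. The key observation is that the proof of Theorem \ref{short1d} treats each endpoint independently via the local change of variables $|s-j|^{1+\beta_j}=(1+\beta_j)|x-j|$, and establishes Mellin-Sobolev regularity for the transformed function $\tilde{u}(x,t)$ with a single weight $\ell$. Translating back to the $s$-variable, a weight $\gamma$ in $x$ near $j$ transforms into $1/2 + (1+\beta_j)(\gamma-1/2)$ in $s$, and multiplying by $d(s)\sim |s-j|$ shifts the weight by $+1$, which simultaneously converts $\mathbb{C}_{\omega}$ to $\mathbb{S}_{\omega}$. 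Since $\omega_j$ is supported where only the transformation tied to $\beta_j$ is relevant, $\omega_j w$ inherits $\beta_j$-weighted regularity without any contamination from the other endpoint. I would thus read off the four displayed inclusions for $\omega_j w$ directly from \eqref{exanduniqofu}, \eqref{extrareginspace1}, \eqref{extrareginspace2} and \eqref{extrareginspace3}, combined with \eqref{xtosderivartves}.

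For the pointwise estimates in part (a), I would invoke Lemma \ref{abr}(a). Writing $\tilde{u} = \tilde{u}_1 + c_j(t)\omega_j$ with $\tilde{u}_1\in \mathcal{H}^{r+2-2/q-\varepsilon,\ell+2-2/q-\varepsilon}_p(\mathbb{B})$ and $c_j(t) = \tilde{u}(j,t)$, Lemma \ref{abr}(a) yields $|\omega_j\tilde{u}_1(x,t)| \leq C|x-j|^{3/2+\ell-2/q-\varepsilon}$; substituting $|x-j| = |s-j|^{1+\beta_j}/(1+\beta_j)$ and recalling $w(s,t) = d(s)\,\tilde{u}(x(s),t)$ gives the first estimate once we multiply by $|s-j|$. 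For the derivatives I would use the chain-rule identities
\[
u_s = |s-j|^{\beta_j}\tilde{u}_x,\quad w_s = (-1)^{j}\tilde{u} + |s-j|^{1+\beta_j}\tilde{u}_x,
\]
\[
w_{ss} = (-1)^{j}(2+\beta_j)|s-j|^{\beta_j}\tilde{u}_x + |s-j|^{1+2\beta_j}\tilde{u}_{xx},
\]
together with the pointwise bounds $|\tilde{u}_x|\lesssim |x-j|^{1/2+\ell-2/q-\varepsilon}$ and $|\tilde{u}_{xx}|\lesssim |x-j|^{-1/2+\ell-2/q-\varepsilon}$, which follow from the continuity $\partial_x: \mathcal{H}^{r,\gamma}_p(\mathbb{B}) \to \mathcal{H}^{r-1,\gamma-1}_p(\mathbb{B})$ together with Lemma \ref{abr}(a). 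A short arithmetic check confirms that all contributing exponents collapse to the single value $(\ell+3/2-2/q)(1+\beta_j) - \varepsilon(1+\beta_j)$, which can be absorbed into the form stated in (a) by shrinking $\varepsilon$.

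For part (b), the strategy is to optimize the parameters under the constraints \eqref{pqgammachoice0}-\eqref{pqgammachoice}. Passing $q\to\infty$ and $\ell\to 1/2+\ell_0$ in the exponents of (a) produces the limit
\[
1 + \bigl(1/2+\ell_0+3/2\bigr)(1+\beta_j) = 3 + 2\beta_j + \ell_0(1+\beta_j),
\]
which is precisely the target rate. Since the short-time existence window in Theorem \ref{wexreg} depends on the chosen parameters, fixing $\varepsilon>0$ and then selecting $q$ sufficiently large and $\ell$ sufficiently close to $1/2+\ell_0$ yields a time $T_\varepsilon\in(0,T]$ on which the sharper bounds of (b) hold.

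The main obstacle I anticipate is not conceptual but bookkeeping: verifying that the weight transformation rule under the non-linear substitution $|s-j|^{1+\beta_j}=(1+\beta_j)|x-j|$ is consistently applied at both endpoints, that the asymptotics space ($\mathbb{C}_\omega$ in $x$ versus $\mathbb{S}_\omega$ in $s$) is correctly tracked, and that multiplicative $\varepsilon$-losses arising from the $(1+\beta_j)$ factor are absorbed into a single $\varepsilon$ by an a priori smaller choice before optimizing $q$ and $\ell$. Once this is done, every step is a direct consequence of Theorem \ref{short1d} and Lemma \ref{abr}.
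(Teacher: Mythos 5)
Your proposal is correct and follows essentially the same route as the paper: the paper derives this corollary as an immediate localization of Theorem \ref{wexreg}, remarking only that ``considering $\omega_j w$ instead of $w$'' yields the sharper $\beta_j$-dependent weights near each endpoint, with the pointwise estimates in (a)--(b) then read off from Lemma \ref{abr}(a). Your explicit bookkeeping of the change of variables $|s-j|^{1+\beta_j}=(1+\beta_j)|x-j|$, the weight transformation $\gamma_x\mapsto \tfrac12+(1+\beta_j)(\gamma_x-\tfrac12)$, the $+1$ shift from multiplying by $d(s)$ (which simultaneously maps $\mathbb{C}_\omega$ to $\mathbb{S}_\omega$), and the parameter optimization $q\to\infty$, $\ell\to\tfrac12+\ell_0$ for part (b), matches the computation implicit in the passage from Theorem \ref{short1d} to Theorem \ref{wexreg} and the Corollary.
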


\begin{corollary}\label{gammaflow}
The equation \eqref{Ioannina1} has a unique solution and the evolved curves have the same regularity as the solution $w$ given in Theorem {\rm \ref{wexreg}} or Corollary {\rm\ref{pb1}}.
\end{corollary}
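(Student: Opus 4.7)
The plan is to reduce the geometric flow \eqref{Ioannina1} to the scalar degenerate parabolic equation \eqref{kwer1}-\eqref{kwer2} via the graph ansatz \eqref{gflow}, apply Theorem \ref{wexreg} together with Corollary \ref{pb1}, and then transfer existence, uniqueness and regularity back to the level of curves. The first step is to revisit the derivation in Section \ref{sec6} that led from \eqref{gflow} to \eqref{kwer1}-\eqref{kwer2}: combining the identity
$$
(\sigma_{t})^{\perp}=\frac{1-kw}{\sqrt{(1-kw)^{2}+w_{s}^{2}}}\,w_{t}\,\xi_{\sigma}
$$
with formula \eqref{s5}, one sees that, as long as $\sigma=\gamma+w\xi$ stays inside the chart and the factor $1-kw$ is strictly positive, the geometric equation $(\sigma_{t})^{\perp}={\bf k}_{\gind}$ is equivalent, after cancelling the non-vanishing coefficient in front of $\xi_{\sigma}$, to the scalar PDE \eqref{kwer1} with initial condition \eqref{kwer2}.

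Having established this equivalence, I would set $\varGamma(s,t):=\gamma(s)+w(s,t)\xi(s)$ on the arc of $\gamma$ joining the two singular endpoints, where $w$ is supplied by Theorem \ref{wexreg}, and deduce the claimed regularity of $\varGamma$ by transferring the Mellin-Sobolev estimates on $w$ using Lemma \ref{abr}(b) together with the regularity of $\gamma$ and $\xi$ assumed in Section \ref{assumptongama}. The fact that $\varGamma$ stays pinned at the singular endpoints follows from Corollary \ref{pb1}(a), which forces $w(j,t)\to 0$ at each endpoint $j\in\{0,1\}$ via the continuity statement of Theorem \ref{wexreg}. The smallness condition $|kw|<1/2$ needed to justify the graph representation on a uniform time interval is guaranteed, for short time, by the continuous embedding \eqref{wreg3} and the vanishing of $w$ at $t=0$.

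For uniqueness I would proceed as follows. Any competitor $\widetilde{\varGamma}$ of \eqref{Ioannina1} lying in the same regularity class and issuing from $\gamma$ remains $C^{1}$-close to $\gamma$ on a short time interval and therefore admits, after a tangential reparametrization that does not alter its trace as a geometric curve, a graph representation $\widetilde{\varGamma}=\gamma+\widetilde w\,\xi$. The height $\widetilde w$ then satisfies \eqref{kwer1}-\eqref{kwer2} in the same functional-analytic framework as $w$, and the uniqueness clause of Theorem \ref{wexreg} would force $\widetilde w=w$, hence $\widetilde{\varGamma}=\varGamma$ as geometric objects.

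The main obstacle I anticipate lies precisely in this last step: one must verify that the tangential reparametrization bringing an arbitrary competitor into graph form is itself regular enough to keep $\widetilde w$ inside the functional-analytic framework of Theorem \ref{wexreg}, especially near the singular endpoints where the parametrization by $\gamma$ degenerates and the weights in the Mellin-Sobolev spaces become critical. A milder point, also worth addressing explicitly, is the conformal invariance of the construction: the graph ansatz \eqref{gflow} is set up in local conformal coordinates, and the fact that the resulting flow $\varGamma$ does not depend on the chart follows from the intrinsic geometric character of $(\varGamma_{t})^{\perp}={\bf k}_{\gind}$ but should be checked in the write-up.
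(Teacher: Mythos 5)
Your proposal matches the paper's approach: the paper likewise takes $\varGamma=\gamma+w\xi$ with $w$ the solution from Theorem \ref{wexreg}, and transfers regularity by checking that assumption \eqref{mainassumption} places $\gamma_1,\gamma_2$ and $\xi_1,\xi_2$ in Mellin--Sobolev spaces embedding into $\mathcal{H}_{p}^{r+2,\frac{3}{2}+(\ell+\frac{3}{2})(1+\beta_{j})}(\mathbb{B})\oplus\mathbb{S}_{\omega}$, then invoking Lemma \ref{abr} (a), (b) and Corollary \ref{pb1}. Your caveat about tangential reparametrization of a competitor is well placed but not a flaw in the argument: the paper's uniqueness is to be read within the class of graph solutions over $\gamma$, and Remark \ref{equivalence-1-2} explicitly acknowledges that the passage between \eqref{Ioannina1} and its reparametrized form is delicate in the singular setting.
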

\begin{proof}
The existence and uniqueness follow by Theorem \ref{wexreg}. By \eqref{gflow}, near the tips, the evolved curves are given by
$$
\mathbb{B}\times [0,T]\ni (s,t) \mapsto \sigma(s,t)=\gamma(s)+w(s,t)\xi(s).
$$
Observe that
$$a-1>3/2+(\ell+3/2)(1+\beta_{j})>1/2.$$
Then for $\gamma$ we have 
$$
\gamma_{1},\gamma_{2}\in \mathcal{H}_{p}^{b,\alpha}(\mathbb{B})\oplus\mathbb{S}_{\omega} \hookrightarrow \mathcal{H}_{p}^{r+2,\frac{3}{2}+(\ell+\frac{3}{2})(1+\beta_{j})}(\mathbb{B})\oplus\mathbb{S}_{\omega}.
$$
On the other hand
$$
\omega\{\xi_{1},\xi_{2}\}\in \mathcal{H}_{p}^{r+2,\alpha-1}(\mathbb{B})\oplus\mathbb{C}_{\omega}\hookrightarrow \mathcal{H}_{p}^{r+2,\frac{3}{2}+(\ell+\frac{3}{2})(1+\beta_{j})}(\mathbb{B})\oplus\mathbb{C}_{\omega}.
$$
The result follows by Lemma \ref{abr} (a), (b) and Corollary \ref{pb1}.
\end{proof}

By Lemma \ref{abr} (a) and \eqref{wreg3}, we find the following asymptotics for the graphical function $w$ and the geodesic curvature $k_g$ of the evolved curve. 

\begin{theorem}\label{kgregularitys}
Let $\gamma$ be a curve satisfying the conditions {\rm(a)-(d)} of Section {\rm\ref{assumptongama}}, let $p$, $q$, $r$, $\ell_{0}$, $\ell$ be as in \eqref{ell0}, \eqref{pqgammachoice0}, \eqref{pqgammachoice},
and $w:\mathbb{B}\times[0,T]\rightarrow\mathbb{R}$ be as in Theorem {\rm\ref{wexreg}}. Then:
\begin{enumerate}[\rm(a)]
\item
The function $w$ is
$C^{1}$-smooth up to the boundary and
\begin{equation*}\label{hhkklase}
\lim_{s\rightarrow j}w_{t}(s,t)=0
\end{equation*}
for each $j\in\{0,1\}$ and $t\in[0,T]$.
\medskip
\item
For any fixed $\varepsilon>0$, after choosing $q$ sufficiently large and $\ell$ sufficiently close to $1/2+\ell_{0}$, there exists a time $T_\varepsilon\in(0,T]$
such that close to $j\in\{0,1\}$, the scalar geodesic curvature $k_{\gind}$ has the following asymptotic behaviour
$$
|k_{\gind}(s,t)|\le c_j|s-j|^{(1+\ell_0)(1+\beta_j)-\varepsilon}
$$
where $c_j> 0$ is a positive constant depending only on $r$, $\varepsilon$ and $T_{\varepsilon}$. In particular, if $\alpha\geq5/2$, then
\begin{equation*}
|k_{\gind}(s,t)|\leq\left\{
\begin{array}{lll}
c_j|s-s_j|^{1+\beta_j-\varepsilon} &\text{if} &\beta_1\leq -1/2,\\
c_j|s-s_j|^{-\beta_1\frac{1+\beta_j}{1+\beta_1}-\varepsilon} &\text{if}& \beta_1>-1/2,
\end{array}
\right.
\end{equation*}
for some constant $c_{j}\geq 0$ depending only on $r$, $\varepsilon$ and $T_{\varepsilon}$.
\end{enumerate}
\end{theorem}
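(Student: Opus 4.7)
The plan is to derive both parts essentially as corollaries of the fine regularity and pointwise estimates already established for $w$ in Corollary \ref{pb1}, combined with the explicit formula \eqref{s5} for the geodesic curvature.

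For part (a), the $C^1$-smoothness of $w$ up to $\mathbb{S}^1 \times [0,T]$ at each boundary point $j\in\{0,1\}$ reads off directly from Corollary \ref{pb1}(a): the first two estimates state that $w(s,t) = c_j(t)|s-j| + O(|s-j|^{1+\eta})$ and $w_s(s,t) = (-1)^j c_j(t) + O(|s-j|^{\eta})$ with $\eta = (\ell+3/2-2/q)(1+\beta_j)-\varepsilon > 0$ (using $\ell + 3/2 - 2/q > 1$), and $c_j = u(j,\cdot) \in C([0,T];\R)$ by Theorem \ref{short1d}. To show $w_t(s,t)\to 0$ as $s\to j$, I would plug these asymptotic estimates into the PDE \eqref{kwer1}. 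The outer factor $|\gamma + w\xi|^{-2\beta_j}$ behaves like $|s-j|^{-2\beta_j}$, a strictly positive power of $|s-j|$ since $\beta_j<0$. The bracketed expression has its worst singularity in the $w_{ss}$ term, which by Corollary \ref{pb1}(a) is bounded by $C|s-j|^{-1+\eta}$; the remaining terms (involving $w_s$, $w/|\gamma+w\xi|^2$, etc.) are either bounded or controlled by similar or milder negative powers of $|s-j|$. Multiplying through, $|w_t(s,t)| \le C|s-j|^{-2\beta_j - 1 + \eta}$, and one verifies that the admissible range of $(q,\ell)$ in \eqref{pqgammachoice0}--\eqref{pqgammachoice} ensures this exponent is strictly positive, giving $w_t\to 0$ at the boundary.

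For part (b), I would use the explicit formula \eqref{s5}. Near a tip $j\in\{0,1\}$, the scalar geodesic curvature is
\[
k_{\gind} = |\gamma+w\xi|^{-\beta_j} e^{-h(\gamma+w\xi)} \big(\mathcal{I}_{\text{principal}} + \mathcal{I}_{\text{lower}}\big),
\]
where $\mathcal{I}_{\text{principal}}$ is the $w_{ss}/((1-kw)^2 + w_s^2)^{3/2}$ term and $\mathcal{I}_{\text{lower}}$ collects the remaining contributions. Since $|\gamma+w\xi|\sim |s-j|$ and $e^{-h}$ is bounded, the prefactor is $O(|s-j|^{-\beta_j})$. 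Choosing $q$ sufficiently large and $\ell$ sufficiently close to $1/2 + \ell_0$, Corollary \ref{pb1}(b) gives $|w_{ss}|\le C|s-j|^{1+2\beta_j+\ell_0(1+\beta_j)-\varepsilon}$. A term-by-term inspection, using also the bounds on $w$ and $w_s$ from Corollary \ref{pb1}(b) and the regularity of $k,\varrho,h,\xi,\gamma$ established in \eqref{inreg1}--\eqref{inreg3}, shows $\mathcal{I}_{\text{lower}}$ is bounded by the same or a better power of $|s-j|$ than $\mathcal{I}_{\text{principal}}$. Hence
\[
|k_{\gind}(s,t)| \le C_j |s-j|^{-\beta_j + 1 + 2\beta_j + \ell_0(1+\beta_j)-\varepsilon} = C_j |s-j|^{(1+\ell_0)(1+\beta_j)-\varepsilon},
\]
which is the first claim of (b).

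For the sharper statement under $\alpha \geq 5/2$, I would evaluate $\ell_0$ from \eqref{ell0} explicitly. When $\beta_1 \le -1/2$ (hence also $\beta_0\le -1/2$), each of the four quantities in the min is non-negative: $-(1+2\beta_1)/(1+\beta_1)\ge 0$ since $1+2\beta_1\le 0$, and $(\alpha-7/2-2\beta_j)/(1+\beta_j) \ge (5/2-7/2+1)/(1+\beta_j)=0$; hence $\ell_0 = 0$ and $(1+\ell_0)(1+\beta_j)=1+\beta_j$. When $\beta_1 > -1/2$, the quantity $-(1+2\beta_1)/(1+\beta_1)$ becomes negative; comparing with $(\alpha-7/2-2\beta_j)/(1+\beta_j)$ (which equals $-(1+2\beta_j)/(1+\beta_j)$ at $\alpha=5/2$, and is larger for larger $\alpha$), one sees that the minimum is attained at $\ell_0 = -(1+2\beta_1)/(1+\beta_1)$ because $\beta_1 \ge \beta_j$ makes this expression the smallest. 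In that case $(1+\ell_0)(1+\beta_j) = -\beta_1(1+\beta_j)/(1+\beta_1)$, matching the theorem. The main obstacle is the bookkeeping in part (b): verifying that the many lower-order terms in \eqref{s5} all meet the same asymptotic bound and confirming that the minimum in \eqref{ell0} is truly attained where claimed, particularly comparing the $\beta_0$- and $\beta_1$-indexed entries of the last two quantities in \eqref{ell0}.
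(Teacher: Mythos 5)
Your proposal is correct and follows essentially the same route as the paper: part (a) by reading off the pointwise asymptotics from Corollary \ref{pb1} and feeding them into \eqref{kwer1}, and part (b) by term-by-term estimation of \eqref{s5} together with an explicit evaluation of $\ell_0$ in the two cases $\beta_1\le -1/2$ and $\beta_1>-1/2$. The only substantive difference is presentational: the paper organizes the ``bookkeeping'' of part (b) by introducing a hierarchy of Banach algebras $E_0,\dots,E_5$ and checking membership of each $\mathcal{C}_i$ there, whereas you argue directly with pointwise bounds; the subtle points you flag (verifying that terms such as $\mathcal{C}_3\sim\varrho$ decay at the right rate, which is exactly what forces the entries $(\alpha-5/2)/(1+\beta_j)$ and $(\alpha-7/2-2\beta_j)/(1+\beta_j)$ into the minimum defining $\ell_0$, and confirming that the $j$-indexed entries are minimized at $j=1$ because $\beta\mapsto-(1+2\beta)/(1+\beta)$ is decreasing) are precisely those the paper's $E_i$-machinery is built to handle cleanly.
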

\begin{proof}
Recall at first that close to the boundary points of the interval $\mathbb{B}$, the solution $w$ can be written in the form
$$w=|s-j|u,$$
and the initial curve $\gamma$ in the form
$$\gamma=|s-j|\varphi.$$
(a) For sufficiently small $\varepsilon>0$, by \eqref{wreg3} we have
$$
w_{s},w_{ss}\in C([0,T]; \mathcal{H}_{p}^{r-\frac{2}{q}-\varepsilon,-\frac{1}{2}+(\ell+\frac{3}{2}-\frac{2}{q})(1+\beta_{0})-\varepsilon}(\mathbb{B}))\hookrightarrow C([0,T];C(0,1)),
$$
where we have used the fact that $r\ge 1$ and Lemma \ref{abr} (a) for the continuous embedding. Hence, the right hand side of \eqref{kwer1} belongs to $C([0,T];C(0,1))$, which implies that $w_{t}$ is continuous on $(0,1)\times [0,T]$.

From Corollary \ref{pb1} (b), it follows that $w_{s}$ is continuous in space-time up to the boundary. By \eqref{pqgammachoice}, we have $\ell_{0}>-1$. Consequently,
$$1+\ell_{0}(1+\beta_{j})>-\beta_j>0.$$
Hence, again from Corollary \ref{pb1} (b), we deduce that
$$
\lim_{s\to j}|s-j|^{-2\beta_j}w_{ss}\to 0,
$$
for any $t\in[0,T]$. Moreover, by a straightforward computation, we see that
\begin{eqnarray*}
\mathcal{B}_1&=&\frac{w_s\langle\gamma,\gamma_s\rangle-(1-kw)\langle\gamma,\xi\rangle-w(1-kw)}{|\gamma+w\xi|^2(1-kw)}\\
&=&\frac{-gu+ku^2+(-1)^j\langle\varphi,\varphi_s\rangle u+|s-j|\langle\varphi,\varphi_s\rangle u_s+(-1)^j|\varphi|^2u_s}
{|\varphi+u\xi|^2(1-|s-j|ku)}\nonumber\\
&&-\frac{\rho}{|\varphi+u\xi|^2},\nonumber
\end{eqnarray*}
where $g$ and $\rho$ are defined in \eqref{greqexp}. Taking into account the assumptions on the parameters $b$ and $\alpha$,
we have that
$$
\lim_{s\to j}|s-j|^{-2\beta_j}\mathcal{B}_1= 0,
$$
for any $t\in[0,T]$. Moreover, for the terms $\mathcal{B}_2$ and $\mathcal{B}_3$ given by
\begin{eqnarray*}
\mathcal{B}_2&=&\frac{2kw^2_s+k_sw_sw+k(1-kw)^2}{(1-kw)\big((1-kw)^2+w_s^2\big)},\\
\mathcal{B}_3&=&\frac{w_s\langle Dh|_{\gamma+w\xi},\gamma_s\rangle}{1-kw}-\langle Dh|_{\gamma+w\xi},\xi\rangle,
\end{eqnarray*}
we easily deduce that
$$
\lim_{s\to j}|s-j|^{-2\beta_j}\mathcal{B}_2=0
$$
and
$$
\lim_{s\to j}|s-j|^{-2\beta_j}\mathcal{B}_3=0,
$$
for any $j\in\{0,1\}$.
Consequently, from \eqref{kwer1}, it follows that $w_{t}$ is continuous in space-time up to the boundary and
$$
\lim_{s\to j} w_t(s,t)=0
$$
holds.

(b) Let us examine now the asymptotic behavior of each term on the right hand side of \eqref{s5}. In particular, it suffices
to investigate the asymptotic behavior of the terms
\begin{eqnarray*}
\mathcal{C}_1&=&\frac{1-kw}{\big((1-kw)^2+w_s^2\big)^{3/2}}\,w_{ss},\\
\mathcal{C}_2&=&\frac{2kw^2_s+k_sw_sw+k(1-kw)^2}{\big((1-kw)^2+w_s^2\big)^{3/2}},\\
\mathcal{C}_3&=&\frac{-(1-kw)\langle\gamma,\xi\rangle}{|\gamma+w\xi|^2\big((1-kw)^2+w^2_s\big)^{1/2}},\\
\mathcal{C}_4&=&\frac{w_s\langle\gamma,\gamma_s\rangle-w(1-kw)}{|\gamma+w\xi|^2\big((1-kw)^2+w^2_s\big)^{1/2}},\\
\mathcal{C}_5&=&\frac{w_s\langle Dh|_{\gamma+w\xi},\gamma_s\rangle-(1-kw)\langle Dh|_{\gamma+w\xi},\xi\rangle}{\big((1-kw)^2+w^2_s\big)^{1/2}},
\end{eqnarray*}
close to the boundary point $j\in\{0,1\}$. We would like to mention that, after straightforward computations, it follows that
$$
\mathcal{C}_3=\frac{-\rho(1-kw)}{|\varphi+u\xi|^2\sqrt{(1-kw)^2+w_s^2}}
$$
and
\begin{eqnarray*}
\mathcal{C}_4&=&\frac{w_s\langle\gamma,\gamma_s\rangle-w(1-kw)}{|\gamma+w\xi|^2(1-kw)}\\
&=&\frac{-gu+ku^2+(-1)^j\langle\varphi,\varphi_s\rangle u+|s-j|\langle\varphi,\varphi_s\rangle u_s+(-1)^j|\varphi|^2u_s}
{|\varphi+u\xi|^2\sqrt{(1-kw)^2+w_s^2}}.
\end{eqnarray*}
For sufficiently small $\varepsilon>0$, and after choosing $\ell$ in Corollary \eqref{pb1} sufficiently close to $\ell_{0}+1/2$, we have
\begin{eqnarray*}
\omega_{j}u\in E_{4}&=&C([0,T_{\varepsilon}]; \mathcal{H}_{p}^{r+2-\frac{2}{q}-\varepsilon,\frac{1}{2}+(\ell_{0}+2-\frac{2}{q})(1+\beta_{j})-\varepsilon}(\mathbb{B})\oplus\mathbb{C}_{\omega}),\\
\omega_{j}u_{s}\in E_{3}&=&C([0,T_{\varepsilon}]; \mathcal{H}_{p}^{r+1-\frac{2}{q}-\varepsilon,-\frac{1}{2}+(\ell_{0}+2-\frac{2}{q})(1+\beta_{j})-\varepsilon}(\mathbb{B})),\\
\omega_{j}w\in E_{2}&=&C([0,T_{\varepsilon}]; \mathcal{H}_{p}^{r+2-\frac{2}{q}-\varepsilon,\frac{3}{2}+(\ell_{0}+2-\frac{2}{q})(1+\beta_{j})-\varepsilon}(\mathbb{B})\oplus\mathbb{S}_{\omega}),\\
\omega_{j}w_{s}\in E_{1}&=&C([0,T_{\varepsilon}]; \mathcal{H}_{p}^{r+1-\frac{2}{q}-\varepsilon,\frac{1}{2}+(\ell_{0}+2-\frac{2}{q})(1+\beta_{j})-\varepsilon}(\mathbb{B})\oplus\mathbb{C}_{\omega}),\\
\omega_{j}w_{ss}\in E_{0}&=&C([0,T_{\varepsilon}]; \mathcal{H}_{p}^{r-\frac{2}{q}-\varepsilon,-\frac{1}{2}+(\ell_{0}+2-\frac{2}{q})(1+\beta_{j})-\varepsilon}(\mathbb{B})).
\end{eqnarray*}
Note that,
$$E_{4}\hookrightarrow E_{3}\hookrightarrow E_{0}\quad \text{and} \quad E_{2}\hookrightarrow E_{1}\hookrightarrow E_{0}.$$
Moreover, due to Lemma \ref{abr} (a), we have
$$E_{0}\hookrightarrow C([0,T_{\varepsilon}];C(0,1)).$$
By Lemma \ref{abr} (a), the spaces $E_{i}$, $i\in\{1,2,3,4\}$, are Banach algebras. Moreover, by Lemma \ref{abr} (c), the spaces
$E_{1}$ and $E_{4}$ are closed under holomorphic functional calculus. In addition, Lemma \ref{abr} (b) implies that elements in $E_{i}$, 
$i\in\{1,2,3,4\}$, act by multiplication as bounded maps on $E_{0}$. Additionally, 
$$
\mathcal{H}_{p}^{b-1,\alpha-1}(\mathbb{B})\oplus\mathbb{C}_{\omega}\hookrightarrow E_{4} \quad \text{and} \quad \mathcal{H}_{p}^{b-2,\alpha-2}(\mathbb{B})\hookrightarrow E_{0}.
$$

Therefore, by Lemma \ref{abr} (a)-(c) and \eqref{inreg1}-\eqref{inreg3}, we have that
$$
\omega_{j}\{\gamma_{1}+w\xi_{1},\gamma_{2}+w\xi_{2}\}\in E_{2}\quad\text{and}\quad \omega_{j}|\varphi+u\xi|\in E_{4}.
$$
Moreover,
$$
\omega_{j}\{w_{s},w_{s}^{2},1-kw,(1-kw)^{2}+w_{s}^{2}\}\in E_{1}.
$$
Close to the boundary point $j\in\{0,1\}$, we write
$$k_{s}ww_{s}=|s-j|k_{s}w(|s-j|^{-1}w_{s}),$$
so that
$$\omega_{j}|s-j|k_{s}\in E_{0}\quad\text{and} \quad\omega_{j}w(|s-j|^{-1}w_{s})\in E_{1}.$$
As a consequence,
$$\omega_{j}k_{s}ww_{s}\in E_{0}.$$
Therefore,
$$
\omega_{j}\{g,\rho,w_{ss}, kw_{s}^{2}, k(1-kw)^{2}\}\in E_{0}
$$
and
$$
\omega_j\{gu,ku^2,\langle\varphi,\varphi_s\rangle u,|s-j|\langle\varphi,\varphi_s\rangle u_s,|\varphi|^2u_s\}\in E_0.
$$
In order to control the function $e^{-h(\gamma+w\xi)}$, let us choose a closed smooth path $\Gamma$ within $\{z\in \mathbb{C}\, |\, |z|<R\}$ containing the sets
$$
\bigcup_{t\in[0,T_{\varepsilon}]}\mathrm{Ran}(\omega_{0}w(t)) \quad\text{and}\quad \bigcup_{t\in[0,T_{\varepsilon}]}\mathrm{Ran}(\omega_{1}w(t)).
$$
By the analyticity of $h$, we write
$$
\omega_{j}e^{-h(\gamma+w\xi)}=\frac{\omega_{j}}{2\pi i}\int_{\Gamma}\frac{e^{-h(\gamma_{1}+i\gamma_{2}+\lambda(\xi_{1}+i\xi_{2}))}}{\lambda-w}d\lambda.
$$
Since, for each $\lambda\in \Gamma$, we have 
$$
\omega_{j}\{\gamma_{1}+i\gamma_{2}+\lambda(\xi_{1}+i\xi_{2}), e^{-h(\gamma_{1}+i\gamma_{2}+\lambda(\xi_{1}+i\xi_{2}))}\}\in E_{4}
$$
and 
$$
\lambda-w\in E_{5}=C\big([0,T_{\varepsilon}]; \mathcal{H}_{p}^{r+2-\frac{2}{q}-\varepsilon,\frac{1}{2}+(\ell_{0}+2-\frac{2}{q})(1+\beta_{j})-\varepsilon}(\mathbb{B})\oplus\bigoplus_{m=0}^{n}\mathbb{S}_{\omega}^{m}\big),
$$
by Lemma \ref{sumSn} it follows that 
$$
\omega_{j}e^{-h(\gamma+w\xi)}\in E_{5}.
$$
Observe that $E_{4}\hookrightarrow E_{5}$ and that elements in $E_{5}$ act by multiplication as bounded maps on $E_{0}$. Similarly, we show that the components of
vector valued function $\omega_{j}Dh|_{\gamma+w\xi}$ also belong to $E_{5}$. Consequently, 
$$
\omega_{j}\{\langle Dh|_{\gamma+w\xi},\gamma_s\rangle, \langle Dh|_{\gamma+w\xi},\xi\rangle\}\in E_{5}.
$$
Now we can easily see that 
$$
\omega_{j}\{\mathcal{C}_{1},\mathcal{C}_{2},\mathcal{C}_{3},\mathcal{C}_{4},\mathcal{C}_{5}\}\in E_{0}. 
$$
Now we immediately see that there exists a constant $c_j$, depending only on $r$, $\varepsilon$ and $T_\varepsilon$, such that
close to the boundary point $j\in\{0,1\}$, we have
$$
|k_{\gind}|\le c_j|s-j|^{(1+\ell_0)(1+\beta_j)-\varepsilon}.
$$
In the case where $\alpha=5/2$, then
$$
\ell_0=\min\Big\{0,-\frac{1+2\beta_1}{1+\beta_1}\Big\},
$$
from where we deduce that, close to the boundary point $j\in\{0,1\}$, we have
\begin{equation*}
|k_{\gind}|\leq\left\{
\begin{array}{lll}
c_j|s-s_j|^{1+\beta_j-\varepsilon} &\text{if} &\beta_1\leq -1/2,\\
c_j|s-s_j|^{-\beta_1\frac{1+\beta_j}{1+\beta_1}-\varepsilon} &\text{if}& \beta_1>-1/2.
\end{array}
\right.
\end{equation*}
This completes the proof.
\end{proof}

\begin{remark}\label{equivalence-1-2}
Suppose that for $b=\infty$ we have a solution of \eqref{Ioannina1} and let us represent it as the graph of a function $w$ over the initial curve $\gamma$. From \eqref{s2}, \eqref{s3}
and \eqref{s4}, we obtain that close to the singular points of $\Sigma$ it has the form
\begin{equation}\label{tangentPhi}
\varGamma_t=k_{\gind}\xi_{\gind}+\varPhi\,\tau_{\gind},\quad\text{where}\quad
\varPhi=\frac{|\gamma+w\xi|^{\beta_j}e^{h_j}w_sw_t}{\sqrt{(1-kw)^2+w^2_s}}.
\end{equation}
A standard procedure in geometric flows, known as the DeTurck trick, is to find a continuous family of diffeomorphisms $y:\mathbb{B}\times[0,T]\to\mathbb{B}$, such that
$\widetilde\varGamma:\mathbb{B}\times[0,T_{0}]\to\mathbb{B}$ given by
$$\widetilde\varGamma(s,t)=\varGamma(y(s,t),t)$$
solves
\begin{equation}\label{Ioannina2}
\widetilde\varGamma_t={\bf k}_{\gind}.
\end{equation}
If such a family of diffeomorphisms exists, it should satisfy the initial value problem
\begin{equation}\label{Ioannina3}
\begin{cases}
y_t(s,t)=\varTheta(y(s,t),t),\\
y(s,0)=s,\quad\, s\in\mathbb{B},
\end{cases}
\quad \text{where} \quad \varTheta=\frac{-w_sw_t}{(1-kw)^2+w^2_s}.
\end{equation}
Observe that $\varTheta$ is continuous in $[0,1]\times[0,T]$ and smooth in $(0,1)\times(0,T)$. Consequently, from Peano's theorem, the problem
\eqref{Ioannina3} admits at least one solution. However, one cannot deduce from the asymptotic
behavior of $w$ that $\sup_{s\in(0,1)}\varTheta_s$ is bounded. Hence, we do not have Lipschitz continuity of the function $\varTheta$ with respect to the spatial parameter. This means that the classical Picard-Lindel\"of's theorem cannot be used
to show uniqueness and continuous dependence on the initial data of the solutions
of \eqref{Ioannina3}. It seems that in the singular case, in general, it is unclear if from a solution of \eqref{Ioannina1}
we can obtain a solution of \eqref{Ioannina2}.
\end{remark}

\section{Evolution equations and some geometric consequences}\label{sec7}
Let $\gind$ be a singular metric of order $\beta\in(-1,0)$ at the origin of $\C$, i.e. in a system of local coordinates we have that
$$
\gind=e^{2h}|z|^{2\beta}|dz|^2
$$
where $h$ is an analytic function. Throughout this section, we will always assume that
$\gamma:\mathbb{B}\to\C$ is an embedded curve such that
$\gamma(0)=0=\gamma(1)$ and satisfying the regularity conditions (a)-(d) in Section \ref{assumptongama} with $b=\infty$.

\subsection{Evolution equations}
Let us evolve the curve $\gamma$ by the DCSF, i.e. consider
the map $\varGamma:\mathbb{B}\times[0,T_{\max})\to(\C,\gind)$ which solves
\eqref{Ioannina1}, where from now on we denote by $T_{\max}$ the {\em maximal time of existence of the
solution} given by Theorem \ref{wexreg}. Equivalently, $\varGamma$
satisfies the evolution equation
$$
\varGamma_t=k_{\gind}\xi_{\gind}+\varPhi\,\tau_{\gind},
$$
where $\varPhi$ is given in \eqref{tangentPhi}. From Corollary \ref{pb1}, we have that
$$
\varPhi(0,t)\tau_{\gind}(0,t)=\varPhi(0,t)\xi_{\gind}(0,t)=0=\varPhi(1,t)\tau_{\gind}(1,t)=\varPhi(1,t)\xi_{\gind}(1,t),
$$
for any $t$. Let us see how several important quantities evolve
under the degenerate flow. Denote by $\mu$ the speed function of each evolved curve, i.e.
$$
\mu=|\varGamma_s|_{\gind}=e^{h\circ\varGamma}|\varGamma|^{\beta}|\varGamma_s|.
$$
Denote by $\si(\,\cdot,t)$ be the arc-length parameter of the evolved curve
$\varGamma(\,\cdot,t):\mathbb{B}\to(\C,\gind)$.
Then we have
$$
\si(s,t)=\int_0^s\mu(y,t)\, dy
$$
and $\si_s(s,t)=\mu(s,t)>0.$
Moreover,
\begin{equation}\label{sxy}
\partial_\si=\mu^{-1}\partial_s=e^{-h\circ\varGamma}|\varGamma|^{-\beta}|\varGamma_s|^{-1}\partial_s.
\end{equation}

By straightforward computations, we deduce the following observation.

\begin{lemma}\label{vary}
The induced by $\gind$ Laplacian $\Delta$ on $\mathbb{B}$ is an operator which near to the endpoint $j\in\{0,1\}$ of the interval 
$\mathbb{B}$ has the form
$$
\Delta=\partial^2_\zeta=P|s-j|^{-2\beta}\partial^2_s+(Q-\beta P)|s-j|^{-2\beta-1}\partial_s,
$$
where $P$ and $Q$ are continuous functions on $[0,1]\times[0,T_{\max})$, with $P$ being positive. If around each boundary
point $j\in\{0,1\}$
we perform the coordinate change
$$
|s-j|=\big((1+\beta)|x-j|\big)^{\frac{1}{1+\beta}},
$$
then the Laplacian operator $\Delta$ takes the form
\begin{eqnarray*}
\Delta=P\partial^2_x+Q\big((1+\beta)|x-j|\big)^{-1}\partial_x,
\end{eqnarray*}
where the leading term is uniformly parabolic.
\end{lemma}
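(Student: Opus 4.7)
The plan is to compute $\Delta=\partial_\zeta^2$ directly from \eqref{sxy}. Iterating $\partial_\zeta=\mu^{-1}\partial_s$ gives
$$
\Delta=\mu^{-2}\partial_s^2-\mu^{-3}\mu_s\,\partial_s,
$$
so the task reduces to exhibiting the singular factors $|s-j|^{-2\beta}$ and $|s-j|^{-2\beta-1}$ in the two coefficients, with continuous remainders. Recall $\mu=e^{h\circ\varGamma}|\varGamma|^{\beta}|\varGamma_s|$, so the only source of singularity as $s\to j$ is $|\varGamma|^{\beta}$, while $\mu_s/\mu$ is obtained cleanly by logarithmic differentiation.

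Near $j$, I would use the factorization $\gamma=|s-j|\varphi$ and $w=du$ (with $d(s)=|s-j|$), supplied by Corollary \ref{pb1}, to write $\varGamma=\gamma+w\xi=|s-j|(\varphi+u\xi)$. Hence $|\varGamma|^{2\beta}=|s-j|^{2\beta}|\varphi+u\xi|^{2\beta}$, and
$$
\mu^{-2}=P\,|s-j|^{-2\beta},\qquad P=e^{-2h\circ\varGamma}|\varphi+u\xi|^{-2\beta}|\varGamma_s|^{-2}.
$$
The function $P$ is continuous and positive on $[0,1]\times[0,T_{\max})$: Corollary \ref{pb1} and Corollary \ref{gammaflow} provide $C^0$-regularity up to the boundary, while $|\varGamma_s(j,t)|^2=1+c_j(t)^2>0$ using $\gamma_s(j)=(1,0)$, $\xi(j)=(0,1)$, $w(j,\cdot)=0$ and $w_s(j,\cdot)=(-1)^jc_j$.

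For $\mu^{-3}\mu_s$, logarithmic differentiation gives
$$
\frac{\mu_s}{\mu}=(Dh\circ\varGamma)\cdot\varGamma_s+\beta\,\frac{\langle\varGamma,\varGamma_s\rangle}{|\varGamma|^2}+\partial_s\log|\varGamma_s|.
$$
Expanding $\langle\varGamma,\varGamma_s\rangle/|\varGamma|^2$ using $\varGamma=|s-j|(\varphi+u\xi)$ isolates the singular contribution $\beta\,\text{sgn}(s-j)/|s-j|$, the remainder $R$ being continuous up to the boundary. Multiplying by $\mu^{-2}$ and collecting,
$$
\mu^{-3}\mu_s=\beta P\,\text{sgn}(s-j)\,|s-j|^{-2\beta-1}+PR\,|s-j|^{-2\beta},
$$
and defining $Q=\beta P\bigl(1-\text{sgn}(s-j)\bigr)-PR\,|s-j|$ (continuous and, for $j=0$ vanishing, for $j=1$ limiting to $2\beta P$) yields exactly the claimed form of $\Delta$.

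For the second formula, I would apply the change of variables $|s-j|^{1+\beta}=(1+\beta)|x-j|$. A direct check gives $\partial_s x=|s-j|^{\beta}$, hence
$$
\partial_s=|s-j|^{\beta}\partial_x,\qquad \partial_s^2=|s-j|^{2\beta}\partial_x^2+\beta\,\text{sgn}(s-j)\,|s-j|^{\beta-1}\partial_x.
$$
Substituting,
$$
|s-j|^{-2\beta}\partial_s^2=\partial_x^2+\beta\,\text{sgn}(s-j)\,|s-j|^{-\beta-1}\partial_x,\qquad |s-j|^{-2\beta-1}\partial_s=|s-j|^{-\beta-1}\partial_x,
$$
and the $\beta P\,\text{sgn}(s-j)\,|s-j|^{-\beta-1}$ terms combine with $-\beta P|s-j|^{-\beta-1}$ from the first-order coefficient so that only $\widetilde{Q}|s-j|^{-\beta-1}\partial_x$ (with $\widetilde Q$ continuous) survives alongside $P\partial_x^2$. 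Using $|s-j|^{-\beta-1}=\bigl((1+\beta)|x-j|\bigr)^{-1}$ gives the displayed formula, and uniform parabolicity of the leading term follows since $P$ is continuous and strictly positive on compact time intervals.

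The main obstacle is purely a sign and continuity bookkeeping one: tracking $\text{sgn}(s-j)$ at the two endpoints so that the singular $\beta/(s-j)$ contribution in $\mu_s/\mu$, the singular $\beta$-factor appearing in $\partial_s^2$ after the change of variable, and the prescribed $-\beta P$ offset in the first-order coefficient all cancel properly and leave a remainder $Q$ that is genuinely continuous up to $s=j$.
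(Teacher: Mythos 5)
Your proposal is correct and takes what is essentially the only route: the paper gives no proof of this lemma beyond the phrase ``by straightforward computations,'' and your direct computation via $\partial_\si=\mu^{-1}\partial_s$, the factorization $\varGamma=|s-j|(\varphi+u\xi)$, and logarithmic differentiation of $\mu$ is exactly that computation carried out. One small imprecision worth fixing: your remainder
$$R=(Dh\circ\varGamma)\cdot\varGamma_s+\beta\frac{\langle\psi,\psi_s\rangle}{|\psi|^2}+\frac{\langle\varGamma_s,\varGamma_{ss}\rangle}{|\varGamma_s|^2}$$
is in general \emph{not} continuous up to $s=j$ (the last term involves $w_{ss}$, which by Corollary \ref{pb1} may blow up when $\beta_j<-1/2$); what is continuous, and what actually enters $Q=\beta P(1-\operatorname{sgn}(s-j))-PR\,|s-j|$, is the product $R\,|s-j|$ --- one checks that $|s-j|\psi_s$ and $|s-j|\varGamma_{ss}$ extend continuously (indeed $|s-j|w_{ss}\to 0$ since the exponent $(\ell+3/2-2/q)(1+\beta_j)-\varepsilon$ is positive by \eqref{pqgammachoice}). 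You also correctly notice that the ``$Q$'' appearing after the change of variables differs from the one in the $s$-variable form near $j=1$ by the term $\beta P(\operatorname{sgn}(s-j)-1)$; this reflects a small sloppiness in the lemma's statement, and your use of a separate $\widetilde Q$ is the right way to handle it.
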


The following evolution equations are easily obtained by direct computations, keeping in mind that
$$k_{\gind}(0,t)=k_{\gind}(1,0)=0\quad\text{and}\quad \varPhi(0,t)\tau_{\gind}(0,t)=\varPhi(1,t)\tau_{\gind}(1,t)=0$$
for any $t$; compare for example with \cite{Gr1} and \cite[Lemma 2.6]{Mantegazza2}.

\begin{lemma}\label{lemaevol}
The following evolution equations hold:
\begin{enumerate}[\rm (a)]
\item The speed $\mu$ and the length element $d\si$ evolve according to
$$
\mu_t=-(k^2_{\gind}-\varPhi_\si)\mu\quad\text{and}\quad\nabla_{\partial_t}d\si=-(k^2_{\gind}-\varPhi_\si) d\si.
$$
\item The Lie bracket of $\partial_\si$ and $\partial_t$ is given by
$$
\big[\partial_t,\partial_\si\big]=(k^2_{\gind}-\varPhi_\zeta)\partial_\si.
$$
\item The tangent $\tau_{\gind}$ and the normal $\xi_{\gind}$ satisfy the evolution equations
$$
\nabla_{\partial_t}\tau_{\gind}=\big((k_{\gind})_{\si}+\varPhi k_{\gind}\big)\xi_{\gind}
\quad\text{and}\quad\nabla_{\partial_t}\xi_{\gind}=-\big((k_{\gind})_{\si}+\varPhi k_{\gind}\big)\tau_{\gind},
$$
respectively.
\smallskip
\item The evolution of the length $L$ and of the enclosed area $\mathcal{A}$ is given by
$$
L_t=\lim_{s\to 1}\varPhi(s,t)-\lim_{s\to 0}\varPhi(s,t)-\int_0^1 k^2_{\gind}d\si\quad\text{and}
\quad {\mathcal A}_t=-\int_0^1 k_{\gind}d\si.
$$
\item The commutator between $\partial_\si$ and $\partial_t$ satisfy the identity
$$
\nabla_{\partial_t}\nabla_{\partial_\si}Z
=\nabla_{\partial_\si}\nabla_{\partial_t}Z+(k^2_{\gind}-\varPhi_{\si})\nabla_{\partial_\si}Z
-k_{\gind}\mathcal{R}_{\gind}(\tau_{\gind},\xi_{\gind},Z),
$$
where $Z$ is a time-dependent vector field along the evolving curves and $\mathcal{R}_{\gind}$ the Riemann curvature tensor of $\gind$.
\smallskip
\item The curvature $k_{\gind}$ evolves in time according to
$$
\big(k_{\gind}\big)_t= (k_{\gind})_{\si\si}+\varPhi(k_{\gind})_{\zeta}+k_{\gind}^3+k_{\gind}K_{\gind},
$$
where $K_{\gind}$ is the Gauss curvature of the metric $\gind$.
\end{enumerate}
\end{lemma}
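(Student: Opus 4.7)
All six identities are derived from the prescribed flow
\[
\varGamma_t = k_{\gind}\xi_{\gind} + \varPhi\,\tau_{\gind}
\]
by the standard variational calculus on a one-parameter family of immersed curves in a Riemannian surface, paralleling the smooth computations in \cite{Mantegazza2,Gr1} but carefully tracking the tangential correction $\varPhi\tau_{\gind}$. The identities (a)--(c), (e), (f) are pointwise statements away from the singular endpoints, while (d) is global and requires using the regularity from Corollary \ref{pb1} and Theorem \ref{kgregularitys} to control the boundary contributions; by the hypotheses recalled at the start of the section, both $k_{\gind}\xi_{\gind}$ and $\varPhi\tau_{\gind}$ extend continuously by zero to $s\in\{0,1\}$.

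For (a), I differentiate $\mu^{2}=\gind(\varGamma_s,\varGamma_s)$ in $t$, use compatibility of $\nabla$ with $\gind$ together with $\nabla_{\partial_t}\varGamma_s=\nabla_{\partial_s}\varGamma_t$, and expand $\nabla_{\partial_\si}\varGamma_t$ via the Frenet-type identities $\nabla_{\partial_\si}\tau_{\gind}=k_{\gind}\xi_{\gind}$ and $\nabla_{\partial_\si}\xi_{\gind}=-k_{\gind}\tau_{\gind}$; this gives $\mu_t=-(k_{\gind}^{2}-\varPhi_\si)\mu$, and the statement about $d\si=\mu\, ds$ is immediate. Item (b) follows from $[\partial_t,\partial_s]=0$ and $\partial_\si=\mu^{-1}\partial_s$ combined with (a). For (c), I apply $\nabla_{\partial_t}$ to $\tau_{\gind}=\mu^{-1}\varGamma_s$, commute using (b), and project onto the oriented orthonormal frame $\{\tau_{\gind},\xi_{\gind}\}$; the evolution of $\xi_{\gind}$ is then forced by $\gind(\xi_{\gind},\xi_{\gind})=1$ and $\gind(\tau_{\gind},\xi_{\gind})=0$. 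For (e), I use the universal identity
\[
\nabla_{\partial_t}\nabla_{\partial_\si}Z-\nabla_{\partial_\si}\nabla_{\partial_t}Z=\mathcal{R}_{\gind}(\varGamma_t,\varGamma_\si)Z+\nabla_{[\partial_t,\partial_\si]}Z
\]
and substitute $\varGamma_t=k_{\gind}\xi_{\gind}+\varPhi\tau_{\gind}$, $\varGamma_\si=\tau_{\gind}$, together with (b) and the skew-symmetries of $\mathcal{R}_{\gind}$. Item (f) is the longest: applying $\nabla_{\partial_t}$ to $k_{\gind}=\gind(\nabla_{\partial_\si}\tau_{\gind},\xi_{\gind})$, using (c) and (e) to exchange the covariant derivatives, then simplifying algebraically with the Frenet-type relations leaves the terms $(k_{\gind})_{\si\si}+\varPhi(k_{\gind})_\si+k_{\gind}^{3}$ together with the sectional-curvature term $k_{\gind}\mathcal{R}_{\gind}(\tau_{\gind},\xi_{\gind},\xi_{\gind},\tau_{\gind})=k_{\gind}K_{\gind}$, which in dimension two is exactly $k_{\gind}K_{\gind}$.

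The only non-local part is (d). Integrating the speed evolution from (a) over $\mathbb{B}$ gives
\[
L_t=\int_0^1 \varPhi_\si\, d\si-\int_0^1 k_{\gind}^{2}\, d\si,
\]
and the fundamental theorem of calculus collapses the first integral to $\lim_{s\to 1}\varPhi(s,t)-\lim_{s\to 0}\varPhi(s,t)$ once we know that $\varPhi$ is continuous up to the endpoints. This is where the functional-analytic regularity enters: by the explicit formula \eqref{tangentPhi} combined with Corollary \ref{pb1} and Theorem \ref{kgregularitys}(a), both $w_s$ and $w_t$ extend continuously to $[0,1]$ and $w_t$ vanishes at the endpoints, which yields the required continuity of $\varPhi$. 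For the area, I parametrise the enclosed region near a smooth point of the evolving curve and compute $\mathcal{A}_t=-\int_0^1 \gind(\varGamma_t,\xi_{\gind})_{\gind}\, d\si$ via the first variation of area; the tangential term $\varPhi\tau_{\gind}$ is $\gind$-orthogonal to $\xi_{\gind}$ and contributes nothing, leaving $\mathcal{A}_t=-\int_0^1 k_{\gind}\, d\si$. The main obstacle I anticipate is the rigorous justification of this last first-variation formula when the evolving curve passes through a singular point: the standard divergence argument must be carried out on $[\varepsilon,1-\varepsilon]$ and then $\varepsilon\to 0$, and the vanishing of the resulting boundary terms has to be extracted from the sharp asymptotics of $\varGamma$, $\tau_{\gind}$ and $k_{\gind}$ at the singularities provided by Theorem \ref{kgregularitys}(b) together with the Gauss--Bonnet framework of Section \ref{sec3}.
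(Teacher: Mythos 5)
Your plan is correct and matches the paper's intended approach: the paper gives no actual proof of this lemma, merely stating that the identities follow by direct computation (comparing with Grayson and Mantegazza--Novaga--Tortorelli) once the endpoint conditions $k_{\gind}(j,t)=0$ and $\varPhi(j,t)\tau_{\gind}(j,t)=0$ for $j\in\{0,1\}$ are kept in mind. Your sketch of the standard first-variation calculus with the tangential correction $\varPhi\tau_{\gind}$ carefully tracked, together with your observation that the continuity of $\varPhi$ up to the endpoints and the vanishing boundary contributions in item (d) must be extracted from Corollary \ref{pb1} and Theorem \ref{kgregularitys}, is exactly what the paper's terse remark is pointing at.
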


\subsection{The maximum principle}
A useful tool to control the behavior of various quantities is the comparison principle;
see \cite{hamilton}, \cite[Chapter 7]{andrews} or \cite[Chapter 2]{Mantegazza}. For
our purposes let us state the one-dimensional version of this principle, adapting it to our situation.

\begin{theorem}\label{maxprinc}
Consider the open set $\Omega=(0,1)\times(0,T)$, where $T>0$, and
assume that $u\in C^2(\Omega)\cap C(\overline\Omega)$ is a solution of the differential equation
$$
u_t=Pu_{xx}+Q u_x+\varPsi(u)
$$
where $P$ is a continuous and positive on $[0,1]\times[0,T)$, $Q$ is continuous on $(0,1)\times[0,T)$ and
$\varPsi$ a locally Lipsichtz function. Suppose that for every $t\in(0,T)$ there exists a value $\delta> 0$
and a compact subset $K\subset(0,1)$, such that at every time $t'\in(t-\delta,t+\delta)\cap[0,T)$ the maximum
$u_{\max}(t')=\max_{s\in[0,1]}u(\cdot,t')$ {\rm(}resp. the minimum $u_{\min}(t')=\min_{s\in[0,1]}u(\cdot,t')${\rm)}
is attained at least at one point of $K$. Then, the following conclusions hold:
\begin{enumerate}[\rm (a)]
\item
The function $u_{\max}$ {\rm(}resp. $u_{\min}${\rm)} is locally Lipschitz in $(0,T)$ and, at every differentiability 
time $t\in (0,T)$, we have
$$u'_{\max}(t)\le \varPsi(u_{\max}(t))\quad \big(\text{resp. } u'_{\min}(t)\ge \varPsi(u_{\min}(t))\big).$$
\item If $\phi:[0,T)\to\R$ {\rm(}resp. $\psi:[0,T)\to\R${\rm)} is the solution of the associated ODE
$$
\left\{
\begin{array}{lll}
\phi'(t) &= &\varPsi(\phi(t)) \\
\phi(0) &= &u_{\max}(0)
\end{array}
\right.
\,\,\left(\text{resp.}\,\,
\left\{
\begin{array}{lll}
\psi'(t) &= &\varPsi(\psi(t)) \\
\psi(0) &= & u_{\min}(0)
\end{array}
\right.\right)
$$
then
$$
u(x,t)\le \phi(t)\,\,\,\big(\text{resp. } u(x,t)\ge\psi(t)\big)
$$
for all $(x,t)\in[0,1]\times [0,T)$.
\end{enumerate}
\end{theorem}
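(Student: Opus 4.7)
The plan is to follow the classical Hamilton's trick for parabolic maximum principles, and then combine it with a Gronwall-type ODE comparison for conclusion~(b).

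First I would establish local Lipschitz continuity of $u_{\max}$ on $(0,T)$. Fix $t_{*}\in(0,T)$ and let $\delta>0$ and the compact $K\subset(0,1)$ be as in the hypothesis. After possibly shrinking $\delta$, $K\times[t_{*}-\delta,t_{*}+\delta]\cap[0,T)$ is a compact subset of $\Omega$ on which $u_{t}$ is uniformly bounded by some $M>0$, since $u\in C^{2}(\Omega)$. For any two times $t_{1}<t_{2}$ in that neighborhood and any maximizer $x_{1}\in K$ of $u(\cdot,t_{1})$,
\begin{equation*}
u_{\max}(t_{2})-u_{\max}(t_{1})\ge u(x_{1},t_{2})-u(x_{1},t_{1})\ge -M(t_{2}-t_{1}),
\end{equation*}
and the symmetric bound with a maximizer at time $t_{2}$ gives $|u_{\max}(t_{2})-u_{\max}(t_{1})|\le M(t_{2}-t_{1})$.

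Second, by Rademacher's theorem $u_{\max}$ is differentiable a.e. At such a differentiability point $t_{0}$, pick a maximizer $x_{0}\in K\subset(0,1)$. Writing the one-sided inequalities $u_{\max}(t_{0}+h)\ge u(x_{0},t_{0}+h)$ for both signs of $h$ and dividing by $h$ yields the classical Hamilton identity $u_{\max}'(t_{0})=u_{t}(x_{0},t_{0})$. Because $x_{0}$ is an interior maximum, $u_{x}(x_{0},t_{0})=0$ and $u_{xx}(x_{0},t_{0})\le 0$; combined with $P>0$, the PDE gives $u_{\max}'(t_{0})\le \varPsi(u_{\max}(t_{0}))$, proving~(a). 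For~(b), the nonnegative function $g(t)=(u_{\max}(t)-\phi(t))_{+}$ is locally Lipschitz with $g(0)=0$. At differentiability points $t$ where $u_{\max}(t)>\phi(t)$, part~(a) and the ODE $\phi'=\varPsi(\phi)$ yield
\begin{equation*}
g'(t)\le \varPsi(u_{\max}(t))-\varPsi(\phi(t))\le L\,g(t),
\end{equation*}
where $L$ is a local Lipschitz constant of $\varPsi$ on the bounded range covered by $u$ and $\phi$. Gronwall's inequality forces $g\equiv 0$, hence $u(x,t)\le u_{\max}(t)\le\phi(t)$ on $[0,1]\times[0,T)$. The minimum assertion follows by applying the same argument to $-u$, which satisfies the analogous equation with nonlinearity $\widetilde{\varPsi}(v)=-\varPsi(-v)$.

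The main delicate point is really the Lipschitz step. The interior attainment hypothesis is essential, both to ensure $u_{x}(x_{0},t_{0})=0$ and $u_{xx}(x_{0},t_{0})\le 0$ at the maximizer — which could fail if the maximum were attained on $\{0,1\}$, where $u$ is merely continuous — and to keep $u_{t}$ uniformly bounded in a neighborhood of $x_{0}$, which may fail near the boundary of $\Omega$ where the coefficient $Q$ of the degenerate PDE is allowed to blow up (as in the representation of $\Delta$ from Lemma~\ref{vary}). The compactness of $K$ strictly inside $(0,1)$ is precisely what lets us sidestep both issues in one stroke.
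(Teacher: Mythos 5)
The paper states Theorem~\ref{maxprinc} without proof, referring the reader to Hamilton \cite{hamilton}, Andrews--Hopper \cite[Chapter~7]{andrews}, and Mantegazza \cite[Chapter~2]{Mantegazza}; your argument reproduces the standard ``Hamilton's trick'' found in those references and is, in substance, correct. The Lipschitz step from the compact set $K$, the two-sided difference-quotient computation giving $u_{\max}'(t_0)=u_{t}(x_0,t_0)$ at differentiability points, the first- and second-derivative tests at the \emph{interior} maximizer $x_0\in K$ combined with $P>0$, and the reflection $u\mapsto -u$, $\varPsi\mapsto -\varPsi(-\cdot)$ for the minimum version, are all exactly as in the cited sources. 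Your closing remark correctly identifies why the compactness of $K\subset(0,1)$ is the crucial hypothesis.

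One small point deserves a more careful phrasing in part~(b): the interior-attainment hypothesis is posited only for $t\in(0,T)$, and $u$ is merely continuous on $\overline\Omega$, so $u_{\max}$ (hence $g=(u_{\max}-\phi)_{+}$) is guaranteed locally Lipschitz only on the \emph{open} interval $(0,T)$, not up to $t=0$. The conclusion $g\equiv 0$ still follows, but the Gronwall estimate should be set up on a compact $[a,b]\subset(0,T)$, giving $g(b)\le g(a)e^{L(b-a)}$, and then one lets $a\to 0^{+}$ using the continuity of $u_{\max}$ on $[0,T)$ (from $u\in C(\overline\Omega)$) and $g(0)=0$. As written, the phrase ``$g$ is locally Lipschitz with $g(0)=0$'' slightly overstates the regularity at the left endpoint, though the fix is routine and does not affect the argument.
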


\subsection{Some geometric consequences}
Applying the parabolic maximum principle we can estimate the maximal time of solution and
prove that several geometric properties are preserved during the flow.

\begin{lemma}\label{t1}
The distance-type function $d:\mathbb{B}\times[0,T_{\max})$ given by
$$d(s,t)=e^{2h\circ\varGamma(s,t)}|\varGamma(s,t)|^{2+2\beta}+2t,$$
satisfies the equation
\begin{equation}\label{311032021}
d_t= d_{\si\si}+\varPhi d_\si.
\end{equation}
In particular, the maximal time of solution satisfies
$$
T_{\max}\le\frac{1}{2}\max_{\mathbb{B}}\big( e^{h\circ\gamma}|\gamma|^{2+2\beta}\big).
$$
\end{lemma}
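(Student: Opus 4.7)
The plan is to verify the parabolic equation \eqref{311032021} by a direct computation along the flow, and then to deduce the bound on $T_{\max}$ from Theorem \ref{maxprinc}. Set $\phi(z):=e^{2h(z)}|z|^{2+2\beta}$ so that $d=\phi\circ\varGamma+2t$. Since $d_t-\phi_t=2$ while $d_{\si\si}=\phi_{\si\si}$ and $d_\si=\phi_\si$, the assertion \eqref{311032021} is equivalent to
$$
\phi_t - \phi_{\si\si} - \varPhi\,\phi_\si \;=\; -2 .
$$
Writing $\varGamma_t = k_\gind\xi_\gind + \varPhi\tau_\gind$ and using $\phi_\si = \tau_\gind\phi$, I get
$$
\phi_t \;=\; \langle \operatorname{grad}_\gind\phi , \varGamma_t\rangle_\gind \;=\; k_\gind(\xi_\gind\phi) + \varPhi\,\phi_\si ;
$$
on the other hand, since $\nabla^\gind_{\tau_\gind}\tau_\gind = {\bf k}_\gind = k_\gind\xi_\gind$, the very definition of the Hessian yields
$$
\phi_{\si\si} \;=\; \tau_\gind(\tau_\gind\phi) \;=\; \operatorname{Hess}_\gind\phi(\tau_\gind,\tau_\gind) + k_\gind(\xi_\gind\phi).
$$
Subtracting these two identities cancels the curvature term and reduces the PDE to the pointwise relation $\operatorname{Hess}_\gind\phi = 2\,\gind$ on $\Sigma\setminus\{0\}$.

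This pointwise identity is the geometric heart of the lemma and is where the specific choice of $\phi$ enters: $\phi$ is designed to play the role of the squared $\gind$-distance from the singular origin. In the flat-cone case $h\equiv 0$, the isometry \eqref{isometry} identifies a punctured neighbourhood of $0$ with a flat cone in which $\phi$ becomes proportional to $\tilde r^2$, and the identity reduces to the classical $\operatorname{Hess}(\tilde r^2) = 2\,\gind$ in a flat metric; the factor $e^{2h}$ is included in $\phi$ precisely so that the relation persists in the presence of the smooth conformal factor. I would prove the identity by a direct coordinate computation, writing $\gind = e^{2u}|dz|^2$ with $u = h + \beta\log|z|$, computing the Christoffels of this conformal metric, and checking the Hessian componentwise. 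This is the main obstacle: it requires carefully tracking the singular part of the conformal factor and is where the balance between the power $2+2\beta$ and the weight of the metric is essential.

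Once \eqref{311032021} is established, the bound on $T_{\max}$ follows from the maximum principle. By Lemma \ref{vary}, close to each endpoint $j\in\{0,1\}$ the operator $\partial_\si^2$ takes the form $P|s-j|^{-2\beta}\partial_s^2+(Q-\beta P)|s-j|^{-2\beta-1}\partial_s$ with $P$ positive and continuous, and \eqref{311032021} has vanishing reaction term, so Theorem \ref{maxprinc} applies with $\varPsi\equiv 0$. Since the evolving curves are pinned at the singular point, i.e. $\varGamma(0,t)=\varGamma(1,t)=0$ for all $t\in[0,T_{\max})$ (this is Theorem A(b), with the regularity provided by Corollary \ref{pb1}), the boundary values of $d$ are explicitly $d(0,t)=d(1,t)=2t$. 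Therefore the spatial maximum $d_{\max}(t)=\max_{s}d(s,t)$ is non-increasing in $t$, while $d_{\max}(t)\geq \max\{d(0,t),d(1,t)\}=2t$. Combining these two facts,
$$
2t \;\leq\; d_{\max}(t) \;\leq\; d_{\max}(0) \;=\; \max_{\mathbb{B}} e^{2h\circ\gamma}|\gamma|^{2+2\beta},
$$
and letting $t\uparrow T_{\max}$ gives the stated upper bound on the maximal existence time.
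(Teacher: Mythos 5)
Your reduction of the parabolic equation to the pointwise identity $\operatorname{Hess}_{\gind}\phi=2\,\gind$ (where $\phi=e^{2h}|z|^{2+2\beta}$) is correct, and it is in fact a cleaner formulation of the geometric content than the paper's own computation: the paper differentiates $\langle\varGamma,\varGamma\rangle_{\gind}$ as if the position vector $\varGamma$ were parallel, i.e.\ as if $\nabla_{\partial_{\si}}\varGamma=\tau_{\gind}$, whereas your argument makes explicit that the whole lemma hinges on a second--order identity for $\phi$. The maximum--principle half of your argument, using the pinning $\varGamma(j,t)=0$, $j\in\{0,1\}$, and Theorem~\ref{maxprinc}, is exactly the paper's strategy.

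However, the Hessian identity you defer to a coordinate check is \emph{false}, and this is a genuine gap. Take $h\equiv 0$ so that $\gind_{\beta}=|z|^{2\beta}|dz|^{2}$ is flat. You correctly note that $\phi$ is proportional to $\tilde r^{2}$, where $\tilde r=\tfrac{|z|^{1+\beta}}{1+\beta}$ is the $\gind_{\beta}$--distance to the origin; but the proportionality constant is $(1+\beta)^{2}$, i.e.\ $\phi=(1+\beta)^{2}\tilde r^{2}$, so the classical flat--space identity $\operatorname{Hess}(\tilde r^{2})=2\gind_{\beta}$ gives
\[
\operatorname{Hess}_{\gind_{\beta}}\phi \;=\; 2(1+\beta)^{2}\,\gind_{\beta},
\]
not $2\gind_{\beta}$. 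Equivalently, $\Delta_{\gind_{\beta}}\phi=|z|^{-2\beta}\Delta(|z|^{2+2\beta})=(2+2\beta)^{2}=4(1+\beta)^{2}$. One can also see the failure directly along a static radial geodesic $\varGamma(\si)=((1+\beta)\si)^{1/(1+\beta)}$: there $d=\phi\circ\varGamma+2t=(1+\beta)^{2}\si^{2}+2t$, $\varPhi=0$, and $d_{t}=2\neq 2(1+\beta)^{2}=d_{\si\si}$. So the PDE in the statement does not hold as written: one obtains $d_{t}-d_{\si\si}-\varPhi d_{\si}=2-2(1+\beta)^{2}$ rather than $0$. Since $\beta\in(-1,0)$, the factor $(1+\beta)^{2}$ is strictly less than $1$ and does not drop out. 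The paper's own proof has the same defect (it implicitly uses $\nabla_{X}\varGamma=X$, whereas $\nabla_{X}\varGamma=(1+\beta)X$ in the flat--cone case); and for $h\not\equiv 0$ there are additional non--vanishing $Dh$--terms, so one should not expect an exact equation but at best a differential inequality controlled by the Gaussian curvature. The correct normalisation in the flat--cone case is to divide $\phi$ by $(1+\beta)^{2}$ (or replace $2t$ by $2(1+\beta)^{2}t$); the maximum--principle argument then goes through verbatim and yields finiteness of $T_{\max}$, but with the constant $T_{\max}\le\tfrac{1}{2(1+\beta)^{2}}\max_{\mathbb{B}}\big(e^{2h\circ\gamma}|\gamma|^{2+2\beta}\big)$ rather than the one stated. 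You should not try to prove $\operatorname{Hess}_{\gind}\phi=2\gind$ by brute force, because no amount of Christoffel bookkeeping will make a false identity true; instead, rescale $\phi$ and redo the boundary/interior bookkeeping with the corrected equation.
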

\begin{proof}
Note at first that, because $\beta+1>0$, the function $d$ is well defined and continuous on $\mathbb{B}$.
Observe now that
$$
d=e^{2h\circ\varGamma}|\varGamma|^{2+2\beta}+2t=e^{2h}|\varGamma|^{2\beta}\langle \varGamma,\varGamma\rangle+2t
=\langle \varGamma,\varGamma\rangle_{\gind}+2t.
$$
Differentiating with respect to $t$, we have
$$
d_t=2k_{\gind}\langle\varGamma, \xi_{\gind}\rangle_{\gind}+2\varPhi\langle\varGamma,\tau_{\gind}\rangle_{\gind}+2.
$$
Moreover, differentiating with respect to $\si$, we obtain
$$
d_\si=2\langle\varGamma,\tau_{\gind}\rangle_{\gind}\quad\text{and}\quad d_{\si\si}=2+2k_{\gind}\langle\varGamma,\xi_{\gind}\rangle_{\gind}.
$$
Hence, the function $d$ satisfies the equation \eqref{311032021}. Recall now that, according to the conclusion of the previous section,
the flow stay fixed at the origin. Hence, $d(0,t)=2t=d(1,t)$, for any $t\in[0,T_{\max})$. Hence, $d$ satisfies the assumptions of Theorem \ref{maxprinc}. From the parabolic
maximum principle
we obtain that
\begin{equation}\label{distancesmprini}
2t\le d(s,t)=e^{2h\circ\varGamma(s,t)}|\varGamma(s,t)|^{2+2\beta}+2t\le{\max}_{\mathbb{B}}\big( e^{h\circ\gamma}|\gamma|^{2+2\beta}\big).
\end{equation}
Therefore, taking limit as $t$ tends to $T_{\max}$, we deduce that
$$
T_{\max}\le\frac{1}{2}\max_{\mathbb B}\big( e^{h\circ\gamma}|\gamma|^{2+2\beta}\big).
$$
This completes the proof.
\end{proof}

\begin{lemma}\label{conv1}
Let $\gamma:\mathbb{B}\to(\C,\gind=|z|^{2\beta}|dz|^2)$ be a locally uniformly convex {\rm(}with respect to the Riemannian metric{\rm)} curve. Then,
this property is preserved under the degenerate curvature flow.
\end{lemma}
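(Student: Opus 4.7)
The plan is to apply a maximum-principle argument to the evolution equation of the geodesic curvature. Since $\gind=|z|^{2\beta}|dz|^{2}$ is flat away from the origin, its Gauss curvature vanishes, $K_{\gind}\equiv 0$ on $\C\setminus\{0\}$, so Lemma \ref{lemaevol}(f) reduces to
\begin{equation*}
(k_{\gind})_{t}=(k_{\gind})_{\si\si}+\varPhi(k_{\gind})_{\si}+k_{\gind}^{3}.
\end{equation*}
Fix the orientation of $\xi_{\gind}$ so that the convexity of $\gamma$ reads $k_{\gind}(\cdot,0)\geq 0$ on $\mathbb{B}$. Lemma \ref{lemm ana p} together with Theorem \ref{kgregularitys}(a) yield $k_{\gind}(0,t)=k_{\gind}(1,t)=0$ for every $t\in[0,T_{\max})$, and the reaction term $k_{\gind}^{3}$ vanishes whenever $k_{\gind}=0$. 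The claim is that $k_{\gind}\geq 0$ throughout the flow.

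A direct application of Theorem \ref{maxprinc} fails because its hypothesis requires the infimum of $k_{\gind}$ to be attained on a compact subset of $(0,1)$, whereas in our setting the boundary values themselves vanish. I would circumvent this by introducing, for $\varepsilon>0$, the strict perturbation
\begin{equation*}
v_{\varepsilon}(s,t)=k_{\gind}(s,t)+\varepsilon(t+1),
\end{equation*}
which satisfies $v_{\varepsilon}(\cdot,0)\geq\varepsilon$ and $v_{\varepsilon}(0,t)=v_{\varepsilon}(1,t)=\varepsilon(t+1)>0$. If $v_{\varepsilon}$ ever touched zero, the first such instant $t_{1}\in(0,T_{\max})$ would realize $\min v_{\varepsilon}(\cdot,t_{1})=0$ at an interior spatial point $s_{1}\in(0,1)$, where the smoothness of the flow established in Section \ref{sec6} permits the standard pointwise argument: $(v_{\varepsilon})_{t}(s_{1},t_{1})\leq 0$, while $(v_{\varepsilon})_{\si}(s_{1},t_{1})=0$ and $(v_{\varepsilon})_{\si\si}(s_{1},t_{1})\geq 0$. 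Substituting into the evolution equation for $k_{\gind}$ gives
\begin{equation*}
0\geq (v_{\varepsilon})_{t}(s_{1},t_{1})=(v_{\varepsilon})_{\si\si}+\varPhi(v_{\varepsilon})_{\si}+k_{\gind}^{3}(s_{1},t_{1})+\varepsilon\geq -\varepsilon^{3}(t_{1}+1)^{3}+\varepsilon,
\end{equation*}
which is strictly positive for $\varepsilon$ sufficiently small. This contradiction forces $v_{\varepsilon}>0$ on $\mathbb{B}\times[0,T_{\max})$, and letting $\varepsilon\to 0$ yields $k_{\gind}\geq 0$.

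The main obstacle to overcome is the degeneracy of the parabolic equation at the endpoints; once that is neutralized by the perturbation $v_{\varepsilon}$, the argument is the classical one. A subsidiary check is that at an interior critical point in $s$ one genuinely has $(v_{\varepsilon})_{\si\si}\geq 0$: since $\partial_{\si}=\mu^{-1}\partial_{s}$ with $\mu>0$ on $(0,1)$, critical points and the sign of the second derivative are preserved under the change of parameter, so the elementary second-derivative test applies. Equivalently, one may work throughout in the uniformly parabolic $x$-variable of Lemma \ref{vary}, which removes the degeneracy at $\{0,1\}$ and reduces the argument to the standard maximum principle with zero Dirichlet data.
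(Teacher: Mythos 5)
Your proof is correct and rests on the same pillars as the paper's: the evolution equation for $k_{\gind}$ from Lemma~\ref{lemaevol}(f) with $K_{\gind}\equiv 0$, the vanishing of $k_{\gind}$ at the endpoints, and a parabolic maximum principle. The implementations differ in how the degeneracy at $\{0,1\}$ is sidestepped. The paper argues by contradiction: if $k_{\gind}$ ever became negative, then on a subinterval $(t_1,t_2)$ the spatial minimum would be strictly negative and hence attained in a compact subset of $(0,1)$, which restores the interior-attainment hypothesis of Theorem~\ref{maxprinc}; comparison with the ODE $\psi'=\psi^3$, $\psi(t_1)=0$, then gives $k_{\gind}\geq 0$ on $(t_1,t_2)$, a contradiction. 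You instead use the classical $\varepsilon$-perturbation $v_{\varepsilon}=k_{\gind}+\varepsilon(t+1)$, which forces strictly positive boundary data and allows a first-touching-time argument at an interior critical point, followed by $\varepsilon\to 0$. Your reading of Theorem~\ref{maxprinc}'s hypothesis is accurate and your perturbation handles it cleanly; the one ingredient you should make explicit is that the threshold on $\varepsilon$ (namely $\varepsilon<(t_1+1)^{-3/2}$) can be chosen uniformly in $t_1$ because $T_{\max}<\infty$ by Lemma~\ref{t1}, or alternatively run the argument on $[0,T]$ for each $T<T_{\max}$. Both arguments are sound; yours is somewhat more elementary in that it avoids the ODE-comparison form of the maximum principle and instead uses the pointwise second-derivative test, at the cost of the extra perturbation and limiting step.
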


\begin{proof}
Since $\beta<0$, from Lemma \ref{kg} and Theorem \ref{wexreg} we deduce that, for each $t\in[0,T_{\max})$, the function $k_{\gind}(\cdot,t)$ is
smooth on $(0,1)$, continuous on $\mathbb{B}$ and $k_{\gind}(0,t)=0=k_{\gind}(1,t)$.
We will show that the non-negativity of $k_{\gind}$ is preserved under the degenerate curvature flow. To show this, suppose to
the contrary that there are points in space-time where $k_{\gind}$ becomes negative. This means that there is a time $t_1\ge 0$ such that
$\min k_{\gind}(\cdot,t)=0$ for all $t\le t_1$ and $\min k_{\gind}(\cdot,t)<0$ for any $t\in(t_1,t_2)$, where $t_2\le T_{\max}$.
Thus, the restriction of $k_{\gind}$ on $\mathbb{B}\times(t_1,t_2)$ satisfies the conditions of Theorem \ref{maxprinc}. Hence,
from Lemma \ref{lemaevol} (f) and Theorem \ref{maxprinc} we deduce that
$$
0=\frac{\min_{x\in\mathbb{B}} k_{\gind}(s,t_1)}{\big(1-2t\min_{x\in\mathbb{B}} k^2_{\gind}(s,t_1)\big)^{1/2}}
\le k_{\gind}(s,t),
$$
for all $(s,t)\in\mathbb{B}\times(t_1,t_2)$, which leads to a contradiction. Hence, $k_{\gind}$ stays non-negative
along the evolution. This completes the proof.
\end{proof}

\begin{lemma}\label{isometry2}
Let $L$ be the closed half-line of non-negative real numbers in $\C$ and $F:\C\backslash L\to \varLambda_{\theta}$ the isometry described in \eqref{isometry}.
Suppose that
$\gamma:\mathbb{B}\to\mathbb{C}$ is an embedded smooth regular curve passing through the origin $O$ of $\mathbb{C}$
such that $\gamma(\mathbb{B}^{\rm o})\cap L=\emptyset$ and let $\varGamma:\mathbb{B}\times[0,T_{\max})\to\C$ be the solution of the degenerate curvature flow defined in a maximal time interval
$[0,T_{\max})$. Then,
$F\circ\varGamma:\mathbb{B}^{\rm o}\times[0,T_{\max})\to \varLambda_{\theta}$ provides a solution of the standard CSF in the plane. Additionally, for any $j\in\{0,1\}$, we have that
$$
\lim_{s\to j}F\circ\varGamma(s,t)=0\quad\text{and}\quad \lim_{s\to j}k({F\circ\varGamma}(s,t))=0,
$$
where $k$ denotes the curvature of the curve $F\circ\varGamma(\cdot,t):\mathbb{B}^{\rm o}\to \R^2$.
\end{lemma}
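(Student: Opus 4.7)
The plan is to transport the degenerate curve shortening flow to the Euclidean plane via the isometry $F$ of \eqref{isometry}, which identifies $(\mathbb{C}\setminus L,\gind)$ with the flat sector $(\varLambda_{\theta},dr^{2}+r^{2}dt^{2})$. Since the target metric is Euclidean and CSF is intrinsic, the pushforward of a solution becomes a solution of planar CSF, and the two boundary assertions will then follow from isometric invariance together with the already established behavior of $\varGamma$ near the singular point.

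The key preliminary step is to ensure that $\varGamma(\mathbb{B}^{\rm o},t)\cap L=\emptyset$ for every $t\in[0,T_{\max})$, so that $F\circ\varGamma$ is well defined on $\mathbb{B}^{\rm o}\times[0,T_{\max})$. By Theorem~\ref{THMA}(b) the tips stay pinned at the origin, while by Corollary~\ref{pb1} the map $\varGamma$ is continuous up to the boundary. Consequently, for each $p\in L\setminus\{O\}$ the winding number of the closed loop $\varGamma([0,1],t)\cup\{O\}$ around $p$ is a locally constant integer-valued function of $t$; since it vanishes at $t=0$ by hypothesis, it remains zero for all later times, and together with the embeddedness of $\varGamma(\cdot,t)$ this forces the image $\varGamma(\mathbb{B}^{\rm o},t)$ to stay in $\mathbb{C}\setminus L$.

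With this in place, the composition $F\circ\varGamma$ is smooth on $\mathbb{B}^{\rm o}\times(0,T_{\max})$ by Corollary~\ref{gammaflow}. Because $F$ is an isometry, its pushforward intertwines the Levi-Civita connections and sends $\tau_{\gind},\xi_{\gind},\mathbf{k}_{\gind}$ to the Euclidean unit tangent $\tau$, unit normal $\xi$, and curvature vector $\mathbf{k}$ of $F\circ\varGamma$. Pushing forward the identity $\varGamma_{t}=k_{\gind}\xi_{\gind}+\varPhi\,\tau_{\gind}$ from \eqref{tangentPhi} yields $(F\circ\varGamma)_{t}=k_{\gind}\xi+\varPhi\,\tau$, whose normal component is $(F\circ\varGamma)_{t}^{\perp}=\mathbf{k}$; this is precisely the geometric planar curve shortening flow (the tangential term being an allowed reparameterization).

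For the two limits at $s\to j\in\{0,1\}$, the identity $\varGamma(j,t)=0$ from Theorem~\ref{THMA}(b) together with the continuous extension $F(0)=0$ readable off \eqref{isometry} gives $\lim_{s\to j}F\circ\varGamma(s,t)=0$. Isometric invariance yields the pointwise identity $|k(F\circ\varGamma(s,t))|=|k_{\gind}(\varGamma(s,t))|$ on $\mathbb{B}^{\rm o}$, so the vanishing of the Euclidean curvature at the tips follows from the asymptotic decay $k_{\gind}\to 0$ provided by Theorem~\ref{kgregularitys}(b), which applies with $\alpha\ge 5/2$ since $\gamma$ is smooth. The principal obstacle is the winding-number step of the second paragraph, as all remaining assertions reduce to the isometric invariance of CSF.
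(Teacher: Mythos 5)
The isometry/pushforward half of your argument and the treatment of the two limits at $s\to j$ are correct and match the paper's proof: pushing forward $\varGamma_t = k_{\gind}\xi_{\gind}+\varPhi\tau_{\gind}$ through the isometry $F$ and invoking Theorem \ref{THMA}(b) for the first limit and Theorem \ref{kgregularitys}(b) for the second is exactly what the paper does, if more tersely.

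However, the winding-number step, which you yourself flag as the principal obstacle, does not work. The winding number of $\varGamma(\cdot,t)$ around a point $p\in L\setminus\{O\}$ is a locally constant function of $t$ \emph{only on the open set of times at which the curve avoids $p$}. From the fact that it equals $0$ at $t=0$, you can conclude it stays $0$ on the connected component of that open set containing $t=0$, i.e.\ on $[0,t_0)$ where $t_0$ is the first time (if any) the curve touches $p$. This gives no information at or beyond $t_0$, so it does not rule out $t_0<T_{\max}$: a winding number of zero is perfectly consistent with the curve touching, or even being tangent to, $L$ from one side. In short, the winding number detects whether $p$ is \emph{enclosed}, not whether the curve stays \emph{disjoint} from $L$, and the deduction ``$=0$ $\Rightarrow$ stays in $\mathbb{C}\setminus L$'' does not follow even with embeddedness.

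The paper's argument at this step is different and essential: it uses that the radial half-line $L$ is a geodesic of $\gind$ (Lemma \ref{geod}) together with the strong parabolic maximum (avoidance) principle for curve shortening flow, comparing the evolving curve to the static zero-curvature barrier $L$. That is the tool that controls the first-touching scenario the winding number cannot see. Replacing your second paragraph with that comparison argument would make the proof complete; everything else in your proposal stands.
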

\begin{proof}
Since $\gamma(\mathbb{B}^{\rm o})\cap L=\emptyset$, by the avoidance principle and the shape of geodesics of $(\C,\gind)$ described in Lemma \ref{geod},
we easily see that
$$\varGamma(\mathbb{B}^{\rm o}\times[0,T_{\max}))\cap L=\emptyset;$$
see Fig. \ref{Simera} below.
\begin{figure}[ht]
	\includegraphics[scale=0.9]{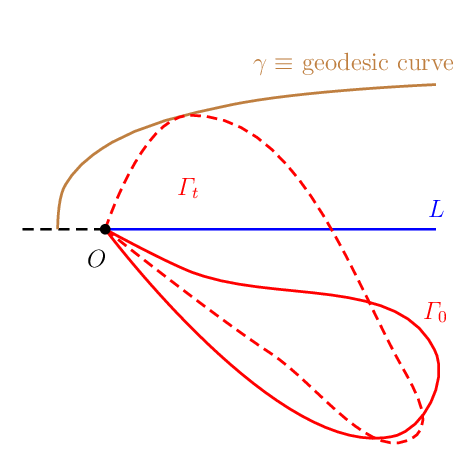}\caption{Avoidance.}\label{Simera}
	\end{figure}
Hence, $F\circ\varGamma:\mathbb{B}^{\rm 0}\times[0,T_{\max})\to \varLambda_{\theta}$ is well defined and, since $F$ is an isometry, it provides a solution of the CSF in the sector $\varLambda_{\theta}$ of $\R^2$.
Moreover, from \eqref{isometry}, we immediately
see that
$$
\lim_{s\to j}F\circ\varGamma(s,t)=0,
$$
for any $j\in\{0,1\}$. Since by Theorem \ref{kgregularitys} (b) the geodesic curvature of evolved curves vanishes at the origin, it follows that
$$
\lim_{s\to j}k({F\circ\varGamma}(s,t))=0,
$$
for any $t\in[0,T_{\max})$ and $j\in\{0,1\}$. This completes the proof.
\end{proof}

\section{Evolution of enclosed areas in flat singular spaces}\label{sec8}
Let us discuss briefly here the following interesting situation. Fix $n$ points $\{p_1,\dots,p_n\}$ in the complex plane and consider the singular
metric
$$
\gind=|z-p_1|^{2\beta_1}\cdots|z-p_n|^{2\beta_n}|dz|^2,
$$
where the orders $-1<\beta_1\le \cdots\le \beta_n<0$. Clearly the Gauss curvature of $\gind$ is zero.
Using exactly the same computations as in the proof of Lemma \ref{lemaevol},
we see that the enclosed areas of the evolved curves satisfy
$$
\mathcal{A}_t(t)=-\int_{\varGamma} k_{\gind}.
$$
Using the Gauss-Bonnet formula in Theorem \ref{SGB}, we get the following result.

\begin{lemma}\label{AGB}
Let $(\C,\gind)$ be the complex plane equipped with a flat metric $\gind$
with conical singular points $\{p_1,\dots,p_n\}$ of orders $-1<\beta_1\le\cdots\le \beta_n<0$,
respectively. Suppose that $\gamma\colon\mathbb{S}^1\to\C$ is
a $C^\infty$-smooth closed embedded curve passing through the singular points $\{p_1,\dots,p_m\}$, $m\le n$, and containing the
rest into its interior.
Then,
the enclosed areas $\mathcal{A}$ of the evolved curves
satisfy
$$
\mathcal{A}_t(t)=-2\pi-\sum_{j=1}^m(\pi-\alpha_j(t))\beta_j+\sum_{j=1}^m\alpha_j(t)-2\pi\sum_{j=m+1}^n\beta_j,
$$
where $\{\alpha_1,\dots,\alpha_m\}$ are the {\rm(}time dependent{\rm)} exterior 
angles of the evolved curves at the vertices $\{p_1,\dots,p_m\}$.
\end{lemma}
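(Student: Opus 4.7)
The plan is to combine the area evolution identity from Lemma \ref{lemaevol}(d) with the singular Gauss-Bonnet formula of Theorem \ref{SGB} applied to the region enclosed by the evolving curve. Since the initial curve $\gamma$ is closed and embedded, and by Theorem \ref{THMA} the evolution stays fixed at $p_1,\dots,p_m$ while being $C^1$-smooth up to these points and $C^\infty$ elsewhere, on a short time interval the evolved curve $\varGamma(\cdot,t)$ remains a simple closed curve of the regularity required by Theorem \ref{SGB}. In particular, the enclosed region $S_t$ is a topological disk, the singular points $p_1,\dots,p_m$ lie on its boundary as corner points with (time dependent) exterior angles $\alpha_1(t),\dots,\alpha_m(t)$, and $p_{m+1},\dots,p_n$ remain in its interior.

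First, I would check the area evolution formula. Since $\varGamma$ is defined on $\mathbb{S}^1$, there are no endpoint boundary contributions from the tangential component $\varPhi$ of the velocity discussed in \eqref{tangentPhi}; exactly the computation in Lemma \ref{lemaevol}(d), carried out on a circle rather than on $\mathbb{B}$, gives
\begin{equation*}
\mathcal{A}_t(t)=-\int_{\varGamma(\cdot,t)} k_{\gind}\,d\si_{\gind},
\end{equation*}
where the integrand is defined and integrable because, by Theorem \ref{kgregularitys}, $k_{\gind}$ has an integrable power-type bound near each $p_j\in\partial S_t$.

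Next, I would apply Theorem \ref{SGB} to $S=S_t$. Since $\gind$ is flat on $\C\setminus\{p_1,\dots,p_n\}$, we have $K_{\gind}\equiv 0$ in the bulk, and $\chi(S_t)=1$ since $S_t$ is a topological disk. Hence Theorem \ref{SGB} reduces to
\begin{equation*}
\frac{1}{2\pi}\int_{\partial S_t} k_{\gind}\,d\si_{\gind}
=1+\frac{1}{2\pi}\sum_{j=1}^{m}(\pi-\alpha_j(t))\beta_j+\sum_{j=m+1}^{n}\beta_j-\frac{1}{2\pi}\sum_{j=1}^{m}\alpha_j(t).
\end{equation*}
Multiplying by $-2\pi$ and substituting into the area evolution identity yields the claimed formula.

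The main obstacle is not the algebra but the geometric/regularity verification underlying the application of Theorem \ref{SGB}: one must ensure that, for $t$ in the time interval under consideration, (i) $\varGamma(\cdot,t)$ is still a simple closed curve whose arcs between consecutive singular vertices are $C^1$ up to the vertices (so that the exterior angles $\alpha_j(t)$ are well-defined and Theorem \ref{SGB} applies), and (ii) the interior singularities $p_{m+1},\dots,p_n$ have not been crossed by the evolving curve. Item (i) follows from the regularity statement in Theorem \ref{THMA}(a), while for (ii) one uses the strong maximum principle together with the fact that straight segments between $p_j$ and a point on $\varGamma(\cdot,0)$ can be shielded by small geodesic loops at $p_j$ (whose geodesic structure was described in Lemma \ref{geod}); this guarantees, at least for a short time, that the enumeration of interior versus boundary singularities is preserved, so that the exponents $\beta_{m+1},\dots,\beta_n$ indeed contribute as interior cone points in the Gauss-Bonnet computation.
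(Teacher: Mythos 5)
Your proposal is correct and follows essentially the same route as the paper: combine the area evolution identity $\mathcal{A}_t=-\int_{\varGamma}k_{\gind}\,d\si$ from Lemma \ref{lemaevol}(d) with the singular Gauss--Bonnet formula of Theorem \ref{SGB} applied to the enclosed disk, using $K_{\gind}\equiv 0$ and $\chi(S_t)=1$. The extra remarks you add on why the enclosed region remains a topological disk with the right boundary/interior partition of the singular points, and why $k_{\gind}$ remains integrable, are sound and simply make explicit what the paper leaves implicit.
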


\section{Proofs of the main theorems}\label{sec9}
In this section, we will prove the Theorems \ref{THMA}, \ref{THMB}, \ref{THMC}, \ref{THMD} and \ref{THME}.

{\bf Proof of Theorems \ref{THMA} and \ref{THMB}:}
The results follow from the Theorem \ref{wexreg}, Corollary \ref{pb1}, Corollary \ref{gammaflow} and the Theorem \ref{kgregularitys}.

{\bf Proof of Theorem \ref{THMC}:} The results follow from Lemma \ref{AGB}.

{\bf Proof of Theorem \ref{THMD}:} From Lemma \ref{isometry2}, the curves $\{F\circ\varGamma(\cdot,t)\}_{t\in[0,T_{\max})}$ are moving by
the CSF in a sector $\varLambda_{\theta}$ which is contained in a half-plane. Moreover, the evolved curves stay fixed at the origin of $\R^2$ and, for each fixed $t$, the curve
$F\circ\varGamma(\cdot,t)$
has zero curvature at $O\in\R^2$. Let us extend now each $F\circ\varGamma(\cdot,t)$, $t\in[0,T_{\max})$, to a planar curve $\{\varGamma_{\theta}(\cdot,t)\}$, $t\in[0,T_{\max})$, by reflecting with respect to the origin $O$ of $\R^2$.
In this way, we obtain a CSF, of point symmetric {\em balanced figure-eight curves}; see
\cite[Definition 4]{Drugan}. In fact, $\{\varGamma_{\theta}(\cdot,t)\}_{t\in(0,T_{\max})}$ instantaneously becomes a family of real analytic curves. According to the uniqueness part of Theorem \ref{wexreg},
the DCSF on $(\C,\gind)$ with initial data a curve satisfying the assumptions of our theorem, is in to a one-to-one correspondence
with the CSF of point symmetric curves in the euclidean plane. By a result of Grayson \cite[Lemma 3]{grayson3}
(see also \cite[Corollary 2]{Drugan})
the flow will contract in finite time into the origin $O$.

{\bf Proof of Theorem \ref{THME}:} Since the initial curve is not passing through singular points
of the surface, its evolution is done by the standard CSF. By the avoidance
principle and the structure of the geodesics near the conical tips in Lemma \ref{geod}, the evolved curves will not
approach the singular points of the surface. Embeddedness of the
evolved curves is guarantied by results of Gage \cite[Theorem 3.1]{Ga1} and Angenent \cite{Ang4}.
Since the evolving curves are staying in a compact region of the surface, the result follows by Grayson
\cite[Theorem 0.1]{Gr1}. 

\begin{remark}
The long-time behaviour of general figure-eight curves in the euclidean space $\R^2$ under the curve shortening
flow, is not fully understood. For some new results in this direction we refer to \cite{Drugan}. Finally, we refer to \cite{Ma} for results
similar to Theorem E. Our methods are different from those in \cite{Ma}.
\end{remark}

{\bf Acknowledgements:} We would like to express our sincere gratitude to the anonymous  referee for the careful reading of the manuscript and the thoughtful comments.

\end{document}